\newcommand{\C}{\mathbb{C}}
\newcommand{\F}{\mathbb{F}}
\newcommand{\N}{\mathbb{N}}
\newcommand{\Q}{\mathbb{Q}}
\newcommand{\R}{\mathbb{R}}
\newcommand{\Z}{\mathbb{Z}}
\newcommand{\Aff}{\mathbb{A}}
\newcommand{\G}{\mathbb{G}}
\newcommand{\Hlf}{\mathbb{H}}
\newcommand{\Mod}{\mathbb{M}}
\newcommand{\T}{\mathbb{T}}
\newcommand{\A}{\mathscr{A}}
\newcommand{\Cat}{\mathscr{C}}
\newcommand{\Fil}{\mathscr{F}}
\newcommand{\Hil}{\mathscr{H}}
\newcommand{\I}{\mathscr{I}}
\newcommand{\Leb}{\mathscr{L}}
\newcommand{\M}{\mathscr{M}}
\newcommand{\Nor}{\mathscr{N}}
\newcommand{\Opn}{\mathscr{O}}
\newcommand{\U}{\mathscr{U}}
\newcommand{\W}{\mathscr{W}}
\theoremstyle{plain}
\newtheorem{thm}{Theorem}[section]
\newtheorem{lmm}[thm]{Lemma}
\newtheorem{prp}[thm]{Proposition}
\newtheorem{crl}[thm]{Corollary}
\theoremstyle{definition}
\newtheorem{dfn}[thm]{Definition}
\newtheorem{rmk}[thm]{Remark}
\newtheorem{exm}[thm]{Example}
\def\ens#1{\{ #1 \}}
\def\Ens#1{\left\{ #1 \right\}}
\def\set#1#2{\{ #1 \mid #2 \}}
\def\Set#1#2{\left\{ #1 \ \middle| \ #2 \right\}}
\def\t#1{\text{\rm #1}}
\def\m#1{\text{#1}}
\def\v#1{| #1 |}
\def\vv#1{\left| #1 \right|}
\def\n#1{\| #1 \|}
\newcommand{\im}{\t{im}}
\newcommand{\coim}{\t{coim}}
\title{Galois Representations Associated to $p$-adic Families of Modular Forms of Finite Slope}
\author{Tomoki Mihara}
\date{}
\begin{document}

\maketitle
\begin{abstract}
We define a pro-$p$ Abelian sheaf on a modular curve of a fixed level $N \geq 5$ divisible by a prime number $p \neq 2$. Every $p$-adic representation of $\t{Gal}(\overline{\Q}/\Q)$ associated to an eigenform is obtained as a quotient of its \'etale cohomology. For any compact $\Z_p[[1 + N \Z_p]]$-algebra $\Lambda_1$ satisfying certain suitable conditions, we construct a representation of $\t{Gal}(\overline{\Q}/\Q)$ over $\Lambda_1$ associated to a $\Lambda_1$-adic cuspidal eigenform of finite slope as a scalar extension of a quotient of the \'etale cohomology.
\end{abstract}

\tableofcontents

\section{Introduction}
\label{Introduction}

Let $p$ be a prime number with $p \neq 2$. We give a new explicit geometric construction of a $p$-adic family of Galois representations associated to modular forms of a fixed level $N \geq 5$ with $p \mid N$ of finite bounded slope. The result is deeply related to one of@'Ä'¥ open questions in \cite{CM98}. R.\ Coleman and B.\ Mazur originally defined the eigencurve of tame level $1$ in \cite{CM98} 6.1 Definition 1. Excluding a discrete subspace from the reduced eigencurve, they constructed a continuous representation of $\t{Gal}(\overline{\Q}/\Q)$ of rank $2$ over the ring of rigid analytic functions from the pseudo-representation obtained as the pull-back of the universal deformation of a pseudo-representation over a finite field. Then they asked a question whether this Galois representation is obtained as the Pontryagin dual of the direct limit of \'etale cohomologies of a tower of modular curves. Instead of the original reduced eigencurve, we consider the reduced eigencurve introduced in \cite{Eme} Theorem 2.23 obtained as the closed subspace of $\t{Spf}(\T_N^{(p)}) \times \Aff^1_{\Q_p}$ interpolating classical Hecke eigenforms, where $\T_N^{(p)}$ is the universal Hecke algebra of level $N$ generated by Hecke operators $T_{\ell}$ for each prime number $\ell \neq p$ and $S_{\ell}$ for each prime number $\ell$ coprime to $N$. Two geometric constructions of a family of Galois representations associated to modular forms are known for the case where the modular forms are ordinary. One is given by H.\ Hida as the inverse limit of Tate modules of Jacobian varieties of towers of modular curves (\cite{Hid86} Theorem 2.1) $(Y_1(p^r N))_{r \in \N}$. The other one is given by A.\ Wiles as gluing of pseudo-representations along Hida family (\cite{Wil88} Theorem 2.2.1). Even if we restrict it to the case where modular forms are ordinary, our construction completely differs from the two. Indeed, we construct a family as a quotient of the cohomology of a compact sheaf on a single modular curve of level $N$.

\vspace{0.2in}
In \S \ref{Topological Modules over Topological Rings}, we recall topological modules over topological rings. We define the notion of profiniteness of a topological module. Roughly speaking, a profinite module is a topological module which is isomorphic to the inverse limit of its quotients by open submodules of finite indices. Every profinite module is a compact Hausdorff topological module such that the set of open subgroups forms a fundamental system of neighbourhoods of $0$, and the converse holds in the case where the topological ring is itself a profinite as is verified in Proposition \ref{zero dimensional}. In \S \ref{Topological Modules over Topological Monoids}, we recall topological modules over monoid algebras of topological monoids over topological rings. We are interested not only in a topological group but also in a topological monoid, because we need the latter in order to give an action of Hecke operators on cohomologies in \S \ref{Actions of the Absolute Galois Group and Hecke Operators}. In \S \ref{p-adic Modular Forms and Hecke Algebras}, we recall modular forms and several variants of Hecke algebras over a $p$-adic field. We introduce the universal Hecke algebras $\T_N^{[< s]}$ and $\T_N^{< s}$ of slope $< s$. We will use them in order to formulate a cuspidal family of systems of Hecke eigenvalues of finite slope in \S \ref{p-adic Family of Galois Representations of Finite Slope}.

\vspace{0.2in}
In \S \ref{Prodiscrete Cohomologies of Complete Topological Modules}, we introduce the notion of a prodiscrete cohomology of a complete topological module with a continuous action of a topological monoid. In \S \ref{Profinite Zp Sheaves on Modular Curves}, we introduce a notion of a profinite $\Z_p$-sheaf. It is an inverse system of \'etale sheaves of finite Abelian $p$-groups, and unlike a smooth $\Z_p$-sheaf, we assume no finiteness condition. Similar with a prodiscrete cohomology of a complete topological module with a continuous action of a topological monoid, we define a prodiscrete cohomology of a profinite $\Z_p$-sheaf as the inverse limit of the cohomologies. It is not a derived functor, but works well due to the profiniteness of sheaves. A prodiscrete cohomology is a cohomology of a single profinite $\Z_p$-sheaves on a single scheme by definition, while a completed cohomology introduced by Emerton in \cite{Eme} is the inverse limit of the direct limit of torsion cohomologies of a compatible system of $p$-adic sheaves on a tower of schemes. Of course, Shapiro's lemma gives an interpretation of a compatible system of sheaves on a tower of schemes as a compatible system of sheaves on a single scheme. However, such an interpretation yields a direct limit of $p$-adic sheaves, and hence completely differs from a profinite $\Z_p$-sheaf. A prodiscrete cohomology of profinite $\Z_p$-sheaves is compact, while a completed cohomology of a compatible system of sheaves on a tower of schemes is Banach, which is far from compact. The compactness is important for interpolation. The Iwasawa algebra, which is compact, is identified with the algebra of rigid analytic functions, and interpolation by rigid analytic functions has good congruence property. On the other hand, the algebra of bounded continuous functions, which is Banach, has infinitely many idempotents, and hence interpolation by continuous functions does not perform so well. We guess that in order to compare with the two cohomologies in a direct way, one needs some duality theory of sheaves extending Schneider--Teitelbaum theory (\cite{ST02}). In \S \ref{Actions of the Absolute Galois Group and Hecke Operators}, we define actions of $\t{Gal}(\overline{\Q}/\Q)$ and Hecke operators on prodiscrete cohomologies. We verify the action of Hecke operators is $\t{Gal}(\overline{\Q}/\Q)$-equivariant in Proposition \ref{Galois - Hecke}. We will use the actions in a geometric construction of a $p$-adic family of Galois representations associated to modular forms.

\vspace{0.2in}
In \S \ref{Interpolation along the Weight Spaces}, we give an explicit way to interpolate $\t{Sym}^{k-2}(\Q_p^2)$ along weights $k \in \N \cap [2,\infty)$. Although the dimension of $\t{Sym}^{k-2}(\Q_p^2)$ for each $k \in \N \cap [2,\infty)$ are pairwise distinct, there are infinite dimensional extensions of them as is shown in Theorem \ref{interpolation} and Remark \ref{extension}. They share the underlying topological modules, and hence can be easily interpolated. In \S \ref{Restriction to Families of Finite Slope}, we construct a profinite module over the Iwasawa algebra with a continuous actions of $\t{Gal}(\overline{\Q}/\Q)$ and $\T_N^{< s}$. We verify the finiteness of it as a module over the topological ring generated by the Iwasawa algebra and Hecke operators in Theorem \ref{finiteness}. In \S \ref{p-adic Family of Modular Forms of Finite Slope}, we introduce a notion of a $\Lambda$-adic domain in Definition \ref{Lambda-adic domain}. Roughly speaking, it is a $1$-dimensional topological algebra over the Iwasawa algebra with ``enough arithmetic points'' and ``the identity theorem'. We define a notion of a modular form over a $\Lambda$-adic domain. As is shown in Remark \ref{eigencurve}, the reduced eigencurve admits a smooth alteration with an open covering of the complement of a discrete subspace by $\Q_p$-analytic spaces associated to $\Lambda$-adic domains. In particular, there are plentiful modular forms over $\Lambda$-adic domains. We verify a certain finiteness of the space of modular forms over a $\Lambda$-adic domain in Theorem \ref{finiteness 3}. In \S \ref{p-adic Family of Galois Representations of Finite Slope}, we generalise a result of \cite{Gro90} in Theorem \ref{Galois representation}. B.\ H.\ Gross proved that for any normalised cuspidal eigenform over $\overline{\Q}_p$ of weight $2$ and level $N$, the quotient of the rational Tate module of the Jacobian of $Y_1(N)$ by the corresponding system of Hecke eigenvalues is naturally isomorphic to the Galois representation associated to the cusp form twisted by a character in the proof of \cite{Gro90} Theorem 11.4, and we loosen the restriction of weight $2$ to weight $\geq 2$. Finally, we construct a $p$-adic family of Galois representations associated to modular forms in Theorem \ref{geometric construction 2} of finite slope.

\section{Preliminaries}
\label{Preliminaries}

In this section, let $p$ denote a prime number. We recall several notions of algebraic objects with topologies. We also recall modular forms and Hecke algebras.

\subsection{Topological Modules over Topological Rings}
\label{Topological Modules over Topological Rings}

A {\it topological monoid} is a monoid $G$ endowed with a topology such that the multiplication $G \times G \to G \colon (g,g') \mapsto gg'$ is continuous. A monoid is always equipped with the discrete topology unless specified so that it is regarded as a topological monoid. A {\it topological group} is a topological monoid $G$ such that its underlying monoid is a group and the inverse $G \to G \colon g \mapsto g^{-1}$ is continuous. A topological group is said to be {\it Abelian} if its underlying group is Abelian. A topological group admits two canonical uniform structures compatible with its topology, and for an Abelian topological group, the two canonical uniform structures coincide with each other. Therefore we always equip a topological Abelian group with the canonical uniform structure.

\begin{exm}
\label{opposite monoid}
Let $G$ be a topological monoid. We denote by $G^{\t{op}}$ the opposite monoid of $G$ endowed with the topology induced by the identity map $(\cdot)^{\t{op}} \colon G \to G^{\t{op}} \colon g \mapsto g^{\t{op}}$ of the underlying sets. Then $G^{\t{op}}$ is a topological monoid. If $G$ is a topological group, then so is $G^{\t{op}}$, and the map $G \to G^{\t{op}} \colon g \mapsto (g^{-1})^{\t{op}}$ is a homeomorphic group isomorphism.
\end{exm}

\begin{exm}
Let $I$ be a set, and $G = (G_i)_{i \in I}$ a family of topological monoids (resp.\ topological groups). Then the direct product $\prod_{i \in I} G_i$ is a topological monoid (resp.\ a topological group) with respect to the direct product topology.
\end{exm}

Let $M$ be an Abelian group. The set $\set{m + p^n M \subset M}{(m,n) \in M \times \N}$ forms a basis of a topology $\Opn_{M,p}$ on $M$, and we call $\Opn_{M,p}$ {\it the $p$-adic topology on $M$}. Then $M$ is an Abelian topological group with respect to the $p$-adic topology on $M$. We say that $M$ is {\it $p$-adically separated} (resp.\ {\it $p$-adically complete}) if the group homomorphism
\begin{eqnarray*}
  \iota_{M,p} \colon M & \to & \varprojlim_{r \in \N} M/p^r M \\
  m & \mapsto & (m + p^r M)_{r = 0}^{\infty}
\end{eqnarray*}
is injective (resp.\ an isomorphism). By definition, the $p$-adic topology on $M$ is the weakest topology for which $\iota_{M,p}$ is continuous with respect to the inverse limit topology of the discrete topology on the target. In particular, $M$ is $p$-adically separated (resp.\ $p$-adically complete) if and only if $M$ is Hausdorff (resp.\ complete) with respect to the $p$-adic topology (resp.\ the canonical uniform structure associated to the $p$-adic topology).

\begin{exm}
Let $I$ be a set, and $M = (M_i)_{i \in I}$ a family of $p$-adically separated (resp.\ $p$-adically complete) Abelian groups. Then the direct product $\prod_{i \in I} M_i$ is a $p$-adically separated (resp.\ $p$-adically complete) Abelian group. The direct product topology of the $p$-adic topologies does not necessarily coincide with the $p$-adic topology on the direct product.
\end{exm}

\begin{prp}
\label{p-adic continuity}
Let $M$ and $N$ be Abelian groups. Every group homomorphism $M \to N$ is continuous with respect to the $p$-adic topologies.
\end{prp}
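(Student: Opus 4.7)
The plan is to verify directly from the definition of the $p$-adic topology that the preimage under $f \colon M \to N$ of every basic open set of $N$ is open in $M$. Recall that a basis for the $p$-adic topology on $N$ consists of cosets of the form $n + p^r N$ with $n \in N$ and $r \in \N$, and likewise for $M$.

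First I would isolate the key algebraic observation that makes everything work: for any group homomorphism of Abelian groups and any $r \in \N$, one has $f(p^r M) \subseteq p^r N$. This is immediate because $p^r M$ is generated as an Abelian group by the elements $p^r m$ for $m \in M$, and $f(p^r m) = p^r f(m) \in p^r N$; in other words, group homomorphisms of Abelian groups are automatically $\Z$-linear, so they intertwine multiplication by $p^r$.

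Next I would fix a basic open set $n + p^r N \subseteq N$ and show that $U \coloneqq f^{-1}(n + p^r N)$ is open in the $p$-adic topology on $M$. If $U = \emptyset$ there is nothing to prove. Otherwise, pick any $m_0 \in U$, so $f(m_0) \in n + p^r N$. For every $m' \in p^r M$ the above observation gives $f(m') \in p^r N$, hence
\begin{equation*}
  f(m_0 + m') = f(m_0) + f(m') \in (n + p^r N) + p^r N = n + p^r N,
\end{equation*}
so $m_0 + p^r M \subseteq U$. Since $m_0 + p^r M$ is a basic open neighbourhood of $m_0$ in the $p$-adic topology on $M$, the set $U$ is open.

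There is no genuine obstacle here; the statement is a direct consequence of the fact that the $p$-adic topology is defined using the subgroups $p^r M$ and $p^r N$, both of which are preserved by any group homomorphism. I would therefore present the proof in a single short paragraph combining the two observations above.
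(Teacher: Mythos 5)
Your proof is correct and follows essentially the same route as the paper: the key inclusion $f(p^r M) \subseteq p^r N$ gives $m_0 + p^r M \subseteq f^{-1}(f(m_0) + p^r N)$, so preimages of basic opens are open. The paper's version merely exhibits the (possibly larger) set $m_0 + \ker(f) + p^r M$ inside the preimage, which is the same idea.
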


\begin{proof}
Let $\varphi \colon M \to N$ be a group homomorphism. We have
\begin{eqnarray*}
  \bigcup_{m' \in \ker(\varphi)} (m+m') + p^r M = m + \ker(\varphi) + p^r M \subset \varphi^{-1} \left( \varphi(m) + p^r N \right)
\end{eqnarray*}
for any $(m,r) \in M \times \N$. It ensures the continuity of $\varphi$.
\end{proof}

A $\Z_p$-module is said to be {\it $p$-adically separated} (resp.\ {\it $p$-adically complete}) if its underlying Abelian group is $p$-adically separated (resp.\ $p$-adically complete). For any $p$-adically complete Abelian group $M$, the natural action of $\Z_p$ on the target of $\iota_{M,p}$ makes $M$ a $\Z_p$-module. Therefore the notion of a $p$-adically complete Abelian group is equivalent to that of a $p$-adically complete $\Z_p$-module.

\begin{exm}
Every finitely generated $\Z_p$-module is $p$-adically complete, while there is no non-trivial $\Q_p$-vector space which is $p$-adically separated.
\end{exm}

\begin{prp}
\label{quotient map}
Let $M$ be a $p$-adically complete $\Z_p$-module. For any $\Z_p$-submodule $L \subset M$ closed with respect to the $p$-adic topology, the canonical projection $M \twoheadrightarrow M/L$ is a quotient map with respect to the $p$-adic topologies.
\end{prp}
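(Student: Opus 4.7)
The plan is to show that the $p$-adic topology on $M/L$ coincides with the quotient topology induced from the $p$-adic topology on $M$ via the natural projection $\pi \colon M \twoheadrightarrow M/L$. One direction is essentially free: $\pi$ is a $\Z_p$-module homomorphism, hence continuous with respect to the $p$-adic topologies by Proposition \ref{p-adic continuity}, so the quotient topology on $M/L$ is at least as fine as the $p$-adic topology. For the reverse inclusion I would argue that $\pi$ is an open map.

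The key computation is the identity $\pi(p^r M) = p^r (M/L)$, which follows from the surjectivity and $\Z_p$-linearity of $\pi$. Consequently, for any $m \in M$ and $r \in \N$, the image $\pi(m + p^r M) = \pi(m) + p^r (M/L)$ is a basic open neighbourhood of $\pi(m)$ in the $p$-adic topology of $M/L$. Since $\pi$ is a group homomorphism, this openness on basic neighbourhoods of $0$ propagates to arbitrary open subsets of $M$. Dually, if $V \subset M/L$ satisfies the condition that $\pi^{-1}(V)$ is open in $M$, then for any $\bar m \in V$ I may choose a lift $m \in \pi^{-1}(\bar m) \subset \pi^{-1}(V)$, pick $r \in \N$ with $m + p^r M \subset \pi^{-1}(V)$, and conclude $\bar m + p^r (M/L) = \pi(m + p^r M) \subset V$. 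This exhibits $V$ as a union of basic $p$-adic open sets in $M/L$, completing the identification of the two topologies.

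The argument is largely formal once the identity $\pi(p^r M) = p^r (M/L)$ is observed, and I do not expect a serious obstacle. The hypotheses that $M$ is $p$-adically complete and that $L$ is closed do not appear strictly necessary for this particular topological statement; I suspect they are imposed so that $M/L$ equipped with the (quotient $=$ $p$-adic) topology is itself well-behaved in the sequel, in particular Hausdorff, which follows from the closedness of $L$ via the equality $\bigcap_{r \in \N} (L + p^r M) = \overline{L} = L$.
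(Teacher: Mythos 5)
Your proof is correct and follows essentially the same route as the paper: continuity from Proposition \ref{p-adic continuity}, the identity $\pi(m + p^r M) = \pi(m) + p^r(M/L)$, openness, and hence the quotient-map property. Your side remark is also accurate — the completeness and closedness hypotheses are not used in this argument, and the paper's proof likewise does not invoke them; they serve to ensure (as you note) that $M/L$ is Hausdorff and otherwise well-behaved later on.
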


\begin{proof}
Let $\varphi \colon M \twoheadrightarrow M/L$ denote the canonical projection. The continuity of $\varphi$ follows from Proposition \ref{p-adic continuity}. We have $\varphi(m + p^r M) = (m + L) + p^r (M/L)$ for any $(m,r) \in M \times \N$, and hence $\varphi$ is an open map. Thus the surjective map $\varphi$ is a quotient map.
\end{proof}

In this paper, a ring is always assumed to be unital and associative, but not necessarily commutative. A {\it topological ring} is a ring $R$ endowed with a topology such that $R$ is a topological Abelian group with respect to the addition $R \times R \to R \colon (r,r') \mapsto r+r'$, and is a topological monoid with respect to the multiplication $R \times R \to R \colon (r,r') \mapsto rr'$. We always equip a topological ring with the canonical uniform structure associated to the topological Abelian group structure given by the addition. A topological ring is said to be {\it commutative} if its underlying ring is commutative.

\begin{exm}
For a ring $R$, {\it the $p$-adic topology on $R$} is the $p$-adic topology on the additive group of $R$. Since $p^n R \subset R$ is a two-sided ideal for any $n \in \N$, $\iota_{R,p}$ is a ring homomorphism, and hence $R$ is a topological ring with respect to the $p$-adic topology.
\end{exm}

\begin{rmk}
\label{character}
Let $R$ be a topological ring. The unit group $R^{\times} \subset R$ is a submonoid of $R$ with respect to the multiplication, and we regard it as a topological monoid with respect to the relative topology. It is not necessarily a topological group. For a topological group $G$, we call a continuous monoid homomorphism $G \to R^{\times}$ a {\it continuous character}.
\end{rmk}

Let $R$ be a topological ring. A {\it topological $R$-module} is a topological Abelian group $M$ endowed with a structure of a left module over the underlying ring of $R$ on the underlying Abelian group of $M$ such that the scalar multiplication $R \times M \to M \colon (r,m) \mapsto rm$ is continuous. A topological $R$-module is said to be a {\it discrete $R$-module} if its underlying topology is the discrete topology, is said to be a {\it finite $R$-module} if it is a discrete $R$-module whose underlying set is a finite set, and is said to be a {\it profinite $R$-module} if it is homeomorphically isomorphic to the inverse limit of finite $R$-modules. For a topological $R$-module $M$, we denote by $\Opn_M$ the set of open $R$-submodules of $M$. A topological $R$-module is said to be {\it linearly complete} if the continuous $R$-linear homomorphism
\begin{eqnarray*}
  M & \to & \varprojlim_{L \in \Opn_M} M/L \\
  m & \mapsto & (m + L)_{L \in \Opn_M}
\end{eqnarray*}
is a homeomorphic isomorphism. For any linearly complete topological $R$-module, $\Opn_M$ forms a fundamental system of neighbourhoods of $0$. Every discrete $R$-module is linearly complete. In particular, every finite $R$-module is linearly complete. Every inverse limit of linearly complete topological $R$-modules is linearly complete. Therefore every profinite $R$-module is linearly complete.

\begin{exm}
For any ring $R$ and left $R$-module $M$, $M$ is a topological $R$-module with respect to the $p$-adic topologies on $R$ and $M$.
\end{exm}

\begin{exm}
Let $R$ be a commutative topological ring. For linearly complete (resp.\ profinite) $R$-modules $M_0$ and $M_1$, we set
\begin{eqnarray*}
  M_0 \hat{\otimes}_R M_1 \coloneqq \varprojlim_{(L_0,L_1) \in \Opn_{M_0} \times \Opn_{M_1}} (M_0/L_0) \otimes_R (M_1/L_1),
\end{eqnarray*}
and endow it with the inverse limit topology of the discrete topologies. Then $M_0 \hat{\otimes}_R M_1$ is a linearly complete (resp.\ profinite) $R$-module.
\end{exm}

\begin{prp}
\label{right exact}
Let $R$ be a commutative topological ring with underlying ring $\v{R}$, and $M_0$, $M_1$, and $M_2$ linearly complete $R$-modules with underlying $\v{R}$-modules $\v{M_0}$, $\v{M_1}$, and $\v{M_2}$ respectively. For any continuous $R$-linear homomorphism $f \colon M_1 \to M_2$, there uniquely exists a continuous $R$-linear homomorphism
\begin{eqnarray*}
  \t{id}_{M_0} \hat{\otimes} f \colon M_0 \hat{\otimes}_R M_1 \to M_0 \hat{\otimes}_R M_2
\end{eqnarray*}
extending the $\v{R}$-linear homomorphism
\begin{eqnarray*}
  \t{id}_{\vv{M_0}} \otimes f \colon \v{M_0} \otimes_{\v{R}} \v{M_1} & \to & \v{M_0} \otimes_{\v{R}} \v{M_2} \\
  m_0 \otimes m_1 & \mapsto & m_0 \otimes f(m_1).
\end{eqnarray*}
Moreover, if $M_0$ and $M_1$ are profinite $R$-modules and $f$ is surjective, then so is $\t{id}_{M_0} \hat{\otimes} f$. 
\end{prp}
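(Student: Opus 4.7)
The plan is to define $\t{id}_{M_0} \hat{\otimes} f$ levelwise in the inverse limits defining the completed tensor products, and then pass to the limit. For each pair $(L_0, L_2) \in \Opn_{M_0} \times \Opn_{M_2}$, continuity of $f$ makes $L_1 := f^{-1}(L_2)$ an open $R$-submodule of $M_1$, and $f$ descends to an $\v{R}$-linear homomorphism $\overline{f} \colon M_1/L_1 \to M_2/L_2$. Composing the canonical projection $M_0 \hat{\otimes}_R M_1 \to (M_0/L_0) \otimes_{\v{R}} (M_1/L_1)$ with $\t{id} \otimes \overline{f}$ yields $\v{R}$-linear maps to $(M_0/L_0) \otimes_{\v{R}} (M_2/L_2)$ that are compatible as $(L_0, L_2)$ ranges over the directed product of open submodules. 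The universal property of the inverse limit then assembles these into the desired continuous $R$-linear map $\t{id}_{M_0} \hat{\otimes} f$; by construction, an elementary tensor $m_0 \otimes m_1$ is sent to the compatible family whose $(L_0,L_2)$-component is $(m_0 + L_0) \otimes (f(m_1) + L_2)$, so the map extends $\t{id}_{\v{M_0}} \otimes f$.

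For uniqueness, I would observe that the image of the algebraic tensor product $\v{M_0} \otimes_{\v{R}} \v{M_1}$ is dense in $M_0 \hat{\otimes}_R M_1$, since its composition with the projection to each $(M_0/L_0) \otimes_{\v{R}} (M_1/L_1)$ is surjective. The target $M_0 \hat{\otimes}_R M_2$ is Hausdorff as an inverse limit of discrete modules, so any two continuous extensions of $\t{id}_{\v{M_0}} \otimes f$ must coincide on the dense image and hence everywhere.

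For surjectivity, suppose $M_0$ and $M_1$ are profinite and $f$ is surjective. Then each quotient $M_0/L_0$ and $M_1/L_1$ by an open submodule is finite, so each level $(M_0/L_0) \otimes_{\v{R}} (M_1/L_1)$ is finite, with surjective transition maps by right exactness of the algebraic tensor product. Consequently the projection from the profinite module $M_0 \hat{\otimes}_R M_1$ onto every level is surjective. Surjectivity of $f$ gives surjectivity of $M_1/f^{-1}(L_2) \twoheadrightarrow M_2/L_2$, and hence of the induced $(M_0/L_0) \otimes_{\v{R}} (M_1/f^{-1}(L_2)) \twoheadrightarrow (M_0/L_0) \otimes_{\v{R}} (M_2/L_2)$. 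Composing these surjections shows that $\t{id}_{M_0} \hat{\otimes} f$ is surjective onto each $(M_0/L_0) \otimes_{\v{R}} (M_2/L_2)$, hence its image is dense in $M_0 \hat{\otimes}_R M_2$. Since $M_0 \hat{\otimes}_R M_1$ is profinite and therefore compact, its continuous image in the Hausdorff space $M_0 \hat{\otimes}_R M_2$ is closed; combined with density, it equals the whole target.

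The main obstacle is mild: one must carefully track the two distinct indexing sets $\Opn_{M_0} \times \Opn_{M_1}$ and $\Opn_{M_0} \times \Opn_{M_2}$ and use the continuity of $f$ to navigate from the latter to the former via the pullback $L_2 \mapsto f^{-1}(L_2)$. The surjectivity portion rests on the compactness of profinite modules and on the fact that every open submodule of a profinite module has finite index, both of which are standard consequences of the definitions established earlier.
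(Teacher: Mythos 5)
Your proof is correct and takes essentially the same approach as the paper: construct the map levelwise by pulling back open submodules of $M_2$ through $f$ and assembling via the universal property of the inverse limit, establish uniqueness from density of the algebraic tensor product together with Hausdorffness of the target, and deduce surjectivity from compactness of the profinite source plus density of the image. The only cosmetic difference is in how density of the image is obtained for surjectivity — you track surjectivity level by level, while the paper simply observes that surjectivity of $f$ forces the image of $\v{M_0} \otimes_{\v{R}} \v{M_1}$ under $\t{id}_{M_0} \hat{\otimes} f$ to coincide with the dense image of $\v{M_0} \otimes_{\v{R}} \v{M_2}$ — after which the compactness argument is identical.
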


\begin{proof}
The uniqueness is obvious because the image of $\v{M_0} \otimes_{\v{R}} \v{M_1}$ is dense in $M_0 \hat{\otimes}_R M_1$ and $M_0 \hat{\otimes}_R M_2$ is Hausdorff. The $\v{R}$-linear homomorphism $\t{id}_{\v{M_0}} \otimes f$ induces a well-defined $R$-linear homomorphism
\begin{eqnarray*}
  M_0/L_0 \otimes_{\v{R}} M_1/L_1 & \to & M_0/L_0 \otimes_{\v{R}} M_2/L_2 \\
  (m_0 + L_0) \otimes (m_1 + L_1) & \mapsto & (m_0 + L_0) \otimes (f(m_1) + L_2)
\end{eqnarray*}
continuous with respect to the discrete topologies for any $(L_0,L_1,L_2) \in \Opn_{M_0} \times \Opn_{M_1} \times \Opn_{M_2}$ with $f(L_1) \subset L_2$. Since $f$ is continuous, $f^{-1}(L_2) \in \Opn_{M_1}$ for any $L_2 \in \Opn_{M_2}$, and hence we have $\set{L_2 \in \Opn_{M_2}}{{}^{\exists} L_1 \in \Opn_{M_1}, \t{s.t.\ } f(L_1) \subset L_2} = \Opn_{M_2}$. Therefore taking the inverse limit, we obtain a continuous $R$-linear homomorphism $\t{id}_{M_0} \hat{\otimes} f \colon M_0 \hat{\otimes}_R M_1 \to M_0 \hat{\otimes}_R M_2$ extending $\t{id}_{\v{M_0}} \otimes f$ by the continuity of $f$. Suppose that $M_0$ and $M_1$ are profinite $R$-modules and $f$ is surjective. The image of $\v{M_0} \otimes_{\v{R}} \v{M_2}$ is dense in $M_0 \hat{\otimes}_R M_2$, and coincides with the image of $\v{M_0} \otimes_{\v{R}} \v{M_1}$ by $\t{id}_{M_0} \hat{\otimes} f$. Since $M_0$ and $M_1$ are profinite $R$-modules, so is $M_0 \hat{\otimes}_R M_1$. It ensures that $\t{id}_{M_0} \hat{\otimes} f$ is a continuous homomorphism from a compact module to a Hausdorff module, and hence is a closed map. Thus $\t{id}_{M_0} \hat{\otimes} f$ is surjective.
\end{proof}

\begin{prp}
Every profinite $\Z_p$-module is $p$-adically separated, and its $p$-adic topology is finer than or equal to its original topology.
\end{prp}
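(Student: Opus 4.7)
My plan is to deduce both claims from a single observation: for any $L \in \Opn_M$ there exists $n \in \N$ with $p^n M \subset L$. Since profinite $\Z_p$-modules are linearly complete, the family $\Opn_M$ is a fundamental system of neighbourhoods of $0$ in the original topology, so once the observation is established, it exhibits each such neighbourhood as containing a $p$-adic neighbourhood of $0$, giving the refinement of topologies. Moreover, a profinite module is Hausdorff, so $\bigcap_{L \in \Opn_M} L = 0$; combined with the inclusion $\bigcap_{n \in \N} p^n M \subset \bigcap_{L \in \Opn_M} L$ obtained from the observation, this yields $p$-adic separation.

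To prove the observation I would fix a presentation $M \cong \varprojlim_{i \in I} M_i$ as an inverse limit of finite $\Z_p$-modules, with continuous projections $\pi_i \colon M \to M_i$, and note that any neighbourhood of $0$ in the inverse limit topology contains some $\ker(\pi_i)$. Hence any $L \in \Opn_M$ satisfies $\ker(\pi_i) \subset L$ for some $i$, so $M/L$ is a quotient of the finite $\Z_p$-module $\pi_i(M) \subset M_i$. A finite $\Z_p$-module is annihilated by some power of $p$: continuity of the scalar multiplication $\Z_p \times M \to M$ makes the annihilator $\set{a \in \Z_p}{am = 0}$ of each element $m$ an open submodule of $\Z_p$, hence of the form $p^{n_m} \Z_p$, and we take the maximum over the finitely many elements of $M/L$. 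Consequently some $p^n$ kills $M/L$, i.e.\ $p^n M \subset L$, as required.

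The only mildly delicate point is the inverse-limit-topology step showing that an open submodule of $M$ must contain the kernel of some projection $\pi_i$; everything else is a direct unwinding of the definitions of profiniteness, linear completeness, and the $p$-adic topology already recorded in this subsection.
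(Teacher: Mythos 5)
Your proof is correct and follows essentially the same route as the paper: reduce to showing each $L \in \Opn_M$ contains $p^r M$ for some $r$, establish that $M/L$ is a finite $\Z_p$-module, and conclude since such a module is killed by a power of $p$. The one difference is at the finiteness step: the paper invokes compactness of $M$ directly to see that the open subgroup $L$ has finite index, while you unwind the inverse-limit presentation to exhibit $M/L$ as a quotient of some $\pi_i(M) \subseteq M_i$; the paper's shortcut is slightly tidier and sidesteps the (mild) need to assume the indexing poset is directed so that a finite intersection of $\ker(\pi_{i_j})$ contains a single $\ker(\pi_i)$.
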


\begin{proof}
Let $M$ be a profinite $\Z_p$-module. Since $M$ is Hausdorff, the second assertion implies the first assertion. Since the topology of a linearly complete $\Z_p$-module is generated by open $\Z_p$-submodules, it suffices to verify that for any open $\Z_p$-submodule $L \subset M$, there is an $r \in \N$ such that $p^r M \subset L$. Since $M$ is compact, $L$ is of finite index as an additive subgroup of $M$. Therefore $M/L$ is a $\Z_p$-module whose underlying group is a finite Abelian group, and hence there is an $r \in \N$ such that $p^r (M/L) = 0$. It implies $p^r M \subset L$.
\end{proof}

\begin{crl}
\label{p-adically open}
For a $p$-adically separated $\Z_p$-module $M$ and a profinite $\Z_p$-module $N$, every $\Z_p$-linear homomorphism $M \to N$ is continuous with respect to the $p$-adic topology on $M$.
\end{crl}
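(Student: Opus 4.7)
The plan is to chain together two results already in hand: Proposition \ref{p-adic continuity}, which gives continuity of any group homomorphism between Abelian groups when both sides carry the $p$-adic topology, and the preceding proposition, which compares the $p$-adic topology on a profinite $\Z_p$-module with its given profinite topology.

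Concretely, let $\varphi \colon M \to N$ be a $\Z_p$-linear homomorphism. First, I would forget the profinite topology on $N$ and view $\varphi$ purely as a homomorphism of Abelian groups. Proposition \ref{p-adic continuity} then immediately yields that $\varphi$ is continuous when both $M$ and $N$ are equipped with their $p$-adic topologies. Second, I would invoke the preceding proposition: since $N$ is a profinite $\Z_p$-module, its $p$-adic topology is finer than or equal to its original (profinite) topology, so the identity map $N_{p\text{-adic}} \to N_{\t{orig}}$ is continuous. Composing the two continuous maps
\begin{eqnarray*}
  M_{p\text{-adic}} \xrightarrow{\ \varphi\ } N_{p\text{-adic}} \xrightarrow{\ \t{id}\ } N_{\t{orig}}
\end{eqnarray*}
gives the desired continuity of $\varphi \colon M \to N$ with $M$ carrying the $p$-adic topology and $N$ carrying its given profinite topology.

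There is essentially no obstacle here; the statement is a direct corollary, and the only point that requires a moment of care is bookkeeping which topology is being placed on $N$ at each stage of the argument. Since $M$ is only assumed $p$-adically separated (not complete), one must resist the temptation to use any completeness-based argument; fortunately, Proposition \ref{p-adic continuity} does not require completeness on either side, so the plan goes through in the stated generality.
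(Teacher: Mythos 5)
Your proof is correct and is exactly the intended argument: the paper states this as a corollary to the preceding proposition with no separate proof, and the two-step chain you describe — Proposition \ref{p-adic continuity} for continuity $M_{p\text{-adic}} \to N_{p\text{-adic}}$, then the preceding proposition for continuity of $\t{id}\colon N_{p\text{-adic}} \to N$ — is precisely how the corollary is meant to follow. (As you implicitly note, the $p$-adic separatedness of $M$ plays no active role in this argument; it is a standing hypothesis carried for later use rather than something the proof needs.)
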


\begin{prp}
\label{zero dimensional}
Let $R$ be a compact topological ring, and $M$ a topological $R$-module. Then $M$ is a profinite $R$-module if and only if $M$ is a compact Hausdorff $R$-module such that the set of open $\Z$-submodules forms a fundamental system of neighbourhoods of $0$.
\end{prp}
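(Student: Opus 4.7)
The plan is to prove the two implications separately. For the forward direction ($\Rightarrow$), the statement is essentially formal from earlier observations: writing $M \cong \varprojlim_{i \in I} M_i$ with each $M_i$ a finite $R$-module, I realise $M$ as a closed subspace of the compact Hausdorff product $\prod_{i \in I} M_i$, so $M$ is itself compact Hausdorff. The paper has already noted that every profinite $R$-module is linearly complete, so $\Opn_M$ forms a fundamental system of neighbourhoods of $0$, and every open $R$-submodule is in particular an open $\Z$-submodule.

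The real content is the converse ($\Leftarrow$). The heart of the argument will be to upgrade the hypothesis from open $\Z$-submodules to open $R$-submodules: I plan to show that every open $\Z$-submodule $L \subset M$ contains an open $R$-submodule. For this I invoke the continuity of the scalar multiplication $\mu \colon R \times M \to M$. Since $\mu^{-1}(L)$ is open in $R \times M$ and contains the compact set $R \times \{0\}$, the tube lemma --- applicable because $R$ is compact --- yields an open neighbourhood $U$ of $0$ in $M$ with $RU \subset L$. Shrinking $U$ by the hypothesis to an open $\Z$-submodule $V \subset U$, I set $L' \coloneqq \langle RV \rangle_{\Z}$, the $\Z$-submodule of $M$ generated by $RV$. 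A direct check confirms that $L'$ is stable under the $R$-action (since $r \cdot \sum n_i r_i v_i = \sum n_i (r r_i) v_i \in \langle RV \rangle_{\Z}$), is contained in $L$ (as $L$ is a $\Z$-submodule containing $RV$), and contains the open set $V = 1 \cdot V$, so $L'$ is an open $R$-submodule of $M$ sitting inside $L$. I expect this tube-lemma step to be the main technical obstacle; the rest will be formal.

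Once open $R$-submodules are known to form a fundamental system of neighbourhoods of $0$, the conclusion follows standardly. Each $L' \in \Opn_M$ has finite index in $M$ by compactness, so each $M/L'$ is a finite $R$-module with the discrete topology. The canonical continuous $R$-linear map $M \to \varprojlim_{L' \in \Opn_M} M/L'$ is injective (because $M$ is Hausdorff and $\Opn_M$ is a basis at $0$) and surjective (by the standard finite-intersection argument: given a coherent family $(\bar m_{L'})_{L'}$, the compact preimages $\pi_{L'}^{-1}(\bar m_{L'}) \subset M$ have the finite intersection property, hence a common point). A continuous bijection from a compact space to a Hausdorff one is a homeomorphism, so this map exhibits $M$ as a profinite $R$-module.
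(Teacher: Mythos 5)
Your proposal is correct and follows essentially the same route as the paper: the forward direction via Tychonoff, and the converse by showing that every open $\Z$-submodule contains an open $R$-submodule using the continuity of the scalar multiplication together with the compactness of $R$. The only real difference is cosmetic — you invoke the tube lemma directly, while the paper reproves the same fact by hand (it covers $R$ by finitely many translates $r_h + I$ of an open neighbourhood $I$ of $0$ in $R$ and intersects the corresponding open $\Z$-submodules $L'_h$), and you spell out the final reduction to the inverse limit, which the paper compresses into the remark that it suffices to show $\Opn_M$ is a fundamental system of neighbourhoods of $0$.
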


\begin{proof}
The necessary implication follows from Tychonoff's theorem. Suppose that $M$ is a compact Hausdorff $R$-module such that the set of open $\Z$-submodules forms a fundamental system of neighbourhoods of $0$. In order to verify that $M$ is a profinite $R$-module, it suffices to show that $\Opn_M$ forms a fundamental system of neighbourhoods of $0$. Let $L \subset M$ be an open $\Z$-submodule of $M$. By the continuity of the scalar multiplication $R \times M \to M$, there are an open neighbourhood $I \subset R$ of $0 \in R$ and an open $\Z$-submodule $L'_0 \subset M$ such that $\set{il}{(i,l) \in I \times L'_0} \subset L$. For each $r \in R$, we denote by $r + I$ the subset $\set{r + i}{i \in I} \subset R$, which is open by the continuity of the addition $R \times R \to R$. Since $R$ is compact, the open covering $\set{r + I}{r \in R}$ admits a finite subcovering. Suppose that $\set{r_h + I}{h \in \N \cap [1,d]}$ covers $R$ for a $d \in \N$ and an $(r_h)_{h=1}^{d} \in R^d$. By the continuity of the scalar multiplication $R \times M \to M$ again, there is an open $\Z$-submodule $L'_h \subset M$ such that $\set{r_hl}{l \in L'_h} \subset L$ for any $h \in \N \cap [1,d]$. Put $L' \coloneqq \bigcap_{h = 0}^{d} L'_h$. Then $L'$ generates an open $R$-submodule contained in $L$.
\end{proof}

\begin{prp}
\label{canonical topology}
Let $R$ be a commutative compact Hausdorff ring with the underlying ring $\v{R}$, and $M$ a finitely generated $\v{R}$-module. For any finite subset $S \subset M$ of generators, the quotient topology on $M$ with respect to the surjective $\v{R}$-linear homomorphism
\begin{eqnarray*}
  R^S & \twoheadrightarrow & M \\
  (r_s)_{s \in S} & \mapsto & \sum_{s \in S} r_ss
\end{eqnarray*}
makes $M$ a compact Hausdorff topological $R$-module, and is independent of $S$.
\end{prp}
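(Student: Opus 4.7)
The plan is to verify the conclusions of the proposition---compactness of $M$, continuity of the algebraic operations, Hausdorffness, and independence of $S$---in turn. Since $S$ is finite and $R$ is compact, Tychonoff's theorem yields that $R^S$ is compact, so $M$ is compact as the continuous image of $R^S$ under $\pi_S \colon R^S \twoheadrightarrow M$.

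That $M$ is a topological $R$-module under the quotient topology follows directly, without needing Hausdorffness first. The quotient map $\pi_S$ is open, since for any open $V \subset R^S$ one has $\pi_S^{-1}(\pi_S(V)) = V + K$ with $K = \ker(\pi_S)$, a union of translates of $V$ and hence open. To verify continuity of addition at $(x_1, x_2) \in M \times M$: given an open $U \ni x_1 + x_2$ in $M$ and lifts $\tilde{x}_i \in R^S$, by continuity of addition on $R^S$ there exist open $V_i \ni \tilde{x}_i$ with $V_1 + V_2 \subset \pi_S^{-1}(U)$; then $\pi_S(V_i)$ are open neighbourhoods of $x_i$ in $M$ whose sum lies in $U$. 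Continuity of scalar multiplication is proved analogously, using continuity of the $R$-action on $R^S$ together with openness of $\pi_S$.

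For Hausdorffness it suffices to show that $K \subset R^S$ is closed, since in a compact Hausdorff space closedness and compactness coincide. The strategy I would pursue is to realise $K$ as the image of a continuous $\v{R}$-linear map from some compact product $R^{S'}$, which amounts to a finite presentation of $M$; then $K$ is the continuous image of the compact space $R^{S'}$, hence compact, hence closed. This is the principal obstacle, since finite generation does not in general imply finite presentation. I would expect the argument either to invoke Noetherian or similar hypotheses on $\v{R}$ forcing $M$ to be finitely presented, or to argue closedness of $K$ more directly, for instance by exhibiting an $\v{R}$-linear embedding of $M$ into a compact Hausdorff $R$-module whose topology pulls back through $\pi_S$ to the $S$-quotient topology.

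For independence of $S$, given two finite generating sets $S$ and $S'$, write each $s \in S$ as an $\v{R}$-linear combination of elements of $S'$; this defines a continuous $\v{R}$-linear map $\varphi \colon R^S \to R^{S'}$ (continuous because its coordinates are finite sums of products in $R$) satisfying $\pi_{S'} \circ \varphi = \pi_S$. Continuity of $\pi_{S'} \circ \varphi$ then shows that $\pi_S$ remains continuous when $M$ carries the $S'$-quotient topology, so every $S'$-open subset of $M$ is also $S$-open. Interchanging the roles of $S$ and $S'$ yields the reverse inclusion, and the two quotient topologies on $M$ coincide.
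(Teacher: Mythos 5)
Your handling of compactness and of the topological $R$-module structure is sound, and more detailed than the paper's, which dismisses the first assertion with a one-line appeal to Tychonoff's theorem; your check that $\pi_S$ is open and that addition and scalar multiplication descend continuously is the right way to make this rigorous. For independence of $S$, your symmetric pair of continuous maps $R^{S} \to R^{S'}$ and $R^{S'} \to R^{S}$ works; the paper instead reduces to the containment case $S_0 \subset S_1$, using that finite generating sets are directed by inclusion, and observes that the induced continuous surjection $R^{S_1} \twoheadrightarrow R^{S_0}$ between compact Hausdorff spaces is automatically a quotient map. Both versions are correct and close in spirit.

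The concern you raise about Hausdorffness is a genuine gap, and the paper's proof does not close it: Tychonoff gives compactness of $R^S$ and hence of $M$, but Hausdorffness of the quotient requires $\ker(\pi_S)$ to be closed in $R^S$, and this can fail for a general compact Hausdorff ring. For instance, take $R = \F_p^{\N}$ with the product topology and $M = R/I$ where $I$ is the ideal of finitely supported sequences; $I$ is a dense proper ideal, so $M$ is a nonzero cyclic $\v{R}$-module whose quotient topology is indiscrete. As you suggest, the gap closes whenever $M$ is finitely presented, since then $\ker(\pi_S)$ is a continuous image of some $R^{S'}$, hence compact, hence closed; this holds automatically if $\v{R}$ is Noetherian, which is the case in every instance where the paper later invokes the canonical topology (the Iwasawa algebra, $\Lambda_0$, its Weierstrass localisations $\Lambda_0(D)$, finite $\Z_p$-algebras). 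But the stated hypotheses do not force it, so the proposition and the paper's one-line Tychonoff argument are, as written, both incomplete on this point, and you were right to flag it.
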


We call this quotient topology {\it the canonical topology on $M$}.

\begin{proof}
The first assertion follows from Tychonoff's theorem. Let $S_0,S_1 \subset M$ be finite subsets of generators with $S_0 \subset S_1$. Since $S_0$ generates $M$, for each $s \in S_1 \backslash S_0$, there is an $(r_{s,s'})_{s' \in S_0} \in R^{S_0}$ with $s = \sum_{s' \in S_0} r_{s,s'} s'$. The surjective $R$-linear homomorphism
\begin{eqnarray*}
  R^{S_1} & \twoheadrightarrow & R^{S_0} \\
  (r_s)_{s \in S_1} & \mapsto & \sum_{s \in S_0} \left( r_s + \sum_{s' \in S_1 \backslash S_0} r_{s,s'} \right) s
\end{eqnarray*}
is a continuous map between compact Hausdorff topological spaces by the continuity of the addition and the multiplication, and hence is a quotient map. The second assertion follows from the fact that the set of finite subsets of generators of $M$ is directed by inclusions.
\end{proof}

Let $R$ be a commutative topological ring. A {\it topological $R$-algebra} is a topological ring $\A$ endowed with a continuous ring homomorphism $R \to \A$ whose image lies in the centre of $\A$. Every topological $R$-algebra $\A$ is a topological left $R$-module by the continuity of the multiplication $\A \times \A \to \A$ and the structure map $R \to \A$. A topological $R$-algebra is said to be a {\it profinite $R$-algebra} if it is homeomorphically isomorphic to the inverse limit of topological $R$-algebras whose underlying topological $R$-modules are finite $R$-modules. Every profinite $R$-algebra is a profinite $R$-module by definition. We do not use the term ``a finite $R$-algebra'' because it is ambiguous here.

\begin{exm}
Let $K$ be an algebraic extension of $\Q_p$, and $O_K$ the integral closure of $\Z_p$ in $K$. Then $K$ and $O_K$ are commutative topological $\Z_p$-algebras with respect to a unique extension $\v{\cdot} \colon K \to [0,\infty)$ of a $p$-adic norm on $\Q_p$, and the relative topology of $O_K \subset K$ coincides with the $p$-adic topology.
\end{exm}

\begin{exm}
\label{continuous functions}
Let $X$ be a topological space, and $R$ a commutative topological ring. Then the $R$-algebra $\t{C}(X,R)$ of continuous maps $X \to R$ is a commutative topological $R$-algebra with respect to the topology of uniform convergence.
\end{exm}

\begin{exm}
\label{tensor}
Let $R$ be a commutative topological ring, and $\A$ a profinite $R$-algebra. For a linearly complete (resp.\ profinite) $R$-module $M$, we regard $\A \hat{\otimes}_R M$ as a linearly complete (resp.\ profinite) $\A$-module with respect to the natural action of $\A$. When we emphasis the structure morphism $\varphi \colon R \to \A$, then we write $(\A,\varphi) \hat{\otimes}_R M$ instead of $\A \hat{\otimes}_R M$.
\end{exm}

\begin{exm}
\label{tensor 2}
Let $R$ be a commutative topological ring with underlying ring $\v{R}$, and $\A_0$ and $\A_1$ commutative profinite $R$-algebras with underlying $\v{R}$-algebras $\v{\A_0}$ and $\v{\A_1}$ respectively. Then the structure of $\v{\A_0} \otimes_{\v{R}} \v{\A_1}$ as a commutative $\v{R}$-algebra uniquely extends a structure of $\A_0 \hat{\otimes}_R \A_1$ as a topological $R$-algebra, for which $\A_0 \hat{\otimes}_R \A_1$ is a commutative profinite $R$-algebra. For any profinite $R$-module $M$, if $M$ is endowed with structures of a profinite $\A_0$-module and a profinite $\A_1$-module extending the structure of a profinite $R$-module, then there uniquely exists a structure of a profinite $(\A_0 \hat{\otimes}_R \A_1)$-module on $M$ extending the structures of a profinite $\A_0$-module and a profinite $\A_1$-module.
\end{exm}

\begin{prp}
Let $R$ be a commutative topological ring. A topological $R$-algebra $\A$ is a profinite $R$-algebra if and only if the underlying topological ring of $\A$ is a compact Hausdorff topological ring such that the set of open two-sided ideals of $\A$ forms a fundamental system of neighbourhoods of $0$.
\end{prp}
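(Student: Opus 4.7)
The plan is to handle the two implications separately. The necessity is a direct unpacking of definitions using Tychonoff, while the sufficiency reduces to a standard compactness argument for inverse limits once I verify that each quotient $\A/I$ by an open two-sided ideal is a finite $R$-algebra.

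For the forward direction I would write $\A \cong \varprojlim_{i \in I} \A_i$ with each $\A_i$ a topological $R$-algebra whose underlying $R$-module is a finite $R$-module, so finite and discrete. By Tychonoff's theorem $\prod_{i} \A_i$ is compact Hausdorff, and $\A$ is cut out as a closed subspace by continuous maps into Hausdorff targets, hence is itself compact Hausdorff. The kernels $\ker(\A \to \A_i)$ are open two-sided ideals, being preimages of the open singleton $\ens{0} \subset \A_i$ under continuous ring homomorphisms, and by the very definition of the inverse limit topology their finite intersections form a fundamental system of neighbourhoods of $0$.

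For the converse I let $\I_{\A}$ denote the set of open two-sided ideals of $\A$, directed by reverse inclusion since a finite intersection of open two-sided ideals is again such. For each $I \in \I_{\A}$ the quotient $\A/I$ is compact as a continuous image of $\A$ and discrete since every coset $a + I$ is open, hence finite. The continuous composite $R \to \A \twoheadrightarrow \A/I$ into a discrete space has open kernel, so $\A/I$ is a topological $R$-algebra whose underlying $R$-module is a finite $R$-module. Thus the canonical map $\Phi \colon \A \to \varprojlim_{I \in \I_{\A}} \A/I$ is a continuous $R$-algebra homomorphism into an inverse limit of finite $R$-algebras, and it remains to show $\Phi$ is a homeomorphic isomorphism.

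Injectivity of $\Phi$ follows from $\A$ being Hausdorff together with $\I_{\A}$ forming a fundamental system at $0$, so that $\bigcap_{I \in \I_{\A}} I = \ens{0}$. For surjectivity I would use the standard finite-intersection-property argument: given a compatible family $(a_I + I)_{I \in \I_{\A}}$, for any finite subfamily $I_1, \ldots, I_n$ the intersection $J \coloneqq I_1 \cap \cdots \cap I_n$ lies in $\I_{\A}$, and any lift of $a_J$ to $\A$ is a common preimage of the prescribed $a_{I_k}$, so the closed preimages $\pi_{I_k}^{-1}(a_{I_k}) \subset \A$ enjoy the finite intersection property and compactness of $\A$ yields a global preimage. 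A continuous bijection from a compact space to a Hausdorff space is automatically a homeomorphism, which closes the argument. The only substantive point is the identification of $\A/I$ as a finite $R$-algebra, and this is dispatched cleanly by compactness plus discreteness forcing finiteness, combined with the openness of the kernel of the structure map into a discrete target.
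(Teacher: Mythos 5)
Your proof is correct and takes essentially the same approach as the paper: both directions hinge on identifying the quotients $\A/I$ by open two-sided ideals as finite $R$-algebras and then invoking compactness to show the canonical map $\A\to\varprojlim_{I}\A/I$ is a homeomorphic isomorphism. The only substantive divergence is in the surjectivity step of the converse, where you run a direct finite-intersection-property argument on the closed fibres $\pi_{I}^{-1}(a_{I})$, whereas the paper observes that the image is dense and that a continuous map from a compact space to a Hausdorff one is closed; these are interchangeable and equally standard. Your forward direction is a bit more explicit than the paper's (which essentially just cites the definition), but there is no mathematical difference.
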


\begin{proof}
Let $\I$ denote the set of open two-sided ideals of $\A$. If $\A$ is a profinite $R$-algebra, then $\A$ is homeomorphically isomorphic to the inverse limit of its quotients by open two-sided ideals of finite index, and hence the underlying topological ring of $\A$ is a compact Hausdorff topological ring such that $\I$ forms a fundamental system of neighbourhoods of $0$. Conversely, suppose that the underlying topological ring of $\A$ is a compact Hausdorff topological ring such that $\I$ forms a fundamental system of neighbourhoods of $0$. For any $I \in \I$, the compactness of $\A$ ensures that $\A/I$ is a topological $R$-algebra whose underlying $R$-module is a finite $R$-module. The $R$-algebra homomorphism
\begin{eqnarray*}
  \iota \colon \A & \to & \varprojlim_{I \in \I} \A/I \\
  a & \mapsto & (a + I)_{I \in \I}
\end{eqnarray*}
is continuous, and the image is dense with respect to the inverse limit topology on the target. Since $\A$ is compact, $\iota$ is a closed map and hence is surjective. Since $\A$ is Hausdorff, $\iota$ is injective because $\I$ forms a fundamental system of neighbourhoods of $0$. Thus $\iota$ is a homeomorphic $R$-algebra isomorphism, and hence $\A$ is a profinite $R$-algebra.
\end{proof}

\begin{crl}
\label{profinite ring}
Let $R_0$ be a commutative topological ring, $R_1$ a commutative topological $R$-algebra, and $\A$ a topological $R_1$-algebra. Then $\A$ is a profinite $R_0$-algebra if and only if $\A$ is a profinite $R_1$-algebra. 
\end{crl}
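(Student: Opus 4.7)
The plan is to reduce both directions to the characterisation proved in the preceding proposition, which says that a topological $R$-algebra $\A$ (for any commutative topological ring $R$) is a profinite $R$-algebra if and only if its underlying topological ring is compact Hausdorff and the set of open two-sided ideals of $\A$ forms a fundamental system of neighbourhoods of $0$. The right-hand side of this equivalence depends only on the underlying topological ring of $\A$, not on the base $R$.

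First I would observe that, since $\A$ is given as a topological $R_1$-algebra via a continuous ring homomorphism $R_1 \to \A$ landing in the centre, and $R_1$ is a topological $R_0$-algebra via a continuous ring homomorphism $R_0 \to R_1$ landing in the centre of $R_1$, the composite $R_0 \to R_1 \to \A$ is a continuous ring homomorphism landing in the centre of $\A$. Hence $\A$ is automatically a topological $R_0$-algebra as well, so the statement of the corollary makes sense.

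Next, by applying the preceding proposition with $R = R_0$ and with $R = R_1$, each of the two properties ``$\A$ is a profinite $R_0$-algebra'' and ``$\A$ is a profinite $R_1$-algebra'' is equivalent to the same intrinsic condition on the underlying topological ring of $\A$: namely, that it is a compact Hausdorff topological ring in which the open two-sided ideals form a fundamental system of neighbourhoods of $0$. Hence the two properties are equivalent to each other, which is the desired conclusion.

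The only subtle point, and the step I would write out most carefully, is the verification that we may apply the preceding proposition in both settings: this requires noting that the hypothesis of the proposition is exactly that $\A$ be a topological $R$-algebra, and both $R = R_0$ and $R = R_1$ satisfy this hypothesis by the observation above. Since the characterisation produced by the proposition makes no reference to $R$, the two applications yield the same condition, and the corollary follows immediately.
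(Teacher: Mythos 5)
Your proof is correct and is the natural deduction from the preceding proposition, which is evidently what the paper has in mind (it gives no separate argument for this corollary). You rightly identify the one small verification needed—that $\A$ inherits a topological $R_0$-algebra structure via the composite $R_0 \to R_1 \to \A$—and then observe that both profiniteness properties reduce to the same base-independent condition on the underlying topological ring.
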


\begin{crl}
Let $R$ be a commutative topological ring, and $\A$ a $\Z_p$-algebra finitely generated as a $\Z_p$-module. For any continuous ring homomorphism $R \to \A$, $\A$ is a profinite $R$-algebra with respect to the $p$-adic topology on $\A$.
\end{crl}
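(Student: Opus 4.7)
The plan is to invoke the characterisation proposition stated immediately before Corollary \ref{profinite ring}, namely that a topological $R$-algebra is profinite if and only if its underlying topological ring is compact Hausdorff and admits a fundamental system of neighbourhoods of $0$ consisting of open two-sided ideals. So I would first verify those two structural properties of $\A$ equipped with its $p$-adic topology, and then conclude by citing that proposition applied to the given continuous structure map $R \to \A$.

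For compactness and Hausdorffness, I would recall from an earlier example in the excerpt that every finitely generated $\Z_p$-module is $p$-adically complete, so the canonical map $\iota_{\A,p} \colon \A \to \varprojlim_{n \in \N} \A/p^n\A$ is a topological isomorphism when the target carries the inverse limit of the discrete topologies. Each $\A/p^n\A$ is a finitely generated module over the finite ring $\Z_p/p^n\Z_p \cong \Z/p^n\Z$, and therefore has finite underlying set. Hence $\A$ is homeomorphic to an inverse limit of finite discrete rings, which is automatically compact Hausdorff.

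For the two-sided ideal condition, I would note that since $\A$ is a $\Z_p$-algebra, the element $p$ lies in the centre of $\A$, so each $p^n \A$ is a two-sided ideal of $\A$. By the very definition of the $p$-adic topology, the family $\{p^n \A\}_{n \in \N}$ is a fundamental system of neighbourhoods of $0$. Consequently the set of open two-sided ideals of $\A$ is cofinal in the neighbourhood filter at $0$.

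Finally, the continuous ring homomorphism $R \to \A$ makes $\A$ into a topological $R$-algebra with respect to the $p$-adic topology (its image lies in the centre of $\A$ as part of being an $R$-algebra structure). Applying the characterisation proposition to this topological $R$-algebra, the two verifications above imply that $\A$ is a profinite $R$-algebra. There is no genuine obstacle here; the only point requiring care is to confirm that the profiniteness criterion from the preceding proposition can be checked purely on the level of the topological ring structure, independently of the particular base $R$, which is exactly how that proposition is phrased.
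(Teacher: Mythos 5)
Your argument is correct and it is exactly the intended route: the paper states the corollary without proof, and the natural deduction is to combine the criterion from the preceding proposition (compact Hausdorff topological ring with a fundamental system of open two-sided ideals) with the observations that a finitely generated $\Z_p$-module is $p$-adically complete, hence $\A \cong \varprojlim_n \A/p^n\A$ is an inverse limit of finite discrete rings, and that each $p^n\A$ is a two-sided ideal because $p$ is central. The one point worth stating explicitly (which you handled only parenthetically) is that the continuity of the multiplication $\A \times \A \to \A$ in the $p$-adic topology and the centrality of the image of $R$ are needed to even form the topological $R$-algebra structure, both of which are standard and already noted earlier in the paper for $p$-adic topologies; with that made explicit, the proof is complete.
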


\begin{prp}
\label{finitely generated - p-adic}
Let $\A$ be a topological $\Z_p$-algebra. For any Hausdorff topological $\A$-module $M$ whose underlying $\Z_p$-module is finitely generated, the topology of $M$ coincides with the $p$-adic topology, and $M$ is a profinite $\A$-module.
\end{prp}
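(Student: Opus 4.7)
The plan is to show that both the given Hausdorff topology and the $p$-adic topology on $M$ arise as the quotient topology from a single continuous surjection $\varphi \colon \Z_p^d \twoheadrightarrow M$, and hence coincide. Choose a finite generating set $s_1,\dots,s_d \in M$ over $\Z_p$ and let $\varphi$ be the $\Z_p$-linear surjection $(a_1,\dots,a_d) \mapsto \sum a_i s_i$.

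First I would verify that $\varphi$ is continuous for the given topology on $M$. Since $\A$ is a topological $\Z_p$-algebra, the structure map $\Z_p \to \A$ is continuous, and composing with the continuous scalar multiplication $\A \times M \to M$ gives a continuous map $\Z_p \times M \to M$. Fixing the second argument at $s_i$ shows each map $\Z_p \to M$, $a \mapsto a s_i$, is continuous, and so their sum $\varphi$ is continuous from $\Z_p^d$ (with its usual topology, which is the $p$-adic topology) to $M$. Because $\Z_p^d$ is compact and $M$ is Hausdorff, $\varphi$ is a closed map, and being continuous and surjective it is a quotient map. Next I would run the same argument with $M$ endowed with its $p$-adic topology: $M$ is $p$-adically separated (hence Hausdorff in the $p$-adic topology) because every finitely generated $\Z_p$-module is $p$-adically complete, and $\varphi$ is continuous for the $p$-adic topologies by Proposition \ref{p-adic continuity}; compactness of $\Z_p^d$ again forces $\varphi$ to be a quotient map. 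Since the quotient topology on $M$ induced by $\varphi$ is unique, the given topology equals the $p$-adic topology.

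For the profiniteness statement, observe that for every $n \in \N$ the subgroup $p^n M$ is in fact an $\A$-submodule, since $a(p^n m) = p^n (am)$ for all $a \in \A$ and $m \in M$. The quotient $M/p^n M$ is finite because $M$ is a finitely generated $\Z_p$-module, and discrete because $p^n M$ is open in the $p$-adic topology; thus it is a finite $\A$-module. Since $M$ is $p$-adically complete, the canonical map $M \to \varprojlim_n M/p^n M$ is a homeomorphic $\A$-linear isomorphism, so $M$ is a profinite $\A$-module.

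I expect the main point requiring care to be the continuity of $\varphi$ for the original topology, since at that stage one has only the continuity data provided by the topological $\A$-module structure and no a priori compatibility with the $p$-adic topology; everything else reduces to a standard compact-to-Hausdorff quotient argument together with the structure theorem (or rather, the completeness example) for finitely generated $\Z_p$-modules already cited in the paper.
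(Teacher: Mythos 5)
Your proposal is correct and takes essentially the same approach as the paper: build the continuous surjection $\varphi \colon \Z_p^S \twoheadrightarrow M$ via the structure map $\Z_p \to \A$ and the scalar multiplication, use compactness of $\Z_p^S$ and Hausdorffness of the target to conclude $\varphi$ is a quotient map for both topologies (the paper invokes its Proposition \ref{quotient map} for the $p$-adic side where you rerun the compact-to-Hausdorff argument, but this is the same underlying mechanism), deduce the two topologies agree, and then verify profiniteness via the homeomorphic isomorphism $M \cong \varprojlim_r M/p^r M$ of $\A$-modules.
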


\begin{proof}
Take a finite subset $S \subset M$ of generators of the underlying $\Z_p$-module of $M$. By the continuity of the addition $M \times M \to M$ and the scalar multiplication $\Z_p \times M \to M$, the surjective $\Z_p$-linear homomorphism
\begin{eqnarray*}
  \varphi \colon \Z_p^S & \to & M \\
  (a_m)_{m \in S} & \mapsto & \sum_{m \in S} a_m m
\end{eqnarray*}
is continuous. Since $\Z_p^S$ is compact and $M$ is Hausdorff, $\varphi$ is a quotient map. It follows from Proposition \ref{quotient map} that the topology of $M$ coincides with the $p$-adic topology. Since $M$ is finitely generated as a $\Z_p$-module, it is $p$-adically complete. Since $p^r$ lies in the centre of $\A$, the action of $\A$ on $M$ extends to a unique action on $M/p^r M$ for any $r \in \N$. We have a homeomorphic $R$-linear isomorphism
\begin{eqnarray*}
  M \stackrel{\sim}{\to} \varprojlim_{r \in \N} M/p^r M,
\end{eqnarray*}
and hence $M$ is a profinite $\A$-module.
\end{proof}

\begin{crl}
\label{finitely generated - adic}
Let $\A$ be a commutative topological $\Z_p$-algebra with a topologically nilpotent element $\epsilon \in \A$. For any Hausdorff topological $\A$-module $M$ whose underlying $\Z_p$-module is finitely generated, the continuous $\A$-linear homomorphism
\begin{eqnarray*}
  M & \to & \varprojlim_{r \in \N} M/\epsilon^r M \\
  m & \mapsto & (m + \epsilon^r M)_{r = 0}^{\infty}
\end{eqnarray*}
is a homeomorphic isomorphism.
\end{crl}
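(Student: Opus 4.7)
The plan is to leverage Proposition \ref{finitely generated - p-adic} to recast the $\epsilon$-adic statement as a question about profinite modules with the $p$-adic topology. Since $M$ is Hausdorff and finitely generated over $\Z_p$, that proposition shows its topology coincides with the $p$-adic topology and makes $M$ a profinite $\A$-module; in particular $M$ is compact Hausdorff and satisfies $\bigcap_n p^n M = 0$. Multiplication by $\epsilon^r$ is continuous, so $\epsilon^r M$ is compact, hence closed in $M$; Proposition \ref{quotient map} then identifies the quotient topology on $M/\epsilon^r M$ with its $p$-adic topology, and a second application of Proposition \ref{finitely generated - p-adic} makes each $M/\epsilon^r M$ a profinite $\A$-module. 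Equipping $\varprojlim_r M/\epsilon^r M$ with the inverse limit topology yields a Hausdorff $\A$-module, and the universal property of the inverse limit makes the map $\iota \colon m \mapsto (m + \epsilon^r M)_{r=0}^{\infty}$ continuous and $\A$-linear.

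The crucial step is the comparison of filtrations: for every open neighbourhood $V$ of $0$ in $M$, $\epsilon^r M \subset V$ for all sufficiently large $r$. To establish this I would show that the set $\{a \in \A \mid aM \subset V\}$ is an open neighbourhood of $0$ in $\A$. By continuity of scalar multiplication $\A \times M \to M$ at each $(0,m)$, there exist open sets $U_m \ni 0$ in $\A$ and $W_m \ni m$ in $M$ with $U_m W_m \subset V$; compactness of $M$ supplies a finite subcover $W_{m_1}, \dots, W_{m_k}$, and $\bigcap_{i=1}^{k} U_{m_i}$ is then an open neighbourhood of $0$ in $\A$ whose action sends $M$ into $V$. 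Topological nilpotence of $\epsilon$ now forces $\epsilon^r$ to enter this neighbourhood for large $r$, so $\epsilon^r M \subset V$. Specialising $V = p^n M$ yields $\bigcap_r \epsilon^r M \subset \bigcap_n p^n M = 0$, whence $\iota$ is injective.

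For surjectivity I would prove that $\iota(M)$ is dense in $\varprojlim_r M/\epsilon^r M$. A basic neighbourhood of a compatible system $(x_r + \epsilon^r M)_{r=0}^{\infty}$ is specified by finitely many indices $r_1 < \cdots < r_k$ and open neighbourhoods of the classes $x_{r_i} + \epsilon^{r_i} M$; the inverse-limit compatibility $x_{r_k} \equiv x_{r_i} \pmod{\epsilon^{r_i} M}$ for all $i \leq k$ shows that $\iota(x_{r_k})$ lies in this neighbourhood, giving density. Since $\iota$ is continuous and $M$ is compact, $\iota(M)$ is a closed subset of the Hausdorff target, so density upgrades to $\iota(M) = \varprojlim_r M/\epsilon^r M$. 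Finally, a continuous bijection from a compact space to a Hausdorff space is automatically a homeomorphism, which concludes the proof. The only genuinely non-formal ingredient is the compactness-plus-topological-nilpotence step comparing the two filtrations; the remainder is routine general topology.
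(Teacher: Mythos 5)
Your proof is correct. The essential ingredient in both your argument and the paper's is the comparison of filtrations — that $\epsilon^r M$ is eventually contained in $p^{r'} M$ — which you both deduce from topological nilpotence of $\epsilon$ together with a finiteness/compactness hypothesis. The paths diverge in how this is exploited. The paper reduces modulo $p^{r'}$ so that $M/p^{r'}M$ is a \emph{finite} discrete $\A$-module, deduces $\epsilon^r(M/p^{r'}M) = 0$ for $r$ large (uniformly, by finiteness), then commutes a double inverse limit $\varprojlim_{r'}\varprojlim_r M/(p^{r'}M + \epsilon^r M)$, using the $p$-adic completeness of each $M/\epsilon^r M$ to collapse the inner limit. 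You instead work directly on $M$: you prove $\{a \in \A : aM \subset V\}$ contains an open neighbourhood of $0$ by a tube-lemma argument over the compact $M$, deduce $\epsilon^r M \subset V$ for $r$ large, get injectivity from $\bigcap_n p^n M = 0$, and get surjectivity and openness in one stroke via density plus the fact that a continuous bijection from a compact space to a Hausdorff one is a homeomorphism. Your route avoids the Fubini-for-limits bookkeeping and never needs to identify $M/\epsilon^r M$ with a second inverse limit; the trade is that you carry the compactness argument explicitly rather than delegating to discreteness of finite quotients. Both are clean; yours is arguably a touch more self-contained topologically, while the paper's lines up more directly with the lemmas it has already proved (\ref{finitely generated - p-adic}, \ref{quotient map}). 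One small imprecision worth noting: the set $\{a \in \A : aM \subset V\}$ need not itself be open; your argument shows only that it \emph{contains} an open neighbourhood of $0$, which is all you use.
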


\begin{proof}
By Proposition \ref{finitely generated - p-adic}, the topology of $M$ is given by the $p$-adic topology. Let $r' \in \N \backslash \ens{0}$. Then $M/p^{r'} M$ is a finite $\A$-module. For any $\overline{m} \in M/p^{r'} M$, since $\epsilon$ is a topologically nilpotent, there is an $r \in \N$ such that $\epsilon^r \overline{m} = 0$. Since the underlying set of $\A$ is a finite set, there is an $r \in \N$ such that $\epsilon^r (M/p^{r'} M)$. It implies the canonical projection $M/p^{r'} M \twoheadrightarrow (M/p^{r'} M)/\epsilon^r (M/p^{r'} M) \cong M/(p^{r'} M + \epsilon^r M)$ is a homeomorphic $\A$-linear isomorphism. Since $M$ is finitely generated as a $\Z_p$-module, so is $M/\epsilon^r M$ for any $r \in \N$. Therefore $M/\epsilon^r M$ is $p$-adically complete, and the topology of $M/\epsilon^r M$ coincides with the $p$-adic topology by Proposition \ref{finitely generated - p-adic} for any $r \in \N$. Therefore we obtain a homeomorphic $\A$-linear isomorphism
\begin{eqnarray*}
  & & M \cong \varprojlim_{r' \in \N} M/p^{r'} M \cong \varprojlim_{r' \in \N} \varprojlim_{r \in \N} M/(p^{r'} M + \epsilon^r M) \cong \varprojlim_{r \in \N} \varprojlim_{r' \in \N} M/(p^{r'} M + \epsilon^r M)  \\
  & \cong & \varprojlim_{r \in \N} M/\epsilon^r M.
\end{eqnarray*}
\end{proof}

\begin{prp}
\label{canonical topology 2}
Let $R$ be a commutative compact Hausdorff ring with the underlying ring $\v{R}$, and $\A$ a commutative $\v{R}$-algebra finitely generated as an $\v{R}$-module. Then the canonical topology on $\A$ as an $\v{R}$-module makes $\A$ a compact Hausdorff topological $R$-algebra.
\end{prp}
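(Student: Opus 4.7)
The plan is to exploit the quotient description of the canonical topology from Proposition \ref{canonical topology}. Choose a finite generating set $S \subset \A$ of the underlying $\v{R}$-module containing $1$, so that the surjective $\v{R}$-linear map
\begin{eqnarray*}
  \pi \colon R^S \twoheadrightarrow \A, \quad (r_s)_{s \in S} \mapsto \sum_{s \in S} r_s s
\end{eqnarray*}
is a quotient map defining the topology on $\A$. By Proposition \ref{canonical topology}, $\A$ is already a compact Hausdorff topological $R$-module, so the additive and scalar structures are taken care of; it remains to verify that the multiplication $\A \times \A \to \A$ is continuous and that the structure morphism $R \to \A$ is continuous.

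For multiplication, I would use the fact that the generators multiply back into $\v{R}$-linear combinations of generators: for each $(s,s') \in S \times S$ write $ss' = \sum_{t \in S} c_{s,s',t} t$ with $c_{s,s',t} \in \v{R}$. This gives an explicit continuous ``lifted multiplication''
\begin{eqnarray*}
  \mu \colon R^S \times R^S \to R^S, \quad \bigl((r_s)_{s}, (r'_s)_{s}\bigr) \mapsto \Bigl(\sum_{(s,s') \in S \times S} c_{s,s',t} r_s r'_{s'}\Bigr)_{t \in S},
\end{eqnarray*}
continuous because it is built only from the continuous addition and multiplication of $R$ and fixed scalars from $\v{R}$. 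By construction, $\pi \circ \mu$ agrees with the composition of $\pi \times \pi$ with the multiplication of $\A$, and hence $\pi \circ \mu$ is continuous. To conclude that the multiplication of $\A$ itself is continuous, I would descend along $\pi \times \pi$: both $R^S \times R^S$ (compact by Tychonoff) and $\A \times \A$ (compact Hausdorff as a product of compact Hausdorff spaces) are in the compact Hausdorff world, so the continuous surjection $\pi \times \pi$ is closed and therefore a quotient map, and the universal property of the quotient promotes continuity of $\pi \circ \mu$ to continuity of the multiplication $\A \times \A \to \A$.

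For the structure morphism $R \to \A$, $r \mapsto r \cdot 1$, observe that because $1 \in S$ it factors as $R \hookrightarrow R^S$ (embedding into the coordinate indexed by $1$) followed by $\pi$; both arrows are continuous, so the composite is. Commutativity of $\A$ puts the image in the centre automatically. Combining these steps with Proposition \ref{canonical topology} yields the desired compact Hausdorff topological $R$-algebra structure.

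The one non-formal step is the descent argument: continuity of the bilinear multiplication cannot be read off pointwise from continuity of $\pi \circ \mu$, and in general products of quotient maps fail to be quotient maps. This is precisely where the compact Hausdorff hypothesis on $R$ (hence on $R^S$, $\A$, and $\A \times \A$) is essential, via the standard fact that a continuous surjection from a compact space onto a Hausdorff space is closed. Once that is in place, the rest is routine bookkeeping.
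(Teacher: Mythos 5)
Your argument is correct and is essentially the paper's proof: both lift multiplication to a continuous bilinear map $\mu = \nabla$ on $R^S \times R^S$ via the structure constants, and then descend along the quotient map $\pi \times \pi$. You spell out why $\pi \times \pi$ is a quotient map (continuous surjection from a compact space onto a Hausdorff space is closed), whereas the paper compresses this into the phrase ``compatibility of the quotient topology and the direct product,'' and you additionally note the continuity of the structure morphism $R \to \A$, which the paper leaves implicit.
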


\begin{proof}
Let $S$ be a finite subset $S \subset \A$ of generators as an $\v{R}$-module. The surjective $R$-linear homomorphism
\begin{eqnarray*}
  R^S & \twoheadrightarrow & \A \\
  (r_s)_{s \in S} & \mapsto & \sum_{s \in S} r_ss.
\end{eqnarray*}
is a quotient map by the definition of the canonical topology. Since $S$ generates $\A$ as an $R$-module, for each $(s',s'') \in S \times S$, there is an $(r_{s,s',s''})_{s \in S}$ such that $s's'' = \sum_{s \in S} r_{s,s',s''} s$. We consider the $R$-bilinear homomorphism
\begin{eqnarray*}
  \nabla \colon R^S \times R^S & \to & R^S \\
  ((r'_s)_{s \in S},(r''_s)_{s \in S}) & \mapsto & \left( \sum_{s',s'' \in S} r_{s,s',s''} r'_{s'} r''_{s''} \right)_{s \in S},
\end{eqnarray*}
which is continuous by the continuity of the addition and the multiplication. By the compatibility of the quotient topology and the direct product, the continuity of $\nabla$ ensures that of the multiplication $\A \times \A \to \A$.
\end{proof}

\begin{dfn}
\label{free}
Let $\A$ be a topological $\Z_p$-algebra. For a topological $\A$-module $M$, we denote by $\t{tor}_p(M) \subset M$ the $\A$-submodule consisting of elements $m \in M$ with $p^rm = 0$ for some $r \in \N$, and by $M_{\t{free}}$ the topological $\A$-module flat over $\Z_p$ obtained as the quotient $M/\t{tor}_p(M)$.
\end{dfn}

For any discrete $\A$-module $M$, its $\A$-submodule $\t{tor}_p(M)$ is closed, and hence its quotient $M_{\t{free}}$ is also a discrete $\A$-module. On the other hand, for a topological $\A$-module $M$ which is not a discrete $\A$-module, $\t{tor}_p(M)$ is not necessarily closed, and hence $M_{\t{free}}$ is not necessarily Hausdorff even if $M$ is Hausdorff. Moreover, the correspondence $M \rightsquigarrow M_{\t{free}}$ does not necessarily commute with direct products.

\begin{exm}
\begin{eqnarray*}
  \Z_p = \varprojlim_{r \in \N} \Z_p/p^r \Z_p \hookrightarrow \left( \prod_{r = 0}^{\infty} \Z/p^r \Z \right)_{\t{free}} \twoheadrightarrow \prod_{r = 0}^{\infty} (\Z/p^r \Z)_{\t{free}} = 0.
\end{eqnarray*}
\end{exm}

\subsection{Topological Modules over Topological Monoids}
\label{Topological Modules over Topological Monoids}

Let $S$ be a topological space. A {\it topological space with an action of $S$} is a pair $(X,\rho)$ of a topological space $X$ and a continuous map $\rho \colon S \times X \to X$. For topological spaces $(X_0,\rho_0)$ and $(X_1,\rho_1)$ with actions of $S$, an {\it $S$-equivariant map} $\varphi \colon (X_0,\rho_0) \to (X_1,\rho_1)$ is a continuous map $\varphi \colon X_0 \to X_1$ satisfying $\varphi(\rho_0(s,x)) = \rho_1(s,\varphi(x))$ for any $(s,x) \in S \times X_0$. Let $G$ be a topological monoid. A {\it $G$-space} is a topological space $(X,\rho)$ with an action of the underlying topological space of $G$ satisfying the following:
\begin{itemize}
\item[(i)] The equality $\rho(g, \rho(g',x)) = \rho(gg',x)$ holds for any $(g,g',x) \in G \times G \times X$.
\item[(ii)] The equality $\rho(1,x) = x$ holds for any $x \in X$.
\end{itemize}

\begin{exm}
\label{linear fractional transformation}
Let $\Hlf \subset \C$ denote the upper half plain $\set{a + b \ \sqrt[]{\mathstrut -1}}{(a,b) \in \R \times (0,\infty)}$. Then $\Hlf$ is an $\t{SL}_2(\Z)$-space with respect to the action
\begin{eqnarray*}
  \t{SL}_2(\Z) \times \Hlf & \to & \Hlf \\
  \left(
  \left(
    \begin{array}{cc}
      a & b \\
      c & d
    \end{array}
  \right),
  z \right) & \mapsto & \frac{az+b}{cz+d}
\end{eqnarray*}
given by linear fractional transformations.
\end{exm}

\begin{prp}
\label{p-adic linear fractional transformation}
Let $p$ be a prime number. The subset
\begin{eqnarray*}
  \Pi_0(p) \coloneqq
  \left(
    \begin{array}{cc}
      \Z_p & \Z_p \\
      p \Z_p & \Z_p^{\times}
    \end{array}
  \right)
  \subset \t{M}_2(\Z_p),
\end{eqnarray*}
is a closed submonoid with respect to the multipication and the $p$-adic topology of the additive group of $\t{M}_2(\Z_p)$, and $\Z_p$ is a $\Pi_0(p)$-space with respect to the action
\begin{eqnarray*}
  m_p \colon \Pi_0(p) \times \Z_p & \to & \Z_p \\
  \left(
    \left(
      \begin{array}{cc}
        a & b \\
        c & d
      \end{array}
    \right),
    z
  \right)
  & \mapsto &
  m_p
  \left(
    \left(
      \begin{array}{cc}
        a & b \\
        c & d
      \end{array}
    \right),
    z
  \right)
  \coloneqq \frac{az+b}{cz+d}
\end{eqnarray*}
given by $p$-adic linear fractional transformations.
\end{prp}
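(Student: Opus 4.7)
The plan is to verify the four ingredients of the claim—closedness of $\Pi_0(p)$, its being a submonoid under multiplication, well-definedness of $m_p$ as a map into $\Z_p$, and the continuity together with the two $\Pi_0(p)$-space axioms—all of which reduce to elementary properties of the $p$-adic topology on $\Z_p$. The central observation used throughout is that $p \Z_p$ is the kernel of the reduction modulo $p$ and is therefore clopen in $\Z_p$; consequently $\Z_p^{\times} = \Z_p \setminus p \Z_p$ is clopen as well, and $p \Z_p + \Z_p^{\times} = \Z_p^{\times}$, because every unit remains a unit after translation by an element of $p \Z_p$.

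First, $\Pi_0(p)$ is closed in $\t{M}_2(\Z_p)$ because, under the identification $\t{M}_2(\Z_p) \cong \Z_p^4$ whose product topology agrees with the $p$-adic topology, $\Pi_0(p)$ is the intersection of the preimages of the closed subsets $p \Z_p$ and $\Z_p^{\times}$ under the two lower-row coordinate projections. Next, the identity matrix clearly lies in $\Pi_0(p)$; for two matrices in $\Pi_0(p)$ with entries $(a,b,c,d)$ and $(a',b',c',d')$, a direct calculation shows that their product has bottom-left entry $ca' + dc' \in p \Z_p$ and bottom-right entry $cb' + dd' \in p \Z_p + \Z_p^{\times} = \Z_p^{\times}$, so $\Pi_0(p)$ is a submonoid.

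For the action, the same identity $p \Z_p + \Z_p^{\times} = \Z_p^{\times}$ shows that $cz + d \in \Z_p^{\times}$ for every $z \in \Z_p$ and every matrix in $\Pi_0(p)$, so $(az+b)(cz+d)^{-1}$ makes sense as an element of $\Z_p$ and $m_p$ is well-defined as a map to $\Z_p$. The continuity of $m_p$ then follows from the continuity of addition and multiplication on the topological ring $\Z_p$ together with the continuity of inversion on $\Z_p^{\times}$; the latter is an isometry, since $\v{u^{-1} - v^{-1}}_p = \v{u - v}_p / (\v{u}_p \v{v}_p) = \v{u - v}_p$ for $u, v \in \Z_p^{\times}$. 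Finally, the axioms $m_p(1,z) = z$ and $m_p(MM',z) = m_p(M, m_p(M',z))$ hold as formal identities for linear fractional transformations in the rational function field $\Q(z)$ and therefore specialize wherever both sides are defined, which by the above is on all of $\Pi_0(p) \times \Z_p$. There is no substantive obstacle; the whole argument is careful bookkeeping in the $p$-adic topology, and the subtlest step—the continuity of inversion on $\Z_p^{\times}$—is handled by the isometry formula just displayed (or alternatively by identifying $\Z_p^{\times} \cong \varprojlim_n (\Z/p^n \Z)^{\times}$ as a profinite topological group).
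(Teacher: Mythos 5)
Your proof is correct and covers all the claims (closedness, submonoid, well-definedness, continuity, and the two action axioms), essentially by the same direct verification as the paper; the paper's proof actually only addresses the $\Pi_0(p)$-space part and justifies continuity by noting the map is locally analytic in its entries, whereas you give the more elementary isometry estimate for inversion on $\Z_p^\times$ and state the associativity/identity axioms as formal identities in $\Q(z)$ rather than writing out the matrix product computation—these are cosmetic differences in presentation of the same routine argument.
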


\begin{proof}
To begin with, we verify that the image of $m_p$ actually lies in $\Z_p$. Let $(A,z) \in \Pi_0(p) \times \Z_p$ with 
$
  A = 
  \left(
    \begin{array}{cc}
      a & b \\
      c & d
    \end{array}
  \right)
$
. We have $cz \in p \Z_p$, $d \in \Z_p^{\times}$, and hence $cz + d \in \Z_p^{\times}$. Since $az + b \in \Z_p$, we obtain $m_p(A,z) \in \Z_p$. The function
\begin{eqnarray*}
  \Z_p \times \Z_p \times p \Z_p \times (1 + p \Z_p) \times \Z_p & \to & \Z_p \\
  (a,b,c,d,z) & \mapsto & m_p \left(
  \left(
    \begin{array}{cc}
      a & b \\
      c & d
    \end{array}
  \right),
  z \right)
\end{eqnarray*}
is locally analytic, and hence $m_p$ is continuous. For any $(A,B,z) \in \Pi_0(p) \times \Pi_0(p) \times \Z_p$ with 
$
  A = 
  \left(
    \begin{array}{cc}
      a & b \\
      c & d
    \end{array}
  \right)
$
 and 
$
  B = 
  \left(
    \begin{array}{cc}
      e & f \\
      g & h
    \end{array}
  \right)
$
, we have
\begin{eqnarray*}
  & & m_p(A,m_p(B,z))
  = m_p
  \left(
    \left(
      \begin{array}{cc}
        a & b \\
        c & d
      \end{array}
    \right),
    \frac{ez+f}{gz+h}
  \right)
  = \frac{a(ez+f)+b(gz+h)}{c(ez+f)+d(gz+h)} \\
  & = & \frac{(ae+bg)z+(af+bh)}{(ce+dg)z+(cf+dh)}
  = m_p
  \left(
    \left(
      \begin{array}{cc}
        ae+bg & af+bh \\
        ce+dg & cf+dh
      \end{array}
    \right),
    z
  \right)
  = m_p(AB,z).
\end{eqnarray*}
For any $z \in \Z_p$, $m_p(1,z) = z$ by definition. Thus $(\Z_p,m_p)$ is a $\Pi_0(p)$-space.
\end{proof}

A topological group is said to be a {\it profinite group} if it is homeomorphically isomorphic to the inverse limit of finite groups. The notion of a profinite group is equivalent to the notion of a compact Hausdorff totally disconnected group, and to the notion of a compact topological group such that the set of open normal subgroups forms a system of fundamental neighbourhoods of the unit.

\begin{exm}
Let $K$ be a field, $L$ a Galois algebraic extension of $K$, and $S$ an algebraic variety over $k$. Then $\t{Gal}(L/K)$ is a profinite group, and for any locally constant sheaf $\Fil$ of finite Abelian groups on $S_{\t{\'et}}$, the \'etale cohomology $\t{H}^*(S \times_K L, \Fil)$ of the inverse image of $\Fil$ by the base change $S \times_K L \to S$ is a $\t{Gal}(L/K)$-space whose underlying set is a finite set.
\end{exm}

In the following in this subsection, let $R$ denote a commutative topological ring, and $G$ a topological monoid. A {\it topological $R[G]$-module} is a pair $(M,\rho)$ of a topological $R$-module $M$ and a continuous map $\rho \colon G \times M \to M$ such that $\rho$ makes the underlying topological space of $M$ a $G$-space and satisfies the following:
\begin{itemize}
\item[(iii)] The equality $\rho(g,m+m') = \rho(g,m) + \rho(g,m')$ holds for any $(g,m,m') \in G \times M \times M$.
\item[(iv)] The equality $\rho(g,rm) = r \rho(g,m)$ holds for any $(g,r,m) \in G \times R \times M$.
\end{itemize}
For a topological $R[G]$-module $(M,\rho)$, we denote by $\Gamma(G,(M,\rho)) \subset M$ the closed $R$-submodule consisting of elements $m \in M$ with $\rho(g,m) = m$ for any $g \in G$. A topological $R[G]$-module is said to be {\it a discrete $R[G]$-module} if its underlying topological $R$-module is a discrete $R$-module. A topological $R[G]$-module is said to be {\it a finite $R[G]$-module} if its underlying topological $R$-module is a finite $R$-module, and is said to be a {\it profinite $R[G]$-module} if it is homeomorphically isomorphic to the inverse limit of finite $R[G]$-modules. The underlying topological $R$-module of a finite $R[G]$-module is a finite $R$-module by definition, and hence the underlying topological $R$-module of a profinite $R[G]$-module is a profinite $R$-module. Every profinite $R[G]$-module is a compact Hausdorff totally disconnected topological $R[G]$-module.

\begin{exm}
\label{symmetric product}
We endow $\t{M}_2(R)$ with the direct product topology through the $R$-linear isomorphism $\t{M}_2(R) \cong R^4$ given by the canonical basis of $R^2$. Then $\t{M}_2(R)$ is a topological monoid with respect to the multiplication, and $R^2$ is a topological $R[\t{M}_2(R)]$-module with respect to the natural representation
\begin{eqnarray*}
  \rho_{R^2} \colon \t{M}_2(R) \times R^2 \to R^2 \\
  \left(
    \left(
      \begin{array}{cc}
        a & b \\
        c & d
      \end{array}
    \right),
    \left(
      \begin{array}{c}
        \alpha_0 \\
        \alpha_1
      \end{array}
    \right)
  \right) & \mapsto &
    \left(
      \begin{array}{c}
        a \alpha_0 + b \alpha_1 \\
        c \alpha_0 + d \alpha_1
      \end{array}
    \right).
\end{eqnarray*}
For each $n \in \N$, we denote by $\t{Sym}^n(R^2,\rho_{R^2}) = (\t{Sym}^n(R^2),\t{Sym}^n(\rho_{R^2}))$ the topological $R[\t{M}_2(R)]$-module obtained as the $n$-th symmetric tensor product of $(R^2,\rho_{R^2})$ over $R$. Identifying $\t{Sym}^n(R^2,\rho_{R^2})$ with the $R$-module $\bigoplus_{i = 0}^{n} R T_1^i T_2^{n-i} \subset R[T_1,T_2]$ of homogeneous polynomials of degree $n$, we put
\begin{eqnarray*}
  T_1^i T_2^{n-i} \coloneqq
  \left(
    \begin{array}{c}
      1 \\
      0
    \end{array}
  \right)^i
  \otimes
  \left(
    \begin{array}{c}
      0 \\
      1
    \end{array}
  \right)^{n-i} \in \t{Sym}^n(R^2)
\end{eqnarray*}
for each $(n,i) \in \N \times \N$ with $i \leq n$.
\end{exm}

Let $(M,\rho)$ be a topological $R[G]$-module. An $R$-submodule $L \subset M$ is said to be an {\it $R[G]$-submodule of $(M,\rho)$} if $\rho(g,l) \in L$ for any $(g,l) \in G \times L$. For instance, $rM \subset M$ is an $R[G]$-submodule of $(M,\rho)$ for any $r \in R$. When $R$ is a commutative topological $\Z_p$-algebra, then the kernel $\t{tor}_p(M)$ of the canonical projection $M \twoheadrightarrow M_{\t{free}}$ is an $R[G]$-submodule of $M$.

\begin{exm}
\label{induced}
Let $(M,\rho)$ be a topological $R[G]$-module. There are several examples of topological $R[G]$-modules induced by $(M,\rho)$.
\begin{itemize}
\item[(i)] Every $R[G]$-submodule $L \subset M$ is a topological $R[G]$-module with respect to the relative topology and a well-defined action
\begin{eqnarray*}
  \rho | L \colon G \times L & \to & L \\
  (g,l) & \mapsto & \rho(g,l),
\end{eqnarray*}
and we put $(M,\rho) | L \coloneqq (L,\rho | L)$.
\item[(ii)] For any $R[G]$-submodule $L$ of $(M,\rho)$, $M/L$ is a topological $R[G]$-module with respect to the quotient topology and the well-defined action
\begin{eqnarray*}
  \rho/L \colon G \times M/L & \to & M/L \\
  (g, m + L) & \mapsto & \rho(g,m) + L = \Set{\rho(g,m')}{m' \in m + L},
\end{eqnarray*}
and we put $(M,\rho)/L \coloneqq (M/L,\rho/L)$. In particular, we abbreviate $(M,\rho)/rM$ to $(M,\rho)/r$ for each $r \in R$. When $R$ is a commutative topological $\Z_p$-algebra, then we put $(M,\rho)_{\t{free}} \coloneqq (M,\rho)/\t{tor}_p(M)$.
\item[(iii)] For any topological monoid $H$ and continuous monoid homomorphism $\iota \colon H \to G$, $M$ is a topological $R[H]$-module with respect to the action
\begin{eqnarray*}
  \t{Res}_H^G(\rho) \colon H \times M & \to & M \\
  (h,m) & \mapsto & \rho(\iota(h),m),
\end{eqnarray*}
and we also denote $\t{Res}_H^G(M,\rho) \coloneqq (M, \t{Res}_H^G(\rho))$ simply by $(M,\rho)$ as long as this abbreviation yields no confusion. In other words, we usually regard a topological $R[G]$-module as a topological $R[H]$-module.
\end{itemize}
\end{exm}

For a topological $R[G]$-module $(M,\rho)$, we denote by $\Opn_{(M,\rho)}$ the set of open $R[G]$-submodules of $(M,\rho)$. A topological $R[G]$-module $(M,\rho)$ is said to be {\it linearly complete} if the natural continuous $R$-linear $G$-equivariant homomorphism
\begin{eqnarray*}
  (M,\rho) & \to & \varprojlim_{L \in \Opn_{(M,\rho)}} (M,\rho)/L \\
  m & \mapsto & (m + L)_{L \in \Opn_{(M,\rho)}}
\end{eqnarray*}
is a homeomorphic isomorphism. For any linearly complete $R[G]$-module $(M,\rho)$, $\Opn_{(M,\rho)}$ forms a fundamental system of neighbourhoods of $0 \in M$, and hence is cofinal in $\Opn_M$. Therefore the underlying topological $R$-module of a linearly complete $R[G]$-module is linearly complete. Every discrete $R[G]$-module is linearly complete. In particular, every finite $R[G]$-module is linearly complete. Every inverse limit of linearly complete topological $R[G]$-modules is linearly complete. Therefore every profinite $R[G]$-module is linearly complete.

\begin{exm}
Let $(M_0,\rho_0)$ and $(M_1,\rho_1)$ be linearly complete (resp.\ profinite) $R[G]$-modules. Then the continuous actions $\rho_0$ and $\rho_1$ induce a continuous action
\begin{eqnarray*}
  \rho_0 \hat{\otimes} \rho_1 \colon G \times (M_0 \hat{\otimes}_R M_1) \to M_0 \hat{\otimes}_R M_1,
\end{eqnarray*}
for which $(M_0,\rho) \hat{\otimes}_R (M_1,\rho_1) \coloneqq (M_0 \hat{\otimes}_R M_1, \rho_0 \hat{\otimes} \rho_1)$ is a linearly complete (resp.\ profinite) $R[G]$-module. When $(M_0,\rho_0)$ is the underlying topological $R[G]$-module of a commutative profinite $R$-algebra $\A$ endowed with the trivial action of $G$, then we regard $\A \hat{\otimes}_R (M_1,\rho_1)$ as a linearly complete (resp.\ profinite) $\A[G]$-module with respect to the natural action of $\A$.
\end{exm}

\begin{prp}
For any commutative topological $\Z_p$-algebra $\A$, every Hausdorff topological $\A[G]$-module $(M,\rho)$ whose underlying $\Z_p$-module is finitely generated is a profinite $\A[G]$-module.
\end{prp}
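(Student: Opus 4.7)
The plan is to bootstrap from Proposition \ref{finitely generated - p-adic}, which already tells us that the underlying topological $\A$-module of $(M,\rho)$ is profinite and that its topology coincides with the $p$-adic topology. The only remaining issue is to promote the inverse system $(M/p^r M)_{r \in \N}$ of finite $\A$-modules to an inverse system of finite $\A[G]$-modules with the correct continuity.

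First I would verify that each $p^r M \subset M$ is an $R[G]$-submodule of $(M,\rho)$, using condition (iv): for $(g,m) \in G \times M$, we have $\rho(g, p^r m) = p^r \rho(g, m) \in p^r M$. Hence by Example \ref{induced}(ii), $M/p^r M$ inherits a well-defined continuous $G$-action $\rho/p^r M$ with respect to the quotient topology, which by Proposition \ref{finitely generated - p-adic} agrees with the discrete topology on the finite $\Z_p$-module $M/p^r M$. Thus each $(M/p^r M, \rho/p^r M)$ is a finite $\A[G]$-module.

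Next, since Proposition \ref{finitely generated - p-adic} furnishes a homeomorphic $\A$-linear isomorphism
\begin{eqnarray*}
  M \stackrel{\sim}{\to} \varprojlim_{r \in \N} M/p^r M
\end{eqnarray*}
and since this isomorphism is manifestly $G$-equivariant (each canonical projection $M \twoheadrightarrow M/p^r M$ is $G$-equivariant by construction), we conclude that $(M,\rho)$ is homeomorphically isomorphic to the inverse limit in the category of topological $\A[G]$-modules of the finite $\A[G]$-modules $(M/p^r M, \rho/p^r M)$. By the definition of profinite $\A[G]$-module, this proves the claim.

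I do not expect a real obstacle here; the only point requiring a moment of care is confirming that the $G$-action descends compatibly to each $M/p^r M$ with the quotient topology, but this is immediate from the $\A$-linearity condition (iv) combined with Example \ref{induced}(ii), so the argument is essentially a citation of Proposition \ref{finitely generated - p-adic} followed by the observation that the inverse system is already $G$-equivariant.
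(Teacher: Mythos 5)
Your proposal is correct and follows essentially the same route as the paper's proof: invoke Proposition \ref{finitely generated - p-adic} to identify the topology with the $p$-adic one, observe that each $p^r M$ is $G$-stable, and note that the resulting isomorphism $M \cong \varprojlim_r M/p^r M$ is $G$-equivariant. The paper is simply more terse; you spell out the same ingredients (Example \ref{induced}(ii), discreteness of the quotient topology) that it leaves implicit.
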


\begin{proof}
Since the underlying $\Z_p$-module of $M$ is finitely generated, it is $p$-adically complete. By Proposition \ref{finitely generated - p-adic}, the topology of $M$ coincides with the $p$-adic topology. Since $p^r M$ is stable under the action of $G$, we have a homeomorphic $\A$-linear $G$-equivariant isomorphism
\begin{eqnarray*}
  (M,\rho) & \to & \varprojlim_{r \in \N} (M,\rho)/p^r \\
  m & \mapsto & (m + p^r M)_{r = 0}^{\infty}.
\end{eqnarray*}
Thus $(M,\rho)$ is a profinite $\A[G]$-module.
\end{proof}

A {\it topological $R[G]$-algebra} is a pair $(\A,\rho)$ of a topological $R$-algebra $\A$ and a continuous map $\rho \colon G \times \A \to \A$ such that $\rho$ makes the underlying topological $R$-module of $\A$ a topological $R[G]$-module and satisfies the following:
\begin{itemize}
\item[(iv)] The equality $\rho(g,ff') = \rho(g,f) \rho(g,f')$ holds for any $(g,f,f') \in G \times \A \times \A$.
\item[(v)] The equality $\rho(g,1) = 1$ holds for any $g \in G$.
\end{itemize}
We remark that when $G$ is a topological group, then the condition (v) follows from other conditions. For any topological $R[G]$-algebra $(\A,\rho)$, we have $\rho(g,r1) = r \rho(g,1) = r1$ by the condition (v) for any $(g,r) \in G \times R$, and hence the image of $R$ is contained in $\Gamma(G,(\A,\rho))$. A topological $R[G]$-algebra is said to be {\it commutative} if its underlying topological $R$-algebra is commutative, and is said to be a {\it profinite $R[G]$-algebra} if it is homeomorphically isomorphic to the inverse limit of topological $R[G]$-algebras whose underlying topological $R$-modules are finite $R$-modules.

\begin{exm}
Suppose that $G$ is a profinite group, and let $\Opn_G$ denote the set of open normal subgroups of $G$. Then
\begin{eqnarray*}
  \Z_p[[G]] \coloneqq \varprojlim_{K \in \Opn_G} \Z_p[G/K]
\end{eqnarray*}
is a profinite $\Z_p[G]$-algebra with respect to the inverse limit topology of the $p$-adic topologies, and we call it {\it the Iwasawa algebra associated to $G$}.  It admits a natural embedding $G \hookrightarrow \Z_p[[G]]^{\times}$, for which for any profinite $\Z_p[G]$-module $(M,\rho)$, there uniquely exists a structure on $M$ as a profinite $\Z_p[[G]]$-module extending the structure as a profinite $\Z_p[G]$-module. In particular, for any profinite $\Z_p$-algebra $\A$ with a continuous character $G \to \A^{\times}$ (Remark \ref{character}), the natural structure on $\A$ as a profinite $\Z_p[G]$-algebra uniquely extends to a structure as a profinite $\Z_p[[G]]$-algebra. We call this property {\it the universality of the Iwasawa algebra}.
\end{exm}

\begin{prp}
\label{geometric action}
Let $(X,\rho)$ be a compact $G$-space. Then $\t{C}(X,R)$ is a commutative topological $R[G^{\t{op}}]$-algebra with respect to the action
\begin{eqnarray*}
  \rho^{\vee} \colon G^{\t{op}} \times \t{C}(X,R) & \to & \t{C}(X,R) \\
  (g^{\t{op}},f) & \mapsto & \left( \rho^{\vee}(g^{\t{op}},f) \colon x \mapsto f(\rho(g,x)) \right).
\end{eqnarray*}
\end{prp}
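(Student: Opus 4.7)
The plan is to verify each of the conditions defining a topological $R[G^{\t{op}}]$-algebra for $(\t{C}(X,R), \rho^{\vee})$. For each $(g, f) \in G \times \t{C}(X,R)$, the map $\rho^{\vee}(g^{\t{op}}, f) = f \circ \rho(g, \cdot) \colon X \to R$ is a composition of continuous maps and so lies in $\t{C}(X,R)$. The monoid action axioms follow from $g^{\t{op}} h^{\t{op}} = (hg)^{\t{op}}$ together with the defining conditions of the $G$-space $(X,\rho)$: for every $x \in X$,
\begin{eqnarray*}
  \rho^{\vee}(g^{\t{op}} h^{\t{op}}, f)(x) = f(\rho(hg, x)) = f(\rho(h, \rho(g, x))) = \rho^{\vee}(g^{\t{op}}, \rho^{\vee}(h^{\t{op}}, f))(x),
\end{eqnarray*}
and $\rho^{\vee}(1^{\t{op}}, f)(x) = f(\rho(1, x)) = f(x)$. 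Additivity and $R$-linearity in $f$, multiplicativity, and preservation of the constant function $1$ are immediate from the pointwise definition of the ring operations on $\t{C}(X,R)$ combined with the fact that each $\rho(g,\cdot)$ is a self-map of $X$.

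The substantive step is joint continuity of $\rho^{\vee}$. A neighbourhood basis of $0 \in \t{C}(X,R)$ in the topology of uniform convergence is given by the sets $W(U) \coloneqq \set{f \in \t{C}(X,R)}{f(X) \subseteq U}$ as $U$ runs over open neighbourhoods of $0$ in $R$. Fixing $(g_0^{\t{op}}, f_0)$ and such a $U$, I would choose open symmetric neighbourhoods $U_1, U_2$ of $0$ in $R$ with $U_1 + U_1 \subseteq U$ and $U_2 + U_2 \subseteq U_1$, using that the underlying additive group of $R$ is a topological Abelian group. Decomposing
\begin{eqnarray*}
  \rho^{\vee}(g^{\t{op}}, f) - \rho^{\vee}(g_0^{\t{op}}, f_0) = \rho^{\vee}(g^{\t{op}}, f - f_0) + \bigl[\rho^{\vee}(g^{\t{op}}, f_0) - \rho^{\vee}(g_0^{\t{op}}, f_0)\bigr],
\end{eqnarray*}
the first summand lies in $W(U_1)$ whenever $f - f_0 \in W(U_1)$, because $\rho(g, X) \subseteq X$ forces the image of $f - f_0$ on $\rho(g, X)$ to remain inside $U_1$.

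The hard part, and the only step that actually uses the compactness of $X$, is controlling the second summand uniformly in $x$ as $g$ varies. I would invoke a tube-lemma style argument for the continuous map $\phi \colon G \times X \to R$, $(g, x) \mapsto f_0(\rho(g, x))$: for each $x \in X$ pick open $V_x \ni g_0$ in $G$ and $N_x \ni x$ in $X$ with $\phi(V_x \times N_x) \subseteq \phi(g_0, x) + U_2$, extract a finite subcover $\{N_{x_i}\}_{i=1}^{n}$ of $X$ by compactness, and set $V \coloneqq \bigcap_{i=1}^{n} V_{x_i}$. For $g \in V$ and $x \in X$, choosing $i$ with $x \in N_{x_i}$, the two estimates $\phi(g, x) - \phi(g_0, x_i) \in U_2$ and $\phi(g_0, x) - \phi(g_0, x_i) \in U_2$ combine via the symmetry of $U_2$ to give $\phi(g, x) - \phi(g_0, x) \in U_2 + U_2 \subseteq U_1$. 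Hence $\rho^{\vee}$ sends $V^{\t{op}} \times (f_0 + W(U_1))$ into $\rho^{\vee}(g_0^{\t{op}}, f_0) + W(U_1 + U_1) \subseteq \rho^{\vee}(g_0^{\t{op}}, f_0) + W(U)$, establishing continuity and completing the proof.
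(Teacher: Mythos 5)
Your proof is correct and follows essentially the same route as the paper: the algebraic axioms are checked pointwise exactly as in the paper, and the continuity argument is the same tube-lemma/finite-subcover estimate on $\phi(g,x) = f_0(\rho(g,x))$ that the paper carries out — the paper even makes the same split $f' = (f'-f_0)+f_0$ that you express as $\rho^{\vee}(g^{\t{op}},f-f_0) + [\rho^{\vee}(g^{\t{op}},f_0)-\rho^{\vee}(g_0^{\t{op}},f_0)]$. The only cosmetic difference is the bookkeeping of small neighbourhoods of $0 \in R$: the paper picks a single $J_0$ with $J_0 - J_0 + J_0 \subseteq J$, while you nest $U_2 + U_2 \subseteq U_1$ and $U_1 + U_1 \subseteq U$.
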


For the convention of $G^{\t{op}}$, see Example \ref{opposite monoid}.

\begin{proof}
We verify the continuity of $\rho^{\vee}$. For each $f \in \t{C}(X,R)$ and open subset $J \subset R$, put $f + \t{C}(X,J) \coloneqq \set{f' \in \t{C}(X,R)}{(f' - f)(x) \in J, {}^{\forall} x \in X}$, which is an open neighbourhood of $f$, and the set of such subsets forms an open basis of $\t{C}(X,R)$ by the definition of the topology of uniform convergence. For any open subset $J \subset R$, we have $\rho^{\vee}(g^{\t{op}},f) \in \t{C}(X,J)$ for any $(g^{\t{op}},f) \in G^{\t{op}} \times \t{C}(X,J)$ by the definition of $\rho^{\vee}$. Let $(g_0^{\t{op}},f_0) \in G^{\t{op}} \times \t{C}(X,R)$, and $I \subset \t{C}(X,R)$ be an open neighbourhood of $\rho^{\vee}(g_0^{\t{op}},f_0)$. Take an open neighbourhood $J \subset R$ of $0$ such that $\rho^{\vee}(g_0^{\t{op}},f_0) + \t{C}(X,J)$ is contained in $I$. By the continuity of the addition $R \times R \to R$ and the additive inverse $R \to R \colon r \mapsto -r$, there is an open neighbourhood $J_0 \subset R$ of $0$ such that $r-r'+r'' \in J$ for any $(r,r',r'') \in J_0 \times J_0 \times J_0$. For any $x \in X$, the set $U_x \coloneqq \set{x' \in X}{f_0(x') - f_0(x) \in J_0}$ is an open neighbourhood of $x$ by the continuity of $f$, the addition $R \times R \to R$, and the additive inverse $R \to R \colon r \mapsto -r$. For any $x \in X$, the preimage $\rho^{-1}(U_{\rho(g_0,x)}) \subset G \times X$ is an open neighbourhood of $(g_0,x)$ by the continuity of $\rho$, and hence there are open neighbourhoods $U^1_x \subset G$ and $U^2_x \subset X$ of $g_0$ and $x$ respectively such that $U^1_x \times U^2_x \subset \rho^{-1}(U_{\rho(g_0,x)})$, or equivalently, $f_0(\rho(g',x')) - f_0(\rho(g_0,x)) \in J_0$ for any $(g',x') \in U^1_x \times U^2_x$. We denote by $\Cat$ the set of triad $(x,U^1_x,U^2_x)$ of $x \in X$, an open neighbourhood $U^1_x \subset G$ of $g_0$, and an open neighbourhood $U^2_x \subset X$ of $x$ such that $f_0(\rho(g',x')) - f_0(\rho(g_0,x)) \in J_0$ for any $(g',x') \in U^1_x \times U^2_x$. Since $X$ is compact, the open covering $\set{U^2}{{}^{\exists} x \in X, {}^{\exists} U^1 \subset G, \t{s.t.\ } (x,U^1,U^2) \in \Cat}$ admits a finite subcovering $\U = \set{U^2_i}{i \in \N \cap [1,d]}$. For each $i \in \N \cap [1,d]$, take an $x_i \in X$ and a $U^1_{x_i} \subset G$ with $(x_i,U^1_{x_i},U^2_i) \in \Cat$, and formally put $U^2_{x_i} \coloneqq U^2_i$. We do not mean $U^2_i = U^2_j$ even though $x_i = x_j$.  Set $U^1 \coloneqq \bigcap_{i = 1}^{d} U^1_{x_i} \subset G$. We denote by $(U^1)^{\t{op}} \subset G^{\t{op}}$ the image of $U^1$. It is an open neighbourhood of $g_0^{\t{op}}$. Let $((g')^{\t{op}},f') \in U^1 \times (f_0 + \t{C}(X,J_0))$. For any $x \in X$, there is an $i \in \N \cap [1,d]$ such that $x \in U^2_{x_i}$, and we have
\begin{eqnarray*}
  & & \rho^{\vee}((g')^{\t{op}},f')(x) = f'(\rho(g',x)) \\
  & \in & f'(\rho(U^1_{x_i} \times U^2_{x_i})) \subset f'(U_{\rho(g_0,x_i)}) = f' \left( \Set{x' \in X}{f_0(x') - f_0(\rho(g_0,x_i)) \in J_0} \right) \\
  & \subset & ((f' - f_0) + f_0) \left( \Set{x' \in X}{f_0(x') - f_0(\rho(g_0,x_i)) \in J_0} \right) \\
  & \subset & J_0 + (f_0(\rho(g_0,x_i)) + J_0) \subset J_0 + \left( (f_0(\rho(g_0,x)) - J_0) + J_0 \right) \subset \rho^{\vee}(g_0^{\t{op}},f_0)(x) + J.
\end{eqnarray*}
It implies $\rho^{\vee}(g',f') \in \rho^{\vee}(g_0,f_0) + \t{C}(X,J)$. Thus $\rho^{\vee}$ is continuous.

\vspace{0.2in}
We verify the other conditions. For any $(g^{\t{op}},h^{\t{op}},f) \in G^{\t{op}} \times G^{\t{op}} \times \t{C}(X,R)$, we have
\begin{eqnarray*}
  & & \rho^{\vee}(g^{\t{op}}h^{\t{op}},f) = \rho^{\vee}((hg)^{\t{op}},f) = f(\rho(hg,x)) = f(\rho(h,\rho(g,x))) = \rho^{\vee}(h^{\t{op}},f)(\rho(g,x)) \\
  & = & \rho^{\vee}(g^{\t{op}}, \rho^{\vee}(h^{\t{op}},f))(x),
\end{eqnarray*}
for any $x \in X$, and hence $\rho^{\vee}(g^{\t{op}}, \rho^{\vee}(h^{\t{op}},f)) = \rho^{\vee}(g^{\t{op}}h^{\t{op}},f)$. For any $f \in \t{C}(X,R)$, we have
\begin{eqnarray*}
  \rho^{\vee}(1^{\t{op}},f)(x) = f(\rho(1^{\t{op}},x)) = f(x)
\end{eqnarray*}
for any $x \in X$, and hence $\rho^{\vee}(1^{\t{op}},f) = f$. For any $(g^{\t{op}},f,f') \in G^{\t{op}} \times \t{C}(X,R) \times \t{C}(X,R)$, we have
\begin{eqnarray*}
  & & \rho^{\vee}(g^{\t{op}},f+f')(x) = (f+f')(\rho(g,x)) = f(\rho(g,x)) + f'(\rho(g,x)) \\
  & = & \rho^{\vee}(g^{\t{op}},f)(x) + \rho^{\vee}(g^{\t{op}},f')(x),
\end{eqnarray*}
for any $x \in X$, and hence $\rho^{\vee}(g^{\t{op}},f+f') = \rho^{\vee}(g^{\t{op}},f) + \rho^{\vee}(g^{\t{op}},f')$. For any $(g^{\t{op}},r,f) \in G^{\t{op}} \times R \times \t{C}(X,R)$, we have
\begin{eqnarray*}
  \rho^{\vee}(g^{\t{op}},rf)(x) = (rf)(\rho(g,x)) = r(f(\rho(g,x))) = r \rho^{\vee}(g^{\t{op}},f)(x),
\end{eqnarray*}
for any $x \in X$, and hence $\rho^{\vee}(g^{\t{op}},rf) = r \rho^{\vee}(g^{\t{op}},f)$. For any $(g^{\t{op}},f,f') \in G^{\t{op}} \times \t{C}(X,R) \times \t{C}(X,R)$, we have
\begin{eqnarray*}
  \rho^{\vee}(g^{\t{op}},ff')(x) = (ff')(\rho(g,x)) = f(\rho(g,x)) f'(\rho(g,x)) = \rho^{\vee}(g^{\t{op}},f)(x) \rho^{\vee}(g^{\t{op}},f')(x),
\end{eqnarray*}
for any $x \in X$, and hence $\rho^{\vee}(g^{\t{op}},ff') = \rho^{\vee}(g^{\t{op}},f) \rho^{\vee}(g^{\t{op}},f')$. For any $g^{\t{op}} \in G^{\t{op}}$, we have
\begin{eqnarray*}
  \rho^{\vee}(g^{\t{op}},1)(x) = 1(\rho(g,x)) = 1 = 1(x),
\end{eqnarray*}
for any $x \in X$, and hence $\rho^{\vee}(g^{\t{op}},1) = 1$. Thus $(\t{C}(X,R),\rho^{\vee})$ is a commutative topological $R[G^{\t{op}}]$-algebra.
\end{proof}

\begin{dfn}
Let $\A$ be a topological $R[G]$-algebra. A continuous map $\kappa \colon G \to \A$ is said to be a {\it crossed homomorphism $\kappa \colon G \to (\A,\rho)$} if it satisfies $\kappa(1) = 1$ and $\kappa(gg') = \kappa(g) \rho(g, \kappa(g'))$ for any $(g,g') \in G \times G$. For a crossed homomorphism $\kappa \colon G \to (\A,\rho)$, we define a map
\begin{eqnarray*}
  \rho_{\kappa} \colon G \times \A & \to & \A \\
  (g,f) & \mapsto & \kappa(g) \rho(g,f),
\end{eqnarray*}
and call it {\it the action of $G$ on $(\A,\rho)$ of weight $\kappa$}.
\end{dfn}

\begin{prp}
\label{weight}
Let $\A$ be a topological $R[G]$-algebra, and $\kappa \colon G \to (\A,\rho)$ a crossed homomorphism. Then $(\A,\rho_{\kappa})$ is a topological $R[G]$-module.
\end{prp}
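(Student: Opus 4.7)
The plan is to check directly the four algebraic axioms (i)--(iv) for $\rho_{\kappa}$ to be an action making $\A$ a topological $R[G]$-module, together with the continuity of $\rho_{\kappa} \colon G \times \A \to \A$. Continuity is essentially formal: $\rho_{\kappa}$ factors as the composite
\begin{eqnarray*}
  G \times \A \xrightarrow{(\kappa \times \t{id}_{\A}) \circ \Delta_G} \A \times (G \times \A) \xrightarrow{\t{id}_{\A} \times \rho} \A \times \A \xrightarrow{\t{mult}} \A,
\end{eqnarray*}
where $\Delta_G$ is the diagonal on the $G$-factor, and each constituent is continuous by the continuity of $\kappa$, of $\rho$, and of the multiplication of the topological ring $\A$. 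So the topological content is free once the algebraic identities are verified.

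Next I would verify the $R$-linearity conditions (iii) and (iv). For additivity, using that $\rho(g,-)$ is additive on $\A$ (since $\A$ is a topological $R[G]$-module) and that multiplication by $\kappa(g)$ is left-distributive in the ring $\A$, one gets $\rho_{\kappa}(g,f+f') = \kappa(g)(\rho(g,f)+\rho(g,f')) = \rho_{\kappa}(g,f) + \rho_{\kappa}(g,f')$. For $R$-linearity, since the structure map $R \to \A$ lands in the centre of $\A$, the element $r \in R$ commutes with $\kappa(g) \in \A$, giving $\rho_{\kappa}(g,rf) = \kappa(g)\rho(g,rf) = \kappa(g)(r\rho(g,f)) = r(\kappa(g)\rho(g,f)) = r\rho_{\kappa}(g,f)$.

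The axioms of a monoid action are the step I expect to be the main (though still short) obstacle, because this is where the crossed homomorphism condition on $\kappa$ is essential and intertwines with the multiplicativity of $\rho$. The unit axiom is immediate: $\rho_{\kappa}(1,f) = \kappa(1)\rho(1,f) = 1 \cdot f = f$, using $\kappa(1) = 1$ and axiom (ii) for $\rho$. For associativity, for any $(g,g',f) \in G \times G \times \A$, I would expand
\begin{eqnarray*}
  \rho_{\kappa}(g,\rho_{\kappa}(g',f))
  &=& \kappa(g) \rho(g, \kappa(g') \rho(g',f)) \\
  &=& \kappa(g) \rho(g,\kappa(g')) \rho(g,\rho(g',f)) \\
  &=& \bigl(\kappa(g) \rho(g,\kappa(g'))\bigr) \rho(gg',f) \\
  &=& \kappa(gg') \rho(gg',f) = \rho_{\kappa}(gg',f),
\end{eqnarray*}
where the second equality uses the multiplicativity of $\rho(g,-)$ coming from axiom (iv) for the $R[G]$-algebra $(\A,\rho)$, the third uses axiom (i) for $\rho$, and the fourth is precisely the crossed homomorphism identity $\kappa(gg') = \kappa(g)\rho(g,\kappa(g'))$. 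Combined with continuity and the $R$-linearity conditions already checked, this shows $(\A,\rho_{\kappa})$ is a topological $R[G]$-module.
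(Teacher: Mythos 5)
Your proof is correct and follows the same direct axiom-checking strategy as the paper's, treating continuity via the continuity of $\kappa$, $\rho$, and the ring multiplication, and then verifying (i)--(iv). One thing worth noting: your associativity chain has the correct middle term $\kappa(g)\rho(g,\kappa(g'))\rho(g,\rho(g',f))$, whereas the paper's display reads $\kappa(g)\kappa(g')\rho(g,\rho(g',f))$ — that is evidently a typographical slip in the paper, since the multiplicativity of $\rho(g,\cdot)$ produces $\rho(g,\kappa(g'))$, and only then does the crossed-homomorphism identity $\kappa(gg')=\kappa(g)\rho(g,\kappa(g'))$ apply to close the computation; your version is the accurate one.
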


\begin{proof}
The continuity of $\rho_{\kappa}$ follows from that of $\rho$, $\kappa$, and the multiplication $\A \times \A \to \A$. We have
\begin{eqnarray*}
  & & \rho_{\kappa}(g, \rho_{\kappa}(g',f)) = \rho_{\kappa}(g, \kappa(g') \rho(g',f)) = \kappa(g) \rho(g, \kappa(g') \rho(g',f)) \\
  & = & \kappa(g) \kappa(g') \rho(g, \rho(g',f) = \kappa(gg') \rho(gg',f) = \rho_{\kappa}(gg',f)
\end{eqnarray*}
for any $(g,g',f) \in G \times G \times \A$. We have
\begin{eqnarray*}
  & & \rho_{\kappa}(1,f) = \kappa(1) \rho(1,f) = f
\end{eqnarray*}
for any $f \in \A$. We have
\begin{eqnarray*}
  & & \rho_{\kappa}(g,f+f') = \kappa(g) \rho(g,f+f') = \kappa(g) \rho(g,f) + \kappa(g) \rho(g,f') = \rho_{\kappa}(g,f) + \rho_{\kappa}(g,f')
\end{eqnarray*}
for any $(g,f,f') \in G \times \A \times \A$. We have
\begin{eqnarray*}
  & & \rho_{\kappa}(g,rf) = \kappa(g) \rho(g,rf) = \kappa(g) r \rho(g,f) = r \kappa(g) \rho(g,f) = r \rho_{\kappa}(g,f)
\end{eqnarray*}
for any $(g,r,f) \in G \times R \times \A$. Thus $(\A,\rho_{\kappa})$ is a topological $R[G]$-module.
\end{proof}

\begin{crl}
\label{geometric action 2}
Let $(X,\rho)$ be a compact $G$-space, and $\kappa \colon G \to \t{C}(X,R) \colon g \mapsto \kappa_g$ a continuous map such that $\kappa_1 = 1$ and $\kappa_{gg'}(x) = \kappa_g(\rho(g',x)) \kappa_{g'}(x)$ for any $(g,g',x) \in G \times G \times X$. Then $\t{C}(X,R)$ is a topological $R[G^{\t{op}}]$-module with respect to the action
\begin{eqnarray*}
  \rho^{\vee}_{\kappa} \colon G^{\t{op}} \times \t{C}(X,R) & \to & \t{C}(X,R) \\
  (g^{\t{op}},f) & \mapsto & \left( \rho^{\vee}_{\kappa}(g^{\t{op}},f) \colon x \mapsto \kappa_g(x) f(\rho(g,x)) \right).
\end{eqnarray*}
\end{crl}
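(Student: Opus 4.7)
The plan is to realise $\rho^{\vee}_{\kappa}$ as the twisted action of Proposition \ref{weight} applied to the $R[G^{\t{op}}]$-algebra structure on $\t{C}(X,R)$ constructed in Proposition \ref{geometric action}. Concretely, I would first define a map $\tilde{\kappa} \colon G^{\t{op}} \to \t{C}(X,R)$ by $\tilde{\kappa}(g^{\t{op}}) \coloneqq \kappa_g$. Its continuity follows from the continuity of $\kappa$ together with the fact that $(\cdot)^{\t{op}} \colon G \to G^{\t{op}}$ is a homeomorphism (Example \ref{opposite monoid}).

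Next, I would check that $\tilde{\kappa}$ is a crossed homomorphism $G^{\t{op}} \to (\t{C}(X,R),\rho^{\vee})$ in the sense of the preceding definition. Since $g^{\t{op}} h^{\t{op}} = (hg)^{\t{op}}$ in $G^{\t{op}}$, the required identity $\tilde{\kappa}(g^{\t{op}} h^{\t{op}}) = \tilde{\kappa}(g^{\t{op}}) \rho^{\vee}(g^{\t{op}}, \tilde{\kappa}(h^{\t{op}}))$ translates, after evaluating at $x \in X$ and using the definitions of $\rho^{\vee}$ and $\tilde{\kappa}$, into
\begin{eqnarray*}
  \kappa_{hg}(x) = \kappa_g(x) \kappa_h(\rho(g,x)),
\end{eqnarray*}
which is precisely the hypothesis $\kappa_{gg'}(x) = \kappa_g(\rho(g',x)) \kappa_{g'}(x)$ applied to the pair $(h,g)$, combined with the commutativity of $\t{C}(X,R)$ (inherited from the commutativity of $R$). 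The normalisation $\tilde{\kappa}(1^{\t{op}}) = \kappa_1 = 1$ is immediate from the hypothesis.

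Having verified that $\tilde{\kappa}$ is a crossed homomorphism, Proposition \ref{weight} yields that $(\t{C}(X,R), (\rho^{\vee})_{\tilde{\kappa}})$ is a topological $R[G^{\t{op}}]$-module, where by definition
\begin{eqnarray*}
  (\rho^{\vee})_{\tilde{\kappa}}(g^{\t{op}}, f) = \tilde{\kappa}(g^{\t{op}}) \, \rho^{\vee}(g^{\t{op}}, f) = \kappa_g \cdot \rho^{\vee}(g^{\t{op}}, f).
\end{eqnarray*}
Evaluating at $x \in X$ gives $\kappa_g(x) f(\rho(g,x))$, which coincides with $\rho^{\vee}_{\kappa}(g^{\t{op}},f)(x)$. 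Hence $(\rho^{\vee})_{\tilde{\kappa}} = \rho^{\vee}_{\kappa}$, and the conclusion follows.

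The only place requiring care is the bookkeeping of the opposite-monoid multiplication: one must correctly match the ordering of $g$ and $g'$ in the hypothesis against the order required by the crossed-homomorphism identity in $G^{\t{op}}$. Once this is done, the result is a direct combination of Propositions \ref{geometric action} and \ref{weight}, so I do not anticipate any substantive obstacle beyond that notational check.
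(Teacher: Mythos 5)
Your proposal is correct and follows the same route as the paper's own proof: abbreviating the transported map $g^{\t{op}} \mapsto \kappa_g$ as a map $G^{\t{op}} \to \t{C}(X,R)$, verifying that it is a crossed homomorphism for $\rho^{\vee}$ (using the hypothesised cocycle condition together with the commutativity of $\t{C}(X,R)$, plus the normalisation $\kappa_1 = 1$), and then invoking Propositions \ref{geometric action} and \ref{weight}. The only cosmetic difference is that you spell out the normalisation condition and the index substitution $(g,g') \rightsquigarrow (h,g)$ explicitly, where the paper leaves them implicit.
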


\begin{proof}
We abbreviate $\kappa \circ (\cdot)^{\t{op}}$ to $\kappa$. We have $\rho^{\vee}_{\kappa}(g^{\t{op}},f) = \kappa_{g^{\t{op}}} \rho^{\vee}(g^{\t{op}},f)$ for any $(g^{\t{op}},f) \in G^{\t{op}} \times \t{C}(X,R)$. Therefore by Proposition \ref{geometric action} and Proposition \ref{weight}, it suffices to verify that $\kappa$ is a crossed homomorphism $G^{\t{op}} \to (\t{C}(X,R),\rho^{\vee})$. For any $(g^{\t{op}},(g')^{\t{op}},f) \in G^{\t{op}} \times G^{\t{op}} \times \t{C}(X,R)$, we have
\begin{eqnarray*}
  \kappa_{g^{\t{op}} (g')^{\t{op}}}(x) = \kappa_{(g'g)^{\t{op}}}(x) = \kappa_{(g')^{\t{op}}}(\rho(g,x)) \kappa_{g^{\t{op}}}(x) = \rho^{\vee}(g^{\t{op}}, \kappa_{(g')^{\t{op}}})(x) \kappa_{g^{\t{op}}}(x)
\end{eqnarray*}
for any $x \in X$, and hence $\kappa_{g^{\t{op}} (g')^{\t{op}}} = \kappa_{g^{\t{op}}} \rho^{\vee}(g^{\t{op}}, \kappa_{(g')^{\t{op}}})$. Thus $\kappa \colon G^{\t{op}} \to (\t{C}(X,R),\rho^{\vee})$ is a crossed homomorphism.
\end{proof}

\begin{exm}
\label{modular form}
By Proposition \ref{p-adic linear fractional transformation} and Proposition \ref{geometric action}, $\t{C}(\Z_p,\Z_p)$ admits an action $m_p^{\vee}$ of $\Pi_0(p)^{\t{op}}$ such that $(\t{C}(\Z_p,\Z_p),m_p^{\vee})$ is a commutative topological $\Z_p[\Pi_0(p))^{\t{op}}]$-algebra. For any continuous group homomorphism $\chi \colon \Z_p^{\times} \to \Z_p^{\times}$, the map
\begin{eqnarray*}
  \kappa(\chi) \colon \Pi_0(p) & \to & \t{C}(\Z_p,\Z_p) \\
  \left(
    \begin{array}{cc}
      a & b \\
      c & d
    \end{array}
  \right)
  & \mapsto & \chi(cz+d)
\end{eqnarray*}
satisfies the conditions in Corollary \ref{geometric action 2} with respect to $m_p$, where $z \coloneqq \t{id}_{\Z_p}$. Indeed, we have
\begin{eqnarray*}
  & & \kappa(\chi)
  \left(
    \left(
      \begin{array}{cc}
        a & b \\
        c & d
      \end{array}
    \right)
    \left(
      \begin{array}{cc}
        e & f \\
        g & h
      \end{array}
    \right)
  \right)
  =
  \kappa(\chi)
  \left(
    \left(
      \begin{array}{cc}
        ae+bg & af+bh \\
        ce+dg & cf+dh
      \end{array}
    \right)
  \right) \\
  & = & \chi((ce+dg)z+(cf+dh)) = \chi(c(ez+f)+d(gz+h)) = \chi \left( (gz+h) \left( c \frac{(ez+f}{gz+h} + d \right) \right) \\
  & = & \chi(gz+h) \chi \left( c \frac{(ez+f}{gz+h} + d \right) = \kappa(\chi)
  \left(
    \left(
      \begin{array}{cc}
        e & f \\
        g & h
      \end{array}
    \right)
  \right)
  \rho^{\vee}
  \left(
    \left(
      \begin{array}{cc}
        e & f \\
        g & h
      \end{array}
    \right),
    \kappa(\chi)
    \left(
      \left(
        \begin{array}{cc}
          a & b \\
          c & d
        \end{array}
      \right)
    \right)
  \right)
\end{eqnarray*}
for any 
$
  \left(
    \left(
      \begin{array}{cc}
        a & b \\
        c & d
      \end{array}
    \right),
    \left(
      \begin{array}{cc}
        e & f \\
        g & h
      \end{array}
    \right)
  \right)
  \in \Pi_0(p) \times \Pi_0(p) 
$. Thus we obtain a continuous action $(m_p^{\vee})_{\kappa(\chi)}$ of $\Pi_0(p)^{\t{op}}$ on $\t{C}(\Z_p,\Z_p)$.
\end{exm}
\subsection{$p$-adic Modular Forms and Hecke Algebras}
\label{p-adic Modular Forms and Hecke Algebras}

Let $N$ be a positive integer with $N \geq 5$, and $p$ a prime number dividing $N$. Henceforth, we fix an algebraic closure $\overline{\Q}_p$ of $\Q_p$ and an isomorphism $\iota_{p,\infty} \colon \overline{\Q}_p \stackrel{\sim}{\to} \C$ of fields. We recall $p$-adic modular forms.

\vspace{0.2in}
Let $R$ be a commutative topological ring. For each formal power series $f(q) = \sum_{h = 0}^{\infty} a_h q^h$ over $R$, we put $a_h(f) \coloneqq a_h \in R$ for each $h \in \N$. Suppose that $R$ is a subring of $\overline{\Q}_p$. Let $k \in \N \cap [2,\infty)$. A {\it modular form over $R$ of weight $k$  and level $N$} is an element of the $R$-algebra $R[[q]]$ of formal power series whose image in $\C[[q]]$ is a modular form of level $\Gamma_1(N)$ of weight $k$. The notion of modular forms over $R$ is compatible with extensions of $R$ if $R$ contains $\Z_p$ by \cite{DI95} Theorem 12.3.2 and Theorem 12.3.4/2, and hence then coincides with the usual definition given as an $R$-linear combination of modular forms over $\Z_p$. See also \cite{DR73} VII 3.7. We denote by $\t{M}_k(\Gamma_1(N),R) \subset R[[q]]$ the $R$-submodule of modular forms over $R$ of weight $k$. For a modular form over $R$ of weight $k$ and level $N$, we call its image in $\C[[q]]$ its {\it corresponding modular form}. A modular form over $R$ of weight $k$ and level $N$ is said to be {\it a cusp form over $R$ of weight $k$ and level $N$} if its corresponding modular form is a cusp form.

\vspace{0.2in}
Identifying $\t{M}_k(\Gamma_1(N),\overline{\Q}_p)$ as the $\C$-vector space of modular forms of weight $k$, we have a $\overline{\Q}_p$-linear action of Hecke operators on it. Here a Hecke operator means one of operators $T_{\ell}$ for a prime number $\ell$ and diamond operators $\langle \overline{n} \rangle$ for an $\overline{n} \in (\Z/N \Z)^{\times}$ (\cite{Hid86} \S 1 p.\ 549, \cite{Eme10} \S 1.2 p.\ 4-5). Let $\epsilon \colon (\Z/N \Z)^{\times} \to \overline{\Q}{}_p^{\times}$ be a Dirichlet character. We denote by $\t{M}_k(\Gamma_1(N),\epsilon,R) \subset \t{M}_k(\Gamma_1(N),R)$ the $R$-submodule of elements whose corresponding modular forms are contained in the kernel of $\langle \overline{n} \rangle - \iota_{p,\infty}(\epsilon(\overline{n}))$ for any $d \in (\Z/N \Z)^{\times}$. We denote by $R[\epsilon] \subset \overline{\Q}_p$ the $R$-subalgebra generated by the image of $\epsilon$. Operators $T_{\ell}$ for a prime number $\ell$ and $S_n$ for an $n \in \Z$ coprime to $N$ act on $\t{M}_k(\Gamma_1(N),\epsilon,R[\epsilon])$ through the embedding into $\t{M}_k(\Gamma_1(N),\overline{\Q}_p)$. The action is given explicitly in the following way:
\begin{eqnarray*}
  T_{\ell} \colon \t{M}_k(\Gamma_1(N),\epsilon,R[\epsilon]) & \to & \t{M}_k(\Gamma_1(N),\epsilon,R[\epsilon]) \\
  f(q) & \mapsto &
  \left\{
    \begin{array}{ll}
      \sum_{h = 0}^{\infty} a_{\ell h}(f) q^h + \sum_{h = 0}^{\infty} a_h(f) \epsilon(\ell + N \Z) \ell^{k-2} q^{\ell h} & (\ell \mid \hspace{-.62em}/ N) \\
      \sum_{h = 0}^{\infty} a_{\ell h}(f) q^h & (\ell \mid N)
    \end{array}
  \right. \\
  S_n \colon \t{M}_k(\Gamma_1(N),\epsilon,R[\epsilon]) & \to & \t{M}_k(\Gamma_1(N),\epsilon,R[\epsilon]) \\
  f(q) & \mapsto & \epsilon(n + N \Z) n^{k-2} f(q).
\end{eqnarray*}
A modular form $f(q)$ over $R$ of weight $k$ is said to be an {\it eigenform over $R$ of weight $k$ and level $N$} if its corresponding modular form is an eigenform. It is equivalent to the condition that there is a Dirichlet character $\epsilon_f$ such that the image of $f$ in $\t{M}_k(\Gamma_1(N),R[\epsilon_f])$ lies in $\t{M}_k(\Gamma_1(N),\epsilon_f,R[\epsilon_f])$ and is a simultaneous eigenvector of Hecke operators. An eigenform $f$ over $R$ of weight $k$ is said to be {\it normalised} if $a_1(f) = 1$. A {\it cuspidal eigenform over $R$ of weight $k$ and level $N$} is an eigenform over $R$ of weight $k$  and level $N$ which is a cusp form over $R$ of weight $k$ and level $N$.

\begin{exm}
Let $k \in \N \cap [2,\infty)$ be an even number. Then the formal power series
\begin{eqnarray*}
  E_k(q) \coloneqq -\frac{B_k}{2k} + \sum_{h = 1}^{\infty} \left( \sum_{d \mid h} d^{k-1} \right) q^h \in \Q[[q]]
\end{eqnarray*}
is a normalised eigenform over $\Q$ of weight $k$  and level $N$ which is not cusp. Here $B_k \in \Q$ is the $k$-th Bernoulli number.
\end{exm}

Let $k_0 \in \N \cap [2,\infty)$. We denote by $\t{T}_{k_0,N} \subset \t{End}_{\overline{\Z}_p}(\t{M}_{k_0}(\Gamma_1(N),\overline{\Z}_p))$ the commutative $\Z_p$-subalgebra generated by Hecke operators. Since $\t{M}_{k_0}(\Gamma_1(N),\overline{\Q}_p)$ is a $\overline{\Q}_p$-vector space, $\t{T}_{k_0,N}$ is torsionfree as a $\Z_p$-module. Since $\C \otimes_{\Z_p} \t{T}_{k_0,N}$ is isomorphic to a $\C$-vector subspace of $\t{End}_{\C}(\t{H}^1(\Gamma_1(N),\t{Sym}^{k_0-2}(\C^2,\rho_{\C^2})))$ by the Eichler--Shimura isomorphism (\cite{Shi59} 5 Th\'eor\`eme 1, \cite{Hid93} 6.3 Theorem 4), it is finite dimensional as a $\C$-vector space. By the equality
\begin{eqnarray*}
  \t{T}_{k_0,N} = \t{End}_{\overline{\Z}_p} \left( \t{M}_{k_0} \left( \Gamma_1(N), \overline{\Z}_p \right) \right) \cap (\Q_p \otimes_{\Z_p} \t{T}_{k_0,N})
\end{eqnarray*}
as $\Z_p$-algebras of $\t{End}_{\overline{\Q}_p}(\t{M}_{k_0}(\Gamma_1(N),\overline{\Q}_p)) \cong \Q_p \otimes_{\Z_p} \t{End}_{\overline{\Z}_p}(\t{M}_{k_0}(\Gamma_1(N),\overline{\Z}_p))$, $\t{T}_{k_0,N}$ is a $\Z_p$-algebra finitely generated as a $\Z_p$-module. In fact, the finiteness can be verified with no use of the Eichler--Shimura isomorphism, because Hecke operators act on $\t{M}_{k_0}(\Gamma_1(N),\Z)$ by \cite{DI95} Proposition 12.3.11 and \cite{DI95} Proposition 12.4.1. For any $n \in \N$ with $n \in 1 + N \Z$, we have $S_n = n^{k_0-2} \langle n + N \Z \rangle = n^{k_0-2} \langle 1 + N \Z \rangle = n^{k_0-2} \in \t{T}_{k_0,N}$, and hence $S_n$ is invertible as an element of $\t{T}_{k_0,N}$ because $n$ is coprime to $p$. The map
\begin{eqnarray*}
  \N \cap (1 + N \Z) & \to & \t{T}_{k_0,N}^{\times} \\
  n & \mapsto & n^{k_0-2} = S_n
\end{eqnarray*}
is a monoid homomorphism with respect to the multiplication, and it extends to a continuous character
\begin{eqnarray*}
  S_{\bullet} \colon 1 + N \Z_p & \to & \t{T}_{k_0,N}^{\times} \\
  n & \mapsto & n^{k_0-2}.
\end{eqnarray*}
By the universality of the Iwasawa algebra, $S_{\bullet}$ associates a continuous $\Z_p$-algebra homomorphism $\Z_p[[1 + N \Z_p]] \to \t{T}_{k_0,N}$, and we regard $\t{T}_{k_0,N}$ as a profinite $\Z_p[[1 + N \Z_p]]$-algebra.

\vspace{0.2in}
The $\Z_p$-bilinear pairing
\begin{eqnarray*}
  \langle \cdot, \cdot \rangle \colon \t{T}_{k_0,N} \otimes_{\Z_p} \t{M}_{k_0}(\Gamma_1(N),\overline{\Q}_p) & \to & \overline{\Q}_p \\
  (A,f) & \mapsto & \langle A, f \rangle \coloneqq a_1(Af)
\end{eqnarray*}
is non-degenerate, and induces a $\overline{\Q}_p$-linear isomorphism
\begin{eqnarray*}
  \t{M}_{k_0}(\Gamma_1(N),\overline{\Q}_p) \stackrel{\sim}{\to} \t{Hom}_{\Z_p}(\t{T}_{k_0,N},\overline{\Q}_p)
\end{eqnarray*}
by \cite{Hid93} 5.3 Theorem 1. Therefore the subset of $\t{M}_{k_0}(\Gamma_1(N),\overline{\Q}_p)$ consisting of normalised eigenforms corresponds to the subset of $\t{Hom}_{\Z_p}(\t{T}_{k_0,N},\overline{\Q}_p)$ consisting $\Z_p$-algebra homomorphisms. Let $f$ be a normalised eigenform $f$ over $\overline{\Q}_p$ of weight $k_0$. We denote by $\lambda_f \colon \t{T}_{k_0,N} \to \overline{\Q}_p$ the $\Z_p$-algebra homomorphism corresponding to $f$, and call it {\it the system of Hecke eigenvalues associated to $f$}. We have $a_h(f) = \lambda_f(T_h)$ for any $h \in \N \backslash \ens{0}$. In particular, if $f$ is a cuspidal eigenform over $\overline{\Q}_p$ of weight $k_0$  and level $N$, then we have $f = \sum_{h = 1}^{\infty} \lambda_f(T_h) q^h$. Since $\t{T}_{k_0,N}$ is finitely generated as a $\Z_p$-module, the subextension $\Q_p(f)$ of $\overline{\Q}_p/\Q_p$ generated by $\set{a_h(f)}{h \in \N} = \set{\lambda_f(T_h)}{h \in \N}$ is a finite extension of $\Q_p$. We call $\Q_p(f)$ {\it the $p$-adic Hecke field associated to $f$}.

\vspace{0.2in}
We denote by
\begin{eqnarray*}
  \t{T}_{\leq k_0,N} \subset \t{End}_{\overline{\Q}_p} \left( \bigoplus_{k = 2}^{k_0} \t{M}_k(\Gamma_1(N),\overline{\Q}_p) \right)
\end{eqnarray*}
the commutative $\Z_p$-subalgebra generated by $T_{\ell}$ for each prime number $\ell$ and $S_n$ for each $n \in \N$ coprime to $N$ acting in a diagonal way. By the universality of the Iwasawa algebra, we have a continuous $\Z_p$-algebra homomorphism $\Z_p[[1 + N \Z_p]] \to \t{T}_{\leq k_0,N}$ sending each $n \in \N \cap (1 + N \Z) \subset 1 + \Z_p$ to the Hecke operator $S_n$. There is a natural embedding $\t{T}_{\leq k_0,N} \hookrightarrow \prod_{k = 2}^{k_0} \t{T}_{k,N}$ by definition, and hence $\t{T}_{\leq k_0,N}$ is finitely generated as a $\Z_p$-module. We endow $\t{T}_{\leq k_0,N}$ with the $p$-adic topology, and regard it as a profinite $\Z_p[[1 + N \Z_p]]$-algebra. We put
\begin{eqnarray*}
  \T_N \coloneqq \varprojlim_{k \in \N} \t{T}_{\leq k,N},
\end{eqnarray*}
and regard it as a profinite $\Z_p[[1 + N \Z_p]]$-algebra.

\vspace{0.2in}
Let $k_0 \in \N \cap [2,\infty)$. Henceforth, we fix a $p$-adic norm $\v{\cdot} \colon \Q_p \to [0,\infty)$, and endow $\overline{\Q}_p$ with a unique non-Archimedean norm $\v{\cdot} \colon \overline{\Q}_p \to [0,\infty)$ extending the $p$-adic norm on $\Q_p$. We denote by $\overline{\Z}_p \subset \overline{\Q}_p$ the valuation ring. It is an integral closure of $\Z_p$ in $\overline{\Q}_p$, and hence is independent of the choice of the $p$-adic norm on $\Q_p$. It is well-known that $\t{M}_{k_0}(\Gamma_1(N),\overline{\Q}_p) \subset \overline{\Q}_p[[q]]$ is contained in the image of $\overline{\Z}_p[[q]] \otimes_{\Z_p} \Q_p$, and hence we endow $\t{M}_{k_0}(\Gamma_1(N),\overline{\Q}_p)$ with the non-Archimedean norm $\n{\cdot}$ given by setting
\begin{eqnarray*}
  \n{f} \coloneqq \max_{h \in \N} \v{a_h(f)} < \infty
\end{eqnarray*}
for each $f \in \t{M}_{k_0}(\Gamma_1(N),\overline{\Q}_p)$. An $f \in \t{M}_{k_0}(\Gamma_1(N),\overline{\Q}_p)$ is said to be {\it of finite slope} if $f \in T_p^h(\t{M}_{k_0}(\Gamma_1(N),\overline{\Q}_p))$ for any $h \in \N$. We denote by $\t{M}_{k_0}(\Gamma_1(N),\overline{\Q}_p)^{< \infty}$ the $T_p$-stable $\overline{\Q}_p$-vector subspace of $\t{M}_{k_0}(\Gamma_1(N),\overline{\Q}_p)$ consisting of modular forms of finite slope. Since Hecke operators commute with each other, $\t{M}_{k_0}(\Gamma_1(N),\overline{\Q}_p)^{< \infty} \subset \t{M}_{k_0}(\Gamma_1(N),\overline{\Q}_p)$ is stable under the action of the other Hecke operators. Since $\t{M}_{k_0}(\Gamma_1(N),\overline{\Q}_p)$ is a finite dimensional $\overline{\Q}_p$-vector space, the decreasing sequence $(T_p^h(\t{M}_{k_0}(\Gamma_1(N),\overline{\Q}_p)))_{h = 0}^{\infty}$ is eventually stable. It implies that the restriction of $T_p$ on $\t{M}_{k_0}(\Gamma_1(N),\overline{\Q}_p)^{< \infty}$ is bijective. Moreover, since $\t{M}_{k_0}(\Gamma_1(N),\overline{\Q}_p)^{< \infty}$ is a finite dimensional $\overline{\Q}_p$-vector space, the restriction of $T_p$ on $\t{M}_{k_0}(\Gamma_1(N),\overline{\Q}_p)^{< \infty}$ and its inverse $T_p^{-1}$ are continuous with respect to the norm topology.

\vspace{0.2in}
Let $s \in \N \backslash \ens{0}$. An $f \in \t{M}_{k_0}(\Gamma_1(N),\overline{\Q}_p)$ is said to be {\it of slope $< s$} if $f$ is of finite slope and $\lim_{h \to \infty} \n{(p^s T_p^{-1})^h f} = 0$. If $f$ is an eigenform, then it is equivalent to the condition that the system $\lambda_f \colon \t{T}_{k_0,N} \to \overline{\Q}_p$ of Hecke eigenvalues associated to $f$ satisfies $\v{\lambda_f(T_p)} > \v{p}^s$. Let $R \subset \overline{\Q}_p$ be a subring. An $f \in \t{M}_{k_0}(\Gamma_1(N),R)$ is said to be {\it of slope $< s$} if $f$ is a modular form over $\overline{\Q}_p$ of slope $< s$. We denote by $ \t{M}_{k_0}(\Gamma_1(N),R)^{< s} \subset \t{M}_{k_0}(\Gamma_1(N),R)$ the $R$-submodule of modular forms of slope $< s$. Since Hecke operators commute with each other, $\t{M}_{k_0}(\Gamma_1(N),\overline{\Q}_p)^{< s} \subset \t{M}_{k_0}(\Gamma_1(N),\overline{\Q}_p)$ is stable under the action of Hecke operators. We denote by $\t{T}_{k_0,N}^{[< s]} \subset \t{End}_{\overline{\Q}_p}(\t{M}_{k_0}(\Gamma_1(N),\overline{\Q}_p)^{< s})$ the image of $\t{T}_{k_0,N}$. Since $\t{T}_{k_0,N}$ is finitely generated as a $\Z_p$-module, so is $\t{T}_{k_0,N}^{[< s]}$.

\vspace{0.2in}
The operator $T_p$ is invertible on $\t{M}_{k_0}(\Gamma_1(N),\overline{\Q}_p)^{< s}$ by definition. We denote by
\begin{eqnarray*}
  \t{T}_{k_0,N}^{< s} \subset \t{End}_{\overline{\Q}_p} \left( \t{M}_{k_0}(\Gamma_1(N),\overline{\Q}_p)^{< s} \right)
\end{eqnarray*}
the commutative $\Z_p$-subalgebra generated by $\t{T}_{k_0,N}^{[< s]}$ and $p^sT_p^{-1}$. Let
\begin{eqnarray*}
  F(X) = X^n + a_1 X^{n-1} + \cdots + a_n \in \Z_p[X]
\end{eqnarray*}
be the minimal polynomial of $T_p$ as an element of $\t{T}_{k_0,N}^{[< s]}$. Since every eigenvalue of $T_p$ as an element of $\t{End}_{\overline{\Q}_p}(\t{M}_{k_0}(\Gamma_1(N),\overline{\Q}_p)^{< s})$ is of norm in $(\v{p}^s,1]$, we have $\v{a_i} \leq 1$ for any $i \in \N \cap [1,n]$, $\v{p}^{(n-i)s} a_i < \v{a_n}$ for any $i \in \N \cap [1,n-1]$, and $\v{p}^{ns} < \v{a_n}$. It implies that the polynomial
\begin{eqnarray*}
  Q(X) \coloneqq a_n^{-1}X^n P(p^sX^{-1}) = X^n + \frac{p^s a_{n-1}}{a_n} X^{n-1} + \cdots + \frac{p^{ns}}{a_n}
\end{eqnarray*}
lies in $\Z_p[X]$. Since $\t{End}_{\overline{\Q}_p}(\t{M}_{k_0}(\Gamma_1(N),\overline{\Q}_p)^{< s})$ is a $\overline{\Q}_p$-vector space, its $\Z_p$-subalgebra $\t{T}_{k_0,N}^{< s}$ is torsionfree as a $\Z_p$-module. Therefore the equality $a_n Q(p^sT_p^{-1}) = (p^sT_p^{-1})^nP(T_p) = 0 \in \t{End}_{\overline{\Q}_p}(\t{M}_{k_0}(\Gamma_1(N),\overline{\Q}_p)^{< s})$ ensures that $Q(p^sT_p^{-1}) = 0$ as an element of $\t{T}_{k_0,N}^{< s}$. It implies that $p^sT_p^{-1}$ is integral over $\t{T}_{k_0,N}^{[< s]}$ as an element of $\t{T}_{k_0,N}^{< s}$, and $\t{T}_{k_0,N}^{< s}$ is finitely generated as a $\t{T}_{k_0,N}^{[< s]}$-module.

\begin{prp}
\label{p^s/T_p}
The surjective $\t{T}_{k_0,N}$-algebra homomorphism
\begin{eqnarray*}
  \t{T}_{k_0,N}^{[< s]}[X] & \twoheadrightarrow & \t{T}_{k_0,N}^{< s} \\
  X & \mapsto & p^sT_p^{-1}
\end{eqnarray*}
induces a $\t{T}_{k_0,N}$-algebra isomorphism
\begin{eqnarray*}
  \left( \t{T}_{k_0,N}^{[< s]}[X]/(T_pX-p^s) \right)_{\t{free}} \cong \t{T}_{k_0,N}^{< s}.
\end{eqnarray*}
\end{prp}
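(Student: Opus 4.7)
The plan is to set $A := \t{T}_{k_0,N}^{[< s]}$, $B := \t{T}_{k_0,N}^{< s}$, and $R := A[X]/(T_p X - p^s)$, and to prove that the induced surjection $R_{\t{free}} \twoheadrightarrow B$ is injective.

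First I would record the factorization. The homomorphism $A[X] \twoheadrightarrow B$ sending $X \mapsto p^s T_p^{-1}$ annihilates $T_p X - p^s$ by direct substitution, so it factors through a surjection $R \twoheadrightarrow B$; and since $B$ is a $\Z_p$-subalgebra of the $\overline{\Q}_p$-vector space $\t{End}_{\overline{\Q}_p}(\t{M}_{k_0}(\Gamma_1(N),\overline{\Q}_p)^{< s})$, it is $\Z_p$-torsion-free, so the surjection further descends to an $A$-algebra surjection $\psi \colon R_{\t{free}} \twoheadrightarrow B$.

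Next, to prove injectivity, take any lift $P(X) = \sum_{k = 0}^{m} c_k X^k \in A[X]$ of an element in the kernel of $\psi$. The key manipulation inside $R$ is to use the relation $T_p X = p^s$ in order to slide $T_p^m$ through $P(X)$:
\[
T_p^m P(X) \;=\; \sum_{k=0}^{m} c_k T_p^{m-k} (T_p X)^k \;=\; \sum_{k=0}^{m} c_k\, p^{ks} T_p^{m-k} \qquad \text{in } R.
\]
The right-hand side is the image in $R$ of an element of $A$, and its image in $B$ is $T_p^m \cdot 0 = 0$. Since $A$ and $B$ are both subalgebras of $\t{End}_{\overline{\Q}_p}(\t{M}_{k_0}(\Gamma_1(N),\overline{\Q}_p)^{< s})$, the inclusion $A \hookrightarrow B$ is canonical, so this element already vanishes in $A$, hence in $R$. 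Thus $T_p^m P(X) = 0$ in $R$.

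The decisive step is then to convert $T_p^m$-annihilation into $p^{ms}$-annihilation: multiplying by $X^m$ and using commutativity of $R$ together with $X T_p = p^s$ gives
\[
0 \;=\; X^m T_p^m P(X) \;=\; (X T_p)^m P(X) \;=\; p^{ms} P(X) \qquad \text{in } R.
\]
Hence $P(X) \in \t{tor}_p(R)$, so its image in $R_{\t{free}}$ vanishes, and $\psi$ is injective. The only real obstacle is noticing the trick $X^m T_p^m = (X T_p)^m = p^{ms}$ that turns $T_p$-torsion into $p$-torsion once the identity $T_p X = p^s$ has been used to rewrite $T_p^m P(X)$ as an element of $A$; everything else is a routine verification using the defining relation of $R$ and the embedding $A \hookrightarrow B$.
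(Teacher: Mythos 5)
Your proof is correct but takes a genuinely different route from the paper's. The paper's proof relies on the integrality analysis of $T_p$ carried out just before the Proposition: from the minimal polynomial $F(X)$ of $T_p$ with $\v{p}^{ns} < \v{a_n}$ one gets $p^m(p^sT_p^{-1}) \in \t{T}_{k_0,N}^{[< s]}$ for some $m$, and then flatness of $\Q_p$ over $\Z_p$ finishes the job, since after tensoring with $\Q_p$ both $\t{T}_{k_0,N}^{[< s]}[X]/(T_pX - p^s)$ and $\t{T}_{k_0,N}^{< s}$ identify with $\Q_p \otimes_{\Z_p} \t{T}_{k_0,N}^{[< s]}$ (because $T_p$ becomes invertible there), so the kernel of the surjection is $p$-power torsion and dies under $(\cdot)_{\t{free}}$. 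Your argument sidesteps the minimal polynomial entirely: it uses only the defining relation $T_pX = p^s$ in $R := \t{T}_{k_0,N}^{[< s]}[X]/(T_pX-p^s)$ and the tautological injectivity of $\t{T}_{k_0,N}^{[< s]} \hookrightarrow \t{T}_{k_0,N}^{< s}$ as subalgebras of the same endomorphism ring, and the identity $X^m T_p^m = (XT_p)^m = p^{ms}$ replaces the flatness step by converting $T_p$-annihilation into $p$-annihilation directly inside $R$. Yours is more elementary and self-contained; what the paper's route makes visible is the generic identification $\Q_p \otimes_{\Z_p} \t{T}_{k_0,N}^{[< s]} \cong \Q_p \otimes_{\Z_p} \t{T}_{k_0,N}^{< s}$, the $\Q_p$-rational content that the Proposition is packaging integrally.
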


\begin{proof}
By the argument above, $p^n(p^sT_p^{-1}) \in \t{T}_{k_0,N}^{< s}$ lies in the image of $\t{T}_{k_0,N}^{[< s]}$ for an $n \in \N$. Therefore the flatness of $\Q_p$ as a $\Z_p$-module ensures the assertion.
\end{proof}

For each $k_0 \in \N$, we denote by
\begin{eqnarray*}
  \t{T}_{\leq k_0,N}^{[< s]} \subset \t{End}_{\overline{\Q}_p} \left( \bigoplus_{k = 2}^{k_0} \t{M}_k(\Gamma_1(N),\overline{\Q}_p)^{< s} \right)
\end{eqnarray*}
the image of $\t{T}_{\leq k_0,N}$, and by
\begin{eqnarray*}
  \t{T}_{\leq k_0,N}^{< s} \subset \t{End}_{\overline{\Q}_p} \left( \bigoplus_{k = 2}^{k_0} \t{M}_k(\Gamma_1(N),\overline{\Q}_p)^{< s} \right)
\end{eqnarray*}
the $\Z_p$-subalgebra generated by $\t{T}_{\leq k_0,N}$ and $p^sT_p^{-1}$. They are finitely generated as $\Z_p$-modules by a similar argument with that in the previous paragraph. We set
\begin{eqnarray*}
  \T_N^{[< s]} & \coloneqq & \varprojlim_{k \in \N} \t{T}_{\leq k,N}^{[< s]} \\
  \T_N^{< s} & \coloneqq & \varprojlim_{k \in \N} \t{T}_{\leq k,N}^{< s},
\end{eqnarray*}
and regard them as profinite $\T_N$-algebras. In particular, they are regarded as profinite $\Z_p[[1 + N \Z_p]]$-algebras.

\begin{prp}
\label{topologically nilpotent}
The continuous $\T_N$-algebra homomorphism
\begin{eqnarray*}
  \T_N^{[< s]}[[X]] & \to & \T_N^{< s} \\
  X & \mapsto & p^sT_p^{-1}
\end{eqnarray*}
is surjective.
\end{prp}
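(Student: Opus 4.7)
The plan is to establish surjectivity in three steps: (i) verify that $y \coloneqq p^s T_p^{-1}$ is topologically nilpotent in $\T_N^{< s}$ so that the substitution $X \mapsto y$ extends to a continuous map; (ii) show the image is dense by working level by level; (iii) upgrade density to surjectivity via compactness of $\T_N^{[< s]}[[X]]$.

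For (i), I work at a fixed finite level $k$ and invoke the multi-weight analogue of the polynomial $Q(X)$ constructed just before Proposition \ref{p^s/T_p}. The crucial point is that the inequalities $\v{p^{(n_k - i) s} a_i} < \v{a_{n_k}}$ for $i = 1, \ldots, n_k - 1$ and $\v{p^{n_k s}} < \v{a_{n_k}}$, already established there, force every non-leading coefficient of $Q_k$ to lie in $p \Z_p$. Writing $Q_k(X) = X^{n_k} + p \, r_k(X)$ with $r_k \in \Z_p[X]$ of degree less than $n_k$, the relation $Q_k(y) = 0$ in $\t{T}_{\leq k, N}^{< s}$ yields $y^{n_k} \in p \cdot \t{T}_{\leq k, N}^{< s}$; iterating gives $y^{m n_k} \in p^m \cdot \t{T}_{\leq k, N}^{< s}$, so $y^n \to 0$ in the $p$-adic topology of the finitely generated $\Z_p$-module $\t{T}_{\leq k, N}^{< s}$. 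Passing to $\T_N^{< s} = \varprojlim_k \t{T}_{\leq k, N}^{< s}$ with its inverse limit topology, $y$ is topologically nilpotent in $\T_N^{< s}$, and the substitution $X \mapsto y$ extends uniquely to a continuous $\T_N$-algebra homomorphism $\T_N^{[< s]}[[X]] \to \T_N^{< s}$.

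For (ii), the transition maps in $(\t{T}_{\leq k, N}^{[< s]})_k$ are surjective (each is obtained from the next by restriction of the Hecke action), so every projection $\T_N^{[< s]} \twoheadrightarrow \t{T}_{\leq k, N}^{[< s]}$ is surjective. Post-composing with the surjection $\t{T}_{\leq k, N}^{[< s]}[X] \twoheadrightarrow \t{T}_{\leq k, N}^{< s}$ afforded by the multi-weight analogue of Proposition \ref{p^s/T_p} shows that the image of $\T_N^{[< s]}[X] \subset \T_N^{[< s]}[[X]]$ surjects onto each $\t{T}_{\leq k, N}^{< s}$, hence is dense in $\T_N^{< s}$.

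Finally, for (iii), $\T_N^{[< s]}$ is profinite and thus compact, so $\T_N^{[< s]}[[X]] = \varprojlim_n \T_N^{[< s]}[X]/(X^n)$ is an inverse limit of compact Hausdorff $\T_N^{[< s]}$-modules and is itself compact. Its continuous image in the Hausdorff ring $\T_N^{< s}$ is therefore closed, and since it contains the dense image from (ii), it equals $\T_N^{< s}$. The main obstacle is the topological nilpotence claim in (i): it is the key input that allows replacing formal polynomials by formal power series in the substitution and relies on the sharp $p$-integrality of the non-leading coefficients of $Q_k$; once this is secured, (ii) and (iii) reduce to standard density-and-compactness manipulations.
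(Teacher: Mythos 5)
Your argument is correct and follows essentially the same route as the paper's proof: establish topological nilpotence of $p^s T_p^{-1}$ via the $p$-integrality of the non-leading coefficients of the modified minimal polynomial $Q$, extend the polynomial substitution to formal power series, then obtain surjectivity from density and compactness. The only cosmetic difference is that the paper first builds the extended homomorphism $\T_N^{[< s]}[[X]] \to \t{T}_{\leq k, N}^{< s}$ level by level and then assembles them via the universality of the inverse limit, whereas you argue topological nilpotence directly in $\T_N^{< s}$; both are fine.
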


\begin{proof}
To begin with, we verify that the $\T_N$-algebra homomorphism
\begin{eqnarray*}
  \T_N^{[< s]}[X] & \to & \T_N^{< s} \\
  X & \mapsto & p^sT_p^{-1}
\end{eqnarray*}
uniquely extends to a continuous $\T_N$-algebra homomorphism
\begin{eqnarray*}
  \T_N^{[< s]}[[X]] \to \T_N^{< s}.
\end{eqnarray*}
Here $\T_N^{[< s]}[X]/(X^r)$ is regarded as a profinite $\T_N^{[< s]}$-algebra with respect to the topology given by the canonical $\T_N^{[< s]}$-linear basis $(X^h)_{h = 0}^{r-1}$ for each $r \in \N$, and
\begin{eqnarray*}
  \T_N^{[< s]}[[X]] = \varprojlim_{r \in \N} \T_N^{[< s]}[X]/(X^r),
\end{eqnarray*}
is endowed with the inverse limit topology. Let $k_1 \in \N \cap [2,\infty)$. Let $P(X) = X^n + a_1 X^{n-1} + \cdots + a_n \in \Z_p[X]$ be the minimal polynomial of $T_p$ as an element of $\t{T}_{\leq k_1,N}^{[< s]}$. Since every eigenvalue of the action of $T_p$ on $\bigoplus_{k = 2}^{k_1} \t{M}_{k}(\Gamma_1(N),\Z_p)^{< s}$ is of norm in $(\v{p}^s,1]$, we have $\v{a_i} \leq 1$ for any $i \in \N \cap [1,n]$, $\v{p}^{(n-i)s} a_i < \v{a_n}$ for any $i \in \N \cap [1,n-1]$, and $\v{p}^{ns} < \v{a_n}$. Therefore the polynomial
\begin{eqnarray*}
  Q(X) \coloneqq X^n + \frac{p^s a_{n-1}}{a_n} X^{n-1} + \cdots + \frac{p^{ns}}{a_n}.
\end{eqnarray*}
satisfies $Q(X) - X^n \in p \Z_p[X]$. Since $\t{T}_{\leq k_1,N}^{< s}$ is torsionfree as a $\Z_p$-module, the equality
\begin{eqnarray*}
  a_n Q(p^sT_p^{-1}) = (p^sT_p^{-1})^nP(T_p) = 0
\end{eqnarray*}
in $\t{T}_{\leq k_1,N}^{< s}$ ensures $Q(p^sT_p^{-1}) = 0 \in \t{T}_{\leq k_1,N}^{< s}$ and hence $(p^sT_p^{-1})^n \in p \t{T}_{\leq k_1,N}^{< s}$. It implies $p^sT_p^{-1}$ is topologically nilpotent in $\t{T}_{\leq k_1,N}^{< s}$ with respect to the $p$-adic topology, because $\t{T}_{\leq k_1,N}^{< s}$ is $p$-adically complete. Therefore $p^sT_p^{-1}$ is topologically nilpotent in $\t{T}_{\leq k_1,N}^{< s}$, and the $\T_N$-algebra homomorphism
\begin{eqnarray*}
  \T_N^{[< s]}[X] & \to & \t{T}_{\leq k_1,N}^{< s} \\
  X & \mapsto & p^sT_p^{-1}
\end{eqnarray*}
uniquely extends to a continuous $\T_N$-algebra homomorphism
\begin{eqnarray*}
  \T_N^{[< s]}[[X]] \to \t{T}_{\leq k_0,N}^{< s}
\end{eqnarray*}
by the universality of the algebra of formal power series and the $p$-adic completeness of $\t{T}_{\leq k_0,N}^{< s}$. Thus the $\T_N$-algebra homomorphism
\begin{eqnarray*}
  \T_N^{[< s]}[X] & \to & \T_N^{< s} \\
  X & \mapsto & p^sT_p^{-1}
\end{eqnarray*}
uniquely extends to a continuous $\T_N$-algebra homomorphism
\begin{eqnarray*}
  \varphi \colon \T_N^{[< s]}[[X]] \to \T_N^{< s} = \varprojlim_{k \in \N} \t{T}_{\leq k,N}^{< s}
\end{eqnarray*}
by the universality of the inverse limit. The composite
\begin{eqnarray*}
  \T_N^{[< s]}[[X]] \to \T_N^{< s} \twoheadrightarrow \t{T}_{\leq k_0,N}^{< s}
\end{eqnarray*}
is surjective by the definition of $\t{T}_{\leq k_0,N}^{< s}$ for any $k_0 \in \N \cap [2,\infty)$, and hence the image of $\varphi$ is dense by the definition of the inverse limit topology. Since $\T_N^{[< s]}[[X]]$ is compact and $\T_N^{< s}$ is Hausdorff, the continuity of $\varphi$ ensures its surjectivity.
\end{proof}

\section{Actions on Prodiscrete Cohomologies}
\label{Actions on Prodiscrete Cohomologies}

In this section, let $R$ denote a commutative topological ring, and $G$ a monoid endowed with the discrete topology. We mainly consider the case where $R$ is $\Z_p$ or the Iwasawa algebras, and $G$ is a submonoid of $\t{M}_2(\Z_p)$. We introduce the notion of a prodiscrete cohomology of a linearly complete $R[G]$-module. We compare it with the group cohomology of the underlying module, and with the cohomology of the derived functor of $\Gamma(G,\cdot) \circ \varprojlim$. We also introduce the notion of a profinite $R$-sheaf on a modular curve. The prodiscrete cohomology of a profinite $R$-sheaf coincides with that of the corresponding profinite $R[G]$-module for a suitable $G$ under several conditions.

\subsection{Prodiscrete Cohomologies of Complete Topological Modules}
\label{Prodiscrete Cohomologies of Complete Topological Modules}

Suppose that $R$ is discrete. The category $\t{Mod}(R)$ (resp.\ $\t{Mod}(R[G])$) of discrete $R$-modules and $R$-linear homomorphisms (resp.\ discrete $R[G]$-modules and $R$-linear $G$-equivariant homomorphisms) naturally admits a structure of an Abelian category. The correspondence $(M,\rho) \rightsquigarrow \Gamma(G,(M,\rho))$ gives a left exact functor $\Gamma(G,\cdot) \colon \t{Mod}(R[G]) \to \t{Mod}(R)$. We denote by $\t{H}^*(G,\cdot)$ the cohomology of the right derived functor of $\Gamma(G,\cdot)$, and call it {\it the group cohomology}. We remark that the underlying Abelian group of the group cohomology of a discrete $R[G]$-module is naturally isomorphic to the group cohomology of the underlying $\Z[G]$-module, because they can be calculated cocycles and coboundaries in the same way.

\vspace{0.2in}
Now we consider the case where $R$ is not necessarily discrete. We denote by $\v{R}$ the underlying ring of $R$ endowed with the discrete topology. For a topological $R[G]$-module $(M,\rho)$, the pair $\v{(M,\rho)} = (\v{M},\v{\rho})$ of the underlying $\v{R}$-module $\v{M}$ endowed with the discrete topology and the induced action $\v{\rho} \colon G \times \v{M} \to \v{M} \colon (g,m) \mapsto \rho(g,m)$ is a discrete $\v{R}[G]$-module, and we denote by $\t{H}^*(G,(M,\rho))$ the discrete $\v{R}$-module $\t{H}^*(G,\v{(M,\rho)})$. For any finite $R[G]$-module $(M,\rho)$, the annihilator $\t{Ann}_R(M) \subset R$ is an open ideal, and acts trivially on $\t{H}^*(G,(M,\rho))$. Therefore the action of $\v{R}$ makes $\t{H}^*(G,(M,\rho))$ a discrete $R$-module for any finite $R[G]$-module $(M,\rho)$.

\vspace{0.2in}
For a linearly complete topological $R[G]$-module $(M,\rho)$, we set
\begin{eqnarray*}
  \Hil^*(G,(M,\rho)) \coloneqq \varprojlim_{L \in \Opn_{(M,\rho)}} \t{H}^*(G,(M,\rho)/L),
\end{eqnarray*}
and endow it with the inverse limit topology. We call it {\it the prodiscrete cohomology of $(M,\rho)$}, and regard it as a linearly complete $\v{R}$-module. For any profinite $R[G]$-module $(M,\rho)$, the action of $\v{R}$ makes $\Hil^*(G,(M,\rho))$ a linearly complete $R$-module by the argument in the previous paragraph. In this subsection, we show that the prodiscrete cohomology has an aspect of the cohomology of a derived functor reflecting the topology of $R$. We remark that a derived functor usually does not possess information of topologies. For example, the category of topological $R[G]$-modules and continuous $R$-linear $G$-equivariant homomorphisms does not necessarily admit a structure of an Abelian category, and hence one should forget topologies in order to consider a derived functor.

\begin{lmm}
\label{prodiscrete cohomology}
Let $(M,\rho)$ be a first countable linearly complete $R[G]$-module. Then the natural $\v{R}$-linear homomorphism
\begin{eqnarray*}
  \t{H}^i(G,(M,\rho)) \to \Hil^i(G,(M,\rho))
\end{eqnarray*}
is surjective for any $i \in \N$.
\end{lmm}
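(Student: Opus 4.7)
The plan is to compute both cohomologies via the normalised bar complex and to perform a Mittag--Leffler style inductive lifting. First, since $(M,\rho)$ is linearly complete, $\Opn_{(M,\rho)}$ is a fundamental system of neighbourhoods of $0$; combined with first countability, this lets me extract a countable decreasing cofinal chain $L_0 \supset L_1 \supset \cdots$ in $\Opn_{(M,\rho)}$. Consequently the canonical map
\begin{eqnarray*}
  \Hil^i(G,(M,\rho)) \to \varprojlim_{n \in \N} \t{H}^i(G,(M,\rho)/L_n)
\end{eqnarray*}
is a homeomorphic isomorphism, and $(M,\rho) \cong \varprojlim_n (M,\rho)/L_n$ as topological $R[G]$-modules.

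Given a compatible system $([c_n])_{n \in \N}$ representing an element of this countable inverse limit, I would build representatives $c_n \in Z^i(G,M/L_n)$ of $[c_n]$ that are \emph{strictly} compatible, in the sense that the projection $M/L_{n+1} \twoheadrightarrow M/L_n$ sends $c_{n+1}$ to $c_n$. Starting from any cocycle $\tilde c_{n+1}$ representing $[c_{n+1}]$, its reduction to $Z^i(G,M/L_n)$ differs from the already constructed $c_n$ by a coboundary $d b_n$ with $b_n \in C^{i-1}(G,M/L_n)$. Because $L_{n+1} \subset L_n$, the quotient map $M/L_{n+1} \twoheadrightarrow M/L_n$ is surjective, so I can lift $b_n$ pointwise to a cochain $\tilde b_{n+1} \in C^{i-1}(G,M/L_{n+1})$ and replace $\tilde c_{n+1}$ by $c_{n+1} \coloneqq \tilde c_{n+1} + d \tilde b_{n+1}$. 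This modified cocycle still represents $[c_{n+1}]$ and now reduces exactly to $c_n$.

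The strictly compatible family $(c_n)$ then assembles coordinatewise, for each tuple $(g_1,\ldots,g_i) \in G^i$, into a map $c \colon G^i \to \varprojlim_n M/L_n = M$. The cocycle relation for $c$ is inherited from that in each $M/L_n$, so $[c] \in \t{H}^i(G,(M,\rho))$, and by construction $c$ reduces to $c_n$ modulo $L_n$ for every $n$, so $[c]$ maps onto the prescribed class in $\Hil^i(G,(M,\rho))$. The case $i = 0$ is trivial because $\Gamma(G,\cdot)$ commutes with inverse limits on the nose.

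The main obstacle I anticipate is making the inductive step rigorous: one must simultaneously keep the representative in the correct cohomology class and force strict compatibility with the previously constructed $c_n$. This is precisely where the surjectivity of the quotient projections on \emph{cochains} (needed to lift $b_n$, not merely the cocycle $\tilde c_{n+1}$) is essential, and where the reduction to a countable cofinal chain provided by first countability of $(M,\rho)$ is genuinely used; without countability the induction cannot be run in the elementary form described here, and one would instead have to argue about a $\varprojlim^1$ vanishing over an arbitrary directed system.
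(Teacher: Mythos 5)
Your proposal matches the paper's proof essentially verbatim: both use first countability to extract a countable cofinal chain $L_0 \supset L_1 \supset \cdots$ in $\Opn_{(M,\rho)}$, both inductively correct a chosen representative cocycle by a lifted coboundary (lifting the cochain $b_n$ pointwise along the surjection $M/L_{n+1} \twoheadrightarrow M/L_n$) to obtain a strictly compatible system, both assemble this system into a cocycle valued in $M \cong \varprojlim_n M/L_n$, and both treat $i = 0$ separately. This is the same argument; no gap.
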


\begin{proof}
Let $c = (c_L)_{L \in \Opn_{(M,\rho)}} \in \Hil^i(G,(M,\rho))$. Since $M$ is first countable, there is a decreasing sequence $(L_r)_{r = 0}^{\infty}$ in $\Opn_{(M,\rho)}$ such that $\set{L_r}{r \in \N}$ forms a fundamental system of neighbourhoods of $0$. For each $r \in \N$, take an $i$-cocycle $\tilde{c}_{L_r} \in \t{Z}^i(G,(M,\rho)/L_r)$ representing $c_{L_r}$. We construct an inverse system $(\tilde{c}'_{L_r})_{r = 0}^{\infty}$ of $i$-cocycles $\tilde{c}'_{L_r} \in \t{Z}^i(G,(M,\rho)/L_r)$ representing $c_{L_r}$ for any $r \in \N$. If $i = 0$, then we have $\t{Z}^i(G,(M,\rho/L_r)) = \Gamma(G,(M,\rho)/L_r) \cong \t{H}^i(G,(M,\rho)/L_r)$ for any $r \in \N$, and hence $(\tilde{c}'_{L_r})_{r = 0}^{\infty} \coloneqq (\tilde{c}_{L_r})_{r = 0}^{\infty}$ is an inverse system. Suppose $i > 0$. Put $\tilde{c}'_{L_0} \coloneqq \tilde{c}_{L_0}$. Assume that a compatible system $(\tilde{c}'_{L_r})_{r = 0}^{r_0}$ of representatives of $(c_{L_r})_{r = 0}^{r_0}$is taken for an $r_0 \in \N$. Since the image of $\tilde{c}_{L_{r_0+1}}$ in $\t{Z}^i(G,(M,\rho)/L_{r_0})$ represents $c_{L_{r_0}}$, there is a set-theoretical map $b_{r_0} \colon G^{i-1} \to M/L_{r_0}$ which associates the $i$-coboundary $\partial b_{r_0} \in \t{B}^i(G,(M,\rho)_{L_{r_0}})$ given as the difference of $\tilde{c}'_{L_{r_0}}$ and the image of $\tilde{c}_{L_{r_0+1}}$. Take a set-theoretical lift $b'_{r_0+1} \colon G^{i-1} \to M/L_{r_0+1}$, and denote by $\partial b'_{r_0+1} \in \t{B}^i(G,(M,\rho)/L_{r_0+1})$ the $i$-coboundary associated to $b'_{r_0+1}$. We set $\tilde{c}'_{L_{r_0+1}} \coloneqq \tilde{c}_{L_{r_0+1}} + \partial b'_{r_0+1} \in \t{Z}^i(G,(M,\rho)/L_{r_0+1})$. Then the image of $\tilde{c}'_{L_{r_0+1}}$ in $\t{Z}^i(G,(M,\rho)/L_{r_0})$ coincides with $\tilde{c}'_{L_{r_0}}$, and hence $(\tilde{c}'_{L_r})_{r = 0}^{r_0+1}$ is a compatible system of representatives of $(c_{L_r})_{r = 0}^{r_0+1}$. By induction on $r_0$, we obtain an inverse system $(\tilde{c}'_{L_r})_{r = 0}^{\infty}$ of representatives of $(c_{L_r})_{r = 0}^{\infty}$. Since $(M,\rho)$ is linearly complete and $\set{L_r}{r \in \N}$ is cofinal in $\Opn_{(M,\rho)}$, the $R$-linear $G$-equivariant homomorphism
\begin{eqnarray*}
  \iota \colon (M,\rho) & \to & \varprojlim_{r \in \N} (M,\rho)/L_r \\
  m & \mapsto & (m + L_r)_{r = 0}^{\infty}
\end{eqnarray*}
is a homeomorphic isomorphism. Let $\tilde{c} \colon G^i \to M$ denote the composite of the set-theoretical map
\begin{eqnarray*}
  G^i & \to & \varprojlim_{r \in \N} (M,\rho)/L_r \\
  (g_j)_{j = 1}^{i} & \mapsto & \left( \tilde{c}'_{L_r}((g_j)_{j = 1}^{i}) \right)
\end{eqnarray*}
and $\iota^{-1}$. Since $M$ is Hausdorff, we have $\bigcap_{r = 0}^{\infty} L_r = \ens{0}$, and hence the cocycle conditions for $\tilde{c'}_{L_r}$ for each $r \in \N$ ensures the cocycle condition for $\tilde{c}$. Forgetting the topology of the target $M$ of $\tilde{c}$, we regard $\tilde{c}$ as an element of $\t{Z}^i(G,\v{(M,\rho)})$. By the construction of $\tilde{c}$, the image of its cohomology class coincides with $c$.
\end{proof}

\begin{lmm}
\label{prodiscrete cohomology 2}
Suppose that the underlying monoid of $G$ is finitely generated. Let $(M,\rho)$ be a first countable profinite $R[G]$-module. Then the natural $\v{R}$-linear homomorphism
\begin{eqnarray*}
  \t{H}^i(G,(M,\rho)) \to \Hil^i(G,(M,\rho))
\end{eqnarray*}
is an isomorphism for any $i \in \N$.
\end{lmm}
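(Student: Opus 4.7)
The plan is to deduce this from Lemma \ref{prodiscrete cohomology} together with a compactness argument. Since every profinite $R[G]$-module is linearly complete, Lemma \ref{prodiscrete cohomology} already gives surjectivity. It therefore suffices to prove injectivity: suppose $\tilde{c} \in \t{Z}^i(G, \v{(M,\rho)})$ is a cocycle whose class maps to zero in every $\t{H}^i(G, (M,\rho)/L)$, and produce a single $(i-1)$-cochain $b \colon G^{i-1} \to M$ with $\partial b = \tilde{c}$.

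For each $L \in \Opn_{(M,\rho)}$, I would set
\[
T_L \coloneqq \Set{b \in M^{G^{i-1}}}{\partial b \equiv \tilde{c} \pmod L},
\]
viewed as a subset of the product space $M^{G^{i-1}}$ equipped with the product topology inherited from $M$. Three properties need to be checked. First, $T_L$ is nonempty: by hypothesis there exists $b_L \colon G^{i-1} \to (M,\rho)/L$ with $\partial b_L = \tilde{c} \bmod L$, and any set-theoretical lift of $b_L$ to $M$ lies in $T_L$. Second, $T_L$ is closed: the reduced coboundary $b \mapsto \partial b \bmod L$ defines a continuous map $M^{G^{i-1}} \to (M/L)^{G^i}$, because each output coordinate is a finite $\Z$-linear combination depending on finitely many coordinates of $b$ via the continuous maps $\rho(g,\cdot) \colon M \to M$ attached to individual elements $g \in G$ (recall $G$ is discrete); the target is Hausdorff as a product of discrete $\v{R}$-modules, so $T_L$ is the preimage of a single point. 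Third, the family $\set{T_L}{L \in \Opn_{(M,\rho)}}$ has the finite intersection property, since $T_{L \cap L'} \subset T_L \cap T_{L'}$ and $L \cap L' \in \Opn_{(M,\rho)}$.

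The decisive step is then Tychonoff's theorem: since $M$ is profinite, hence compact Hausdorff, the power $M^{G^{i-1}}$ is compact Hausdorff, so $\bigcap_L T_L$ is nonempty. Any $b$ in this intersection satisfies $\partial b - \tilde{c} \in L^{G^i}$ for every $L \in \Opn_{(M,\rho)}$, and as $\Opn_{(M,\rho)}$ is a fundamental system of neighbourhoods of $0$ in the Hausdorff module $M$, this forces $\partial b = \tilde{c}$. Hence $\tilde{c}$ is a coboundary, which gives the desired injectivity.

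The main obstacle I anticipate is purely topological bookkeeping: verifying carefully that $\partial$ is continuous for the product topology on $M^{G^{i-1}}$ (the key observation being that each output coordinate of $\partial b$ depends on only finitely many coordinates of $b$) and that $T_L$ is genuinely closed rather than merely locally closed. Once those are in place the argument reduces cleanly to Tychonoff. I note that in this formulation neither the first countability of $(M,\rho)$ nor the finite generation of $G$ is strictly required for injectivity; these hypotheses would alternatively allow a sequential inductive construction of compatible cochains along a countable cofinal chain in $\Opn_{(M,\rho)}$, parallel to the surjectivity proof of Lemma \ref{prodiscrete cohomology}, by lifting lower-degree cocycle discrepancies from $(M,\rho)/L_r$ to $(M,\rho)/L_{r+1}$.
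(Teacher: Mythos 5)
Your proof is correct, and it takes a genuinely different route from the paper's. For injectivity the paper exploits the finite generation hypothesis: it embeds $\t{Z}^i(G,\v{(M,\rho)})$ into the compact Hausdorff space $M^{S^i}$ via evaluation on a finite generating set $S$, shows that the image of $\t{B}^i(G,\v{(M,\rho)})$ there is closed and dense in $B \coloneqq \varprojlim_L \t{B}^i(G,(M,\rho)/L)$, hence equal to it, and concludes that $\tilde{c}$ is already a coboundary. Your argument instead applies compactness directly to the cochain space $M^{G^{i-1}}$ (via Tychonoff) and to the nested family of closed sets $T_L$ of approximate cobounders, producing an exact cobounder in one step via the finite intersection property; this avoids having to topologize the spaces of cocycles and coboundaries at all. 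Both arguments are valid compactness arguments, but yours is shorter and, as you correctly observe, neither the finite generation of $G$ nor the first countability of $M$ is used in the injectivity half (the latter is of course still needed for surjectivity via Lemma \ref{prodiscrete cohomology}, and the paper retains the finite-generation hypothesis in part because it reuses the resulting topology on $\t{Z}^i$ and $\t{H}^i$ in Lemma \ref{cohomological dimension} and afterward). One small point worth recording explicitly: $T_L$ is closed because $(M/L)^{G^i}$ is Hausdorff (indeed $M/L$ is finite discrete, being a compact discrete quotient), so the singleton $\{\tilde{c} \bmod L\}$ is closed and $T_L$ is its continuous preimage — you do say this, and it is the crux of the closedness check.
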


\begin{proof}
By Lemma \ref{prodiscrete cohomology}, it suffices to verify the injectivity of the given homomorphism. Let $c \in \t{H}^i(G,(M,\rho))$ be an element of the kernel of the given homomorphism. If $i = 0$, then $c \in \t{H}^i(G,(M,\rho)) \cong \v{(M,\rho)}^G \subset \v{M}$, and since the image of $c$ in $\t{H}^i(G,(M,\rho)/L) \cong \Gamma(G,(M,\rho)/L) \subset M/L$ is $0$ for any $L \in \Opn_{(M,\rho)}$, we have $c \in \bigcap_{L \in \Opn_{(M,\rho)}} L = \ens{0}$. Suppose $i > 0$. Take a representative $\tilde{c} \in \t{Z}^i(G,\v{(M,\rho)})$. Put
\begin{eqnarray*}
  Z & \coloneqq & \varprojlim_{L \in \Opn_{(M,\rho)}} \t{Z}^i(G,(M,\rho)/L) \\
  B & \coloneqq & \varprojlim_{L \in \Opn_{(M,\rho)}} \t{B}^i(G,(M,\rho)/L).
\end{eqnarray*}
Since the image of $c$ in $\Hil^i(G,(M,\rho))$ is $0$, the image of $\tilde{c}$ in $Z$ lies in the image of $B$. Let $S \subset G$ be a finite set of generators. The evaluation map
\begin{eqnarray*}
  \t{Z}^i(G,\v{(M,\rho)}) & \to & M^{S^i} \\
  c' & \mapsto & (c'(s_1, \ldots, s_i))_{(s_1, \ldots, s_i) \in S^i}
\end{eqnarray*}
is injective by the cocycle condition. We endow $M^{S^i}$ and $M^{G^{i-1}}$ with the direct product topology. They are compact and Hausdorff by Tychonoff's theorem, because $M$ is profinite. Since the cocycle condition is given by equalities, the continuity of $\rho$ and the addition $M \times M \to M$ ensures that the image of $\t{Z}^i(G,\v{(M,\rho)})$ is closed in $M^{S^i}$, and hence $\t{Z}^i(G,\v{(M,\rho)})$ is compact and Hausdorff with respect to the relative topology. The continuity of $\rho$, the addition $M \times M \to M$, and the additive inverse $M \times M \colon m \mapsto -m$ ensures that the map $\partial \colon M^{G^{i-1}} \to \t{Z}^i(G,\v{(M,\rho)})$ associating coboundaries is continuous, and hence its image $\t{B}^i(G,\v{(M,\rho)})$ is closed. Since $M$ is profinite, the $R$-linear homomorphism
\begin{eqnarray*}
  M^{S^i} & \to & N \coloneqq \varprojlim_{L \in \Opn_{(M,\rho)}} (M/L)^{S^i} \cong \left( \varprojlim_{L \in \Opn_{(M,\rho)}} (M/L) \right)^{S^i} \\
  (m_s)_{s \in S^i} & \mapsto & \left( (m_s + L)_{s \in S^i} \right)_{L \in \Opn_{(M,\rho)}}
\end{eqnarray*}
is a homeomorphic isomorphism. By the definition of an $i$-coboundary, the image of $\t{B}^i(G,\v{(M,\rho)})$ in $\t{Z}^i(G,(M,\rho)/L)$ coincides with $\t{B}^i(G,(M,\rho)/L)$ for any $L \in \Opn_{(M,\rho)}$. Regarding $\t{Z}^i(G,(M,\rho)/L)$ as a $R$-submodule of the finite $R$-module $(M/L)^{S^i}$ by a similar evaluation map for each $L \in \Opn_{(M,\rho)}$, we identify $B$ as a closed $R$-submodule of $N$. By the definition of the inverse limit topology, the image of $\t{B}^i(G,\v{(M,\rho)})$ is dense in $B$. Since $\t{B}^i(G,\v{(M,\rho)})$ is compact and $N$ is Hausdorff, the image of $\t{B}^i(G,\v{(M,\rho)})$ in $N$ is closed, and hence coincides with $B$. It implies that $\tilde{c}$ belongs to $\t{B}^i(G,\v{(M,\rho)})$, because the image of $\tilde{c}$ in $N$ lies in $B$. Thus $c = 0$.
\end{proof}

We remark that a group is finitely generated if and only if its underlying monoid is finitely generated. Indeed, for a set $S$ of generators of a group, the underlying monoid of the group is generated by $S \cup S^{-1} \coloneqq \set{g^{\sigma}}{(g,\sigma) \in S \times \ens{-1,1}}$. Therefore Lemma \ref{prodiscrete cohomology 2} is valid also for a finitely generated group. Through the isomorphism in Lemma \ref{prodiscrete cohomology 2}, we equip the source with the pull-back of the topology of the target instead of the discrete topology. The induced topology coincides with the quotient topology of the space of cocycles defined in the proof of Lemma \ref{prodiscrete cohomology 2}. Since $R$ acts continuously on the target, we regard the source as a profinite $R$-module.

\begin{lmm}
\label{cohomological dimension}
Suppose that the underlying monoid of $G$ is a finitely generated free group. Let $(M,\rho)$ be a first countable profinite $R[G]$-module. Then the equality $\Hil^i(G,(M,\rho)) = 0$ holds for any $i \in \N \cap [2,\infty)$.
\end{lmm}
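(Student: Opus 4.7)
The plan is to reduce the prodiscrete cohomology to ordinary group cohomology via Lemma \ref{prodiscrete cohomology 2}, and then invoke the classical fact that a finitely generated free group has cohomological dimension at most $1$. Since the underlying monoid of $G$ is finitely generated by hypothesis and $(M,\rho)$ is a first countable profinite $R[G]$-module, Lemma \ref{prodiscrete cohomology 2} gives an $\v{R}$-linear isomorphism
\begin{eqnarray*}
  \t{H}^i(G,(M,\rho)) \stackrel{\sim}{\to} \Hil^i(G,(M,\rho))
\end{eqnarray*}
for every $i \in \N$. Therefore it suffices to verify that the discrete group cohomology $\t{H}^i(G,\v{(M,\rho)})$ of the underlying $\v{R}[G]$-module vanishes for $i \geq 2$.

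To establish this vanishing, I would exhibit a free resolution of $\Z$ of length $1$ over the group ring $\Z[G]$. Let $S$ be a finite free generating set of $G$. A standard computation with the Fox free differential shows that the augmentation ideal $I_G \subset \Z[G]$ is a free left $\Z[G]$-module with basis $\set{s-1}{s \in S}$, so one obtains a short exact sequence
\begin{eqnarray*}
  0 \to \Z[G]^S \to \Z[G] \to \Z \to 0
\end{eqnarray*}
of left $\Z[G]$-modules, where the first map sends the basis element corresponding to $s$ to $s - 1$, and the second is the augmentation. Applying $\t{Hom}_{\Z[G]}(\cdot, \v{(M,\rho)})$ and taking cohomology then shows $\t{H}^i(G,\v{(M,\rho)}) = 0$ for all $i \geq 2$, as desired.

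The main technical point is really just the existence of the free resolution above; the freeness of $I_G$ is a classical fact going back to Nielsen--Schreier and can be cited without reproof. The only subtlety is ensuring that we are permitted to use Lemma \ref{prodiscrete cohomology 2} for $G$ a finitely generated free \emph{group} rather than only a finitely generated free \emph{monoid}; this is precisely the remark immediately after the proof of that lemma, namely that a group is finitely generated if and only if its underlying monoid is (one adjoins $S^{-1}$ to a generating set $S$). No further obstacle arises.
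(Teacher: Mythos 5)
Your proof is correct but reaches the key vanishing by a genuinely different route from the paper. Both proofs first reduce the prodiscrete cohomology to ordinary group cohomology — you invoke the isomorphism of Lemma \ref{prodiscrete cohomology 2}, while the paper is slightly more economical and only needs the surjection of Lemma \ref{prodiscrete cohomology}, since a surjection from $0$ suffices — and both then rest on the fact that a finitely generated free group has cohomological dimension at most $1$. You prove this algebraically, via the classical free resolution $0 \to \Z[G]^S \to \Z[G] \to \Z \to 0$ coming from freeness of the augmentation ideal (Nielsen--Schreier / Fox calculus), and then take $\t{Ext}_{\Z[G]}(\Z,\cdot)$. The paper instead argues topologically: it realizes $G$ as $\pi_1$ of a wedge of circles $C$, translates the group cohomology into sheaf cohomology on $C$, identifies it with \v{C}ech cohomology using paracompactness, and kills everything in degree $\geq 2$ by the $1$-dimensionality of $C$. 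Your approach has the advantage of being self-contained and purely algebraic, avoiding the sheaf-theoretic comparison (the paper's claimed equivalence of categories is really the correspondence between $R[G]$-modules and \emph{local systems} on $C$, and the reader has to supply the aspherical-space comparison of sheaf and group cohomology); the paper's approach has the advantage of manifesting the geometric origin of the cohomological dimension bound, which fits the geometric theme of the rest of the paper. Your remark about the finitely-generated-monoid vs.\ finitely-generated-group point is exactly the observation the paper makes after Lemma \ref{prodiscrete cohomology 2}, and the balancing of $\t{Ext}$ that tacitly justifies computing the derived-functor cohomology via a projective resolution of $\Z$ rather than an injective resolution of the coefficients is standard and unproblematic here. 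No gap.
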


\begin{proof}
By Lemma \ref{prodiscrete cohomology}, it suffices to verify $\t{H}^i(G,(M,\rho)) = 0$ for any $i \in \N \cap [2,\infty)$. By the definition of the group cohomology, it suffices to verify the equality in the case where $R$ is discrete. Since the underlying monoid of $G$ is a finitely generated free group, it is isomorphic to the fundamental group of a based connected $1$-dimensional finite CW-complex $C$ of the form $\t{S}^1 \vee \cdots \vee \t{S}^1$. Therefore there is an equivalence between the category of discrete $R[G]$-modules and $R$-linear $G$-equivariant homomorphisms and the category of sheaves of $R$-modules on $C$ and morphisms of sheaves of $R$-modules. Let $\Fil$ denote the sheaf of $R$-modules on $C$ corresponding to the discrete $R[G]$-module $\v{(M,\rho)}$ by the equivalence. We have a natural $R$-linear isomorphism $\t{H}^*(G,(M,\rho)) \cong \t{H}^*(C,\Fil)$. Since $C$ is a finite CW-complex, it is paracompact, and hence there is a natural $R$-linear isomorphism $\t{H}^*(C,\Fil) \cong \check{\t{H}}{}^*(C,\Fil)$. Since $C$ is $1$-dimensional CW-complex, it is of $\check{\m{C}}$ech-dimension $1$. Therefore we obtain $\check{\t{H}}{}^i(C, \Fil) = 0$ for any $i \in \N \cap [2,\infty)$. Thus the assertion holds.
\end{proof}

A complex of topological $R$-modules (resp.\ topological $R[G]$-modules) with continuous $R$-linear homomorphisms (resp.\ continuous $R$-linear $G$-equivariant homomorphisms) is said to be an {\it exact sequence} if its underlying complex of left $\v{R}$-modules with $\v{R}$-linear homomorphisms is exact.

\begin{prp}
\label{right exact 2}
Suppose that the underlying monoid of $G$ is a finitely generated free group. Let $(M_1,\rho_1)$, $(M_2,\rho_2)$, and $(M_3,\rho_3)$ be first countable profinite $R[G]$-modules with an exact sequence
\begin{eqnarray*}
  0 \to (M_1,\rho_1) \to (M_2,\rho_2) \to (M_3,\rho_3) \to 0
\end{eqnarray*}
of continuous $R$-linear $G$-equivariant homomorphisms. Then it induces an exact sequence
\begin{eqnarray*}
  0 & \to & \Hil^0(G,(M_1,\rho_1)) \to \Hil^0(G,(M_2,\rho_2)) \to \Hil^0(G,(M_3,\rho_3)) \\
  & \to & \Hil^1(G,(M_1,\rho_1)) \to \Hil^1(G,(M_2,\rho_2)) \to \Hil^1(G,(M_3,\rho_3)) \\
  & \to & 0
\end{eqnarray*}
of linearly complete $R$-modules with continuous $R$-linear homomorphisms.
\end{prp}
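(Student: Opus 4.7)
The plan is to reduce the desired six-term exact sequence to the inverse limit of six-term exact sequences on finite $R[G]$-quotients, which in turn arise from the classical long exact sequence in group cohomology truncated by the fact that a finitely generated free group has cohomological dimension $1$.

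First I would exhibit a compatible system of short exact sequences of finite $R[G]$-modules. The continuous injection $M_1 \to M_2$ is a topological embedding with closed image, since it is a map from a compact space into a Hausdorff space; likewise the continuous surjection $\pi \colon M_2 \to M_3$ of compact Hausdorff topological groups is a quotient map, hence an open map (for any open $L_2 \subset M_2$ the preimage $\pi^{-1}(\pi(L_2)) = L_2 + M_1$ is open as a union of translates of $L_2$). Thus for each $L_2 \in \Opn_{(M_2,\rho_2)}$ the submodules $L_1(L_2) \coloneqq L_2 \cap M_1 \in \Opn_{(M_1,\rho_1)}$ and $L_3(L_2) \coloneqq \pi(L_2) \in \Opn_{(M_3,\rho_3)}$ are naturally defined and yield a functorial inverse system of short exact sequences
\begin{eqnarray*}
  0 \to (M_1,\rho_1)/L_1(L_2) \to (M_2,\rho_2)/L_2 \to (M_3,\rho_3)/L_3(L_2) \to 0
\end{eqnarray*}
of finite $R[G]$-modules.

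Next I would apply the classical long exact sequence in group cohomology to each of these sequences. Exactly as in the proof of Lemma \ref{cohomological dimension}, since $G$ is a finitely generated free group, it is the fundamental group of a wedge of finitely many circles, so its cohomological dimension is at most $1$ and $\t{H}^i(G,N) = 0$ for any discrete $\v{R}[G]$-module $N$ and $i \in \N \cap [2,\infty)$. The long exact sequence thus truncates to a six-term exact sequence of finite $R$-modules, indexed functorially by $L_2 \in \Opn_{(M_2,\rho_2)}$.

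Finally I would pass to the inverse limit. The two natural maps $L_2 \mapsto L_1(L_2)$ and $L_2 \mapsto L_3(L_2)$ are cofinal: given $L_1 \in \Opn_{(M_1,\rho_1)}$, the topological embedding supplies an open $U \subset M_2$ with $U \cap M_1 = L_1$, and the linear completeness of $M_2$ furnishes $L_2 \in \Opn_{(M_2,\rho_2)}$ contained in $U$, whence $L_1(L_2) \subset L_1$; given $L_3 \in \Opn_{(M_3,\rho_3)}$, the preimage $\pi^{-1}(L_3) \in \Opn_{(M_2,\rho_2)}$ maps onto $L_3$. Consequently the inverse limit over $L_2$ of the six-term systems coincides with the sought-after six-term sequence of prodiscrete cohomologies. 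Since every term of the six-term system is a finite $R$-module, the system is automatically Mittag-Leffler and $\varprojlim$ is exact on it; continuity of all maps is automatic, as they are inverse limits of maps of finite discrete modules. The main technical obstacle will be the cofinality step together with the verification that $\pi$ sends open $R[G]$-submodules to open $R[G]$-submodules, both of which rest on the compact Hausdorff structure of the ambient modules.
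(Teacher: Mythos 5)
Your proof is correct, but it takes a genuinely different route from the paper's. The paper applies the classical cohomology long exact sequence to the underlying \emph{discrete} $\v{R}[G]$-modules $\v{(M_j,\rho_j)}$, truncates using Lemma \ref{cohomological dimension} (vanishing of $\t{H}^i$ for $i \geq 2$ on the whole profinite module), and then transports the six-term exact sequence to the prodiscrete level via the isomorphism $\t{H}^i(G,(M,\rho)) \cong \Hil^i(G,(M,\rho))$ of Lemma \ref{prodiscrete cohomology 2}. You instead work directly at the finite-quotient level: you produce a compatible inverse system of short exact sequences of finite $R[G]$-modules indexed by $L_2 \in \Opn_{(M_2,\rho_2)}$, apply the classical long exact sequence plus the cohomological-dimension bound there, and take the inverse limit, with the cofinality argument matching each side against the definition of $\Hil^i$. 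The paper's route is shorter given its prior lemmas and isolates the topological bookkeeping in Lemma \ref{prodiscrete cohomology 2}; your route avoids invoking that lemma and makes the compatibility and cofinality of the quotient systems explicit, at the cost of more bookkeeping. One point to tighten: the claim that Mittag--Leffler alone forces $\varprojlim$ to be exact is not valid over an arbitrary directed index poset. You should either (a) use first countability of $M_2$ to replace $\Opn_{(M_2,\rho_2)}$ by a countable cofinal chain, after which Mittag--Leffler does give $\t{R}^1\varprojlim = 0$, or (b) argue directly via Tychonoff compactness that for an inverse system of finite (hence compact Hausdorff) modules over \emph{any} directed poset, the inverse limit of short exact sequences is exact because the fibres of a surjection form a nonempty inverse system of finite sets. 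Either fix is small, but one of them should be written out.
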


\begin{proof}
We have an exact sequence
\begin{eqnarray*}
  0 & \to & \t{H}^0(G,(M_1,\rho_1)) \to \t{H}^0(G,(M_2,\rho_2)) \to \t{H}^0(G,(M_3,\rho_3)) \\
  & \to & \t{H}^1(G,(M_1,\rho_1)) \to \t{H}^1(G,(M_2,\rho_2)) \to \t{H}^1(G,(M_3,\rho_3)) \\
  & \to & 0
\end{eqnarray*}
by the cohomology long exact sequence and Lemma \ref{cohomological dimension}. Therefore the assertion follows from Lemma \ref{prodiscrete cohomology 2}. 
\end{proof}

\begin{lmm}
\label{cocycle}
Suppose that $R$ is discrete and the underlying monoid of $G$ is a free group with a basis $E$. For a discrete $R[G]$-module $(M,\rho)$, the evaluation map
\begin{eqnarray*}
  \t{Z}^1(G,(M,\rho)) & \to & M^E \\
  c & \mapsto & (c(e))_{e \in E}
\end{eqnarray*}
is an $R$-linear isomorphism.
\end{lmm}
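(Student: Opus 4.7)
The plan is to prove injectivity and surjectivity separately; $R$-linearity of the evaluation map is immediate from the fact that $\t{Z}^1(G,(M,\rho))$ is an $R$-submodule of the $R$-module $M^G$ of set-theoretical maps $G \to M$ under pointwise operations.

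For injectivity, I would first extract the basic identities forced on any $1$-cocycle $c \colon G \to M$ by the cocycle condition $c(gg') = c(g) + \rho(g,c(g'))$: setting $g = g' = 1$ gives $c(1) = 0$, and then $0 = c(1) = c(gg^{-1}) = c(g) + \rho(g,c(g^{-1}))$ forces $c(g^{-1}) = -\rho(g^{-1},c(g))$. Since $E$ is a basis of the free group $G$, every $g \in G$ can be written as a product $g = e_1^{\sigma_1} \cdots e_r^{\sigma_r}$ with $e_j \in E$ and $\sigma_j \in \ens{-1,1}$, and iterated application of the cocycle relation together with the two identities above expresses $c(g)$ as an $R$-linear combination of $\rho(h,c(e))$ for various $h \in G$ and $e \in E$. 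In particular, $c$ is determined by the tuple $(c(e))_{e \in E}$, which proves injectivity.

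For surjectivity, I would use the universal property of the free group via the semidirect product construction. Form the group $M \rtimes_{\rho} G$ with underlying set $M \times G$ and multiplication $(m,g) \cdot (m',g') \coloneqq (m + \rho(g,m'), gg')$; a direct computation shows that a set-theoretical map $c \colon G \to M$ lies in $\t{Z}^1(G,(M,\rho))$ if and only if $g \mapsto (c(g),g)$ is a group homomorphism $G \to M \rtimes_{\rho} G$. Given any $(m_e)_{e \in E} \in M^E$, the assignment $e \mapsto (m_e,e)$ defines a set-theoretical map $E \to M \rtimes_{\rho} G$, which extends uniquely to a group homomorphism $\varphi \colon G \to M \rtimes_{\rho} G$ by the universal property of the free group on $E$. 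Composing $\varphi$ with the projection to $M$ yields a $1$-cocycle whose values on $E$ are $(m_e)_{e \in E}$ because the second coordinate of $\varphi(g)$ is forced to be $g$ (both maps $G \to G$ agree on the generating set $E$), and so the first coordinate automatically defines the desired cocycle.

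The only genuine obstacle is verifying that a $1$-cocycle corresponds to a homomorphism into the semidirect product, which is a straightforward calculation:
\begin{eqnarray*}
  (c(g),g) \cdot (c(g'),g') & = & (c(g) + \rho(g,c(g')), gg') = (c(gg'), gg'),
\end{eqnarray*}
so the cocycle condition is exactly the condition that $g \mapsto (c(g),g)$ respects multiplication, while $c(1) = 0$ (automatic from the cocycle condition as noted above) corresponds to preservation of the identity. Everything else is routine, and the result is an $R$-linear isomorphism since both the semidirect product construction and the evaluation on $E$ are $R$-linear in the $M$-coordinate.
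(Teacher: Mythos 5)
Your proof is correct and takes essentially the same approach as the paper's: both encode a $1$-cocycle as a group homomorphism into a semidirect-product-type auxiliary group and then invoke the universal property of the free group on $E$. Your $M \rtimes_{\rho} G$ embeds into the paper's auxiliary subgroup of $\t{Aut}(M \times \Z)$ (whose elements act affinely by $(m,n) \mapsto (\alpha(m) + nc, n)$) via $(m,g) \mapsto \bigl( (m',n) \mapsto (\rho(g,m') + nm, n) \bigr)$, so the difference is only a choice of realization, with yours being the more standard one.
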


\begin{proof}
Put
\begin{eqnarray*}
  H & \coloneqq & \Set{\varphi \in \t{Aut}(M \times \Z)}{
  \begin{array}{ll}
    \varphi(m,0) \in M \times \Ens{0}, {}^{\forall} m \in M \\
    \varphi(0,1) \in M \times \ens{1}
  \end{array}
  } \\
  \t{Hom}(G,H)_{\rho} & \coloneqq & \Set{\chi \in \t{Hom}(G,H)}{\chi(g)(m,0) = (\rho(g,m),0)}.
\end{eqnarray*}
For each $\varphi \in H$, we denote by $c_{\varphi} \in M$ the element with $\varphi(0,1) = (c_{\varphi},1)$. The map
\begin{eqnarray*}
  \iota_1 \colon \t{Z}^1(G,(M,\rho)) & \to & \t{Hom}(G,H)_{\rho} \\
  c & \mapsto & \left( g \mapsto \left( (m,n) \mapsto \left( \rho(g,m) + n c(g) , n \right) \right) \right)
\end{eqnarray*}
is bijective because it admits an inverse
\begin{eqnarray*}
  \t{Hom}(G,H)_{\rho} & \to & \t{Z}^1(G,(M,\rho)) \\
  \chi & \mapsto & \left( g \mapsto c_{\chi(g)} \right).
\end{eqnarray*}
The map
\begin{eqnarray*}
  \iota_2 \colon \t{Hom}(G,H)_{\rho} & \to & H^E \\
  \chi & \mapsto & (c_{\chi(e)})_{e \in E}
\end{eqnarray*}
is bijective by the universality of a free group. The map
\begin{eqnarray*}
  \iota_3 \colon H^E & \to & M^E \\
  (\varphi_e)_{e \in E} & \mapsto & (c_{\varphi_e})_{e \in E}
\end{eqnarray*}
is bijective because it admits an inverse
\begin{eqnarray*}
  M^E & \to & H^E \\
  (c_e)_{e \in E} & \mapsto & \left( (m,n) \mapsto (\rho(e,m) + n c_e, n) \right)_{e \in E}.
\end{eqnarray*}
The composite $\iota_3 \circ \iota_2 \circ \iota_1$ of bijective maps coincides with the evaluation map in the assertion.
\end{proof}

For an Abelian category $\Cat$, we denote by $\Cat^{\N}$ the Abelian category of inverse systems of objects of $\Cat$ indexed by $\N$ and compatible systems of morphisms.

\begin{lmm}
\label{derived limit}
Suppose that $R$ is discrete. For any inverse system $((M_r)_{r = 0}^{\infty},\varphi_{\bullet})$ of discrete $R[G]$-modules, the equality $\t{R}^i \varprojlim ((M_r)_{r = 0}^{\infty},\varphi_{\bullet}) = 0$ holds for any $i \in \N \cap [2,\infty)$, where $\varprojlim$ is regarded as a left exact functor $\t{Mod}(R[G])^{\N} \to \t{Mod}(R[G])$.
\end{lmm}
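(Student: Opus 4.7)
The plan is to exhibit a two-term Roos-type resolution of $\varprojlim$ and then identify its cokernel with $\t{R}^1 \varprojlim$ by effaceability, which will force the higher derived functors to vanish. Define two functors $\Pi, \Pi' \colon \t{Mod}(R[G])^{\N} \to \t{Mod}(R[G])$ by $\Pi(M) = \Pi'(M) \coloneqq \prod_{r \in \N} M_r$ and a natural transformation $d_M \colon \Pi(M) \to \Pi'(M)$ sending $(m_r)_{r \in \N}$ to $(m_r - \varphi_r(m_{r+1}))_{r \in \N}$. The kernel of $d_M$ is canonically identified with $\varprojlim ((M_r)_{r = 0}^{\infty}, \varphi_{\bullet})$; denote the cokernel by $T(M)$. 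Both $\Pi$ and $\Pi'$ are exact, since a countable direct product of short exact sequences of $R[G]$-modules is short exact.

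Given any short exact sequence $0 \to A \to B \to C \to 0$ in $\t{Mod}(R[G])^{\N}$, applying $\Pi$ and $\Pi'$ yields a commutative diagram with two exact rows linked vertically by $d$, and the snake lemma produces an exact sequence
\[
  0 \to \varprojlim A \to \varprojlim B \to \varprojlim C \to T(A) \to T(B) \to T(C) \to 0.
\]
Hence $(\varprojlim, T, 0, 0, \ldots)$ is a cohomological $\delta$-functor from $\t{Mod}(R[G])^{\N}$ to $\t{Mod}(R[G])$ whose component in degree $0$ is $\varprojlim$ and which vanishes in every degree $\geq 2$.

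To pass from this $\delta$-functor to the actual right derived functor, I will verify that $T$ is effaceable. For $M = ((M_r)_{r = 0}^{\infty}, \varphi_{\bullet})$, set $N_r \coloneqq \prod_{s = 0}^{r} M_s$ and let the transition map $\psi_r \colon N_{r+1} \to N_r$ be the projection forgetting the last coordinate. Each $\psi_r$ is surjective, so the equation $n_r - \psi_r(n_{r+1}) = y_r$ is solvable in $(n_r)_{r \in \N}$ for any prescribed $(y_r)_{r \in \N}$ by straightforward recursion on $r$. Consequently $d_N$ is surjective and $T(N) = 0$. The $R[G]$-linear map $M_r \to N_r$ sending $m$ to $(\varphi_s \circ \varphi_{s+1} \circ \cdots \circ \varphi_{r-1}(m))_{s = 0}^{r}$, whose $s = r$ component is $m$ itself, is injective and intertwines $\varphi_{\bullet}$ with $\psi_{\bullet}$, giving a monomorphism $M \hookrightarrow N$ in $\t{Mod}(R[G])^{\N}$.

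Effaceability of $T$, together with the vanishing of our $\delta$-functor in degrees $\geq 2$, implies via the standard universality criterion that $(\varprojlim, T, 0, 0, \ldots)$ is the universal $\delta$-functor extending $\varprojlim$, so it is canonically isomorphic to $(\t{R}^i \varprojlim)_{i \in \N}$. In particular $\t{R}^i \varprojlim = 0$ for every $i \in \N \cap [2, \infty)$. The point requiring the most care is the snake lemma calculation in the second step and the verification that the resulting connecting morphisms coincide with those of the right derived functor under this identification; both are standard, and everything else reduces to elementary recursion and the exactness of countable products.
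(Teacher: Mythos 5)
Your proof is correct and reaches the conclusion by a genuinely different route from the paper's. Both arguments start from the same two-term Roos complex $\prod_r M_r \xrightarrow{d} \prod_r M_r$, identify $\varprojlim$ with the kernel and $T \coloneqq \varprojlim^1$ with the cokernel, and use the snake lemma (plus exactness of countable products in the module category $\t{Mod}(R[G])$) to obtain a $\delta$-functor $(\varprojlim, T, 0, 0, \ldots)$. Where they diverge is in proving this $\delta$-functor is the right derived functor. The paper invokes Jannsen's structure theorem (\cite{Jan88} 1.1 Proposition b)) for injective objects of $\t{Mod}(R[G])^{\N}$ --- they are $\bigl(\prod_{r \leq r_0} I_r\bigr)_{r_0}$ with projection transition maps and $I_r$ injective --- then reduces via the forgetful functor to the classical Mittag--Leffler vanishing for Abelian groups. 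You instead show $T$ is effaceable directly: for an arbitrary inverse system $M$, you embed it into $N$ with $N_r = \prod_{s \leq r} M_s$ and forgetful truncations, and since these transitions are (split) surjective the telescope equation solves by recursion, giving $T(N) = 0$ and hence $T(u) = 0$ for the embedding $u$. Your route is more self-contained --- it needs neither the structure theorem for injectives nor the reduction to $\Z$-modules --- which is an advantage here, though the paper's appeal to Jannsen does double duty since the same structure theorem is reused in the proof of Theorem \ref{prodiscrete cohomology 3}. One minor remark: your final caveat about matching the connecting morphisms with those of the derived functor is unnecessary --- once both $\delta$-functors are universal and agree in degree $0$, the isomorphism of $\delta$-functors (connecting maps included) is forced by universality.
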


\begin{proof}
Let $((M_r)_{r = 0}^{\infty},\varphi_{\bullet})$ be an inverse system of discrete $R[G]$-modules. We denote by $\varpi_{r_0} \colon \prod_{r = 0}^{r_0+1} M_r \twoheadrightarrow \prod_{r = 0}^{r_0} M_r$ the canonical projection for each $r_0 \in \N$, and by $\varpi_{-1}$ the zero homomorphism $M_0 \twoheadrightarrow 0$. We define an $R$-linear $G$-equivariant homomorphism $\psi_{r_0} \colon \prod_{r = 0}^{r_0} M_r \to \prod_{r = 0}^{r_0-1} M_r$ by setting $\psi_{r_0} \left( (m_r)_{r = 0}^{r_0} \right) \coloneqq \left( m_r - \varphi_r(m_{r+1}) \right)_{r = 0}^{r_0-1}$ for each $r_0 \in \N$ and $(m_r)_{r = 0}^{r_0} \in \prod_{r = 0}^{r_0} M_r$. Then the system $\psi_{\bullet} = (\psi_r)_{r = 0}^{\infty}$ is a morphism
\begin{eqnarray*}
  \left( \left( \prod_{r = 0}^{r_0} M_r \right)_{r_0 = 0}^{\infty}, \varpi_{\bullet} \right) \to \left( \left( \prod_{r = 0}^{r_0-1} M_r \right)_{r_0 = 0}^{\infty}, \varpi_{\bullet-1} \right)
\end{eqnarray*}
in $\t{Mod}(R[G])^{\N}$. Indeed, for any $r_0 \in \N$ and $(m_r)_{r = 0}^{r_0+1} \in \prod_{r = 0}^{r_0+1} M_r$, we have
\begin{eqnarray*}
  & & (\varpi_{r_0} \circ \psi_{r_0+1})((m_r)_{r = 0}^{r_0+1}) = \varpi_{r_0}((m_r - \varphi_r(m_{r+1}))_{r = 0}^{r_0}) = (m_r - \varphi_r(m_{r+1}))_{r = 0}^{r_0-1} \\
  & = & \psi_{r_0}((m_r)_{r = 0}^{r_0}) = (\psi_{r_0} \circ \varpi_{r_0})((m_r)_{r = 0}^{r_0+1}).
\end{eqnarray*}
By the definition of the inverse limit, we obtain an exact sequence
\begin{eqnarray*}
  0 \to \varprojlim \left( (M_r)_{r = 0}^{\infty}, \varphi_{\bullet} \right) \to \prod_{r = 0}^{\infty} M_r \xrightarrow[]{\varprojlim \psi_{\bullet}} \prod_{r = 0}^{\infty} M_r
\end{eqnarray*}
of discrete $R[G]$-modules through natural $R$-linear $G$-equivariant isomorphisms
\begin{eqnarray*}
  \prod_{r = 0}^{\infty} M_r & \cong & \varprojlim \left( \left( \prod_{r = 0}^{r_0} M_i \right)_{r_0 = 0}^{\infty}, \varpi_{\bullet} \right) \\
  \prod_{r = 0}^{\infty} M_r & \cong & \varprojlim \left( \left( \prod_{r = 0}^{r_0-1} M_i \right)_{r_0 = 0}^{\infty}, \varpi_{\bullet} \right).
\end{eqnarray*}
We denote by $\varprojlim^1 ((M_r)_{r = 0}^{\infty},\varphi_{\bullet})$ the cokernel of the right arrow of the exact sequence. This construction gives a functor $\varprojlim^1 \colon \t{Mod}(R[G])^{\N} \to \t{Mod}(R[G])$. The system
\begin{eqnarray*}
  \varprojlim{}^{\bullet} = \left( \varprojlim{}^i \right)_{i = 0}^{\infty} \coloneqq \left( \varprojlim, \varprojlim{}^1, 0, \ldots \right)
\end{eqnarray*}
is a cohomological functor with respect to a natural connecting homomorphism. We verify that $\varprojlim^{\bullet}$ is a right derived functor of $\varprojlim$.

\vspace{0.2in}
We identify $\t{Mod}(\Z)$ with $\t{Mod}(\Z[\ens{1}])$, and also consider $\varprojlim^1 \colon \t{Mod}(\Z)^{\N} \to \t{Mod}(\Z)$. Let $F$ and $F^{\N}$ denote the forgetful functors $\t{Mod}(R[G]) \to \t{Mod}(\Z)$ and $\t{Mod}(R[G])^{\N} \to \t{Mod}(\Z)^{\N}$ respectively. By the exactness of $F$ and by the definitions of $\varprojlim^{\bullet}$, we have natural equivalences $F \circ \varprojlim^i \cong \varprojlim^i \circ F^{\N}$  for each $i \in \N$. In the case where $R = \Z$ and $G = \ens{1}$, then it is well-known that $\varprojlim^{\bullet}$ is a right derived functor of $\varprojlim$. In order to verify that $\varprojlim^{\bullet}$ is a right derived functor of $\varprojlim$ in a general case, it suffices to verify that it is a universal effacable functor. Let $I$ be an injective object of $\t{Mod}(R[G])^{\N}$. By \cite{Jan88} 1.1 Proposition b), $I$ is isomorphic to $((\prod_{r = 0}^{r_0} M_r)_{r_0 = 0}^{\infty}, \varpi_{\bullet})$ for some inverse system $(M_r)_{r = 0}^{\infty}$ of injective objects in $\t{Mod}(R[G])$ whose transition maps are $0$. Therefore we have
\begin{eqnarray*}
  F \left( \varprojlim{}^1 I \right) \cong \varprojlim{}^1(F(I)) \cong \t{R}^1 \varprojlim (F^{\N}(I)) \cong \t{R}^1 \varprojlim \left( \left( \prod_{r = 0}^{r_0} F(M_r) \right)_{r_0 = 0}^{\infty}, \varpi_{\bullet} \right) = 0
\end{eqnarray*}
because $((\prod_{r = 0}^{r_0} F(M_r))_{r_0 = 0}^{\infty}, \varpi_{\bullet})$ is an inverse system of Abelian groups satisfying the Mittag--Leffler condition. We obtain $\varprojlim^i I = 0$ for any $i \in \N \backslash \ens{0}$. Thus $\varprojlim^{\bullet}$ is a universal effacable functor, and hence is a right derived functor of $\varprojlim$. We conclude that $\t{R}^i \varprojlim = 0$ for any $i \in \N \cap [2,\infty)$, and the assertion holds.
\end{proof}

\begin{thm}
\label{prodiscrete cohomology 3}
Let $R$ be a commutative topological ring, $G$ a finitely generated free group endowed with the discrete topology, $(M,\rho)$ a first countable profinite $R[G]$-module, and $(L_i)_{i = 0}^{\infty}$ a countable decreasing sequence of open $R[G]$-submodules of $(M,\rho)$ such that $\set{L_r}{r \in \N}$ forms a fundamental system of neighbourhoods of $0$. Then there exists a natural $\v{R}$-linear isomorphism
\begin{eqnarray*}
  \Hil^*(G,(M,\rho)) \cong \t{R}^* \left( \t{H}^0(G, \cdot) \circ \varprojlim \right) \left( \left( \vv{(M,\rho)/L_r} \right)_{r = 0}^{\infty} \right),
\end{eqnarray*}
where $\varprojlim$ is regarded as the left exact functor $\t{Mod}(\v{R}[G])^{\N} \to \t{Mod}(\v{R}[G])$.
\end{thm}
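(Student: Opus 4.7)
The plan is to realize $\t{H}^0(G,\cdot) \circ \varprojlim$ as the composite $\varprojlim \circ F_1$, where $F_1 \colon \t{Mod}(\v{R}[G])^{\N} \to \t{Mod}(\v{R})^{\N}$ denotes the termwise application of $\t{H}^0(G,\cdot) = \Gamma(G,\cdot)$; this identification is valid because $G$-invariants commute with arbitrary limits. I then apply the Grothendieck composition-of-functors spectral sequence, which, once its hypotheses are verified, takes the form
\begin{eqnarray*}
  E_2^{p,q} = \t{R}^p \varprojlim \left( \left( \t{H}^q\left(G, \vv{(M,\rho)/L_r}\right) \right)_{r = 0}^{\infty} \right) & \Longrightarrow & \t{R}^{p+q}\left( \t{H}^0(G,\cdot) \circ \varprojlim \right) \left( \left( \vv{(M,\rho)/L_r} \right)_{r = 0}^{\infty} \right).
\end{eqnarray*}

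Both hypotheses --- that $F_1$ sends injectives of $\t{Mod}(\v{R}[G])^{\N}$ to $\varprojlim$-acyclics, and that $\t{R}^q F_1$ is computed termwise --- follow from the description of injectives used in the proof of Lemma \ref{derived limit} via \cite{Jan88} 1.1 Proposition b): every such injective is isomorphic to $((\prod_{r = 0}^{r_0} I_r)_{r_0 = 0}^{\infty}, \varpi_{\bullet})$ with injective components $I_r \in \t{Mod}(\v{R}[G])$ and canonical projections as transitions. Applying $\t{H}^0(G,\cdot)$ termwise preserves this shape and keeps the transitions surjective, so the image satisfies Mittag--Leffler, and hence --- invoking also Lemma \ref{derived limit} --- is $\varprojlim$-acyclic. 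Moreover, the $r_0$-th component of such an injective resolution of $(\vv{(M,\rho)/L_r})_r$ is itself an injective resolution of $\vv{(M,\rho)/L_{r_0}}$ in $\t{Mod}(\v{R}[G])$, so $\t{R}^q F_1 ((\vv{(M,\rho)/L_r})_r) \cong (\t{H}^q(G, \vv{(M,\rho)/L_r}))_r$ as an inverse system. By Lemma \ref{derived limit} and Lemma \ref{cohomological dimension}, the $E_2$-page is supported only in the square $0 \leq p, q \leq 1$, and the entire argument reduces to showing that $\t{R}^1 \varprojlim ((\t{H}^q(G, \vv{(M,\rho)/L_r}))_r) = 0$ for $q = 0, 1$.

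This last vanishing is the main step. Because $(M,\rho)$ is profinite and each $L_r$ is open, every quotient $(M,\rho)/L_r$ is compact and discrete, hence finite as a set; combined with the finite generation of $G$ --- explicitly via the description of $1$-cocycles in Lemma \ref{cocycle} for the case $q = 1$ --- each $\t{H}^q(G, \vv{(M,\rho)/L_r})$ is a finite abelian group. Any inverse system of finite abelian groups trivially satisfies the Mittag--Leffler condition, so $\varprojlim^1$ on these systems vanishes. With $E_2^{p,q} = 0$ for all $p \geq 1$, the spectral sequence collapses at $E_2$ and yields a natural $\v{R}$-linear isomorphism
\begin{eqnarray*}
  \t{R}^n \left( \t{H}^0(G,\cdot) \circ \varprojlim \right) \left( \left( \vv{(M,\rho)/L_r} \right)_{r = 0}^{\infty} \right) & \cong & \varprojlim \t{H}^n(G, \vv{(M,\rho)/L_r}) = \Hil^n(G, (M, \rho))
\end{eqnarray*}
for every $n \in \N$, with both sides vanishing for $n \geq 2$. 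Naturality in $(M,\rho)$ is inherited from that of the Grothendieck spectral sequence, while the cofinality of $\{L_r\}$ in $\Opn_{(M,\rho)}$ identifies the target with $\Hil^n(G,(M,\rho))$. The principal obstacle is precisely the Mittag--Leffler verification, which rests essentially on the finiteness of each $(M,\rho)/L_r$ (supplied by profiniteness) together with the finite generation of $G$.
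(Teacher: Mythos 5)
Your proof is correct, but it resolves the composite functor in the opposite direction from the paper, so the two arguments run the two Grothendieck spectral sequences of the same composition. The paper reads $\t{H}^0(G,\cdot) \circ \varprojlim$ literally, applying \cite{Jan88}~1.1 Proposition~b) to see that $\varprojlim$ sends injectives to $\Gamma(G,\cdot)$-acyclic objects, so that $\t{E}_2^{s,t} = \t{H}^s(G, \t{R}^t\varprojlim(\cdots))$; it then kills the $\t{R}^1\varprojlim$ column using Mittag--Leffler for the \emph{surjective} system $(\v{(M,\rho)/L_r})_r$, and needs linear completeness together with Lemma~\ref{prodiscrete cohomology 2} to translate the surviving $\t{H}^1(G,\varprojlim(\cdots)) = \t{H}^1(G,(M,\rho))$ into $\Hil^1(G,(M,\rho))$. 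You instead rewrite the composite as $\varprojlim \circ F_1$ (valid since $G$-invariants commute with limits), verify the acyclicity hypothesis for $F_1$ using the same description of injectives, and land on the transposed $E_2$-page $\t{R}^p\varprojlim\bigl((\t{H}^q(G,\v{(M,\rho)/L_r}))_r\bigr)$; here Mittag--Leffler is invoked not from surjectivity of the transition maps but from \emph{finiteness} of the cohomology groups, which in turn rests on profiniteness of $(M,\rho)$ (each $(M,\rho)/L_r$ is finite) and on Lemma~\ref{cocycle} to bound $\t{H}^1$ by $((M,\rho)/L_r)^E$. What your route buys is that the surviving column $E_2^{0,n} = \varprojlim \t{H}^n(G,(M,\rho)/L_r)$ equals $\Hil^n(G,(M,\rho))$ directly by cofinality of $\{L_r\}$ in $\Opn_{(M,\rho)}$, so Lemma~\ref{prodiscrete cohomology 2} is not needed as an input; conversely you lean on the fact, extracted from the proof (rather than the statement) of Lemma~\ref{cohomological dimension}, that $\t{H}^q(G,\cdot)$ vanishes in degrees $\geq 2$ on discrete modules. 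Both arguments are sound, hinge on finite generation of $G$ and the profiniteness hypothesis, and collapse the spectral sequence to a single column; they differ in which of the two edges of the square they kill and in which auxiliary lemma carries the last identification.
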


\begin{proof}
Let $i \in \N$. We construct an $R$-linear isomorphism
\begin{eqnarray*}
  \Hil^i(G,(M,\rho)) \cong \t{R}^i \left( \t{H}^0(G, \cdot) \circ \varprojlim \right) \left( \left( \vv{(M,\rho)/L_r} \right)_{r = 0}^{\infty} \right).
\end{eqnarray*}
When $i = 0$, then the assertion follows from the linear completeness of $(M,\rho)$. Suppose $i > 0$. By \cite{Jan88} 1.1 Proposition b), $\varprojlim$ sends injective objects to a direct product of injective objects, which is acyclic with respect to $\Gamma(G,\cdot)$. Therefore we have a spectral sequence
\begin{eqnarray*}
  \t{E}_2^{s,t} \coloneqq \t{H}^s \left( G, \t{R}^t \varprojlim \left( \vv{(M,\rho)/L_r} \right)_{r = 0}^{\infty} \right) \Longrightarrow \t{R}^{s+t} \left( \t{H}^0(G, \cdot) \circ  \varprojlim \right) \left( \left( \vv{(M,\rho)/L_r} \right)_{r = 0}^{\infty} \right).
\end{eqnarray*}
Since $(\v{(M,\rho)/L_r})_{r = 0}^{\infty}$ is a surjective system, it satisfies the Mittag--Leffler condition, and hence 
\begin{eqnarray*}
  \t{R}^1 \varprojlim \left( \vv{(M,\rho)/L_r} \right)_{r = 0}^{\infty} = 0
\end{eqnarray*}
by a similar argument with that in the proof of Lemma \ref{derived limit} with the forgetful functor $\t{Mod}(\v{R}[G]) \to \t{Mod}(\Z)$. Together with Lemma \ref{cohomological dimension} and Lemma \ref{derived limit}, we obtain $\t{E}_2^{s,t} = 0$ for any $(s,t) \in \N \times \N$ with $s \geq 2$ or $t \geq 1$. In particular, when $i \geq 2$, then we have
\begin{eqnarray*}
  \t{R}^i \left( \t{H}^0(G, \cdot) \circ \varprojlim \right) \left( \left( \vv{(M,\rho)/L_r} \right)_{r = 0}^{\infty} \right) = 0
\end{eqnarray*}
and hence we obtain an $\v{R}$-linear isomorphism by Lemma \ref{cohomological dimension}. Suppose $i = 1$. We have
\begin{eqnarray*}
  & & \t{R}^i \left( \t{H}^0(G, \cdot) \circ \varprojlim \right) \left( \left( \vv{(M,\rho)/L_r} \right)_{r = 0}^{\infty} \right) \cong E_2^{1,0} = \t{H}^1 \left( G, \varprojlim \left( \vv{(M,\rho)/I_r} \right) \right) \\
  & \cong & \t{H}^1(G, \vv{(M,\rho)}) = \t{H}^1(G, (M,\rho)) \cong \Hil^1(G, (M,\rho))
\end{eqnarray*}
by the linear completeness of $(M,\rho)$ and Lemma \ref{prodiscrete cohomology 2}.
\end{proof}

Thus $\Hil^*(G,\cdot)$ is the cohomology of a derived functor together with a topology and a continuous action of $R$.

\subsection{Profinite $\Z_p$-Sheaves on Modular Curves}
\label{Profinite Zp Sheaves on Modular Curves}

In this subsection, we introduce the notion of a profinite $R$-sheaf on a modular curve in order to construct a profinite Galois representation endowed with a compatible action of Hecke operators. Let $R$ be a commutative topological ring $R$ and $G$ a topological monoid.

\begin{dfn}
\label{etale cohomology}
Let $S$ be a Noetherian scheme. A {\it profinite $R$-sheaf on $S$} is an inverse system of sheaves on $S_{\t{\'et}}$ of finite $R$-modules. For a profinite $R$-sheaf $\Fil = (\Fil_{\lambda})_{\lambda \in \Lambda}$ on $S$, we set
\begin{eqnarray*}
  \Hil_{\t{et}}^*(S,\Fil) \coloneqq \varprojlim_{\lambda \in \Lambda} \t{H}_{\t{et}}^* \left( S, \Fil_{\lambda} \right),
\end{eqnarray*}
and endow it with the inverse limit topology of the discrete topologies. We call it {\it the prodiscrete cohomology of $\Fil$}.
\end{dfn}

\begin{rmk}
\label{derived limit 2}
Let $S$ be a proper algebraic variety over a separably closed field, and $\Fil$ a smooth $\Z_p$-sheaf on $S_{\t{\'et}}$. Then $\Fil$ is represented by a profinite $\Z_p$-sheaf $(\Fil_r)_{r \in \N}$ such that $\Fil_r$ is a finite Abelian sheaf of $(\Z/p^r \Z)$-modules over $S_{\t{\'et}}$ and the transition morphism induces an isomorphism $(\Z/p^r \Z) \otimes_{\Z/p^{r+1} \Z} \Fil_{r+1} \cong \Fil_r$ by definition. By the finiteness of the \'etale cohomology in \cite{Mil80} Corollary VI.2.8 and \cite{Mil13} Remark 17.9, there is a natural $\Z_p$-linear isomorphism
\begin{eqnarray*}
  \Hil_{\t{et}}^*(S,(\Fil_r)_{r \in \N}) \cong \t{H}_{\t{\'et}}^*(S,\Fil),
\end{eqnarray*}
and hence the prodiscrete cohomology can be computed as the ordinary \'etale cohomology. The same holds for the case where $S$ is not proper but the cohomology of $\Fil_r$ is a finite group for each $r \in \N$ by the vanishing of $\t{R}^1 \varprojlim$.
\end{rmk}

We give an explicit construction of a profinite $R$-sheaf with no use of a fundamental group. Suppose that $G$ is a discrete finite group. Let $Y_1$ be a Noetherian scheme with a $G$-torsor $Y \twoheadrightarrow Y_1$, where $G$ acts on $Y$ from the right. Let $(M,\rho)$ be a finite $R[G]$-module $M$. For a scheme $X$ and a set $I$, we denote by $X \times I$ the disjoint union of copies of $X$ indexed by $I$. We consider the right action of $G$ on $M$ given by setting $mg := \rho(g^{-1},m)$ for each $(m,g) \in M \times G$. We endow $Y \times M$ with the right diagonal action of $G$ over $Y_1$. We consider the right action of $G \times G$ on $Y$ given by the first projection, on $G$ given by setting $g(g_1,g_2) := g_1^{-1}gg_2$ for each $(g,(g_1,g_2)) \in G \times (G \times G)$, and on $G \times M \times M$ given by setting $(g,m_1,m_2)(g_1,g_2) := (g_1^{-1}gg_2,\rho(g_1^{-1},m_1),\rho(g_2^{-1},m))$ for each $((g,m_1,m_2),(g_1,g_2)) \in (G \times M \times M) \times (G \times G)$. We endow $Y \times G$ and $Y \times (G \times M \times M)$ with the right diagonal action of $G \times G$ over $Y_1$. Since $Y \twoheadrightarrow Y_1$ is a $G$-torsor, we have $(G \times G)$-equivariant isomorphisms
\begin{eqnarray*}
  Y \times_{Y_1} Y & \cong & Y \times G \\
  (Y \times M) \times_{Y_1} (Y \times M) & \cong & Y \times (G \times M \times M)
\end{eqnarray*}
over $Y_1$. We obtain
\begin{eqnarray*}
  (G \backslash (Y \times M)) \times_{Y_1} (G \backslash (Y \times M)) & \cong & (G \times G) \backslash ((Y \times M) \times_{Y_1} (Y \times M)) \\
  & \cong & (G \times G) \backslash (Y \times (G \times M \times M))
\end{eqnarray*}
The map $G \times M \times M \to M \colon (g,m_1,m_2) \mapsto m_1 + \rho(g,m_2)$ defines a morphism $(Y \times M) \times_{Y_1} (Y \times M) \cong Y \times (G \times M \times M) \to Y \times M$, and induces an addition on $G \backslash (Y \times M)$ over $Y_1$. The map $M \times R \to M \mapsto (m,r) \mapsto rm$ defines a $G$-equivariant morphism $Y \times (M \times R) \to Y \times M$, and induces a scalar multiplication $(G \backslash (Y \times M)) \times R \to G \backslash (Y \times M)$ compatible with the addition. We put $(\underline{(M,\rho)})_{Y_1} := G \backslash (Y \times M)$. Then $(\underline{(M,\rho)})_{Y_1}$ is finite \'etale over $Y_1$, and we regard it as a locally constant \'etale sheaf of finite $R$-modules on $Y_1$. In particular, when $G$ acts trivially on $M$, then $(\underline{M})_{Y_1} := (\underline{(M,\rho)})_{Y_1}$ is a constant sheaf independent of $(G,\rho)$ and $Y \twoheadrightarrow Y_1$.

\vspace{0.2in}
Suppose that $G$ is a profinite group. Let $\Nor$ be a coinitial subset of the set of open normal subgroups of $G$ containing $G \in \Nor$. Let $(Y_H)_{H \in \Nor}$ be a projective system of Noetherian schemes satisfying the following axiom:
\begin{itemize}
\item[(i)] For each $H \in \Nor$, a right action of $G/H$ on $Y_H$ is given.
\item[(ii)] Every transitive morphism is $G$-equivariant.
\item[(iii)] For any $H \in \Nor$, $Y_H$ forms a $(G/H)$-torsor over $Y_1 := Y_G$.
\end{itemize}
Let $(M,\rho)$ be a profinite $R[G]$-module $M$. We denote by $O_{(M,\rho)}$ the set of open $R[G]$-submodules of $(M,\rho)$. For each $L \in O_{(M,\rho)}$, we put $H_L := \set{g \in G}{\rho(g,m) - m \in L, {}^{\forall} m \in M} \in \Nor$. For each $L \in O_{(M,\rho)}$ and $H \in \Nor$ with $H \subset H_L$, we have a functorial construction of a locally constant \'etale sheaf $(\underline{(M,\rho)/L})_{Y_1}$ of finite $R$-modules over $Y_1$ given by the $(G/H)$-torsor $Y_H \twoheadrightarrow Y_1$, which is independent of the choice of $H$ because $Y_H \twoheadrightarrow Y_{H_L}$ is an $(H_L/H)$-torsor. We obtain a well-defined profinite $R$-sheaf $(\underline{(M,\rho)})_{Y_1} := ((\underline{(M,\rho)/L})_{Y_1})_{L \in O_{(M,\rho)}}$ on $Y_1$.

\vspace{0.2in}
We give an example of a tower $(Y_H)_{H \in \Nor}$ satisfying the axiom above. Henceforth, we fix an algebraic closure $\overline{\Q}$ of $\Q$ and an embedding $\iota_{0,\infty} \colon \overline{\Q} \hookrightarrow \C$. For each $N \in \N$, we put
\begin{eqnarray*}
  & & \Gamma_1(N) \coloneqq
  \left(
    \begin{array}{cc}
      1 + N \Z & \Z \\
      N \Z & 1 + N \Z
    \end{array}
  \right)
  \cap \t{SL}_2(\Z) \\
  & & \Gamma(N) \coloneqq
  \left(
    \begin{array}{cc}
      1 + N \Z & N \Z \\
      N \Z & 1 + N \Z
    \end{array}
  \right)
  \cap \t{SL}_2(\Z) = \ker \left( \Gamma_1(N) \twoheadrightarrow \t{SL}_2(\Z/N \Z) \right).
\end{eqnarray*}
For each $N \in \N$ with $N \geq 5$ (resp.\ $N \geq 3$), we denote by $Y_1(N)$ (resp.\ $Y(N)$) the modular curve of level $\Gamma_1(N) \subset \t{SL}_2(\Z)$ (resp.\ $\Gamma(N) \subset \t{SL}_2(\Z)$), i.e.\ a moduli of pairs $(E,\beta)$ of an elliptic curve $E$ and a projection $\beta \colon E[N] \twoheadrightarrow (\underline{\Z/N \Z})$ between group schemes (resp.\ a moduli space of pairs $(E,(\alpha_1,\alpha_2))$ of an elliptic curve $E$ and a $(\Z/N \Z)$-linear basis $(\alpha_1,\alpha_2)$ of $E[N]$). For each $N \in \N \cap [5,\infty)$, we remark that for an elliptic curve $E$, giving a projection $\beta \colon E[N] \twoheadrightarrow (\underline{\Z/N \Z})$ between group schemes is equivalent to giving a closed immersion $\iota \colon \G_{\t{m}}[N] \hookrightarrow E[N]$ between group schemes through the Weil pairing $\langle \cdot, \cdot \rangle_N \colon E[N] \times E[N] \to \G_{\t{m}}[N]$ (\cite{Sil86} III 8), and emphasis that this formulation of $Y_1(N)$ is the one dealt with in \cite{Gro90} Proposition 2.1 and \cite{KM85} 4.8, and is distinct from the usual one in \cite{KM85} 3.2 unless the base field is extended to $\Q[X_N]/(P_N(X_N))$, where $P_N(X_N) \in \Q[X_N]$ is the $N$-th cyclotomic polynomial. For each $N \in \N \cap [3,\infty)$, we have a right action
\begin{eqnarray*}
  (E,(\alpha_1,\alpha_2))
  \left(
    \begin{array}{cc}
      a & b \\
      c & d
    \end{array}
  \right)^{\t{op}}
  \coloneqq \left( E, \left( a \alpha_1 + c \alpha_2, b \alpha_1 + d \alpha_2 \right) \right)
\end{eqnarray*}
of $\t{GL}_2(\Z/N \Z)$ on $Y(N)$. We recall that $Y_1(N)$ (resp.\ $Y(N)$) is an algebraic curve defined over $\t{Spec}(\Q)$ (resp.\ $\t{Spec}(\Q[X_N]/(P_N(X_N)))$), and the analytification of the base change of $Y_1(N)$ (resp.\ $Y(N)$) by $\t{Spec}(\C) \to \t{Spec}(\Q)$ (resp.\ $\t{Spec}(\C) \to \t{Spec}(\Q[X_N]/(P_N(X_N)))$ given by $\Q[X_N]/(P_N(X_N)) \hookrightarrow \C \colon X_N \mapsto \exp(2N^{-1} \pi \ \sqrt[]{\mathstrut -1})$) is biholomorphic to the quotient $\Gamma_1(N) \backslash \Hlf$ (resp.\ the quotient $\Gamma(N) \backslash \Hlf$) of $\Hlf$ with respect to the action of $\Gamma_1(N)$ (resp.\ $\Gamma(N)$) given in Example \ref{linear fractional transformation}. The moduli interpretations ensure that there is a natural finite surjective \'etale morphism
\begin{eqnarray*}
  Y(N) & \to & Y_1(N) \\
  (E,(\alpha_1,\alpha_2)) & \mapsto & \left( E, \beta_{\alpha_1, \alpha_2} \colon a_1 \alpha_1 + a_2 \alpha_2 \mapsto a_2 \right)
\end{eqnarray*}
for each $N \in \N \cap [5,\infty)$, and that there are natural finite surjective \'etale morphisms
\begin{eqnarray*}
\begin{array}{crcl}
  & Y_1(nN) & \to & Y_1(N) \\
  & (E,\beta) & \mapsto & \left( E, \beta + N \Z \colon \alpha \mapsto \beta(\alpha) + N \Z \right) \\
  \t{resp.} & Y(nN) & \to & Y(N) \times_{\Q[X_N]/(P_N(X_N))} \Q[X_{nN}]/(P_{nN}(X_{nN})) \\
  & (E,\alpha_1,\alpha_2) & \mapsto & \left( E, n \alpha_1, n \alpha_2 \right)
\end{array}
\end{eqnarray*}
for each $n,N \in \N$ with $n \geq 1$ and $N \geq 5$ (resp.\ and $N \geq 3$) for which $(Y_1(N))_{N \in \N \cap [5,\infty)}$ (resp.\ $(Y(N))_{N \in \N \cap [3,\infty)}$) is a compatible system. The natural morphisms $Y(N) \to Y_1(N)$ for each $N \in \N \cap [5,\infty)$ gives a morphism $(Y(N))_{N \in \N \cap [5,\infty)} \to (Y_1(N))_{N \in \N \cap [5,\infty)}$ of compatible systems.

\vspace{0.2in}
Henceforth, we fix an $N \in \N \cap [5,\infty)$. We put
\begin{eqnarray*}
  & & \t{G} \hat{\Gamma}_e(N) \coloneqq
  \left(
    \begin{array}{cc}
      \hat{\Z} & \hat{\Z} \\
      N \hat{\Z} & 1 + N \hat{\Z}
    \end{array}
  \right)
  \cap \t{GL}_2 (\hat{\Z}) \\
  & & \hat{\Gamma}_1(N) \coloneqq
  \left(
    \begin{array}{cc}
      1 + N \hat{\Z} & \hat{\Z} \\
      N \hat{\Z} & 1 + N \hat{\Z}
    \end{array}
  \right)
  \cap \t{SL}_2 (\hat{\Z}).
\end{eqnarray*}
The Galois group of the finite \'etale covering $Y(nN) \to Y(N)$ is naturally isomorphic to the finite group
\begin{eqnarray*}
  & & \ker(\t{GL}_2(\Z/nN \Z) \twoheadrightarrow \t{GL}_2(\Z/N \Z)) \\
  & = &
  \left(
    \begin{array}{cc}
      1 + N (\Z/nN \Z) & N (\Z/nN \Z) \\
      N (\Z/nN \Z) & 1 + N (\Z/nN \Z)
    \end{array}
  \right)
  \cap \t{GL}_2(\Z/nN \Z),
\end{eqnarray*}
and the composite $Y(nN) \to Y(N) \to Y_1(N) \times_{\Q} \Q[X_N]/(P_N(X_N)) \to Y_1(N)$ corresponds to the group
\begin{eqnarray*}
  \t{G} \Gamma_e(N,n) \coloneqq
  \left(
    \begin{array}{cc}
      \Z/nN \Z & \Z/nN \Z \\
      N (\Z/nN \Z) & 1 + N (\Z/nN \Z)
    \end{array}
  \right)
  \cap \t{GL}_2(\Z/nN \Z)
\end{eqnarray*}
through the right action for each $n \in \N \backslash \ens{0}$. We have a natural homeomorphic group isomorphism $\t{G} \hat{\Gamma}_e(N) \to \varprojlim_{n \in \N} \t{G} \hat{\Gamma}_e(N,n!)$. Therefore $\t{G} \hat{\Gamma}_e(N)$ acts from right on the tower $(Y(Nn!))_{n \in \N}$ of torsors over $Y_1(N)$ in a desired way. Similarly, $\hat{\Gamma}_1(N)$ acts from right on the tower $(Y(Nn!)_{\overline{\Q}})_{n \in \N}$ of torsors over $Y_1(N)_{\overline{\Q}}$ in a desired way.

\begin{exm}
\label{symmetric product 2}
Let $p$ be a prime number dividing $N$, and $n \in \N$. Then $\t{Sym}^n(\Z_p^2, \rho_{\Z_p^2})$ (Example \ref{symmetric product}) is a profinite $\Z_p[\t{M}_2(\Z_p)]$-module by Proposition \ref{finitely generated - p-adic}, and in particular, we regard it as a profinite $\Z_p[\t{G} \hat{\Gamma}_e(N)]$-module through the composite $\t{G} \hat{\Gamma}_e(N) \hookrightarrow \t{M}_2(\hat{\Z}) \twoheadrightarrow \t{M}_2(\Z_p)$. It yields a profinite $\Z_p$-sheaf on $Y_1(N)_{\t{\'et}}$ naturally isomorphic to the profinite $\Z_p$-sheaf
\begin{eqnarray*}
  \t{Sym}^n \left( \t{R}^1 (\pi_N)_* (\underline{\Z}{}_p)_{E_1(N)} \right) \coloneqq \left( \t{Sym}^n \left( \t{R}^1 (\pi_N)_* \left( \underline{\Z/p^r \Z} \right)_{E_1(N)} \right) \right)_{r = 0}^{\infty}
\end{eqnarray*}
representing a smooth $p$-adic sheaf. Indeed, let $U$ be a scheme with an \'etale morphism $\iota_U \colon U \to Y_1(N)$. Then $\t{Hom}_{Y_1(N)}(S,Y(Np^r))$ is naturally identified with the set of $(\underline{\Z/Np^r \Z})_S$-linear bases $(\alpha_1,\alpha_2)$ of $\iota_U^* E_1(N)[Np^r]$ such that $(\iota_U^* E_1(N),\beta_{p^r \alpha_1,p^r \alpha_2})$ corresponds to $\iota_U \in \t{Hom}(S,Y_1(N))$ for each $r \in \N$. Putting
\begin{eqnarray*}
  M_r \coloneqq \t{Hom}_{Y_1(N)}(S,Y(Np^r)) \times \t{Sym}^{k-2} \left( (\Z/Np^r \Z)^2, \rho_{(\Z/Np^r \Z)^2} \right),
\end{eqnarray*}
we have a $\t{G} \Gamma_e(N,p^r)$-equivariant map
\begin{eqnarray*}
  M_r & \to & \t{Sym}^{k-2} \left( \t{Hom} \left( \t{H}_{\t{\'et}}^0 \left( S, \iota_U^* E_1(N)[Np^r] \right), \Z/Np^r \Z \right) \right) \\
  \left( (\alpha_1,\alpha_2), \sum_{i = 0}^{k-2} c_i T_1^i T_2^{k-2-i} \right) & \mapsto & \sum_{i = 0}^{k-2} c_i \beta_{\alpha_2,\alpha_1}^{\otimes i} \otimes \beta_{\alpha_1,\alpha_2}^{\otimes k-2-i}
\end{eqnarray*}
for each $r \in \N$. Through the duality between $\Hil_{\t{\'et}}^1(E_1(N),(\underline{\Z}{}_p)_{E_1(N)})$ and the Tate module of $E_1(N)$, it induces a natural identification
\begin{eqnarray*}
  \left( \underline{\t{Sym}^{k-2} \left( \Z_p^2, \rho_{\Z_p^2} \right)} \right) \cong \t{Sym}^n \left( \t{R}^1 (\pi_N)_* (\underline{\Z}{}_p)_{E_1(N)} \right)
\end{eqnarray*}
of profinite $\Z_p$-sheaves on $Y_1(N)_{\t{\'et}}$.
\end{exm}

For a sheaf $\Fil$ of finite Abelian groups on $Y_1(N)_{\t{\'et}}$, we denote by $\Fil_{\overline{\Q}}$ the \'etale sheaf of finite Abelian groups on $Y_1(N)_{\overline{\Q}} \coloneqq Y_1(N) \times_{\Q} \overline{\Q}$ obtained as the inverse image of $\Fil$, and put
\begin{eqnarray*}
  \t{H}_{\t{et}}^* \left( Y_1(N)_{\overline{\Q}}, \Fil \right) \coloneqq \t{H}_{\t{et}}^* \left( Y_1(N)_{\overline{\Q}}, \Fil_{\overline{\Q}} \right).
\end{eqnarray*}
For a profinite $R$-sheaf $\Fil = (\Fil_{\lambda})_{\lambda \in \Lambda}$ on $Y_1(N)$, we denote by $\Fil_{\overline{\Q}}$ the profinite $R$-sheaf on $Y_1(N)_{\overline{\Q}}$ obtained as the inverse system $((\Fil_{\lambda})_{\overline{\Q}})_{\lambda \in \Lambda}$, and we put
\begin{eqnarray*}
  \Hil_{\t{et}}^* \left( Y_1(N)_{\overline{\Q}}, \Fil \right) \coloneqq \Hil_{\t{et}}^* \left( Y_1(N)_{\overline{\Q}}, \Fil_{\overline{\Q}} \right).
\end{eqnarray*}
For a profinite $R[\t{G} \hat{\Gamma}_e(N)]$-module $(M,\rho)$, we have a natural identification
\begin{eqnarray*}
  \left( \left( \underline{(M,\rho)} \right)_{Y_1(N)} \right)_{\overline{\Q}} \cong \left( \underline{\t{Res}_{\hat{\Gamma}_1(N)}^{\t{G} \hat{\Gamma}_e(N)}(M,\rho)} \right)_{Y_1(N)_{\overline{\Q}}}
\end{eqnarray*}
as profinite $R$-sheaves on $Y_1(N)_{\overline{\Q}}$.

\begin{prp}
\label{comparison 2}
For any first countable profinite $R[\t{G} \hat{\Gamma}_e(N)]$-module $(M,\rho)$, there is a natural homeomorphic $R$-linear isomorphism
\begin{eqnarray*}
  \Hil_{\t{et}}^1 \left( Y_1(N)_{\overline{\Q}}, \left( \underline{(M,\rho)} \right)_{Y_1(N)} \right) \cong \t{H}^1 \left( \Gamma_1(N), \t{Res}_{\Gamma_1(N)}^{\t{G} \hat{\Gamma}_e(N)}(M,\rho) \right).
\end{eqnarray*}
\end{prp}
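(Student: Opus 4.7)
The plan is to unravel the prodiscrete étale cohomology along the cofiltered system of open submodules and reduce, level by level, to the classical comparison between the étale cohomology of a locally constant sheaf on $Y_1(N)_{\overline{\Q}}$ with finite stalks and the group cohomology of its stalk with respect to $\Gamma_1(N)$. Specifically, by Definition \ref{etale cohomology} and the identification
\begin{eqnarray*}
  \left( \left( \underline{(M,\rho)} \right)_{Y_1(N)} \right)_{\overline{\Q}} \cong \left( \underline{\t{Res}_{\hat{\Gamma}_1(N)}^{\t{G} \hat{\Gamma}_e(N)}(M,\rho)} \right)_{Y_1(N)_{\overline{\Q}}}
\end{eqnarray*}
recalled just before the proposition, I have
\begin{eqnarray*}
  \Hil_{\t{et}}^1 \left( Y_1(N)_{\overline{\Q}}, \left( \underline{(M,\rho)} \right)_{Y_1(N)} \right) = \varprojlim_{L \in O_{(M,\rho)}} \t{H}_{\t{et}}^1 \left( Y_1(N)_{\overline{\Q}}, \left( \underline{(M,\rho)/L} \right)_{Y_1(N)_{\overline{\Q}}} \right),
\end{eqnarray*}
and it suffices to produce, for each $L \in O_{(M,\rho)}$, a natural $R$-linear isomorphism $\t{H}_{\t{et}}^1(Y_1(N)_{\overline{\Q}}, (\underline{(M,\rho)/L})_{Y_1(N)_{\overline{\Q}}}) \cong \t{H}^1(\Gamma_1(N), \t{Res}_{\Gamma_1(N)}^{\t{G} \hat{\Gamma}_e(N)}((M,\rho)/L))$ compatible with $L$.

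For a fixed $L$, the sheaf $(\underline{(M,\rho)/L})_{Y_1(N)_{\overline{\Q}}}$ is locally constant with finite stalk $(M,\rho)/L$, constructed from the $\hat{\Gamma}_1(N)/H_L$-torsor $Y(Nn!)_{\overline{\Q}} \to Y_1(N)_{\overline{\Q}}$ for $n \gg 0$. Artin's comparison theorem for étale cohomology with finite torsion coefficients identifies its $\t{H}_{\t{et}}^1$ with the singular cohomology $\t{H}^1(Y_1(N)(\C)^{\t{an}}, \Fil_L^{\t{an}})$ of the corresponding locally constant analytic sheaf. Since $N \geq 5$, the group $\Gamma_1(N)$ is torsion-free and acts freely and properly discontinuously on the contractible space $\Hlf$, so $Y_1(N)(\C)^{\t{an}} \cong \Gamma_1(N) \backslash \Hlf$ is an Eilenberg–MacLane space $K(\Gamma_1(N), 1)$. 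Under this identification, $\Fil_L^{\t{an}}$ is precisely the locally constant sheaf associated to $(M,\rho)/L$ viewed as a $\Gamma_1(N)$-module through $\Gamma_1(N) \hookrightarrow \hat{\Gamma}_1(N)$, and a standard $K(\pi,1)$-argument therefore yields $\t{H}^1(Y_1(N)(\C)^{\t{an}}, \Fil_L^{\t{an}}) \cong \t{H}^1(\Gamma_1(N), \t{Res}_{\Gamma_1(N)}^{\t{G} \hat{\Gamma}_e(N)}((M,\rho)/L))$.

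Passing to the inverse limit over $L$, the left-hand side becomes the prodiscrete cohomology $\Hil_{\t{et}}^1$ by construction, while the right-hand side becomes $\Hil^1(\Gamma_1(N), \t{Res}_{\Gamma_1(N)}^{\t{G} \hat{\Gamma}_e(N)}(M,\rho))$. Because $Y_1(N)(\C)^{\t{an}}$ is a non-compact Riemann surface with finitely many cusps removed, $\Gamma_1(N)$ is a finitely generated free group, so Lemma \ref{prodiscrete cohomology 2} applies to the first-countable profinite $R[\Gamma_1(N)]$-module $\t{Res}_{\Gamma_1(N)}^{\t{G} \hat{\Gamma}_e(N)}(M,\rho)$ and gives a natural $\v{R}$-linear isomorphism $\t{H}^1(\Gamma_1(N), \t{Res}_{\Gamma_1(N)}^{\t{G} \hat{\Gamma}_e(N)}(M,\rho)) \cong \Hil^1(\Gamma_1(N), \t{Res}_{\Gamma_1(N)}^{\t{G} \hat{\Gamma}_e(N)}(M,\rho))$. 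The topology on the right-hand side of the proposition is, via the remark following Lemma \ref{prodiscrete cohomology 2}, the pull-back of the inverse-limit-of-discrete topology, which coincides tautologically with the inverse-limit topology on the left-hand side; consequently the isomorphism is a homeomorphism.

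The substantial step is the second one: matching, at each finite level $L$, the étale sheaf $(\underline{(M,\rho)/L})_{Y_1(N)_{\overline{\Q}}}$ constructed via the profinite tower $(Y(Nn!)_{\overline{\Q}})_n$ with the analytic local system on $\Gamma_1(N) \backslash \Hlf$ determined by the $\Gamma_1(N)$-action on $(M,\rho)/L$, and checking that this identification is functorial in $L$. The key ingredients are (i) the equivalence between locally constant étale sheaves with finite stalks on a connected scheme and finite continuous representations of its étale fundamental group, (ii) Artin's comparison of the algebraic and topological fundamental groups, which exhibits $\hat{\Gamma}_1(N)$ as the étale fundamental group of $Y_1(N)_{\overline{\Q}}$, and (iii) the crucial torsion-freeness afforded by the hypothesis $N \geq 5$, without which the quotient $\Gamma_1(N) \backslash \Hlf$ would fail to be a $K(\Gamma_1(N), 1)$ and the reduction from singular to group cohomology would break.
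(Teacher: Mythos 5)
Your proposal is correct and follows the same overall architecture as the paper's own proof: reduce to the finite level, identify the two first cohomologies there, pass to the inverse limit, and then invoke Lemma~\ref{prodiscrete cohomology 2} together with the topology introduced just after it to convert $\Hil^1(\Gamma_1(N),\cdot)$ into $\t{H}^1(\Gamma_1(N),\cdot)$ as a homeomorphism. The one place where you genuinely diverge is the finite-level step. You go through Artin's comparison theorem to trade $\t{H}^1_{\t{et}}$ for singular cohomology of the analytification, then use that $\Gamma_1(N)\backslash\Hlf$ is a $K(\Gamma_1(N),1)$ (torsion-freeness from $N\geq 5$) to land in group cohomology. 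The paper instead disposes of the finite case in one line by the torsor interpretation of $\t{H}^1$: a locally constant \'etale sheaf with finite stalk $M$ on $Y_1(N)_{\overline{\Q}}$ corresponds to a continuous $\pi_1^{\t{et}}(Y_1(N)_{\overline{\Q}}) = \hat{\Gamma}_1(N)$-action on $M$, $\t{H}^1_{\t{et}}$ classifies torsors under the sheaf, $\t{H}^1(\Gamma_1(N), M)$ classifies torsors under the $\Gamma_1(N)$-module $M$, and since $M$ is finite these coincide. Your route is the familiar textbook one and requires nothing beyond standard comparison theorems; the paper's route is shorter, stays entirely on the algebraic side, and makes no reference to the analytification or singular cohomology. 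Both are valid, and both rely on the same underlying torsion-freeness of $\Gamma_1(N)$, just packaged differently ($K(\pi,1)$ for you, fundamental group of a non-compact Riemann surface hence free for the paper).
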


\begin{proof}
Since $\Gamma_1(N)$ is a fundamental group of the open complex manifold $\Gamma_1(N) \backslash \Hlf$ of dimension $1$, it is a finitely generated free group. By Lemma \ref{prodiscrete cohomology 2}, we have an isomorphism
\begin{eqnarray*}
  \t{H}^1 \left( \Gamma_1(N), \t{Res}_{\Gamma_1(N)}^{\t{G} \hat{\Gamma}_e(N)}(M,\rho) \right) \cong \Hil^1 \left( \Gamma_1(N), \t{Res}_{\Gamma_1(N)}^{\t{G} \hat{\Gamma}_e(N)}(M,\rho) \right),
\end{eqnarray*}
which is a homeomorphism by the definition of the topology of the left hand side introduced right after Lemma \ref{prodiscrete cohomology 2}. By the definition of $\Hil^1$, the assertion for the general case follows from the case where $(M,\rho)$ is a finite $R[\hat{\Gamma}_1(N)]$-module. In this case, the assertion is well-known by the interpretation as the set of isomorphism classes of torsors.
\end{proof}

\begin{crl}
For any finite (resp.\ first countable profinite) $R[\t{G} \hat{\Gamma}_e(N)]$-module $(M,\rho)$, the prodiscrete cohomology
\begin{eqnarray*}
  \Hil_{\t{et}}^1 \left( Y_1(N)_{\overline{\Q}}, \left( \underline{(M,\rho)} \right)_{Y_1(N)} \right)
\end{eqnarray*}
is a finite (resp.\ first countable profinite) $R$-module.
\end{crl}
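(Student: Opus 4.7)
The plan is to reduce everything to the group cohomology of $\Gamma_1(N)$ via Proposition \ref{comparison 2}, which provides a natural homeomorphic $R$-linear isomorphism
\[
  \Hil_{\t{et}}^1 \left( Y_1(N)_{\overline{\Q}}, \left( \underline{(M,\rho)} \right)_{Y_1(N)} \right) \cong \t{H}^1 \left( \Gamma_1(N), \t{Res}_{\Gamma_1(N)}^{\t{G} \hat{\Gamma}_e(N)}(M,\rho) \right),
\]
the target being equipped with the topology pulled back from $\Hil^1(\Gamma_1(N), \t{Res}_{\Gamma_1(N)}^{\t{G}\hat{\Gamma}_e(N)}(M,\rho))$ through the isomorphism of Lemma \ref{prodiscrete cohomology 2}. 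Since $\Gamma_1(N)$ is the fundamental group of the open Riemann surface $\Gamma_1(N) \backslash \Hlf$, which is a connected $1$-dimensional non-compact manifold and thus homotopy equivalent to a finite wedge of circles, it is a finitely generated free group. Fix once and for all a finite free basis $E \subset \Gamma_1(N)$.

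For the finite case, note that the restriction $\t{Res}_{\Gamma_1(N)}^{\t{G}\hat{\Gamma}_e(N)}(M,\rho)$ is a finite $R[\Gamma_1(N)]$-module whose underlying topological module is discrete. By Lemma \ref{cocycle} applied to the underlying discrete module, the evaluation map yields an $R$-linear isomorphism $\t{Z}^1(\Gamma_1(N), \vv{(M,\rho)}) \cong \vv{M}^E$. Since $E$ is finite and $\vv{M}$ is finite, $\t{Z}^1$ is finite, hence so is its quotient $\t{H}^1(\Gamma_1(N), \t{Res}(M,\rho))$. Transporting back through Proposition \ref{comparison 2} gives the finite $R$-module claim.

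For the profinite case, the restriction $\t{Res}_{\Gamma_1(N)}^{\t{G}\hat{\Gamma}_e(N)}(M,\rho)$ is still first countable and profinite as a topological $R[\Gamma_1(N)]$-module, so by definition
\[
  \Hil^1 \left( \Gamma_1(N), \t{Res}_{\Gamma_1(N)}^{\t{G}\hat{\Gamma}_e(N)}(M,\rho) \right) = \varprojlim_{L \in \Opn_{(M,\rho)}} \t{H}^1 \left( \Gamma_1(N), (M,\rho)/L \right).
\]
Each quotient $(M,\rho)/L$ is a finite $R[\Gamma_1(N)]$-module, so by the finite case just established, every $\t{H}^1(\Gamma_1(N),(M,\rho)/L)$ is a finite $R$-module. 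Therefore the inverse limit is a profinite $R$-module. Because $(M,\rho)$ is first countable and linearly complete, $\Opn_{(M,\rho)}$ admits a countable cofinal decreasing sequence, and we may replace the indexing set of the inverse limit by this countable sequence without changing the limit; the resulting countable inverse limit of finite $R$-modules is automatically first countable.

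The only subtlety, and the step that requires slight care, is that the topology asserted in the corollary is the one transported through the homeomorphisms of Proposition \ref{comparison 2} and Lemma \ref{prodiscrete cohomology 2}. Once one notes that both isomorphisms are homeomorphisms by construction, the argument is otherwise mechanical: finiteness reduces to counting via a free basis, and profiniteness plus first countability reduce to recognising the cohomology as a countable inverse limit of finite $R$-modules. No further obstacles arise.
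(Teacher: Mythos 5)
Your argument is correct and follows essentially the same route as the paper: both proofs reduce via Proposition \ref{comparison 2} to the group cohomology of the finitely generated free group $\Gamma_1(N)$, and both establish finiteness by observing that a $1$-cocycle is determined by its values on a finite generating set. The paper handles the profinite case by noting that $\t{Z}^1(\Gamma_1(N),(M,\rho))$ is itself a first countable profinite $R$-module and $\t{H}^1$ is its quotient by a closed submodule, whereas you unwind $\Hil^1$ as the inverse limit of the finite cohomologies $\t{H}^1(\Gamma_1(N),(M,\rho)/L)$ and appeal to a countable cofinal chain for first countability; this is a minor repackaging of the same facts and is equally valid.
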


\begin{proof}
Since $\Gamma_1(N)$ is a finitely generated, $\t{Z}^1(\Gamma_1(N),(M,\rho))$ is a finite (resp.\ first countable profinite) $R$-module, and hence so is $\t{H}^1(\Gamma_1(N), (M,\rho))$. The natural homeomorphic $R$-linear isomorphism
\begin{eqnarray*}
  \Hil_{\t{et}}^1 \left( Y_1(N)_{\overline{\Q}}, \left( \underline{(M,\rho)} \right)_{Y_1(N)} \right) \cong \t{H}^1 \left( \Gamma_1(N), \t{Res}_{\Gamma_1(N)}^{\t{G} \hat{\Gamma}_e(N)}(M,\rho) \right)
\end{eqnarray*}
in Proposition \ref{comparison 2} guarantees that the left hand side is a finite (resp.\ first countable profinite) $R$-module.
\end{proof}

\begin{crl}
\label{symmetric product 3}
Let $p$ be a prime number dividing $N$. For any $n \in \N$, there is a natural homeomorphic $\Z_p$-linear isomorphism
\begin{eqnarray*}
  \Hil^* \left( Y_1(N)_{\overline{\Q}}, \t{Sym}^n \left( \t{R}^1 (\pi_N)_* (\underline{\Z}{}_p)_{E_1(N)} \right) \right) \cong \Hil^* \left( \Gamma_1(N), \t{Res}_{\Gamma_1(N)}^{\t{M}_2(\Z_p)} \left( \t{Sym}^n \left( \Z_p^2, \rho_{\Z_p^2} \right) \right) \right).
\end{eqnarray*}
\end{crl}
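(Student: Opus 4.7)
The plan is to reduce the corollary to Proposition \ref{comparison 2} by sheafifying through Example \ref{symmetric product 2}, then handle each cohomological degree separately. First, since $\t{Sym}^n(\Z_p^2, \rho_{\Z_p^2})$ is finitely generated as a $\Z_p$-module, Proposition \ref{finitely generated - p-adic} makes it a first countable profinite $\Z_p[\t{M}_2(\Z_p)]$-module, and via $\t{G} \hat{\Gamma}_e(N) \hookrightarrow \t{M}_2(\hat{\Z}) \twoheadrightarrow \t{M}_2(\Z_p)$ a first countable profinite $\Z_p[\t{G} \hat{\Gamma}_e(N)]$-module. Example \ref{symmetric product 2} then identifies $\t{Sym}^n(\t{R}^1 (\pi_N)_* (\underline{\Z}_p)_{E_1(N)})$ with the profinite $\Z_p$-sheaf $(\underline{\t{Sym}^n(\Z_p^2, \rho_{\Z_p^2})})_{Y_1(N)}$ associated to this module, so the left-hand side of the corollary becomes $\Hil^*_{\t{et}}(Y_1(N)_{\overline{\Q}}, (\underline{\t{Sym}^n(\Z_p^2, \rho_{\Z_p^2})})_{Y_1(N)})$.

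I would then establish the isomorphism in each degree. For $* = 1$, Proposition \ref{comparison 2} applied to $M = \t{Sym}^n(\Z_p^2, \rho_{\Z_p^2})$ yields a natural homeomorphic $\Z_p$-linear isomorphism with $\t{H}^1(\Gamma_1(N), \t{Res}_{\Gamma_1(N)}^{\t{G}\hat{\Gamma}_e(N)} M)$; since $\Gamma_1(N)$ is a finitely generated free group as the fundamental group of the open Riemann surface $\Gamma_1(N) \backslash \Hlf$, Lemma \ref{prodiscrete cohomology 2} equates this with $\Hil^1(\Gamma_1(N), M)$, which is the right-hand side because the two restrictions $\t{Res}_{\Gamma_1(N)}^{\t{G}\hat{\Gamma}_e(N)}$ and $\t{Res}_{\Gamma_1(N)}^{\t{M}_2(\Z_p)}$ produce the same module (both factor through the inclusion $\Gamma_1(N) \hookrightarrow \t{M}_2(\Z_p)$). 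For $* = 0$, a finite quotient $M/L$ yields a locally constant sheaf on the connected scheme $Y_1(N)_{\overline{\Q}}$ whose global sections are $(M/L)^{\Gamma_1(N)} = \t{H}^0(\Gamma_1(N), M/L)$ by identifying $\Gamma_1(N)$ with a dense subgroup of $\pi_1^{\t{\'et}}(Y_1(N)_{\overline{\Q}})$; taking the inverse limit over $L$ gives the desired identification in degree $0$. For $* \geq 2$, the right-hand side vanishes by Lemma \ref{cohomological dimension}, while the left-hand side vanishes because $Y_1(N)_{\overline{\Q}}$ is a smooth affine curve over an algebraically closed field, to which Artin's vanishing theorem applies at each finite level $L$, and the inverse limit of zero modules is zero.

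The main obstacle is ensuring that the several natural identifications align into a single homeomorphic $\Z_p$-linear isomorphism rather than a mere abstract one, which requires tracking topologies through Example \ref{symmetric product 2}, Proposition \ref{comparison 2}, and Lemma \ref{prodiscrete cohomology 2}. A closely related subtlety is that the cofinal systems of open submodules used to define the prodiscrete cohomology on each side need to be compared: open $\Z_p[\t{G}\hat{\Gamma}_e(N)]$-submodules of $\t{Sym}^n(\Z_p^2, \rho_{\Z_p^2})$ and open $\Z_p[\Gamma_1(N)]$-submodules of the same underlying module may a priori yield different fundamental systems of neighbourhoods of $0$, but Proposition \ref{finitely generated - p-adic} ensures both coincide with the $p$-adic topology, so this subtlety is harmless.
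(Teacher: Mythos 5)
Your proposal is correct and follows essentially the same chain of identifications the paper uses: Example \ref{symmetric product 2} to sheafify, Proposition \ref{comparison 2} to compare the prodiscrete \'etale cohomology in degree $1$ with the group cohomology of $\Gamma_1(N)$, and Lemma \ref{prodiscrete cohomology 2} to upgrade $\t{H}^1$ to $\Hil^1$ with its topology. The paper's own proof is just two lines citing exactly these three ingredients, and in particular does not explicitly address degrees $0$ and $\geq 2$; your explicit treatment of those degrees (identifying invariants with sections of the locally constant sheaf for $*=0$, invoking Lemma \ref{cohomological dimension} on one side and Artin vanishing for a smooth affine curve on the other for $* \geq 2$) fills in what the paper leaves implicit but is not a genuinely different route. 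Your closing observation that the cofinal systems of open submodules for the two different monoid actions agree because Proposition \ref{finitely generated - p-adic} forces both topologies to be the $p$-adic one is the right way to see that the comparison is homeomorphic, not merely abstract.
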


\begin{proof}
The assertion follows from Lemma \ref{prodiscrete cohomology 2} and Proposition \ref{comparison 2} by the natural isomorphism
\begin{eqnarray*}
  \left( \underline{\t{Res}_{\t{G} \hat{\Gamma}_e(N)}^{\t{M}_2(\Z_p)} \left( \t{Sym}^n \left( \Z_p^2, \rho_{\Z_p^2} \right) \right)} \right)_{Y_1(N)} \cong \t{Sym}^n \left( \t{R}^1 (\pi_N)_* (\underline{\Z}{}_p)_{E_1(N)} \right)
\end{eqnarray*}
as profinite $\Z_p$-sheaf in Example \ref{symmetric product 2}.
\end{proof}

\subsection{Actions of the Absolute Galois Group and Hecke Operators}
\label{Actions of the Absolute Galois Group and Hecke Operators}

Henceforth, we fix a prime number $p$ dividing $N$. The natural projection $\hat{\Z} \twoheadrightarrow \Z_p$ gives a continuous monoid homomorphism $\t{G} \hat{\Gamma}_e(N) \to \Pi_0(p)$, and hence we regard a topological $R[\Pi_0(p)]$-module as a topological $R[\t{G} \hat{\Gamma}_e(N)]$-module. Let $(M,\rho)$ be a profinite $R[\Pi_0(p)]$-module, and $\Fil$ the profinite $R$-sheaf on $Y_1(N)$ associated to $(M,\rho)$. If $(M,\rho)$ is a finite $R[\t{G} \hat{\Gamma}_e(N)]$-module, then the action of $\t{Gal}(\overline{\Q}/\Q)$ on $Y_1(N)_{\overline{\Q}}$ gives a continuous action on the finite Abelian group $\t{H}_{\t{et}}^*(Y_1(N)_{\Q},\Fil)$ in a functorial way. In general, the action of $\t{Gal}(\overline{\Q}/\Q)$ on $Y_1(N)_{\overline{\Q}}$ gives a continuous action on $\Hil_{\t{et}}^*(Y_1(N)_{\Q},\Fil)$, because it is defined as the inverse limit of finite $R[\t{Gal}(\overline{\Q}/\Q)]$-modules by $R$-linear $\t{Gal}(\overline{\Q}/\Q)$-equivariant homomorphisms. The actions of $\t{Gal}(\overline{\Q}/\Q)$ on prodiscrete cohomologies of profinite $R$-sheaves associated to profinite $R[\Pi_0(p)]$-modules are functorial with respect to continuous $R$-linear $\Pi_0(p)$-equivariant homomorphisms.

\vspace{0.2in}
Let $A \in \t{GL}_2(\Q_p)$ satisfying $A^{\iota} \coloneqq \det(A) A^{-1} \in \Pi_0(p)$ with the orbit decomposition $\bigsqcup_{\theta = 1}^{m} \Gamma_1(N) A_{\theta}$ of the double coset $\Gamma_1(N) A \Gamma_1(N) \subset \t{GL}_2(\Q_p)$ with respect to the left action of $\Gamma_1(N)$. For each $(\gamma,\theta) \in \Gamma_1(N) \times (\N \cap [1,m])$, we put $A_{\theta} \gamma = \gamma_{\theta} A_{\delta(\gamma,\theta)}$ by a unique $(\gamma_{\theta},\delta(\gamma,\theta)) \in \Gamma_1(N) \times (\N \cap [1,m])$. For a $1$-cocycle $c \colon \Gamma_1(N) \to \v{(M,\rho)}$, we define $A c \colon \Gamma_1(N) \to \v{(M,\rho)}$ by setting
\begin{eqnarray*}
  (A c)(\gamma) \coloneqq \sum_{\theta = 1}^{m} \rho \left( A_{\theta}^{\iota}, c(\gamma_{\theta}) \right) \in \v{M}
\end{eqnarray*}
for each $\gamma \in \Gamma_1(N)$. Then $A c$ is a $1$-cocycle, and its cohomology class is independent of the presentation of the double coset decomposition. The action of $A$ induces an $R$-linear endomorphism on $\t{H}^1(\Gamma_1(N),(M,\rho))$, and we call it {\it the double coset operator associated to $A$}. For a prime number $\ell$, we denote by $T_{\ell}$ the $R$-linear endomorphism on $\t{H}^1(\Gamma_1(N),(M,\rho))$ given by the double coset operator associated to 
$
  \left(
    \begin{array}{cc}
      1 & 0 \\
      0 & \ell
    \end{array}
  \right)
$.
 For each $n \in \N \backslash \ens{0}$ with a prime factorisation $n = \prod_{j = 1}^{d} \ell_j^{s_j}$, we set $T_n \coloneqq \prod_{j = 1}^{d} T_{\ell_j}^{s_j}$. For each $\overline{n} \in (\Z/N \Z)^{\times}$, we denote by $\langle \overline{n} \rangle$ the $R$-linear endomorphism on $\t{H}^1(\Gamma_1(N),(M,\rho))$ given by the double coset operator associated to a $D(n) \in \t{SL}_2(\Z)$ of the form 
$
  \left(
    \begin{array}{cc}
      a & b \\
      c & n
    \end{array}
  \right)
$
 with $c \in N \Z$, where $n \in \Z$ is a representative of $\overline{n}$. The operator $\langle \overline{n} \rangle$ is independent of the choice of $D(n)$ for any $\overline{n} \in (\Z/N \Z)^{\times}$. As in \S \ref{p-adic Modular Forms and Hecke Algebras}, we also call these operators {\it Hecke operators}. The actions of Hecke operators on the first cohomology group (and the module of $1$-cocycles if we fix presentations of double coset decompositions) are functorial with respect to continuous $R$-linear $\Pi_0(p)$-equivariant homomorphisms.

\begin{prp}
\label{Galois - Hecke}
For any first countable profinite $R[\Pi_0(p)]$-module $(M,\rho)$, the natural homeomorphic $R$-linear isomorphism
\begin{eqnarray*}
  \Hil_{\t{et}}^1 \left( Y_1(N)_{\overline{\Q}}, \underline{(M,\rho)} \right)_{Y_1(N)} \cong \t{H}^1 \left( \Gamma_1(N), \t{Res}_{\Gamma_1(N)}^{\Pi_0(p)}(M,\rho) \right)
\end{eqnarray*}
in Proposition \ref{comparison 2} gives an $R$-linear $\t{Gal}(\overline{\Q}/\Q)$-equivariant action of Hecke operators on the left hand side.
\end{prp}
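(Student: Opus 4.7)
The plan is to reduce to finite $R[\Pi_0(p)]$-modules and then realise every Hecke operator geometrically as a pushforward--pullback along a Hecke correspondence of modular curves defined over $\Q$, after which Galois-equivariance is automatic. Since both the $\t{Gal}(\overline{\Q}/\Q)$-action (defined by passage to the inverse limit of finite $R$-modules) and each Hecke operator (constructed functorially from continuous $R$-linear $\Pi_0(p)$-equivariant maps and commuting with inverse limits of profinite $R$-modules) are compatible with inverse limits, and $(M,\rho)$ is first countable, it suffices to verify the statement when $(M,\rho)$ is a finite $R[\Pi_0(p)]$-module; the general case then follows by taking the inverse limit over a cofinal countable family in $\Opn_{(M,\rho)}$.

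Fix such a finite $(M,\rho)$ and an $A \in \t{GL}_2(\Q_p)$ with $A^{\iota} \in \Pi_0(p)$. First I would introduce the intermediate modular curve $Y_A$ over $\Q$ classifying $\Gamma_1(N)$-level structures together with an auxiliary structure encoding the element $A$ (for $A = \t{diag}(1,\ell)$ with $\ell \neq p$ coprime to $N$, this is the usual $Y_1(N;\ell)$; for $\ell \mid N$ or $\ell = p$ one uses the analogous moduli of pairs $(E,\beta,C)$ with $C$ an auxiliary cyclic subgroup of order $\ell$). This curve comes equipped with two finite étale degeneracy maps $\pi_1,\pi_A \colon Y_A \to Y_1(N)$ defined over $\Q$; over $\C$ the curve $Y_A$ is uniformised by $(\Gamma_1(N) \cap A^{-1} \Gamma_1(N) A) \backslash \Hlf$, and $\pi_1,\pi_A$ correspond respectively to the inclusion of this subgroup into $\Gamma_1(N)$ and to its conjugate embedding by $A$. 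The action of $A^{\iota} \in \Pi_0(p)$ on $(M,\rho)$ supplies a morphism of locally constant \'etale sheaves $\pi_A^* \bigl(\underline{(M,\rho)}\bigr)_{Y_1(N)} \to \pi_1^* \bigl(\underline{(M,\rho)}\bigr)_{Y_1(N)}$ on $Y_A$, again defined over $\Q$.

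I would then define a Hecke operator on $\t{H}_{\t{et}}^1(Y_1(N)_{\overline{\Q}}, (\underline{(M,\rho)})_{Y_1(N)})$ as the composition
\[
  \t{H}_{\t{et}}^1(Y_1(N)_{\overline{\Q}}, \underline{(M,\rho)}) \xrightarrow{\pi_A^*} \t{H}_{\t{et}}^1(Y_{A,\overline{\Q}}, \pi_A^* \underline{(M,\rho)}) \to \t{H}_{\t{et}}^1(Y_{A,\overline{\Q}}, \pi_1^* \underline{(M,\rho)}) \xrightarrow{(\pi_1)_*} \t{H}_{\t{et}}^1(Y_1(N)_{\overline{\Q}}, \underline{(M,\rho)}),
\]
where $(\pi_1)_*$ is the trace map along the finite \'etale cover $\pi_1$. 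Because $Y_A$, both maps $\pi_1,\pi_A$, and the sheaf morphism $\pi_A^*\underline{(M,\rho)} \to \pi_1^* \underline{(M,\rho)}$ are all defined over $\Q$, this composition is tautologically $\t{Gal}(\overline{\Q}/\Q)$-equivariant.

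It then remains to identify this geometric operator, under the isomorphism of Proposition \ref{comparison 2}, with the cocycle-theoretic double coset operator $A$ on $\t{H}^1(\Gamma_1(N),\t{Res}_{\Gamma_1(N)}^{\Pi_0(p)}(M,\rho))$. Under the comparison between étale cohomology of $Y_A$ and group cohomology of $\Gamma_A \coloneqq \Gamma_1(N) \cap A^{-1} \Gamma_1(N) A$, the map $\pi_A^*$ followed by the twist becomes the composition of restriction from $\Gamma_1(N)$ to $\Gamma_A$ with the conjugation-by-$A$ isomorphism on coefficients, while $(\pi_1)_*$ corresponds to the transfer (corestriction) from $\Gamma_A$ back to $\Gamma_1(N)$; evaluating on a cocycle and writing out the cosets $\Gamma_A \backslash \Gamma_1(N)$ using $\Gamma_1(N) A \Gamma_1(N) = \bigsqcup_\theta \Gamma_1(N) A_\theta$ reproduces exactly the formula $(Ac)(\gamma) = \sum_\theta \rho(A_\theta^\iota, c(\gamma_\theta))$. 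The principal difficulty lies in this last bookkeeping---arranging a moduli-theoretic model of $Y_A$ over $\Q$ for which the two degeneracy maps are visibly rational, and matching the sheaf twist by $A^{\iota}$ on the geometric side with the coefficient action occurring in the cocycle formula. Once this identification is established for $A = \t{diag}(1,\ell)$ and for the matrices $D(n)$ defining $\langle \overline{n} \rangle$, Galois-equivariance of each Hecke operator follows from that of the geometric composition, completing the proof.
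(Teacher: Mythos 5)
Your proposal takes essentially the same route as the paper's proof: reduce to finite coefficient modules using the inverse-limit definition of prodiscrete cohomology, realise each Hecke operator as a pullback--pushforward along a Hecke correspondence on modular curves defined over $\Q$ (so that Galois-equivariance is automatic), and then identify the geometric operator with the double coset operator under the comparison isomorphism by explicit cocycle bookkeeping. The only cosmetic difference is that the paper, imitating Deligne, works out only the case of $T_\ell$ (with the explicit moduli interpretation of $Y_1(N,\ell)$ via triads $(E,\beta,C)$ and the adjoint map $\t{ad}_{A_\ell}$ between the profinite Galois groups of the towers) and leaves the diamond operators implicit, whereas you state the construction uniformly for an arbitrary $A \in \t{GL}_2(\Q_p)$ with $A^\iota \in \Pi_0(p)$; either formulation is fine, since the matrices $D(n)$ defining $\langle\overline{n}\rangle$ lie in $\t{SL}_2(\Z)$ and yield automorphisms of $Y_1(N)$ rather than genuine correspondences.
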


\begin{proof}
The prodiscrete cohomologies are defined as the inverse limits of cohomologies corresponding to a finite $R[\Pi_0(p)]$-module and a sheaf of finite Abelian $p$-groups on $Y_1(N)_{\t{\'et}}$ associated to it. The isomorphism in the assertion is given as the inverse limit of isomorphisms between group cohomologies of finite $R[\Pi_0(p)]$-modules and \'etale cohomologies of the associated sheaves of finite Abelian $p$-groups on $Y_1(N)_{\t{\'et}}$. Transition maps on the right hand is Hecke-equivariant, while those on the left hand side is $\t{Gal}(\overline{\Q}/\Q)$-equivariant. Therefore it suffices to verify the assertion in the case where $(M,\rho)$ is a finite $R[\Pi_0(p)]$-module. Imitating \cite{Del69} Proposition 3.18, we compare the action of Hecke operators on the right hand side with a $\t{Gal}(\overline{\Q}/\Q)$-equivariant endomorphisms on the left hand side induced by a Hecke correspondence. Let $\ell$ be a prime number. We deal only with $T_{\ell}$. We denote by $\Fil$ the sheaf of finite Abelian groups on $Y_1(N)_{\t{\'et}}$ associated to $(M,\rho)$. The construction of Hecke operators on the prodiscrete cohomology of $\Fil$ by a Hecke correspondence include the following three steps: First, we define a correspondence as the graph associated to two projections $\t{pr}_1, \t{pr}_2 \colon Y_1(N,\ell) \twoheadrightarrow Y_1(N)$ from a curve $Y_1(N,\ell)$ with a moduli interpretation. Secondly, we construct a natural morphism $\t{pr}_1^* \Fil \to \t{pr}_2^* \Fil$, and give a definition of an operator $T_{\ell}$ acting on the \'etale cohomology of $\Fil$. Finally, we verify that the isomorphism in the assertion is $T_{\ell}$-equivariant.

\vspace{0.2in}
Firstly, we set
\begin{eqnarray*}
  \t{G} \hat{\Gamma}_e(N,\ell) & \coloneqq &
  \Set
  {
    \left(
      \begin{array}{cc}
        a & b \\
        c & d
      \end{array}
    \right)
    \in \t{GL}_2(\hat{\Z})
  }
  {
    \begin{array}{l}
      b \in \ell \hat{\Z} \\
      c \in N \hat{\Z} \\
      d \in (1 + N) \hat{\Z}
    \end{array}
  }.
\end{eqnarray*}
Let $Y_1(N,\ell)$ denote the algebraic curve over $\Q$ obtained as the quotient of the finite \'etale covering $Y(N \ell) \twoheadrightarrow Y_1(N)$ corresponding to the subgroup $\t{G} \hat{\Gamma}_e(N,\ell) \subset \t{G} \hat{\Gamma}_e(N)$. Then $Y_1(N,\ell)$ is a moduli of triads $(E,\beta,C)$ of an elliptic curve $E$, a projection $\beta \colon E[N] \twoheadrightarrow (\underline{\Z/N \Z})$ between group schemes, and a cyclic subgroup $C \subset E[\ell]$ of order $\ell$ with $C \cap \ker(\beta) = \emptyset$. Let $\t{pr}_1$ denote the canonical projection $Y_1(N,\ell) \twoheadrightarrow Y_1(N)$, which corresponds to the natural transform $(E,\beta,C) \mapsto (E,\beta)$ between moduli. As in \S \ref{Profinite Zp Sheaves on Modular Curves}, we identify $Y_1(N)$ with the moduli of pairs $(E,\iota)$ of an elliptic curve $E$ and a closed immersion $\iota \colon \G_{\t{m}}[N] \hookrightarrow E[N]$ between group schemes. Similarly, $Y_1(N,\ell)$ is identified with the moduli of triads $(E,\iota,C)$ of an elliptic curve $E$, a closed immersion $\iota \colon \G_{\t{m}}[N] \hookrightarrow E[N]$ between group schemes, and a cyclic subgroup $C \subset E[\ell]$ of order $\ell$ with $C \cap \im(\iota) = \ens{0}$. Then the projection $\t{pr}_1$ is given by the natural transform $(E,\iota,C) \mapsto (E,\iota)$ between moduli. There is another projection $\t{pr}_2 \colon Y_1(N,\ell) \twoheadrightarrow Y_1(N)$ given by the natural transform $(E,\iota,C) \mapsto (E/C,\iota + C)$ between moduli. For each triad $(E,\beta,C)$ of an elliptic curve $E$, a projection $\beta \colon E[N] \twoheadrightarrow (\underline{\Z/N \Z})$ between group schemes, and a cyclic subgroup $C \subset E[\ell]$ of order $\ell$ with $C \cap \ker(\beta) = \ens{0}$, we denote by $\beta/C$ the projection $(E/C)[N] \twoheadrightarrow (\underline{\Z/N \Z})$ between group schemes determined by the condition that $(E/C,\beta/C)$ is the image of $(E,\beta,C)$. We consider the correspondence $\t{pr}_1 \times \t{pr}_2 \colon Y_1(N,\ell) \to Y_1(N) \times_{\Q} Y_1(N)$. Before that, we calculate the difference of the two embeddings $\eta_1, \eta_2 \colon \t{G} \hat{\Gamma}_e(N,\ell) \hookrightarrow \t{G} \hat{\Gamma}_e(N)$ induced by $\t{pr}_1$ and $\t{pr}_2$. Let $n \in \N \backslash \ens{0}$. We consider the finite groups
\begin{eqnarray*}
  \t{G} \Gamma_e(N,\ell,n) & \coloneqq &
  \Set
  {
    \left(
      \begin{array}{cc}
        \overline{a} & \overline{b} \\
        \overline{c} & \overline{d}
      \end{array}
    \right)
    \in \t{GL}_2(\Z/N \ell n \Z)
  }
  {
  \begin{array}{l}
    \overline{b} \in \ell \Z/N \ell n \Z \\
    \overline{c} \in N (\Z/N \ell n \Z) \\
    \overline{d} \in 1 + N (\Z/N \ell n \Z)
  \end{array}
  } \\
  \t{G} \Gamma_e(N,1,n) & \coloneqq &
  \Set
  {
    \left(
      \begin{array}{cc}
        \overline{a} & \overline{b} \\
        \overline{c} & \overline{d}
      \end{array}
    \right)
    \in \t{GL}_2(\Z/Nn \Z)
  }
  {
  \begin{array}{l}
    \overline{c} \in N (\Z/Nn \Z) \\
    \overline{d} \in 1 + N (\Z/Nn \Z)
  \end{array}
  }.
\end{eqnarray*}
We put 
$
  A_{\ell} \coloneqq
  \left(
    \begin{array}{cc}
      1 & 0 \\
      0 & \ell
    \end{array}
  \right)
  \in \t{G} \hat{\Gamma}_e(N)
$
. The injective continuous group homomorphism
\begin{eqnarray*}
  \t{ad}_{A_{\ell}} \colon \t{G} \hat{\Gamma}_e(N,\ell) & \to & \t{G} \hat{\Gamma}_e(N) \\
  \left(
    \begin{array}{cc}
      a & b \\
      c & d
    \end{array}
  \right)
  & \mapsto &
  \left(
    \begin{array}{cc}
      a & \ell^{-1} b \\
      \ell c & d
    \end{array}
  \right) =
  A_{\ell}
  \left(
    \begin{array}{cc}
      a & b \\
      c & d
    \end{array}
  \right)
  A_{\ell}^{-1}.
\end{eqnarray*}
induces a well-defined group homomorphism
\begin{eqnarray*}
  \t{ad}_{A_{\ell},n} \colon \t{G} \Gamma_e(N,\ell,n) & \to & \t{G} \Gamma_e(N,1,n) \\
  \left(
    \begin{array}{cc}
      a + N \ell n \Z & b + N \ell n \Z \\
      c + N \ell n \Z & d + N \ell n \Z
    \end{array}
  \right)
  & \mapsto &
  \left(
    \begin{array}{cc}
      a + Nn \Z & \ell^{-1} b + Nn \Z \\
      \ell c + Nn \Z & d + Nn \Z
    \end{array}
  \right).
\end{eqnarray*}
We recall that the right action of $\t{GL}_2(\Z/N \ell n \Z)$ (resp.\ $\t{GL}_2(\Z/Nn \Z)$) on $Y(N \ell n)$ (resp.\ $Y(Nn)$) is given by
\begin{eqnarray*}
  (E,(\alpha_1,\alpha_2))
  \left(
    \begin{array}{cc}
      a + N \ell n \Z & b + N \ell n \Z \\
      c + N \ell n \Z & d + N \ell n \Z
    \end{array}
  \right)^{\t{op}}
  = (E, a \alpha_1 + c \alpha_2, b \alpha_1 + d \alpha_2).
\end{eqnarray*}
The canonical projection $Y(N \ell n) \twoheadrightarrow Y_1(N,\ell)$ (resp.\ $Y(Nn) \twoheadrightarrow Y_1(N)$) is given by the natural transform
\begin{eqnarray*}
  & & (E,(\alpha_1,\alpha_2)) \mapsto \left( E, \beta_{n \ell \alpha_1, n \ell \alpha_2} \colon a_1 (n \ell \alpha_1) + a_2 (n \ell \alpha_2) \mapsto a_2, \langle Nn \alpha_2 \rangle \right) \\
  & \t{resp.} & (E,(\alpha_1,\alpha_2)) \mapsto \left( E, \beta_{n \ell \alpha_1, n \ell \alpha_2} \colon a_1 (n \alpha_1) + a_2 (n \alpha_2) \mapsto a_2 \right)
\end{eqnarray*}
between moduli, and hence the right action induces an isomorphism
\begin{eqnarray*}
  & & \t{Gal} \left( Y(N \ell n) \twoheadrightarrow Y_1(N,\ell) \right) \cong \t{G} \Gamma_e(N,\ell,n) \\
  & \t{resp.} & \t{Gal} \left( Y(Nn) \twoheadrightarrow Y_1(N) \right) \cong \t{G} \Gamma_e(N,1,n).
\end{eqnarray*}
The composite of $\t{pr}_2$ and the canonical projection $Y(N \ell n) \twoheadrightarrow Y_1(N,\ell)$ is given by the natural transform
\begin{eqnarray*}
  (E,(\alpha_1,\alpha_2)) \mapsto \left( E/\langle Nn \alpha_2 \rangle, \beta_{n \ell \alpha_1, n \ell \alpha_2}/\langle Nn \alpha_2 \rangle \right)
\end{eqnarray*}
between moduli, and hence the finite \'etale morphism $\sigma_n \colon Y(N \ell n) \twoheadrightarrow Y(Nn)$ given by the natural transform
\begin{eqnarray*}
  (E,(\alpha_1,\alpha_2)) \mapsto \left( E/\langle Nn \alpha_2 \rangle, (\ell \alpha_1 + \langle Nn \alpha_2 \rangle, \alpha_2 + \langle Nn \alpha_2 \rangle) \right)
\end{eqnarray*}
between moduli makes the diagram
\begin{eqnarray*}
\begin{CD}
  Y(N \ell n) @>{\sigma_n}>> Y(Nn) \\
  @VVV                       @VVV \\
  Y_1(N,\ell) @>{\t{pr}_2}>> Y_1(N)
\end{CD}
\end{eqnarray*}
commutes. For any 
$
  \left(
    \begin{array}{cc}
      a + N \ell n \Z & b + N \ell n \Z \\
      c + N \ell n \Z & d + N \ell n \Z
    \end{array}
  \right)
  \in \t{G} \Gamma_e(N,\ell,n)
$,
 noting $d + \ell \Z \in (\Z/\ell \Z)^{\times}$, we have an equality
\begin{eqnarray*}
  & & \sigma_n
  \left(
    (E,(\alpha_1,\alpha_2))
  \left(
    \begin{array}{cc}
      a + N \ell n \Z & b + N \ell n \Z \\
      c + N \ell n \Z & d + N \ell n \Z
    \end{array}
  \right)^{\t{op}}
  \right)
  = \sigma_n \left( (E, (a \alpha_1 + c \alpha_2, b \alpha_1 + d \alpha_2)) \right) \\
  & = & \left( E/\langle dNn \alpha_2 \rangle, \left( (\ell a \alpha_1 + \ell c \alpha_2) + \langle dNn \alpha_2 \rangle, (b \alpha_1 + d \alpha_2) + \langle dNn \alpha_2 \rangle \right) \right) \\
  & = & \left( E/\langle Nn \alpha_2 \rangle, \left( (a(\ell \alpha_1) + (\ell c) \alpha_2) + \langle dNn \alpha_2 \rangle, (\ell^{-1}b (\ell \alpha_1) + d \alpha_2) + \langle dNn \alpha_2 \rangle \right) \right) \\
  & = & \left( E/\langle Nn \alpha_2 \rangle, (\ell \alpha_1 + \langle Nn \alpha_2 \rangle, \alpha_2 +  \langle Nn \alpha_2 \rangle) \right)
  \left(
    \begin{array}{cc}
      a + Nn \Z & \ell^{-1} b + Nn \Z \\
      \ell c + Nn \Z & d + Nn \Z
    \end{array}
  \right)^{\t{op}} \\
  & = & \sigma_n((E,(\alpha_1,\alpha_2))) \ \t{ad}_{A_{\ell},n}
  \left(
      \left(
      \begin{array}{cc}
        a + N \ell n \Z & b + N \ell n \Z \\
        c + N \ell n \Z & d + N \ell n \Z
      \end{array}
    \right)
  \right)^{\t{op}},
\end{eqnarray*}
and hence the group homomorphism $\t{G} \Gamma_e(N,\ell,n) \to \t{G} \Gamma_e(N,1,n)$ obtained as the composite of the natural isomorphisms $\t{Gal}(\varpi) \cong \t{G} \Gamma_e(N,\ell,n)$ and $\t{Gal}(Y(Nn) \twoheadrightarrow Y_1(N)) \cong \t{G} \Gamma_e(N,1,n)$ and $(\sigma_n)_* \colon \t{Gal}(\varpi) \to \t{Gal}(Y(Nn) \twoheadrightarrow Y_1(N))$ coincides with $\t{ad}_{A_{\ell},n}$. As a consequence, the homeomorphic group isomorphism
\begin{eqnarray*}
  \sigma \colon \varprojlim_{n \in \N} \t{Gal} \left( Y(N \ell n!) \twoheadrightarrow Y_1(N,\ell) \right) \stackrel{\sim}{\to} \varprojlim_{n \in \N} \t{Gal}(Y(Nn!) \twoheadrightarrow Y_1(N))
\end{eqnarray*}
induced by the compatible system $((\sigma_{n!})_*)_{n \in \N}$ corresponds to the inverse limit of the compatible system $(\t{ad}_{A_{\ell},n!})_{n \in \N}$ through the natural isomorphisms above, and hence is compatible with $\t{ad}_{A_{\ell}}$ through the natural homeomorphic isomorphisms
\begin{eqnarray*}
  \t{G} \hat{\Gamma}_e(N,\ell) & \cong & \varprojlim_{n \in \N} \t{G} \hat{\Gamma}_e(N,\ell,n!) \\
  \t{G} \hat{\Gamma}_e(N) & \cong & \varprojlim_{n \in \N} \t{G} \hat{\Gamma}_e(N,1,n!).
\end{eqnarray*}
Secondly, we have natural isomorphisms
\begin{eqnarray*}
  \t{pr}_1^* \Fil & \cong & \underline{\t{Res}_{\left( \t{G} \hat{\Gamma}_e(N,\ell), \eta_1 \right)}^{\t{G} \hat{\Gamma}_e(N)} \left( \t{Res}_{\t{G} \hat{\Gamma}_e(N)}^{\Pi_0(p)}(M,\rho) \right)} \\
  & = & \underline{\t{Res}_{\left( \t{G} \hat{\Gamma}_e(N,\ell), \iota_N \circ \eta_1 \right)}^{\t{G} \hat{\Gamma}_e(N,\ell)} \left( \t{Res}_{\t{G} \hat{\Gamma}_e(N,\ell)}^{\Pi_0(p)}(M,\rho) \right)} \\
  \t{pr}_2^* \Fil & \cong & \underline{\t{Res}_{\left( \t{G} \hat{\Gamma}_e(N,\ell), \eta_2 \right)}^{\t{G} \hat{\Gamma}_e(N)} \left( \t{Res}_{\t{G} \hat{\Gamma}_e(N)}^{\Pi_0(p)}(M,\rho) \right)} \\
  & = & \underline{\t{Res}_{\left( \t{G} \hat{\Gamma}_e(N,\ell), \iota_N \circ \eta_2 \right)}^{\t{G} \hat{\Gamma}_e(N)} \left( \t{Res}_{\t{G} \hat{\Gamma}_e(N)}^{\Pi_0(p)}(M,\rho) \right)} \\
  & = & \underline{\t{Res}_{\left( \t{G} \hat{\Gamma}_e(N,\ell), \iota_N \circ \eta_1 \right)}^{\t{G} \hat{\Gamma}_e(N,\ell)} \left( \t{Res}_{\left( \t{G} \hat{\Gamma}_e(N,\ell), \t{ad}_{A_{\ell}} \right)}^{\t{G} \hat{\Gamma}_e(N)} \left( \t{Res}_{\t{G} \hat{\Gamma}_e(N)}^{\Pi_0(p)}(M,\rho) \right) \right)}.
\end{eqnarray*}
By the functoriality of the correspondence $(M,\rho) \rightsquigarrow (\underline{\t{Res}_{(\t{G} \hat{\Gamma}_e(N,\ell), \iota_N \circ \eta_1)}^{\t{G} \hat{\Gamma}_e(N,\ell)}(M,\rho)})_{Y_1(N,\ell)}$, the $R$-linear $\t{G} \hat{\Gamma}_e(N,\ell)$-equivariant homomorphism
\begin{eqnarray*}
  \varphi \colon \t{Res}_{\left( \t{G} \hat{\Gamma}_e(N,\ell), \t{ad}_{A_{\ell}} \right)}^{\t{G} \hat{\Gamma}_e(N)} \left( \t{Res}_{\t{G} \hat{\Gamma}_e(N)}^{\Pi_0(p)}(M,\rho) \right) & \to & \t{Res}_{\t{G} \hat{\Gamma}_e(N,\ell)}^{\Pi_0(p)}(M,\rho) \\
  m & \mapsto & \rho(A_{\ell}^{\iota},m)
\end{eqnarray*}
induces a morphism $\underline{\varphi} \colon \t{pr}_2^* \Fil \to \t{pr}_1^* \Fil$. We define an $R$-linear endomorphism $T_{\ell}$ on $\Hil_{\t{et}}^1(Y_1(N)_{\overline{\Q}}, \Fil)$ as the composite
\begin{eqnarray*}
  & & \Hil_{\t{et}}^1 \left( Y_1(N)_{\overline{\Q}}, \Fil \right) \xrightarrow[]{\t{pr}_2^*} \Hil_{\t{et}}^1 \left( Y_1(N,\ell)_{\overline{\Q}}, \t{pr}_2^* \Fil \right) \xrightarrow[]{\Hil_{\t{et}}^1 \left( \underline{\varphi} \right)} \Hil_{\t{et}}^1 \left( Y_1(N,\ell)_{\overline{\Q}}, \t{pr}_1^* \Fil \right) \\
  & \xrightarrow[]{(\t{pr}_1)_*} & \Hil_{\t{et}}^1 \left( Y_1(N)_{\overline{\Q}}, \Fil \right),
\end{eqnarray*}
where $(\t{pr}_1)_*$ is the trace map associated to the finite Galois covering $\t{pr}_1$.

\vspace{0.2in}
Finally, we verify that the isomorphism in the assertion is $T_{\ell}$-equivariant. Put
\begin{eqnarray*}
  \Gamma_1(N,\ell) \coloneqq
  \left(
    \begin{array}{cc}
      1 + N \Z & \ell \Z \\
      N \Z & 1 + N \Z
    \end{array}
  \right)
  \cap \t{SL}_2(\Z) = \Gamma_1(N) \cap A_{\ell}^{-1} \Gamma_1(N) A_{\ell} \subset \t{SL}_2(\Q).
\end{eqnarray*}
Take a presentation $\Gamma_1(N) A_{\ell} \Gamma_1(N) = \bigsqcup_{\theta = 1}^{m} \Gamma_1(N) g_{\theta}$ of the right coset decomposition. We have
\begin{eqnarray*}
  \Gamma_1(N)/\Gamma_1(N,\ell) = \bigsqcup_{\theta = 1}^{m} (g_{\theta}^{-1} A_{\ell}) \Gamma_1(N,\ell).
\end{eqnarray*}
For each $\gamma \in \Gamma_1(N)$ and $\theta \in \N \cap [1,m]$, let $\delta_{\gamma,\theta} \in \N \cap [1,m]$ denote a unique integer with $g_{\theta} \gamma \in \Gamma_1(N) g_{\delta(\gamma,\theta)}$, and put $\gamma_{\theta} \coloneqq g_{\theta} \gamma g_{\delta(\gamma,\theta)}^{-1} \in \Gamma_1(N)$. The trace map $(\t{pr}_1)_*$ corresponds to the $R$-linear homomorphism
\begin{eqnarray*}
  \t{Tr} \colon \t{H}^1 \left( \Gamma_1(N,\ell), \t{Res}_{\Gamma_1(N,\ell)}^{\Gamma_1(N)} \left( \t{Res}_{\Gamma_1(N)}^{\Pi_0(p)}(M,\rho) \right) \right) \to \t{H}^1 \left( \Gamma_1(N), \t{Res}_{\Gamma_1(N)}^{\Pi_0(p)}(M,\rho) \right)
\end{eqnarray*}
sending the cohomology class of a $1$-cocycle
\begin{eqnarray*}
  c \colon \Gamma_1(N,\ell) \to \t{Res}_{\Gamma_1(N,\ell)}^{\Gamma_1(N)}(\t{Res}_{\Gamma_1(N)}^{\Pi_0(p)}(M,\rho)))
\end{eqnarray*}
to the cohomology class of the $1$-cocycle
\begin{eqnarray*}
  \t{Tr}(c) \colon \Gamma_1(N) & \to & \t{Res}_{\Gamma_1(N)}^{\Pi_0(p)}(M,\rho) \\
  \gamma & \mapsto & \sum_{\theta = 1}^{m} \rho \left( g_{\theta}^{-1} A_{\ell}, c \left( \t{ad}_{A_{\ell}}^{-1}(\gamma_{\theta}) \right) \right).
\end{eqnarray*}
Therefore the endomorphism on $\t{H}^1(\Gamma_1(N), \t{Res}_{\Gamma_1(N)}^{\Pi_0(p)}(M,\rho))$ induced by the action of $T_{\ell}$ on $\Hil_{\t{et}}^1(Y_1(N)_{\overline{\Q}},\Fil)$ through the isomorphism in the assertion sends the cohomology class of a $1$-cocycle $c \colon \Gamma_1(N) \to \t{Res}_{\Gamma_1(N)}^{\Pi_0(p)}(M,\rho)$ to a $1$-cocycle $c' \colon \Gamma_1(N) \to \t{Res}_{\Gamma_1(N)}^{\Pi_0(p)}(M,\rho)$ given by setting
\begin{eqnarray*}
  & & c'(\gamma) \coloneqq \t{Tr} \left( \varphi \circ c \circ \t{ad}_{A_{\ell}} \right)(\gamma) = \sum_{\theta = 1}^{m} \rho \left( g_{\theta}^{-1} A_{\ell}, \left( \varphi \circ c \circ \t{ad}_{A_{\ell}} \right) \left( \t{ad}_{A_{\ell}}^{-1}(\gamma_{\theta}) \right) \right) \\
  & = & \sum_{\theta = 1}^{m} \rho \left( g_{\theta}^{-1} A_{\ell}, \rho \left( A_{\ell}^{\iota}, c \left( \t{ad}_{A_{\ell}} \left( \t{ad}_{A_{\ell}}^{-1}(\gamma_{\theta}) \right) \right) \right) \right) = \sum_{\theta = 1}^{m} \rho \left( g_{\theta}^{-1} A_{\ell} A_{\ell}^{\iota}, c(\gamma_{\theta}) \right) \\
  & = & \sum_{\theta = 1}^{m} \rho \left( \ell g_{\theta}^{-1}, c(\gamma_{\theta})) \right) = \sum_{\theta = 1}^{m} \rho \left( g_{\theta}^{\iota}, c(\gamma_{\theta})) \right) = T_{\ell}(c)(\gamma)
\end{eqnarray*}
for each $\gamma \in \Gamma_1(N)$. Thus the isomorphism in the assertion is $T_{\ell}$-equivariant.
\end{proof}

\section{Interpolation of \'Etale Cohomologies}
\label{Interpolation of Etale Cohomologies}

Henceforth, we only consider the case $p \neq 2$. In this section, we interpolate the family $(\t{Sym}^{k-2}(\Z_p^2,\rho_{\Z_p^2}))_{k = 2}^{\infty}$ along weights $k \geq 2$. Their scalar extensions by $\Q_p$ are irreducible $\Q_p$-linear representations of $\Pi_0(p)$ of pairwise distinct dimensions. In order to compare them with each other, we construct infinite dimensional extensions of them, which share the underlying topological $\Z_p$-module $\Z_p^{\N}$.

\subsection{Interpolation along the Weight Spaces}
\label{Interpolation along the Weight Spaces}

We construct a profinite $\Z_p[\Pi_0(p)]$-module interpolating profinite $\Z_p[\Pi_0(p)]$-modules $(\t{Sym}^n(\Z_p^2,\rho_{\Z_p^2}))_{n = 0}^{\infty}$ along weights $n + 2 \in \N$. As is dealt with in \S \ref{Actions of the Absolute Galois Group and Hecke Operators}, an action of $\Pi_0(p)$ plays an important role for a geometric construction of a family of Galois representations. To begin with, we extend several functions on $\N$ to the weight spaces $\Z_p$ and $W = \t{Hom}^{\t{cont}}(\Z_p^{\times},\Z_p^{\times})$. For each $(n,m) \in \Z_p \times \N$, we set
\begin{eqnarray*}
  \left(
    \begin{array}{c}
      n \\
      m
    \end{array}
  \right)
  \coloneqq \frac{1}{m!} \prod_{h = 0}^{m-1} (n-h).
\end{eqnarray*}
It gives a unique continuous function $\Z_p \times \N \to \Q_p$ extending the binomial coefficient function on the dense subset $\set{(n,m) \in \N \times \N}{n \geq m} \subset \Z_p \times \N$. Since the image of the dense subset $\set{(n,m) \in \N \times \N}{n \geq m}$ by the binomial coefficient function is $\N \subset \Z_p$, the extended binomial coefficient gives a continuous function $\Z_p \times \N \to \Z_p$. For any $(d,n) \in (1 + p \Z_p) \times \Z_p$, we set
\begin{eqnarray*}
  d^n \coloneqq \sum_{h = 0}^{\infty}
  \left(
    \begin{array}{c}
      n \\
      h
    \end{array}
  \right)
  (d-1)^h.
\end{eqnarray*}
The infinite sum converges in $1 + p \Z_p$, and it gives a unique continuous function $(1 + p \Z_p) \times \Z_p \to 1 + p \Z_p$ extending the restriction $(1 + p \Z) \times \N \to 1 + p \Z$ of the exponential function $\Q^{\times} \times \Z \to \Q^{\times} \colon (d,n) \mapsto d^n$. Every $n \in \Z_p$ associates a continuous character $\chi_{p,n} \colon \Z_p^{\times} \to \Z_p^{\times}$ in the following way. To begin with, we define $\chi_{p,n} |_{1 + p \Z_p} \colon 1 + p \Z_p \to 1 + p \Z_p$ by setting $\chi_{p,n} |_{1 + p \Z_p}(d) \coloneqq d^n$ for each $d \in 1 + p \Z_p$. The infinite sum
\begin{eqnarray*}
  d(p) \coloneqq \sum_{h = 0}^{\infty}
  \left(
    \begin{array}{c}
      (p-1)^{-1} \\
      h
    \end{array}
  \right)
  (d^{p-1}-1)^h
\end{eqnarray*}
converges in $1 + \Z_p$ for any $d \in \Z_p^{\times}$, and the map
\begin{eqnarray*}
  (\cdot)(p) \colon \Z_p^{\times} & \to & 1 + p \Z_p \\
  d & \mapsto & d(p)
\end{eqnarray*}
is a continuous group homomorphism whose restriction on the subgroup $1 + p \Z_p \subset \Z_p^{\times}$ is the identity map. We define a continuous character $\chi_{p,n} \colon \Z_p^{\times} \to \Z_p^{\times}$ as the composite of $(\cdot)(p)$, $\chi_{p,n} |_{1 + p \Z_p}$, and the inclusion $1 + p \Z_p \hookrightarrow \Z_p^{\times}$. On the other hand, the canonical isomorphism
\begin{eqnarray*}
  \Z_p^{\times} & \stackrel{\sim}{\to} & \F_p^{\times} \times (1 + p \Z_p) \\
  d & \mapsto & (d + p \Z_p, d(p))
\end{eqnarray*}
gives a well-defined decomposition
\begin{eqnarray*}
  (\Z/(p-1) \Z) \times \Z_p & \stackrel{\sim}{\to} & W \\
  (n_0 + (p-1) \Z, n_p) & \mapsto & \chi_{n_0} \chi_{p,n_p-n_0}
\end{eqnarray*}
as a group, where we denote by $\chi_n \in W$ the continuous character $\Z_p^{\times} \to \Z_p^{\times} \colon d \mapsto d^n$ for each $n \in \Z$. For each $\chi \in W$, we denote by $(n^p(\chi), n_p(\chi))$ its image in $(\Z/(p-1) \Z) \times \Z_p$. By definition, we have $(n^p(\chi_{p,n}),n_p(\chi_{p,n})) = (0,n)$ for any $n \in \Z_p$, and $(n^p(\chi_n),n_p(\chi_n)) = (n + (p-1) \Z, n)$ for any $n \in \Z$. For each $(d,\chi) \in \Z_p^{\times} \times W$, we put $d^{\chi} \coloneqq \chi(d) \in \Z_p^{\times}$. For each $(\chi,m) \in W \times \N$, we set
\begin{eqnarray*}
  \left(
    \begin{array}{c}
      \chi \\
      m
    \end{array}
  \right)
  \coloneqq
  \left(
    \begin{array}{c}
      n_p(\chi) \\
      m
    \end{array}
  \right) \in \Z_p.
\end{eqnarray*}
It gives a unique continuous function $W \times \N \to \Z_p$ extending the binomial coefficient function on the dense subset $\set{(n,m) \in \N \times \N}{n \geq m}$ with respect to the embedding $\N \hookrightarrow W \colon n \mapsto \chi_n$. Henceforth, we often abbreviate $\chi_n$ to $n$ for each $n \in \Z$.

\begin{prp}
Let $\chi \in W$. For any $(A,\alpha,i) \in \Pi_0(p) \times \Z_p^{\N} \times \N$ with
$
  A = 
  \left(
    \begin{array}{cc}
      a & b \\
      c & d
    \end{array}
  \right)
$
 and $\alpha = (\alpha_j)_{j = 0}^{\infty}$, the infinite sum
\begin{eqnarray*}
  \rho_{\chi}(A,\alpha)_i \coloneqq \sum_{j = 0}^{\infty} \alpha_j \sum_{h = 0}^{\min \Ens{i,j}}
  \left(
    \begin{array}{c}
      i \\
      h
    \end{array}
  \right)
  \left( \prod_{m = i}^{i+j-h-1} (n_p(\chi)-m) \right) a^h b^{i-h} \frac{c^{j-h}}{(j-h)!} d^{\chi-i-j+h}
\end{eqnarray*}
converges in $\Z_p$, and the map
\begin{eqnarray*}
  \rho_{\chi} \colon \Pi_0(p) \times \Z_p^{\N} & \to & \Z_p^{\N} \\
  (A,\alpha) & \mapsto & \left( \rho_{\chi}(A,\alpha)_i \right)_{i \in \N}
\end{eqnarray*}
is continuous.
\end{prp}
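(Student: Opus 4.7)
The crucial first move is to recognise, via the extension of the binomial coefficient to a continuous map $\Z_p \times \N \to \Z_p$ recalled above, that
\begin{eqnarray*}
  \frac{1}{(j-h)!} \prod_{m = i}^{i+j-h-1} (n_p(\chi) - m) = \binom{n_p(\chi) - i}{j-h} \in \Z_p.
\end{eqnarray*}
Substituting this identity rewrites the $(j,h)$-summand as $\alpha_j \binom{i}{h} \binom{n_p(\chi) - i}{j-h} a^h b^{i-h} c^{j-h} d^{\chi - i - j + h}$, and every factor lies in $\Z_p$: the two binomial coefficients and $\alpha_j, a, b, c \in \Z_p$ are immediate, while $d^{\chi - i - j + h} = \chi(d) d^{h-i-j} \in \Z_p^{\times}$ since $d \in \Z_p^{\times}$.

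Convergence then follows from a single $p$-adic norm estimate: all factors other than $c^{j-h}$ have absolute value at most $1$, and $c \in p \Z_p$ gives $\v{c} \leq \v{p}$, so the $(j,h)$-summand has absolute value at most $\v{p}^{j-h}$. For fixed $i$ the inner sum contains at most $i+1$ terms, with $j - h \geq j - i$ whenever $j \geq i$, so the $j$-th term of the outer sum has absolute value at most $\v{p}^{j-i}$. The outer series therefore converges absolutely in $\Z_p$, and crucially this bound is uniform in $(A, \alpha) \in \Pi_0(p) \times \Z_p^{\N}$.

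For continuity, the product topology on $\Z_p^{\N}$ reduces the problem to continuity of each coordinate map $(A, \alpha) \mapsto \rho_\chi(A, \alpha)_i$. Each $J$-th partial sum is a finite combination of products of continuous operations---the coordinate projections $A \mapsto a, b, c, d$, the inversion $d \mapsto d^{-1}$ on $\Z_p^{\times}$, the continuous character $d \mapsto \chi(d)$ arising from $\chi \in W$, the product-topology projection $\alpha \mapsto \alpha_j$, and addition and multiplication in $\Z_p$---hence is continuous; the uniform tail bound of the previous paragraph then exhibits $\rho_\chi(\cdot, \cdot)_i$ as a uniform limit of continuous functions. The only genuinely nontrivial step is the algebraic simplification in the first paragraph, which places the problematic coefficient in $\Z_p$ rather than merely in $\Q_p$; without it one would have to absorb $1/(j-h)!$ into $c^{j-h}$ via the bound $v_p(c^n/n!) \geq n(p-2)/(p-1)$, at which point the running hypothesis $p \neq 2$ would become essential.
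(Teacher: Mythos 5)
Your proof is correct, and it takes a genuinely cleaner route than the paper's own argument. The paper proves convergence by estimating the norm of $c^{j-h}/(j-h)!$ directly: $v_p((j-h)!) \leq (j-h)/(p-1)$ and $\v{c} \leq \v{p}$ give $\v{c^{j-h}/(j-h)!} \leq \v{p}^{(j-h)(1 - \frac{1}{p-1})}$, and the exponent is positive only when $p \neq 2$ --- a hypothesis that is indeed in force in that section. Your reformulation
\begin{eqnarray*}
  \frac{1}{(j-h)!} \prod_{m = i}^{i+j-h-1} (n_p(\chi) - m) = \binom{n_p(\chi)-i}{j-h} \in \Z_p
\end{eqnarray*}
absorbs the troublesome factorial into a $p$-adic binomial coefficient that the paper has already shown to take values in $\Z_p$; this yields the strictly sharper and prime-independent bound $\v{p}^{j-h}$ for the $(j,h)$-summand, hence $\v{p}^{j-i}$ for the $j$-th outer term when $j \geq i$. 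Both bounds are uniform in $(A,\alpha)$, and both proofs then conclude continuity by exhibiting $\rho_\chi(\cdot,\cdot)_i$ as a uniform limit of continuous partial sums, though you spell this step out more carefully than the paper does. Your closing remark correctly identifies the trade-off: the paper's direct factorial estimate is what makes $p \neq 2$ essential at this particular step, whereas the binomial identity avoids it.
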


\begin{proof}
Let $(A,\alpha,i) \in \Pi_0(p) \times \Z_p^{\N} \times \N$ with
$
  A =
  \left(
    \begin{array}{cc}
      a & b \\
      c & d
    \end{array}
  \right)
$
 and $\alpha = (\alpha_j)_{j = 0}^{\infty}$. For any $j \in \N$, we have
\begin{eqnarray*}
  & & \max_{
  \t{\scriptsize $
    \begin{array}{c}
      h \in \N \\
      0 \leq h \leq \min \ens{i,j}
    \end{array}
  $}
  }
  \vv{\frac{c^{j-h}}{(j-h)!}} = \max_{
  \t{\scriptsize $
    \begin{array}{c}
      h \in \N \\
      0 \leq h \leq \min \ens{i,j}
    \end{array}
  $}
  }
  \frac{\vv{c}^{j-h}}{\vv{p}^{\sum_{r = 1}^{\infty} \left\lfloor \frac{j-h}{p^r} \right\rfloor}} \leq \max_{
  \t{\scriptsize $
    \begin{array}{c}
      h \in \N \\
      0 \leq h \leq \min \ens{i,j}
    \end{array}
  $}
  }
  \vv{p}^{(j-h) \left( 1 - \sum_{r = 1}^{\infty} \frac{1}{p^r} \right)} \\
  & = & \vv{p}^{(j - \min \Ens{i,j}) \left( 1 - \frac{1}{p-1} \right)} \stackrel{j \to \infty}{\longrightarrow} 0,
\end{eqnarray*}
where $\lfloor x \rfloor \in \Z$ denotes the largest integer which is not larger than $x$ for each $x \in \R$, and hence
\begin{eqnarray*}
  \vv{\alpha_j \sum_{h = 0}^{\min \Ens{i,j}}
    \left(
      \begin{array}{c}
        i \\
        h
      \end{array}
    \right)
    \left( \prod_{m = i}^{i+j-h-1} (n_p(\chi)-m) \right) a^h b^{i-h} \frac{c^{j-h}}{(j-h)!} d^{\chi-i-j+h}} \stackrel{j \to \infty}{\longrightarrow} 0.
\end{eqnarray*}
It implies that $\rho_{\chi}(A,\alpha)_i$ converges in $\Z_p$. The continuity of $\rho_{\chi}$ follows from the convergence of the infinite sum in the definition of $\rho_{\chi}(A,\alpha)_i$, because of the continuity of each term of the infinite sum.
\end{proof}

Following the abbreviation of $\chi_n$ to $n$, we put $\rho_n \coloneqq \rho_{\chi_n}$ for each $n \in \Z$. In order to verify that the topological space $(\Z_p^{\N},\rho_{\chi})$ with the action of the underlying topological space of $\Pi_0(p)$ is a profinite $\Z_p[\Pi_0(p)]$-module for any $\chi \in W$, we compare it with $\t{Sym}^n(\Z_p^2,\rho_{\Z_p^2})$ for infinitely many $n \in \N$.

\begin{lmm}
\label{specialisation}
Let $n \in \N$. For each $i \in \N \cap [0,n]$, put
\begin{eqnarray*}
  e_{n,i} \coloneqq
  \left(
    \begin{array}{c}
      n \\
      i 
    \end{array}
  \right)
  T_1^i T_2^{n-i}
  \in \t{Sym}^n \left( \Z_p^2 \right).
\end{eqnarray*}
Then the map
\begin{eqnarray*}
  \varpi_n \colon (\Z_p^{\N},\rho_n) & \to & \t{Sym}^n \left( \Z_p^2, \rho_{\Z_p^2} \right) \\
  (\alpha_i)_{i = 0}^{\infty} & \mapsto & \sum_{i = 0}^{n} \alpha_i e_{n,i}
\end{eqnarray*}
is a continuous $\Z_p$-linear $\Pi_0(p)$-equivariant homomorphism.
\end{lmm}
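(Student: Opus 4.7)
The continuity and $\Z_p$-linearity of $\varpi_n$ are immediate, since it factors as the projection $\Z_p^{\N}\twoheadrightarrow\Z_p^{n+1}$ onto the first $n+1$ coordinates followed by the $\Z_p$-linear map $(\alpha_i)_{i=0}^n\mapsto\sum_{i=0}^n\alpha_i e_{n,i}$. The whole content of the lemma is thus the $\Pi_0(p)$-equivariance, which I would prove by expanding both $\varpi_n(\rho_n(A,\alpha))$ and $\t{Sym}^n(\rho_{\Z_p^2})(A,\varpi_n(\alpha))$ in the basis $\set{T_1^iT_2^{n-i}}{0\leq i\leq n}$ and matching coefficients for each $A=\bigl(\begin{smallmatrix}a&b\\c&d\end{smallmatrix}\bigr)\in\Pi_0(p)$, each $\alpha=(\alpha_j)_{j=0}^{\infty}\in\Z_p^{\N}$, and each $i\in\N\cap[0,n]$.

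For the right-hand side I would use $A\cdot T_1=aT_1+cT_2$ and $A\cdot T_2=bT_1+dT_2$ and apply the binomial theorem to
\begin{eqnarray*}
 \t{Sym}^n(\rho_{\Z_p^2})(A, e_{n,j})=\binom{n}{j}(aT_1+cT_2)^j(bT_1+dT_2)^{n-j},
\end{eqnarray*}
obtaining the coefficient of $T_1^iT_2^{n-i}$ as $\sum_{j=0}^n\alpha_j\binom{n}{j}\sum_h\binom{j}{h}\binom{n-j}{i-h}a^hb^{i-h}c^{j-h}d^{n-j-i+h}$, with the convention that $\binom{n-j}{i-h}=0$ outside the natural range. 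For the left-hand side I would rewrite
\begin{eqnarray*}
 \prod_{m=i}^{i+j-h-1}(n-m)=\frac{(n-i)!}{(n-i-j+h)!}=(j-h)!\binom{n-i}{j-h}
\end{eqnarray*}
(interpreted as zero when $n-i-j+h<0$) to convert $\binom{n}{i}\rho_n(A,\alpha)_i$ into $\sum_j\alpha_j\sum_h\binom{n}{i}\binom{i}{h}\binom{n-i}{j-h}a^hb^{i-h}c^{j-h}d^{n-i-j+h}$.

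It then remains to check two things. First, the a priori infinite $j$-sum on the $\rho_n$-side truncates at $j=n$ for $i\leq n$: when $j>n$ the constraint $h\leq\min\{i,j\}=i$ forces $i+j-h-1\geq n$, so the product $\prod_{m=i}^{i+j-h-1}(n-m)$ contains the factor $n-n=0$. Second, the two resulting finite double sums agree term-by-term after the substitution $h=k$, which reduces to the symmetric binomial identity
\begin{eqnarray*}
 \binom{n}{i}\binom{i}{h}\binom{n-i}{j-h}=\binom{n}{j}\binom{j}{h}\binom{n-j}{i-h}=\frac{n!}{h!(i-h)!(j-h)!(n-i-j+h)!}.
\end{eqnarray*}

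The main obstacle is not conceptual but combinatorial bookkeeping: one has to track the ranges of the three indices $i,j,h$ in each expansion, verify the truncation of the infinite series on the $\rho_n$-side honestly, and unwind the factorial identity above. A sanity check in low weight (e.g.\ $n=1$, where both sides visibly reduce to the matrix action on $\Z_p^2$) will confirm the signs and index conventions before attempting the general case.
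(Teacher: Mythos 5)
Your proposal is correct and follows essentially the same route as the paper's proof: the paper carries out the same expansion as a single chain of equalities from $\varpi_n(\rho_n(A,\alpha))$ to $\t{Sym}^n(\rho_{\Z_p^2})(A,\varpi_n(\alpha))$, passing through the trinomial coefficient $\frac{n!}{h!(i-h)!(j-h)!(n-i-j+h)!}$ and the double binomial expansion, with the same truncation of the infinite $j$-sum via the vanishing factor $n-n$ in $\prod_{m=i}^{i+j-h-1}(n-m)$. Organizing it as matching coefficients on both sides versus a single chain of equalities is a cosmetic difference; the underlying combinatorial identity is identical.
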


For the convention of the symmetric product, see Example \ref{symmetric product}.

\begin{proof}
Let $(A,\alpha) \in \Pi_0(p) \times \Z_p^{\N}$ with
$
  A =
  \left(
    \begin{array}{cc}
      a & b \\
      c & d
    \end{array}
  \right)
$
 and $\alpha = (\alpha_i)_{i = 0}^{\infty}$. We have
\begin{eqnarray*}
  & & \varpi_n(\rho_n(A,\alpha)) = \varpi_n((\rho_n(A,\alpha)_i)_{i = 0}^{\infty}) = \sum_{i = 0}^{n} \rho_n(A,\alpha)_i e_{n,i} \\
  & = & \sum_{i = 0}^{n} \sum_{j = 0}^{\infty} \alpha_j \sum_{h = 0}^{\min \Ens{i,j}}
  \left(
    \begin{array}{c}
      i \\
      h
    \end{array}
  \right)
  \left( \prod_{d = i}^{i+j-h-1} (n-d) \right) a^h b^{i-h} \frac{c^{j-h}}{(j-h)!} d^{n-i-j+h}
  \left(
    \begin{array}{c}
      n \\
      i
    \end{array}
  \right)
  T_1^i T_2^{n-i} \\
  & = & \sum_{i = 0}^{n} \sum_{j = 0}^{n} \alpha_j \sum_{h = \max \Ens{0,i+j-n}}^{\min \Ens{i,j}}
  \left(
    \begin{array}{c}
      i \\
      h
    \end{array}
  \right)
  \left( \prod_{d = i}^{i+j-h-1} (n-d) \right) a^h b^{i-h} \frac{c^{j-h}}{(j-h)!} d^{n-i-j+h}
  \left(
    \begin{array}{c}
      n \\
      i
    \end{array}
  \right)
  T_1^i T_2^{n-i} \\
  & = & \sum_{i = 0}^{n} \sum_{j = 0}^{n} \alpha_j \sum_{h = \max \Ens{0,i+j-n}}^{\min \Ens{i,j}} \frac{n!}{h! (i-h)! (j-h)! (n-i-j+h)!} a^h b^{i-h} c^{j-h} d^{n-i-j+h} T_1^i T_2^{n-i} \\
  & = & \sum_{j = 0}^{n} \alpha_j
  \left(
    \begin{array}{c}
      n \\
      j
    \end{array}
  \right)
  \sum_{i = 0}^{n}
  \sum_{h = \max \Ens{0,i+j-n}}^{\min \Ens{i,j}}
  \left(
    \begin{array}{c}
      j \\
      h
    \end{array}
  \right)
  (aT_1)^h (cT_2)^{j-h}
  \left(
    \begin{array}{c}
      n-j \\
      i-h
    \end{array}
  \right)
  (bT_1)^{i-h} (dT_2)^{n-i-j+h} \\
  & = & \sum_{j = 0}^{n} \alpha_j
  \left(
    \begin{array}{c}
      n \\
      j
    \end{array}
  \right)
  \sum_{h = 0}^{j}
  \left(
    \begin{array}{c}
      j \\
      h
    \end{array}
  \right)
  (aT_1)^h (cT_2)^{j-h}
  \sum_{h' = 0}^{n-j}
  \left(
    \begin{array}{c}
      n-j \\
      h'
    \end{array}
  \right)
  (bT_1)^{h'} (dT_2)^{n-j-h'} \\
  & = & \sum_{j = 0}^{n} \alpha_j
  \left(
    \begin{array}{c}
      n \\
      j
    \end{array}
  \right)
  (aT_1+cT_2)^j (bT_1+dT_2)^{n-j} \\
  & = & \t{Sym}^n \left( \rho_{\Z_p^2} \right)
  \left(
    \left(
      \begin{array}{cc}
        a & b \\
        c & d
      \end{array}
    \right),
    \sum_{j = 0}^{n} \alpha_j
    \left(
      \begin{array}{c}
        n \\
        j
      \end{array}
    \right)
    T_1^j T_2^{n-j}
  \right)
  = \t{Sym}^n \left( \rho_{\Z_p^2} \right) \left( A, \sum_{j = 0}^{n} \alpha_j e_{n,j} \right) \\
  & = & \t{Sym}^n \left( \rho_{\Z_p^2} \right)(A, \varpi_n(\alpha)).
\end{eqnarray*}
Thus $\varphi_n$ is a $\Z_p$-linear $\Pi_0(p)$-equivariant homomorphism.
\end{proof}

For each $n \in \N$, we denote by $\t{Sym}_0^n(\Z_p^2) \subset \t{Sym}^n(\Z_p^2)$ the image of $\varpi_n$. It is a lattice of $\t{Sym}^n(\Q_p^2)$ with a $\Z_p$-linear basis $(e_{n,i})_{i = 0}^{n}$, and is a $\Z_p[\Pi_0(p)]$-submodule of $\t{Sym}^n(\Z_p^2,\rho_{\Z_p^2})$. In fact, it is easily seen that $\t{Sym}_0^n(\Z_p^2)$ is a $\Z_p[\t{M}_2(\Z_p)]$-submodule of $\t{Sym}^n(\Z_p^2,\rho_{\Z_p^2})$, but we do not use this fact.

\begin{dfn}
For each $n \in \N$, we put
\begin{eqnarray*}
  \Leb_n \coloneqq \t{Sym}^n \left( \Z_p^2,\rho_{\Z_p^2} \middle) \right| \t{Sym}_0^n \left( \Z_p^2 \right).
\end{eqnarray*}
See Example \ref{induced} (i) for this convention.
\end{dfn}

The modified symmetric product $\Leb_n$ has good congruence relation with respect to $n \in \N$ as is shown in the following.

\begin{lmm}
\label{congruence}
For any $(r,n_0,n_1) \in (\N \backslash \ens{0}) \times \N \times \N$ with $n_0 \leq n_1$ and $n_1 - n_0 \in p^{r-1}(p-1) \Z$, the canonical projection
\begin{eqnarray*}
  \varpi^r_{n_1,n_0} \colon \Leb_{n_1}/p^r & \twoheadrightarrow & \Leb_{n_0}/p^r \\
  \sum_{i = 0}^{n_1} \overline{\alpha}_i e_{n_1,i} & \mapsto & \sum_{i = 0}^{n_0} \overline{\alpha}_i e_{n_0,i}
\end{eqnarray*}
is a $(\Z/p^r \Z)$-linear $\Pi_0(p)$-equivariant homomorphism.
\end{lmm}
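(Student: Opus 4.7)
Since $(e_{n_1,i})_{i=0}^{n_1}$ and $(e_{n_0,i})_{i=0}^{n_0}$ are $\Z_p$-linear bases of $\Leb_{n_1}$ and $\Leb_{n_0}$, the map $\varpi^r_{n_1,n_0}$ is manifestly well-defined and $(\Z/p^r \Z)$-linear; the content of the lemma is the $\Pi_0(p)$-equivariance.

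First I would extract an explicit matrix formula. Expanding $A \cdot e_{n,j} = \binom{n}{j}(a T_1 + c T_2)^j (b T_1 + d T_2)^{n-j}$ from the definition of $\t{Sym}^n(\rho_{\Z_p^2})$ and applying the elementary identity $\binom{n}{j} \binom{j}{h} \binom{n-j}{i-h} = \binom{n}{i} \binom{i}{h} \binom{n-i}{j-h}$, one obtains
\begin{eqnarray*}
A \cdot e_{n,j} = \sum_{i=0}^n \beta^{(n)}_{i,j}(A)\, e_{n,i}, \quad \beta^{(n)}_{i,j}(A) = \sum_{h=0}^{\min(i,j)} \binom{i}{h} \binom{n-i}{j-h} a^h b^{i-h} c^{j-h} d^{n-i-j+h},
\end{eqnarray*}
with the convention $\binom{m}{k} = 0$ for $0 \leq m < k$. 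The equivariance then reduces to the congruence $\beta^{(n_1)}_{i,j}(A) \equiv \beta^{(n_0)}_{i,j}(A) \pmod{p^r}$ for all $0 \leq i \leq n_0$ and $0 \leq j \leq n_1$; here the right-hand side is defined by the same formula and automatically vanishes when $j > n_0$, because $\binom{n_0-i}{j-h} \neq 0$ forces $h \geq i + j - n_0 > i$, leaving the summation range empty.

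Setting $k := j - h$, the $n$-dependent factors in the $h$-th summand are $\binom{n-i}{k}$ and $d^n$, so it suffices to prove
\begin{eqnarray*}
c^k \binom{n_1-i}{k} d^{n_1} \equiv c^k \binom{n_0-i}{k} d^{n_0} \pmod{p^r}.
\end{eqnarray*}
I would decompose the difference as $c^k d^{n_1} \bigl( \binom{n_1-i}{k} - \binom{n_0-i}{k} \bigr) + c^k \binom{n_0-i}{k} (d^{n_1} - d^{n_0})$. For the second summand, Fermat--Euler applied to $(\Z/p^r \Z)^{\times}$, whose exponent is $p^{r-1}(p-1)$, together with $d \in \Z_p^{\times}$ and $n_1 - n_0 \in p^{r-1}(p-1) \Z$, yields $d^{n_1} \equiv d^{n_0} \pmod{p^r}$. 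For the first summand, Vandermonde provides
\begin{eqnarray*}
\binom{n_1-i}{k} - \binom{n_0-i}{k} = \sum_{\ell=1}^k \binom{n_1-n_0}{\ell} \binom{n_0-i}{k-\ell},
\end{eqnarray*}
and the standard bound $v_p \bigl( \binom{N}{\ell} \bigr) \geq v_p(N) - v_p(\ell)$, deduced from the identity $\ell \binom{N}{\ell} = N \binom{N-1}{\ell-1}$, combined with $v_p(\ell) \leq \log_p \ell \leq k - 1$ for $1 \leq \ell \leq k$, gives $v_p \bigl( \binom{n_1-i}{k} - \binom{n_0-i}{k} \bigr) \geq (r-1) - (k-1) = r - k$; together with $v_p(c^k) \geq k$ (using $c \in p \Z_p$) this places the first summand in $p^r \Z_p$ as well, the case $k \geq r$ being immediate from $v_p(c^k) \geq r$ alone.

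The main obstacle is this binomial congruence: the hypothesis provides only $v_p(n_1 - n_0) \geq r - 1$, so $\binom{n_1-i}{k} \not\equiv \binom{n_0-i}{k} \pmod{p^r}$ in general. The argument succeeds precisely because $\Pi_0(p)$ imposes $c \in p \Z_p$, supplying the extra powers of $p$ that exactly compensate for the potential deficit of $k$ in the binomial difference. This matching between the weight discrepancy --- measured by the exponent $p^{r-1}(p-1)$ of $(\Z/p^r \Z)^{\times}$ --- and the lower-triangular structure of $\Pi_0(p)$ is the essential technical point of the interpolation along weights.
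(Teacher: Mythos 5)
Your proof is correct, and it takes a genuinely different and somewhat more elementary route than the paper's. The paper factors $\varpi^r_{n_1,n_0}$ through the interpolating module $(\Z_p^{\N},\rho_\chi)/p^r$: it observes that the identity map on $\Z_p^{\N}/p^r$ intertwines $\rho_{n_1}$ and $\rho_{n_0}$ mod $p^r$ because the matrix entries of $\rho_\chi$, viewed as functions of $\chi$, lie in a closed $\Z_p$-subalgebra of $\t{C}(W,\Z_p)$ whose $\chi$-dependence comes from $n_p(\chi)$ (carrying a $p\Z_p$ coefficient) and $\chi(d)$, so that $\chi-\chi' \in p^{r-1}(p-1)W$ forces entrywise congruence mod $p^r$; the assertion for $\Leb_n$ then follows by pushing down through the surjections $\varpi^r_n$. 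You instead work directly on $\Leb_n$: you derive the explicit matrix $\beta^{(n)}_{i,j}(A) = \sum_h \binom{i}{h}\binom{n-i}{j-h} a^h b^{i-h} c^{j-h} d^{n-i-j+h}$ from the multinomial identity (and this does agree with the matrix of $\rho_n$ after noting $\prod_{m=i}^{i+k-1}(n-m)/k! = \binom{n-i}{k}$), then establish the entrywise congruence by splitting the $n$-dependent factor $c^k\binom{n-i}{k}d^n$ into two pieces: Fermat--Euler handles $d^{n_1} \equiv d^{n_0} \pmod{p^r}$, while Vandermonde and the valuation bound coming from $\ell\binom{N}{\ell}=N\binom{N-1}{\ell-1}$, together with $c\in p\Z_p$, handle the binomial difference, the deficit $k-1$ from the factorial and the deficit $1$ from $v_p(n_1-n_0)\geq r-1$ being exactly absorbed by $v_p(c^k)\geq k$. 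The paper's argument is terser and dovetails with the proof of Theorem~\ref{interpolation}, where the same structural observation on $\rho_\chi$ is reused; yours is more self-contained (it never invokes $\rho_\chi$ on all of $\Z_p^{\N}$, which at this point in the paper is not yet known to be a monoid action) and it makes explicit that the off-diagonal hypothesis $c\in p\Z_p$ defining $\Pi_0(p)$ is precisely what makes the weight interpolation work. Both are valid proofs.
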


\begin{proof}
Let $\tilde{\varpi}{}^r_{\chi,\chi'} \colon (\Z_p^{\N},\rho_{\chi})/p^r \to (\Z_p^{\N},\rho_{\chi'})/p^r$ denote $\t{id}_{\Z_p^{\N}/p^r \Z_p^{\N}}$ for each $(\chi,\chi') \in W \times W$, and $\varpi^r_n \colon (\Z_p^{\N},\rho_n)/p^r \twoheadrightarrow \Leb_n/p^r$ denote the surjective $(\Z/p^r \Z)$-linear $\Pi_0(p)$-equivariant homomorphism induced by $\varpi_n$ for each $n \in \N$. Let $(\chi,\chi') \in W \times W$ with $\chi - \chi' \in p^{r-1}(p-1)W$.  For any $d \in \Z_p^{\times}$, we have $d^{\chi} - d^{\chi'} \in p^r \Z_p$. By the definition of $\varpi^r_{n_1,n_0}$, we have $\varpi^r_{n_1,n_0} \circ \varpi^r_{n_1} = \varpi^r_{n_0} \circ \tilde{\varpi}{}^r_{n_1,n_0}$. Since the matrix representation of $\rho_{\chi}$ with respect to the canonical topological basis of $\Z_p^{\N}$ is given as a function on $\chi \in W$ belonging to the closed $\Z_p$-subalgebra of $\t{C}(W,\Z_p)$ generated by polynomials of the functions $\chi \mapsto n_p(\chi)$ and $\chi \mapsto \chi(d)$ for each $d \in \Z_p^{\times}$ with coefficients in $p \Z_p$, $\tilde{\varpi}{}^r_{n_1,n_0}$ is a $(\Z/p^r \Z)$-linear $\Pi_0(p)$-equivariant isomorphism. Thus the assertion follows from the surjectivity of $\varpi^r_{n_1}$.
\end{proof}

For any $(r,n_0) \in (\N \backslash \ens{0}) \times \N$, the family
\begin{eqnarray*}
  \left( \Leb_{n_0+p^{r-1}(p-1)m}/p^r \right)_{m = 0}^{\infty}
\end{eqnarray*}
forms an inverse system by the canonical projections $(\varpi^r_{n_2,n_1})_{n_2,n_1}$. Let $\chi \in W$ and $r \in \N$. Although we have not verified that the continuous action $\rho_{\chi}$ of the underlying topological space of $\Pi_0(p)$ is a continuous action of $\Pi_0(p)$ yet, it is a $\Z_p$-linear action, and hence the convention $(\Z_p^{\N},\rho_{\chi})/p^{r+1}$ naturally makes sense.

\begin{dfn}
For each $\chi \in W$, we denote by $\chi^{(r)} \in \N$ the smallest non-negative integer satisfying $\chi - \chi^{(r)} \in p^r(p-1)W$.
\end{dfn}

Let $\chi \in W$. Following the convention in the proof of Lemma \ref{congruence}, the family
\begin{eqnarray*}
  \left( \varpi^{r+1}_{\chi^{(r)}+p^r(p-1)m} \circ \tilde{\varpi}{}^{r+1}_{\chi,\chi^{(r)}_0+p^r(p-1)m} \right)_{m = 0}^{\infty}
\end{eqnarray*}
is a compatible system of $(\Z/p^{r+1} \Z)$-linear $\Pi_0(p)$-equivariant homomorphisms, and induces a continuous $(\Z/p^{r+1} \Z)$-linear $\Pi_0(p)$-equivariant homomorphism
\begin{eqnarray*}
\begin{array}{crcl}
  & (\Z_p^{\N},\rho_{\chi})/p^{r+1} & \to & \displaystyle\varprojlim_{m \in \N} \left( \Leb_{\chi^{(r)}+p^r(p-1)m}/p^{r+1} \right) \\
  & (\overline{\alpha}_i)_{i = 0}^{\infty} & \mapsto & \left( \sum_{i = 0}^{\chi^{(r)}+p^r(p-1)m} \overline{\alpha}_i e_{\chi^{(r)}+p^r(p-1)m,i} \right)_{m = 0}^{\infty}.
\end{array}
\end{eqnarray*}
It is a homeomorphic isomorphism because the canonical projections give natural identifications
\begin{eqnarray*}
  \Z_p^{\N} \cong \varprojlim_{m \in \N} \Z_p^{n+p^r(p-1)m} \cong \varprojlim_{m \in \N} \Leb_{n+p^r(p-1)m}
\end{eqnarray*}
as profinite $\Z_p$-modules for any $n \in \N$.

\begin{dfn}
Let $n \in \N$. We put
\begin{eqnarray*}
  \Fil_n/p^r \coloneqq \left( \underline{\Leb_n/p^r} \right)_{Y_1(N)}
\end{eqnarray*}
for each $r \in \N$, and set
\begin{eqnarray*}
  \Fil_n \coloneqq (\Fil_n/p^r)_{r = 0}^{\infty} \cong \left( \underline{\Leb_n} \right)_{Y_1(N)}.
\end{eqnarray*}
See \S \ref{Profinite Zp Sheaves on Modular Curves} for this convention.
\end{dfn}

\begin{rmk}
\label{symmetric product 4}
Let $k_0 \in \N \cap [2,\infty)$. We consider the action of Hecke operators on the profinite $\Z_p[\t{Gal}(\overline{\Q}/\Q)]$-module $\Hil_{\t{et}}^1(Y_1(N)_{\overline{\Q}}, \Fil_{k_0-2})$. For the definition of Hecke operators, see \S \ref{Actions of the Absolute Galois Group and Hecke Operators}. We denote by $\t{T}_{k_0,N}^{\t{\'et}} \subset \t{End}_{\Z_p}(\Hil_{\t{et}}^1(Y_1(N)_{\overline{\Q}}, \Fil_{k_0-2}))$ the commutative $\Z_p$-subalgebra generated by Hecke operators. For each $n \in \Z$ coprime to $N$, we put $S_n \coloneqq n^{k_0-2} \langle n + N \Z \rangle \in \t{T}_{k_0,N}^{\t{\'et}}$. Since $M$ is a finitely generated $\Z_p$-module, $\t{T}_{k_0,N}^{\t{\'et}}$ is finitely generated as a $\Z_p$-module. We endow $\t{T}_{k_0,N}^{\t{\'et}}$ with the $p$-adic topology, and regard it as a profinite $\Z_p$-algebra. The continuous action of $\t{T}_{k_0,N}^{\t{\'et}}$ on $\Hil_{\t{et}}^1(Y_1(N)_{\overline{\Q}}, \Fil_{k_0-2})$ induces a continuous action of $\t{T}_{k_0,N}^{\t{\'et}}$ on $\Hil_{\t{et}}^1(Y_1(N)_{\overline{\Q}}, \Fil_{k_0-2})_{\t{free}}$ (Definition \ref{free}, Example \ref{induced} (ii)). We put
\begin{eqnarray*}
  & & \t{H}_{\t{et}}^* \left( Y_1(N)_{\overline{\Q}}, \t{Sym}^{k_0-2} \left( \t{R}^1 (\pi_N)_* (\underline{\Q}{}_p)_{E_1(N)} \right) \right) \\
  & \coloneqq & \Q_p \otimes_{\Z_p} \Hil_{\t{et}}^* \left( Y_1(N)_{\overline{\Q}}, \t{Sym}^{k_0-2} \left( \t{R}^1 (\pi_N)_* (\underline{\Z}{}_p)_{E_1(N)} \right) \right).
\end{eqnarray*}
This coincides with the ordinary \'etale cohomology of the smooth $\Q_p$-sheaf represented by $\Q_p \otimes_{\Z_p} \t{Sym}^{k_0-2}(\t{R}^1 (\pi_N)_*(\underline{\Z}{}_p)_{E_1(N)})$ by Remark \ref{derived limit 2}. By the argument in Example \ref{symmetric product 2}, we have a natural identification
\begin{eqnarray*}
  & & \Q_p \otimes_{\Z_p} \Hil_{\t{et}}^1 \left( Y_1(N)_{\overline{\Q}}, \left( \underline{\t{Sym}^{k_0-2} \left( \Z_p,\rho_{\Z_p^2} \right)} \right)_{Y_1(N)} \right) \\
  & \cong & \t{H}_{\t{et}}^1 \left( Y_1(N)_{\overline{\Q}}, \t{Sym}^{k_0-2} \left( \t{R}^1 (\pi_N)_* (\underline{\Q}{}_p)_{E_1(N)} \right) \right),
\end{eqnarray*}
as linearly complete $\Z_p[\t{Gal}(\overline{\Q}/\Q)]$-modules with respect to the topologies induced by the $p$-adic norm of their natural integral structures for each $k \in \N \cap [2,\infty)$, and we equip the right hand side with the action of Hecke operators through the the identification. By a similar argument to that in Example \ref{symmetric product 2}, the Hecke action coincides with the usual one dealt with in \cite{Gro90} 3.1, 3.11, and 3.13 induced by the automorphism $Y_1(N) \to Y_1(N) \colon (E,\beta) \mapsto (E,\beta^{\overline{n}})$ for $\langle \overline{n} \rangle$ with $\overline{n} \in (\Z/N \Z)^{\times}$ and by the morphism $\t{pr}_2^* \t{R}^1 (\pi_N)_* (\underline{\Z}{}_p) \to \t{pr}_1^* \t{R}^1 (\pi_N)_* (\underline{\Z}{}_p)$ associated to the $\ell$-isogeny $\t{pr}_1^* E_1(N) \cong E_1(N,\ell) \twoheadrightarrow E_1(N,\ell)/C_1(N,\ell) \cong \t{pr}_2^* E_1(N)$ over $Y_1(N,\ell)$ (introduced in the proof of Proposition \ref{Galois - Hecke}) with the universal object $(E_1(N,\ell),\beta_1(N,\ell),C_1(M,\ell))$ for $T_{\ell}$ with a prime number $\ell$. Since the embedding $\Leb_{k_0-2} \hookrightarrow \t{Sym}^{k_0-2}(\Z_p^2,\rho_{\Z_p^2})$ induces a $\Q_p$-linear $\t{M}_2(\Z_p)$-equivariant isomorphism
\begin{eqnarray*}
  \Q_p \otimes_{\Z_p} \Leb_{k_0-2} \cong \Q_p \otimes_{\Z_p} \t{Sym}^{k_0-2} \left( \Z_p^2,\rho_{\Z_p^2} \right) \cong \t{Sym}^{k_0-2} \left( \Q_p^2,\rho_{\Q_p^2} \right),
\end{eqnarray*}
the natural $\Q_p$-linear $\t{Gal}(\overline{\Q}/\Q)$-equivariant homomorphism
\begin{eqnarray*}
  \Q_p \otimes_{\Z_p} \Hil_{\t{et}}^1 \left( Y_1(N)_{\overline{\Q}}, \Fil_{k_0-2} \right) \to \t{H}_{\t{et}}^* \left( Y_1(N)_{\overline{\Q}}, \t{Sym}^{k_0-2} \left( \t{R}^1 (\pi_N)_* (\underline{\Q}{}_p)_{E_1(N)} \right) \right)
\end{eqnarray*}
is an isomorphism by Proposition \ref{comparison 2}. Therefore the Eichler--Shimura isomorphism (\cite{Shi59} 5 Th\'eor\`eme 1, \cite{Hid93} 6.3 Theorem 4) and the comparison theorem of cohomologies (\cite{SGA4} Expos\'e XI Th\'eor\`eme 4.4 (iii), \cite{SGA4} Expos\'e XVI Corollaire 1.6) give a homeomorphic $\Z_p$-algebra isomorphism $(\t{T}_{k_0,N}^{\t{\'et}})_{\t{free}} \cong \t{T}_{k_0,N}$ preserving $T_{\ell}$ for each prime number $\ell$ and $S_n$ for each $n \in \N$ coprime to $N$. We regard $\Hil_{\t{et}}^1(Y_1(N)_{\overline{\Q}}, \Fil_{k_0-2})_{\t{free}}$ as a topological $\t{T}_{k_0,N}$-module through the isomorphism. It is finitely generated as a $\Z_p$-module, and hence is a profinite $\t{T}_{k_0,N}$-module by Proposition \ref{finitely generated - p-adic}.
\end{rmk}

\begin{thm}
\label{interpolation}
For any $\chi \in W$, $(\Z_p^{\N},\rho_{\chi})$ is a profinite $\Z_p[\Pi_0(p)]$-module.
\end{thm}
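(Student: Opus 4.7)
The plan is to obtain the two assertions---that $\rho_{\chi}$ is a bona fide continuous $\Pi_0(p)$-action and that $(\Z_p^{\N},\rho_{\chi})$ is profinite---simultaneously from the identification already assembled in the paragraph immediately preceding the statement, rather than by checking the monoid axioms directly from the defining formula.

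I would start by noting that $\rho_{\chi}$ is continuous and that, by inspection of the defining formula, $\rho_{\chi}(A,\alpha)_i$ is $\Z_p$-linear in $\alpha$. So for each $r \in \N$, $\rho_{\chi}$ descends to a continuous $\Z_p$-linear set-theoretic map $\overline{\rho}_{\chi,r} \colon \Pi_0(p) \times (\Z_p^{\N}/p^{r+1}) \to \Z_p^{\N}/p^{r+1}$. The core step is then to invoke the homeomorphism
\begin{eqnarray*}
  \varphi_r \colon (\Z_p^{\N},\rho_{\chi})/p^{r+1} \stackrel{\sim}{\to} \varprojlim_{m \in \N} \Leb_{\chi^{(r)}+p^r(p-1)m}/p^{r+1}
\end{eqnarray*}
built just before the theorem as the inverse limit of the compositions $\varpi^{r+1}_{\chi^{(r)}+p^r(p-1)m} \circ \tilde{\varpi}^{r+1}_{\chi,\chi^{(r)}+p^r(p-1)m}$. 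By Lemma \ref{specialisation}, each $\varpi^{r+1}_n$ intertwines $\overline{\rho}_{n,r}$ with the honest $\Pi_0(p)$-action on $\Leb_n/p^{r+1}$ coming from $\t{Sym}^n(\Z_p^2,\rho_{\Z_p^2})$, and by the argument of Lemma \ref{congruence} the identity map $\tilde{\varpi}^{r+1}_{\chi,n}$ intertwines $\overline{\rho}_{\chi,r}$ with $\overline{\rho}_{n,r}$ whenever $\chi - n \in p^r(p-1)W$. Consequently $\varphi_r$ pulls back the genuine inverse-limit action on the profinite $\Z_p[\Pi_0(p)]$-module $\varprojlim_{m} \Leb_{\chi^{(r)}+p^r(p-1)m}/p^{r+1}$ (profinite as an inverse limit of finite $\Z_p[\Pi_0(p)]$-modules) to $\overline{\rho}_{\chi,r}$, forcing $\overline{\rho}_{\chi,r}$ itself to be a continuous $\Pi_0(p)$-action and $(\Z_p^{\N},\rho_{\chi})/p^{r+1}$ a profinite $\Z_p[\Pi_0(p)]$-module.

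To conclude, I would pass to the limit in $r$. The product topology on $\Z_p^{\N}$ coincides with the inverse-limit topology on $\varprojlim_{r}(\Z_p/p^{r+1}\Z_p)^{\N}$ (each factor carrying its quotient, i.e.\ product, topology), because a basic open $\prod_{i\in S}(a_i + p^{r+1}\Z_p) \times \prod_{i\notin S} \Z_p$ on the former matches a basic open on the latter. This yields a homeomorphic $\Z_p$-linear isomorphism
\begin{eqnarray*}
  (\Z_p^{\N},\rho_{\chi}) \stackrel{\sim}{\to} \varprojlim_{r}(\Z_p^{\N},\rho_{\chi})/p^{r+1}
\end{eqnarray*}
compatible with the set-map $\rho_{\chi}$, exhibiting $(\Z_p^{\N},\rho_{\chi})$ as an inverse limit of profinite $\Z_p[\Pi_0(p)]$-modules. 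It is therefore itself profinite, and the monoid axioms holding at each finite-level quotient force them at the limit by Hausdorffness of $\Z_p^{\N}$.

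The main obstacle I expect is bookkeeping: verifying that $\tilde{\varpi}^{r+1}_{\chi,n}$ truly intertwines $\overline{\rho}_{\chi,r}$ with $\overline{\rho}_{n,r}$ when $\chi - n \in p^r(p-1)W$. This reduces to the observation, recorded in the proof of Lemma \ref{congruence}, that each expansion coefficient of $\rho_{\chi}(A,\alpha)_i$ as a $\Z_p$-linear combination of the $\alpha_j$ lies in the closed $\Z_p$-subalgebra of $\t{C}(W,\Z_p)$ generated by $\chi \mapsto n_p(\chi)$ and $\chi \mapsto \chi(d)$ for $d \in \Z_p^{\times}$, so that it is constant modulo $p^{r+1}$ on the coset $n + p^r(p-1)W$.
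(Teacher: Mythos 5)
Your proof is correct and takes essentially the same approach as the paper: both transport the structure through the homeomorphism onto $\varprojlim_{r}\varprojlim_{m}\left(\Leb_{\chi^{(r)}+p^r(p-1)m}/p^{r+1}\right)$, a projective limit of finite $\Z_p[\Pi_0(p)]$-modules, and conclude profiniteness of $(\Z_p^{\N},\rho_{\chi})$ from there. The only difference is that you make the two-stage passage (first mod $p^{r+1}$, then the limit over $r$) and the identification of the product topology with the inverse-limit topology explicit, where the paper compresses both into a single line.
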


\begin{proof}
By the argument above, we have a homeomorphic $\Z_p$-linear $\Pi_0(p)$-equivariant isomorphism
\begin{eqnarray*}
  (\Z_p^{\N},\rho_{\chi}) \cong \varprojlim_{r \in \N} \left( (\Z_p^{\N},\rho_{\chi})/p^{r+1} \right) \to \varprojlim_{r \in \N} \varprojlim_{m \in \N} \left( \Leb_{\chi^{(r)}+p^r(p-1)m}/p^{r+1} \right)
\end{eqnarray*}
of topological spaces with actions of the underlying topological space of $\Pi_0(p)$. Since the target is a projective limit of finite $\Z_p[\Pi_0(p)]$-modules, the source is a profinite $\Z_p[\Pi_0(p)]$-module.
\end{proof}

\begin{crl}
\label{combinatorial}
The equality
\begin{eqnarray*}
  \sum_{m = h}^{\min \Ens{i,j}} (-1)^{m-h}
  \left(
    \begin{array}{c}
      n-m \\
      i-m
    \end{array}
  \right)
  \left(
    \begin{array}{c}
      j \\
      m
    \end{array}
  \right)
  \left(
    \begin{array}{c}
      m \\
      h
    \end{array}
  \right)
  =
  \left(
    \begin{array}{c}
      n-j \\
      i-h
    \end{array}
  \right)
  \left(
    \begin{array}{c}
      j \\
      h
    \end{array}
  \right)
\end{eqnarray*}
holds for any $(n,i,j,h) \in \Z_p \times \N \times \N \times \N$ with $h \leq \min \ens{i,j}$.
\end{crl}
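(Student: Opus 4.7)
The plan is to reduce the claim to a classical finite binomial identity and then extend from $\N$ to $\Z_p$ by polynomial continuity. First I would make the substitution $m = h + k$ and apply the elementary identity $\binom{j}{h+k}\binom{h+k}{h} = \binom{j}{h}\binom{j-h}{k}$ (an identity of integers, hence a polynomial identity) to factor $\binom{j}{h}$ out of every term on the left. Since $\binom{j}{h}$ is independent of $n$, the claim becomes equivalent to
\begin{equation*}
  \sum_{k=0}^{\min\{i-h,\,j-h\}} (-1)^k \binom{n-h-k}{i-h-k}\binom{j-h}{k} = \binom{n-j}{i-h},
\end{equation*}
which, after the change of variables $n' = n-h$, $i' = i-h$, $j' = j-h$, reads $\sum_k (-1)^k \binom{n'-k}{i'-k}\binom{j'}{k} = \binom{n'-j'}{i'}$.

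Next I would observe that both sides, regarded as functions of $n \in \Z_p$ with $i$, $j$, $h$ fixed, are restrictions of polynomials in $n$ of degree at most $i - h$ with coefficients in $\Q$. By the continuity of polynomial functions and the density of $\N$ in $\Z_p$, it suffices to establish the identity for $n \in \N$.

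For $n \in \N$ I would prove the reduced identity by a generating function computation: in $\Z[x]$ one has
\begin{equation*}
  \sum_{k \geq 0} (-1)^k \binom{j'}{k} (1+x)^{n'-k} x^k = (1+x)^{n'} \biggl(1 - \frac{x}{1+x}\biggr)^{j'} = (1+x)^{n'-j'},
\end{equation*}
and extracting the coefficient of $x^{i'}$ on each side yields the desired equality. Equivalently, for $n' \geq j'$ this is the inclusion--exclusion count of $i'$-element subsets of an $n'$-element set disjoint from a fixed $j'$-element subset.

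The argument is entirely routine once the polynomial character in $n$ is recognised; there is no substantive obstacle. The only point requiring care is confirming that every reduction preserves the polynomial structure in $n$, so that the density argument genuinely applies after the cancellation of $\binom{j}{h}$.
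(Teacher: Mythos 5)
Your proposal is correct, and it takes a genuinely different and more elementary route than the paper. The paper obtains the identity as a consequence of Theorem~\ref{interpolation}: once one knows that $(\Z_p^{\N},\rho_n)$ is a profinite $\Z_p[\Pi_0(p)]$-module, the associativity relation $\rho_n(A_0A_1,\alpha)=\rho_n(A_0,\rho_n(A_1,\alpha))$ is asserted (via $p$-adic Lie theory, the Baker--Campbell--Hausdorff formula, and Schneider--Teitelbaum theory) to unwind precisely into this combinatorial identity, so the Corollary is read off as a shadow of the representation-theoretic structure already built. You instead give a self-contained combinatorial proof: using $\binom{j}{m}\binom{m}{h}=\binom{j}{h}\binom{j-h}{m-h}$ to pull the nonzero constant $\binom{j}{h}$ out of every term (legitimate since $h\leq j$), observing that both sides of the residual identity are polynomials in $n$ of degree at most $i-h$, and then verifying it for natural $n$ by a one-line generating-function computation. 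Each approach buys something: the paper's reasoning makes the identity structurally transparent as the cocycle condition hidden in $\rho_n$ (and the paper itself concedes the identity ``can be obtained in many ways with no use of $p$-adic representations''), while yours is direct, elementary, and usable by a reader who does not want to unpack the Lie-theoretic claim. One small point to state explicitly when you write it up: the generating-function manipulation should be read in $\Z[[x]]$ (inverting $1+x$) rather than literally in $\Z[x]$, since $(1+x)^{n'-k}$ need not be a polynomial; alternatively, restrict the density argument to $n$ large enough that $n'=n-h\geq j'$, in which case every factor genuinely lies in $\Z[x]$.
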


Of course, this equality can be obtained in many ways with no use of $p$-adic representations, and hence must be well-known.

\begin{proof}
It is easily seen that the assertion is equivalent to the condition that $\rho_n(A_0 A_1,\alpha) = \rho_n(A_0, \rho_n(A_1,\alpha))$ for any $(A_0,A_1,\alpha) \in \Pi_0(p) \times \Pi_0(p) \times \Z_p^{\N}$ by $p$-adic Lie algebra theory, $p$-adic analysis to Baker--Campbell--Hausdorff formula, and Schneider--Teitelbaum theory. Thus the assertion follows from Theorem \ref{interpolation}.
\end{proof}

\begin{rmk}
Theorem \ref{interpolation} is deeply related to \cite{PS11} 3.3 and 7.1. Robert Pollack and Glenn Stevens defined a continuous right action of the topological monoid
\begin{eqnarray*}
  \Sigma_0(p) \coloneqq
  \left(
    \begin{array}{cc}
      \Z_p^{\times} & \Z_p \\
      p \Z_p & \Z_p
    \end{array}
  \right)
  \cap \t{GL}_2(\Q_p)
\end{eqnarray*}
of non-negative integral weight $n$ on the topological $\Z_p$-algebra of distributions on $\Z_p$ in \cite{PS11} 3.3, and proved that the closed $\Z_p$-subalgebra of distributions with integral moments, which is canonically homeomorphically isomorphic to $\Z_p[[w]]$, is stable under the action of $\Sigma_0(p)$ in Proposition 7.1. By the canonical topological basis $(w^h)_{h \in \N}$ of $\Z_p[[w]]$, we identify $\Z_p[[w]]$ with $\Z_p^{\N}$. The map $\Pi_0(p) \cap \t{GL}_2(\Q_p) \to \Sigma_0(p)^{\t{op}} \colon A \mapsto (A^{\iota})^{\t{op}} = \det(A) (A^{-1})^{\t{op}}$ associating the cofactor matrices is a homeomorphic group isomorphism, and hence the notion of a right action of $\Sigma_0(p)$ is equivalent to that of a left action of $\Pi_0(p) \cap \t{GL}_2(\Q_p)$. Therefore we obtain a continuous action $\rho'_n$ of $\Pi_0(p) \cap \t{GL}_2(\Q_p)$ of non-negative integral weight $n$ on $\Z_p^{\N}$. The homeomorphic $\Z_p$-linear $\Pi_0(p)$-equivariant isomorphism
\begin{eqnarray*}
  (\Z_p^{\N},\rho_{\chi}) \cong \varprojlim_{r \in \N} \varprojlim_{m \in \N} \left( \Leb_{\chi^{(r)}+p^r(p-1)m}/p^{r+1} \right)
\end{eqnarray*}
for a $\chi \in W$ in the proof of Theorem \ref{interpolation} ensures that if $\chi = \chi_n$ for an $n \in \N$, then the restriction of $\rho_{\chi}$ on the submonoid $\Pi_0(p) \cap \t{GL}_2(\Q_p) \subset \Pi_0(p)$ coincides with $\rho'_n$. Thus the construction of $(\Z_p^{\N},\rho_{\chi})$ is a generalisation of that of $(\Z_p^{\N},\rho'_n)$ in the sense that the former one deals with a general weight and $\Pi_0(p)$ while the latter one deals with a non-negative integral weight and $\Pi_0(p) \cap \t{GL}_2(\Q_p)$.
\end{rmk}

\begin{rmk}
We have a geometric construction of $(\Z_p^{\N},\rho_{\chi})$ in the case where $\chi = \chi_{p,n}$ for an $n \in \Z_p$. In Example \ref{modular form}, we constructed a linearly complete topological $\Z_p[\Pi_0(p)^{\t{op}}]$-module $(\t{C}(\Z_p,\Z_p),(m_p^{\vee})_{\kappa(\chi)})$ using $p$-adic linear fractional transformations. We note that $m_p$ extends to an action of $\Pi_0(p)$ in an obvious way because diagonal matrices in $\Pi_0(p)$ acts trivially on $\Z_p$ via $m_p$. Since $p$-adic linear fractional transformations and $\chi_{p,n}(cz+d)$ for any $(c,d) \in p \Z_p \times \Z_p^{\times}$ are rigid analytic functions on $\Z_p$, the $p$-adically complete $\Z_p$-subalgebra $\Z_p \ens{w} \subset \t{C}(\Z_p,\Z_p)$ consisting of rigid analytic functions of Gauss norm $\leq 1$ is stable under the action of $\Pi_0(p)^{\t{op}}$. The Iwasawa-type dual (\cite{ST02} Theorem 1.2) of the Banach $\Q_p$-algebra $\Q_p \ens{w} \cong \Q_p \otimes_{\Z_p} (\Z_p \ens{w})$ is the profinite $\Z_p$-algebra $\Z_p[[w]]$ of distributions with integral moments. Although the Iwasawa-type duality for Banach representations of a profinite group (\cite{ST02} Theorem 2.3) does not extend to duality of Banach unitary representations for a topological monoid in a direct way, it is easily seen that the continuous action of $\Pi_0(p)^{\t{op}}$ on $\Z_p \ens{w}$ induces a continuous action $\rho''_{\chi_{p,n}}$ of $\Pi_0(p)$ on $\Z_p[[w]]$, and $\rho''_{\chi_{p,n}}$ corresponds to $\rho_{\chi_{p,n}}$ through the identification $\Z_p[[w]] \cong \Z_p^{\N}$. This gives an alternative proof of Theorem \ref{interpolation 2} for the case where the weight $\chi$ is of the form $\chi_{p,n}$ for an $n \in \Z_p$.
\end{rmk}

\begin{rmk}
Let $\chi \in W$. We have a natural identification
\begin{eqnarray*}
  \left( \underline{\t{Res}_{\t{G} \hat{\Gamma}_e(N)}^{\Pi_0(p)} (\Z_p^{\N},\rho_{\chi})} \right)_{Y_1(N)} \cong \left( \Fil_{\chi^{(r)}+p^r(p-1)m}/p^{r+1} \right)_{m,r = 0}^{\infty}
\end{eqnarray*}
as profinite $\Z_p$-sheaves on $Y_1(N)$ by Theorem \ref{interpolation}. When $\chi = \chi_n$ for some $n \in \N$, then we have an identification
\begin{eqnarray*}
  \left( \underline{\t{Res}_{\t{G} \hat{\Gamma}_e(N)}^{\Pi_0(p)} (\Z_p^{\N},\rho_n)} \right)_{Y_1(N)} \cong \left( \Fil_{n+p^rm}/p^{r+1}\right)_{m,r = 0}^{\infty}
\end{eqnarray*}
as profinite $\Z_p$-sheaves on $Y_1(N)$ by a calculation of the image of $\t{G} \hat{\Gamma}_e(N) \to \Pi_0(p)$. We do not use this fact in this paper.
\end{rmk}

\begin{rmk}
\label{extension}
Let $n \in \N$. As is constructed in Lemma \ref{specialisation}, there is a canonical projection $\varpi_n \colon (\Z_p^{\N},\rho_n) \twoheadrightarrow \Leb_n$. Taking the Iwasawa-type dual (\cite{ST02} Theorem 2.3) in Schneider--Teitelbaum theory, we obtain an exact sequence
\begin{eqnarray*}
  0 \to \t{Sym}^n(\Q_p^2, \rho_{\Q_p^2}) \to (\t{C}(\Z_p,\Q_p), \rho_n^{\vee}) \to (\ker(\varpi_n)^{\vee}, (\rho_n | \ker(\varpi_n))^{\vee}) \to 0
\end{eqnarray*}
of unitary Banach $\Q_p$-linear representations, and $(\ker(\varpi_n)^{\vee}, (\rho_n | \ker(\varpi_n))^{\vee})$ is an infinite dimensional irreducible unitary Banach $\Q_p$-linear representation. Thus $(\Z_p^{\N},\rho_n)$ is an infinite dimensional extension of $\Leb_n$ by the Iwasawa-type dual of an infinite dimensional irreducible unitary Banach $\Q_p$-linear representation.
\end{rmk}

Now we interpolate the family $(\t{Sym}^{k-2}(\Z_p^2,\rho_{\Z_p^2}))_{k = 2}^{\infty}$ with respect to weights $k$ as elements of $W$. We put $\Lambda_0 \coloneqq \Z_p[[X]]^{p(p-1)}$. We regard $\Lambda_0$ as a $\Z_p$-submodule of $\t{C}(W,\Z_p)$ by the embedding
\begin{eqnarray*}
  \Z_p[[X]]^{p(p-1)} & \hookrightarrow & \t{C}(W,\Z_p) \\
  (F_{\zeta}(X))_{\zeta = 0}^{p(p-1)-1} & \mapsto & \left( \chi \mapsto F_{\chi^{(1)}}(n_p(\chi) - n^{(1)}) \right).
\end{eqnarray*}
This embedding is an injective continuous homomorphism from a compact module to a Hausdorff module, and hence is a homeomorphic isomorphism onto the closed image. For each $\chi \in W$, we denote by $\t{sp}_{\chi}$ the continuous surjective $\Z_p$-algebra homomorphism
\begin{eqnarray*}
  \Lambda_0 & \twoheadrightarrow & \Z_p \\
  f & \mapsto & f(\chi),
\end{eqnarray*}
and call it {\it a specialisation map}. For each $\chi \in W$, we regard $(\Z_p^{\N},\rho_{\chi-2})$ as a profinite $\Lambda_0[\Pi_0(p)]$-module through $\t{sp}_{\chi}$.

\vspace{0.2in}
Since $\N \cap [2,\infty)$ is dense in $W$, the evaluation map
\begin{eqnarray*}
  \t{sp} \coloneqq \prod_{k = 2}^{\infty} \t{sp}_k \colon \Lambda_0 & \hookrightarrow & \prod_{k = 2}^{\infty} \Z_p \\
  f & \mapsto & (\t{sp}_k(f))_{k = 2}^{\infty}
\end{eqnarray*}
is an injective continuous $\Z_p$-linear homomorphism between compact Hausdorff modules, and hence is a homeomorphic isomorphism onto the closed image. In particular, we regard $\Lambda_0^{\N}$ as a closed $\Z_p$-submodule of $\prod_{k = 2}^{\infty} \Z_p^{\N}$ by the embedding
\begin{eqnarray*}
  \t{sp}^{\N} \colon \Lambda_0^{\N} & \hookrightarrow & \prod_{k = 2}^{\infty} \Z_p^{\N} \\
  (f_i)_{i = 0}^{\infty} & \mapsto & ((\t{sp}_k(f_i))_{i = 0}^{\infty})_{k = 2}^{\infty}.
\end{eqnarray*}
Through the homeomorphic group isomorphism $W \cong (\Z/(p-1) \Z) \times \Z_p$, we identify $W$ as the analytic space given as the disjoint union of $p-1$ copies of $\Z_p$. As a closed $\Z_p$-subalgebra of $\t{C}(W,\Z_p)$, $\Lambda_0$ consists of locally analytic functions on $\Z_p$ whose restrictions on $\zeta + p(p-1)W \subset \zeta + (p-1)W \cong \Z_p$ are given by single power series in $\Z_p[[X - \zeta]]$ for any $\zeta \in \N \cap [0,p(p-1)-1]$. In particular, it contains $n_p$ and the characteristic functions $1_{\zeta + p(p-1)W}$ of $\zeta + p(p-1)W$ for each $\zeta \in \N \cap [0,p(p-1)-1]$. We denote by $z \in \Lambda_0$ the element corresponding to $n_p$, and by $e_{\zeta} \in \Lambda_0$ the idempotent corresponding to $1_{\zeta + p(p-1)W}$ for each $\zeta \in \N \cap [0,p(p-1)-1]$. We put $z_{\zeta} \coloneqq z e_{\zeta} \in \Lambda_0$ for each $\zeta \in \N \cap [0,p(p-1)-1]$. Identifying $e_{\zeta} \Lambda_0$ with $\Lambda_0/(1-e_{\zeta}) \Lambda_0 \cong \Z_p[[X]]$, we obtain a presentation $\Lambda_0 = \prod_{\zeta = 0}^{p(p-1)-1} \Z_p[[z_{\zeta} - \zeta]]$. Let $(h_0,f_0) \in \N \times \t{C}(W,\Z_p)$. We define a map
\begin{eqnarray*}
  \left(
    \begin{array}{c}
      f_0 \\
      h_0
    \end{array}
  \right)
  \colon W & \to & \Z_p \\
  \chi & \mapsto &
  \left(
    \begin{array}{c}
      f_0(\chi) \\
      h_0
    \end{array}
  \right).
\end{eqnarray*}
It is a polynomial function on $f_0$ with coefficients in $\Q_p$, and hence is continuous. Therefore we regard it as an element of $\t{C}(W,\Z_p)$. Let $(d,f) \in (1 + p \Z_p) \times \t{C}(W,\Z_p)$. The infinite sum
\begin{eqnarray*}
  d^f \coloneqq \sum_{h = 0}^{\infty}
  \left(
    \begin{array}{c}
      f \\
      h
    \end{array}
  \right)
  (d-1)^h
\end{eqnarray*}
converges in $1 + p \t{C}(W,\Z_p) \subset \t{C}(W,\Z_p)$ because $d-1 \in p \Z_p$. Suppose $f \in \Lambda_0$. We have
\begin{eqnarray*}
  \left(
    \begin{array}{c}
      f \\
      h
    \end{array}
  \right)
  (d-1)^h
  \left\{
    \begin{array}{ll}
      = 1 & (h = 0) \\
      \in p \Z_p[f] \subset \Lambda_0 & (h \geq 1)
    \end{array}
  \right.
\end{eqnarray*}
because we have
\begin{eqnarray*}
  & & \vv{\frac{(d-1)^h}{h!}} = \vv{d-1}^h \vv{p}^{- \sum_{r = 1}^{\infty} \left\lfloor \frac{h}{p^r} \right\rfloor} \leq \vv{p}^{h - \sum_{r = 1}^{\infty} \frac{h}{p^r}} = \vv{p}^{h - \frac{h}{p-1}} = \vv{p}^{\frac{h(p-2)}{p-1}}
  \left\{
    \begin{array}{ll}
      = 1 & (h = 0) \\
      < 1 & (h \geq 1)
    \end{array}
  \right.
\end{eqnarray*}
by $p \neq 2$. Since $\Lambda_0$ is closed in $\t{C}(W,\Z_p)$, $d^f$ also lies in $\Lambda_0$. Since the embedding $\Lambda_0 \hookrightarrow \t{C}(W,\Z_p)$ is a homeomorphism onto the image, the infinite sum in the definition of $d^f$ also converges to $d^f$ in $\Lambda_0$. More concretely, $d^f$ lies in the closure of $1 + p \Z_p[f] \subset \Lambda_0$.

\vspace{0.2in}
By the universality of Iwasawa algebra, the continuous group homomorphism
\begin{eqnarray*}
  1 + N \Z_p & \hookrightarrow & \t{C}(W,\Z_p)^{\times} \\
  \gamma & \mapsto & \gamma^z
\end{eqnarray*}
induces a continuous $\Z_p$-algebra homomorphism $\Z_p[[1 + N \Z_p]] \to \t{C}(W,\Z_p)$. We remark that since $1 + N \Z_p$ is contained in $1 + p \Z_p$, we have $\gamma^{z(\chi)} = \gamma^{n_p(\chi)} = \chi(\gamma)$ for any $(\gamma,\chi) \in (1 + N \Z_p) \times W$. Through the Amice transform
\begin{eqnarray*}
  \Z_p[[X]] & \stackrel{\sim}{\to} & \Z_p[[1 + N \Z_p]] \\
  X & \mapsto & [1+N] - 1,
\end{eqnarray*}
it corresponds to the $\Z_p$-algebra homomorphism
\begin{eqnarray*}
  \Z_p[[X]] & \to & \t{C}(W,\Z_p) \\
  X & \mapsto & (1+N)^z - 1,
\end{eqnarray*}
which is injective by the Weierstrass preparation theorem and the fact that the exponential function $(1+N)^z$ is a transcendental function. This embedding factors through $\Lambda_0 \hookrightarrow \t{C}(W,\Z_p)$, because $\Lambda_0$ is closed in $\t{C}(W,\Z_p)$ and the $\Z_p$-subalgebra of $\t{C}(W,\Z_p)$ generated by $(1+N)^z \in \Lambda_0$ is dense in the image of $\Z_p[[1 + N \Z_p]]$. We regard $\Lambda_0$ as a profinite $\Z_p[[1 + N \Z_p]]$-algebra through the embedding by Corollary \ref{profinite ring}.

\begin{prp}
For any $(A,F,i) \in \Pi_0(p) \times \Lambda_0^{\N} \times \N$ with 
$
  A = 
  \left(
    \begin{array}{cc}
      a & b \\
      c & d
    \end{array}
  \right)
$
 and $F = (F_j)_{j = 0}^{\infty}$, the infinite sum
\begin{eqnarray*}
  \rho_{\bullet - 2}(A,F)_i \coloneqq \sum_{j = 0}^{\infty} F_j \sum_{h = 0}^{\min \Ens{i,j}}
  \left(
    \begin{array}{c}
      i \\
      h
    \end{array}
  \right)
  \left( \prod_{m = i}^{i+j-h-1} (z-2-m) \right) a^h b^{i-h} \frac{c^{j-h}}{(j-h)!} d^{z-2-i-j+h}
\end{eqnarray*}
converges in $\Lambda_0$, and the map
\begin{eqnarray*}
  \rho_{\bullet - 2} \colon \Pi_0(p) \times \Lambda_0^{\N} & \to & \Lambda_0^{\N} \\
  (A,F) & \mapsto & \left( \rho_{\bullet - 2}(A,F)_i \right)_{i \in \N}
\end{eqnarray*}
is continuous.
\end{prp}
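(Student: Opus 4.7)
The plan is to mimic the sup-norm $p$-adic estimates used in the proof of the analogous statement for $\rho_\chi$ with $\chi \in W$, now working in $\Lambda_0$ topologised as a closed $\Z_p$-subalgebra of $\t{C}(W,\Z_p)$, so that uniform convergence on $W$ coincides with convergence in $\Lambda_0$.

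First I would verify that each individual summand
\[
  T_{i,j}(A,F) \coloneqq F_j \sum_{h = 0}^{\min \Ens{i,j}} \binom{i}{h} \left( \prod_{m=i}^{i+j-h-1}(z-2-m) \right) a^h b^{i-h} \frac{c^{j-h}}{(j-h)!} d^{z-2-i-j+h}
\]
lies in $\Lambda_0$. The only factor requiring justification is $d^{z-2-i-j+h}$ for $d \in \Z_p^{\times}$. I would decompose $d = \omega(d) \cdot d(p)$ with $\omega(d) \in \mu_{p-1}(\Z_p)$ and $d(p) \in 1+p\Z_p$, and then define the locally constant tame factor $\sum_\zeta e_\zeta \omega(d)^{\zeta - 2 - i - j + h}$ via the idempotents $e_\zeta$ (periodic of period $p-1$ in $\zeta$), together with the wild factor $d(p)^{z-2-i-j+h} = \sum_m \binom{z-2-i-j+h}{m} (d(p)-1)^m$, which lies in $\Lambda_0$ by the construction of $d^f$ for $f \in \Lambda_0$ already given earlier in the section. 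Their product lies in $\Lambda_0$.

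Next I would reproduce the uniform sup-norm bound from the proof of the $\rho_\chi$ case:
\[
  \n{T_{i,j}(A,F)}_W \leq \v{p}^{(j - \min \Ens{i,j})(1 - 1/(p-1))},
\]
using $\v{c^{j-h}/(j-h)!} \leq \v{p}^{(j-h)(1-1/(p-1))}$ (valid because $c \in p\Z_p$) together with the fact that every other factor, namely $F_j$, $\binom{i}{h}$, $\prod_m(z-2-m)$, $a^h b^{i-h}$ and $d^{z-2-i-j+h}$, has sup-norm at most $1$ as an element of $\Lambda_0 \subset \t{C}(W,\Z_p)$. For $j \ge i$ this bound equals $\v{p}^{(j-i)(1-1/(p-1))}$, which tends to $0$ uniformly in $(A,F) \in \Pi_0(p) \times \Lambda_0^\N$. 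Hence the partial sums are Cauchy in sup-norm, uniformly in $(A,F)$, and $\Lambda_0$ being closed in the complete space $\t{C}(W,\Z_p)$, the series converges in $\Lambda_0$ to $\rho_{\bullet-2}(A,F)_i$.

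For continuity: $\Lambda_0^\N$ carries the product topology, so it suffices to show each component $(A,F) \mapsto \rho_{\bullet-2}(A,F)_i$ is continuous into $\Lambda_0$. Every partial sum is continuous because the matrix entries $a,b,c,d$ enter polynomially, the $F_j$'s enter linearly, and $d \mapsto d^{z-2-i-j+h}$ is continuous through the tame/wild decomposition just described. The uniform bound above supplies uniform convergence of the partial sums over $\Pi_0(p) \times \Lambda_0^\N$, and a uniform limit of continuous $\Lambda_0$-valued maps is continuous. The main obstacle is the rigorous definition of $d^{z-2-i-j+h} \in \Lambda_0$ for $d \in \Z_p^\times$, since the earlier construction of $d^f$ applies only to $d \in 1+p\Z_p$: one must set up the tame/wild splitting so as to specialise correctly at every integer weight $k \ge 2$, recovering the integer power $d^{k-2-i-j+h}$ appearing in $\rho_{k-2}$; once this is in place, the convergence and continuity arguments are routine extensions of the $\rho_\chi$ case.
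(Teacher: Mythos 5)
Your proof is correct but takes a genuinely different route from the paper's for both halves of the statement. For convergence, the paper argues algebraically inside $\Lambda_0$: it observes that $\prod_{m=0}^{p-1}(z-a-m) \in p\Lambda_0 + (z^p-z)\Lambda_0$ (a mod-$p$ Fermat identity), shows that for $j>i$ the $j$-th term lies in $\bigl(\prod_{m=i}^{j-1}(z-2-m)\bigr)\Lambda_0 \subset (p\Lambda_0+(z^p-z)\Lambda_0)^{\lfloor (j-i)/p \rfloor}$, and appeals to the fact that powers of $p\Lambda_0+(z^p-z)\Lambda_0$ form a fundamental system of neighbourhoods of $0$. You instead transport the sup-norm estimate from the $\rho_\chi$ proof and use that $\Lambda_0$ is a closed subspace of $\t{C}(W,\Z_p)$ whose intrinsic profinite topology coincides with the relative sup-norm topology --- a fact the paper has already established via the compact-to-Hausdorff argument. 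Your decay rate $\v{p}^{(j-i)(1-1/(p-1))}$ is sharper than the paper's $\v{p}^{\lfloor (j-i)/p \rfloor}$ and avoids the Fermat factorisation entirely, at the cost of leaning on the topological identification. For continuity the paper factors $\rho_{\bullet-2}$ through the closed embedding $\t{sp}^{\N} \colon \Lambda_0^{\N} \hookrightarrow \prod_k \Z_p^{\N}$ and invokes the already-proved continuity of $\prod_k \rho_{k-2}$, whereas you argue by uniform convergence of continuous partial sums; both are sound, and the paper's has the advantage of automatically recycling the $\rho_\chi$ result. The one spot where both proofs are thin is the meaning of $d^{z-2-i-j+h} \in \Lambda_0$ when $d \in \Z_p^{\times}$ rather than $d \in 1+p\Z_p$: the paper's ``by the argument above'' only literally covers the wild part, and your tame/wild splitting $d=\omega(d)d(p)$ with the locally constant tame factor assembled from the idempotents $e_\zeta$ is exactly what is needed to complete it. You should actually carry this construction out rather than flag it as an unresolved obstacle, since it is the only genuinely non-routine point in the proposition and everything else is, as you say, a routine transposition.
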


\begin{proof}
Let $(A,F,i) \in \Pi_0(p) \times \Lambda_0^{\N} \times \N$ with 
$
  A = 
  \left(
    \begin{array}{cc}
      a & b \\
      c & d
    \end{array}
  \right)
$
 and $F = (F_j)_{j = 0}^{\infty}$. Each term in the infinite sum in the definition of $\rho_{\bullet - 2}(A,F)_i$ lies in $\Lambda_0$ by the argument above. For any $\chi \in W$, we have $\prod_{m = 0}^{p-1} (z - n_p(\chi) - m) \in p \Lambda_0 + (z^p-z) \Lambda_0$. The family
\begin{eqnarray*}
  \Set{(p \Lambda_0 + (z^p-z) \Lambda_0)^h \Lambda_0}{h \in \N}
\end{eqnarray*}
forms a fundamental system of neighbourhoods of $0$, because of the presentation
\begin{eqnarray*}
  \Lambda_0 = \prod_{\zeta = 0}^{p(p-1)-1} \Z_p[[z_{\zeta} - \zeta]] \cong \varprojlim_{r,h \in \N} \prod_{\zeta = 0}^{p(p-1)-1} (\Z/p^r \Z)[z_{\zeta} - \zeta]/(z_{\zeta} - \zeta)^h.
\end{eqnarray*}
We have
\begin{eqnarray*}
  & & F_j \sum_{h = 0}^{\min \Ens{i,j}}
  \left(
    \begin{array}{c}
      i \\
      h
    \end{array}
  \right)
  \left( \prod_{m = i}^{i+j-h-1} (z-2-m) \right) a^h b^{i-h} \frac{c^{j-h}}{(j-h)!} d^{z-i-j+h} \\
  & \in & \left( \prod_{m = i}^{j-1} (z-2-m) \right) \Lambda_0 \subset \sum_{r = 0}^{\left\lfloor \frac{j-i}{p} \right\rfloor} p^r (z^p-z)^{\left\lfloor \frac{j-i}{p} \right\rfloor - r} \Lambda_0 \subset (p \Lambda_0 + (z^p-z) \Lambda_0)^{\left\lfloor \frac{j-i}{p} \right\rfloor}
\end{eqnarray*}
for any $j \in \N$ with $j > i$, and hence $\rho_{\bullet - 2}(A,F)_i$ converges in $\Lambda_0$ by the linear completeness of the profinite $\Z_p$-algebra $\Lambda_0$. The continuity of $\rho_{\bullet - 2}$ follows from that of
\begin{eqnarray*}
  \prod_{k = 2}^{\infty} \rho_{k-2} \colon \Pi_0(p) \times \prod_{k = 2}^{\infty} \Z_p^{\N} & \to & \prod_{k = 2}^{\infty} \Z_p^{\N} \\
  (A,((\alpha_{k,i})_{i = 0}^{\infty})_{k = 2}^{\infty}) & \mapsto & (\rho_{k-2}(A,(\alpha_{k,i})_{i = 0}^{\infty}))_{k = 2}^{\infty},
\end{eqnarray*}
because $\prod_{k = 2}^{\infty} \rho_{k-2} \circ (\t{id}_{\Pi_0(p)} \times \t{sp}^{\N}) = \t{sp}^{\N} \circ \rho_{\bullet - 2}$.
\end{proof}

\begin{thm}
\label{interpolation 2}
The pair $(\Lambda_0^{\N}, \rho_{\bullet - 2})$ is a profinite $\Lambda_0[\Pi_0(p)]$-module, and the map
\begin{eqnarray*}
  \left( \Lambda_0^{\N}, \rho_{\bullet - 2} \right) & \to & \prod_{k = 2}^{\infty} \Leb_{k-2} \\
  (f_i)_{i = 0}^{\infty} & \mapsto & \left( \sum_{i = 0}^{k-2} \t{sp}_k(f_i) e_{k-2,i} \right)_{k = 2}^{\infty},
\end{eqnarray*}
is an injective continuous $\Lambda_0$-linear $\Pi_0(p)$-equivariant homomorphism.
\end{thm}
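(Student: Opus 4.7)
The plan is to deduce the theorem by comparison with the family $(\Z_p^{\N},\rho_{k-2})_{k \geq 2}$ via the embedding $\t{sp}^{\N} \colon \Lambda_0^{\N} \hookrightarrow \prod_{k=2}^{\infty} \Z_p^{\N}$, exploiting the intertwining identity $\t{sp}^{\N} \circ \rho_{\bullet-2} = (\prod_{k \geq 2} \rho_{k-2}) \circ (\t{id}_{\Pi_0(p)} \times \t{sp}^{\N})$ already recorded at the end of the preceding proposition.

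First I would verify that $\rho_{\bullet-2}$ satisfies the axioms of a $\Pi_0(p)$-action and of a $\Z_p$-linear action on $\Lambda_0^{\N}$. Each such axiom, namely $\rho_{\bullet-2}(1,F) = F$, $\rho_{\bullet-2}(AA',F) = \rho_{\bullet-2}(A,\rho_{\bullet-2}(A',F))$, and additivity in $F$, is an equality in $\Lambda_0^{\N}$; applying the injective map $\t{sp}^{\N}$ and invoking the intertwining identity reduces it to the corresponding axiom in each factor $(\Z_p^{\N},\rho_{k-2})$, which holds by Theorem \ref{interpolation}. The compatibility $\rho_{\bullet-2}(A,fF) = f \rho_{\bullet-2}(A,F)$ with the $\Lambda_0$-scalar multiplication is immediate from the defining formula of $\rho_{\bullet-2}$, since the coefficient of $F_j$ appearing there does not depend on $F$.

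Next I would promote $(\Lambda_0^{\N},\rho_{\bullet-2})$ to a profinite $\Lambda_0[\Pi_0(p)]$-module. Expressing $\prod_{k=2}^{\infty}(\Z_p^{\N},\rho_{k-2})$ as the inverse limit over finite subsets $S \subset \N \cap [2,\infty)$ of the partial products $\prod_{k \in S}(\Z_p^{\N},\rho_{k-2})$, it is a profinite $\Z_p[\Pi_0(p)]$-module. The coordinate-wise action of $\prod_{k=2}^{\infty} \Z_p$ is continuous and commutes with the $\Pi_0(p)$-action, so restriction of scalars along the continuous ring embedding $\t{sp} \colon \Lambda_0 \hookrightarrow \prod_{k=2}^{\infty} \Z_p$ yields a profinite $\Lambda_0[\Pi_0(p)]$-module structure. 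The intertwining identity shows that the image of $\t{sp}^{\N}$ is stable under the action of $\Lambda_0[\Pi_0(p)]$, and since $\t{sp}^{\N}$ is a continuous injection from the compact space $\Lambda_0^{\N}$ to the Hausdorff space $\prod_{k=2}^{\infty} \Z_p^{\N}$, it is a homeomorphism onto a closed subset; hence $(\Lambda_0^{\N},\rho_{\bullet-2})$ identifies with a closed submodule of a profinite $\Lambda_0[\Pi_0(p)]$-module, and is therefore itself profinite.

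Finally, the displayed map factors as $\t{sp}^{\N}$ followed by the product $\prod_{k \geq 2} \varpi_{k-2}$ of the canonical projections from Lemma \ref{specialisation}, so its continuity, $\Pi_0(p)$-equivariance, and $\Lambda_0$-linearity (where $\Leb_{k-2}$ carries the $\Lambda_0$-action pulled back through $\t{sp}_k$) are inherited from the two factors. For injectivity, a vanishing image $(f_i)_{i=0}^{\infty} \in \Lambda_0^{\N}$ forces $\t{sp}_k(f_i) = 0$ for every $k \geq i+2$; since $\N \cap [i+2,\infty)$ meets each residue class modulo $p(p-1)$ in a dense subset of $\Z_p$, the element $f_i \in \Lambda_0 \subset \t{C}(W,\Z_p)$ vanishes on a dense subset of $W$ and is therefore zero. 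The step I expect to require the most care is the bookkeeping that phrases the action axioms precisely as identities inheritable through $\t{sp}^{\N}$; the essential technical input is the injectivity of $\t{sp}$ noted earlier, which lifts each such identity from the factorwise checks provided by Theorem \ref{interpolation}.
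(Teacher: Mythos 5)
Your proposal follows the same route as the paper: both rest on the embedding $\t{sp}^{\N}\colon (\Lambda_0^{\N},\rho_{\bullet-2}) \hookrightarrow \prod_{k\geq 2}(\Z_p^{\N},\rho_{k-2})$, use the intertwining identity from the preceding proposition to pull profiniteness and the module structure back from the target, factor the displayed map through $\prod_{k\geq 2}\varpi_{k-2}$, and obtain injectivity from the fact that an element of $\Lambda_0$ vanishing at all classical weights $k\geq i+2$ must be zero (the paper phrases this via the identity theorem for rigid analytic functions, you via density and continuity — an equivalent, slightly more elementary variant of the same observation). The proposal is correct and essentially identical to the paper's argument, with the minor added explicitness of spelling out that the action axioms themselves can be checked after applying the injective $\t{sp}^{\N}$.
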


\begin{proof}
The embedding $\t{sp}^{\N} \colon (\Lambda_0^{\N}, \rho_{\bullet - 2}) \hookrightarrow \prod_{k = 2}^{\infty} (\Z_p^{\N},\rho_{k-2})$ is an injective continuous $\Lambda_0$-linear $\Pi_0(p)$-equivariant homomorphism onto the closed image by the definition of $\rho_{\bullet - 2}$. Since its target is a profinite $\Lambda_0[\Pi_0(p)]$-module, so is the source. Let $\iota \colon (\Lambda_0^{\N}, \rho_{\bullet - 2}) \to \prod_{k = 2}^{\infty} \Leb_{k-2}$ denote the map in the assertion. Then $\iota$ is a continuous $\Lambda_0$-linear $\Pi_0(p)$-equivariant homomorphism because it is the composite of $\t{sp}^{\N}$ and the canonical projection
\begin{eqnarray*}
  \prod_{k = 2}^{\infty} \varpi_{k-2} \colon \prod_{k = 2}^{\infty} \Z_p^{\N} & \to & \prod_{k = 2}^{\infty} \Leb_{k-2} \\
  ((\alpha_{k,i})_{i = 0}^{\infty})_{k = 2}^{\infty} & \mapsto & \left( \varpi_{k-2} \left( (\alpha_{k,i})_{i = 0}^{\infty} \right) \right)_{k = 2}^{\infty},
\end{eqnarray*}
which is a continuous $\Lambda_0$-linear $\Pi_0(p)$-equivariant homomorphism by Lemma \ref{specialisation}. Let $f = (f_i)_{i = 0}^{\infty} \in \ker(\iota)$. For any $i \in \N$, $f_i \colon W \to \Z_p$ is zero on the subset $\N \cap [i+2,\infty)$ which shares infinitely many points with $\zeta + p(p-1)W$ for each $\zeta \in \N \cap [0,p(p-1)-1]$, and hence $f_i = 0$ by the identity theorem for rigid analytic functions on $\zeta + p(p-1)W \cong p \Z_p$ for each $\zeta \in \N \cap [0,p(p-1)-1]$. Thus $f = 0$. We conclude that $\iota$ is injective.
\end{proof}

\begin{rmk}
The profinite $\Lambda_0[\Pi_0(p)]$-module $(\Lambda_0^{\N}, \rho_{\bullet-2})$ also admits a geometric construction using $p$-adic linear fractional transformations and distributions. We define a continuous action $1 \times m_p$ of $\Pi_0(p)$ on $\Z_p \times \Z_p$ by setting $(1 \times m_p)(A,(\chi,z)) \coloneqq (\chi, m_p(A,z))$ for each $(A,\chi,z) \in \Pi_0(p) \times W \times \Z_p$. Then by Proposition \ref{geometric action}, we obtain a commutative linearly complete $\Z_p[\Pi_0(p)]$-algebra $(\t{C}(W \times \Z_p, \Z_p), (1 \times m_p)^{\vee})$. The map
\begin{eqnarray*}
  \kappa \colon \Pi_0(p)^{\t{op}} & \to & \t{C}(W \times \Z_p,\Z_p) \\
  \left(
    \begin{array}{cc}
      a & b \\
      c & d
    \end{array}
  \right)^{\t{op}}
  & \mapsto & \left( \chi(cz+d) \colon (\chi_0,z_0) \mapsto \chi_0(cz_0+d) \right)
\end{eqnarray*}
satisfies the condition in Corollary \ref{geometric action 2} with respect to $1 \times m_p$. Therefore we obtain a linearly complete $\Z_p[\Pi_0(p)^{\t{op}}]$-module $(\t{C}(W \times \Z_p,\Z_p), (1 \times m_p)^{\vee}_{\kappa})$. Since $1 \times m_p$ and $\chi(cz+d)$ for any $(c,d) \in p \Z_p \times \Z_p^{\times}$ are rigid analytic functions, the $p$-adically complete $\Z_p$-subalgebra $\Z_p \ens{z,w}^{p-1} \subset \t{C}(\Z_p \times \Z_p,\Z_p)^{p-1} \cong \t{C}(\Z_p^{\sqcup p-1} \times \Z_p,\Z_p) \cong \t{C}(W \times \Z_p,\Z_p)$ consisting of rigid analytic functions of Gauss norm $\leq 1$ is stable under the action of $\Pi_0(p)$. Similarly, the $p$-adically complete $\Z_p$-subalgebra $\prod_{\zeta = 0}^{p(p-1)-1} \Z_p \ens{z_{\zeta} - \zeta, w} \subset \t{C}(W \times \Z_p,\Z_p)$ consisting of locally analytic functions whose restriction on the subspace $(\zeta + p(p-1)W) \times \Z_p \subset W \times \Z_p$ is given as the restriction of a rigid analytic function on $W \times \Z_p$ of Gauss norm $\leq 1$ for any $\zeta \in \N \cap [0,p(p-1)-1]$ is stable under the action of $\Pi_0(p)$. The Iwasawa-type dual of $\prod_{\zeta = 0}^{p(p-1)-1} \Z_p \ens{z_{\zeta} - \zeta, w}$ is naturally identified with $\prod_{\zeta = 0}^{p(p-1)-1} \Z_p[[z_{\zeta} - \zeta, w]] \cong \Lambda_0[[w]] \cong \Lambda_0^{\N}$, and hence $(1 \times m_p)^{\vee}_{\kappa}$ induces a continuous action of $\Pi_0(p)$ on $\Lambda_0^{\N}$. The action coincides with $\rho_{\bullet-2}$.
\end{rmk}

For each $k \in \N \cap [2,\infty)$, we also denote by $\t{sp}_k$ the continuous $\Z_p$-algebra homomorphism obtained as the composite
\begin{eqnarray*}
  \Lambda_0 \xrightarrow[]{\t{sp}_k} \Z_p \hookrightarrow \t{T}_{k,N}.
\end{eqnarray*}
We regard $\t{T}_{k,N}$ as a profinite $\Lambda_0$-algebra through $\t{sp}_k$ by Corollary \ref{profinite ring}. It is easy to see that $\t{sp}_k \colon \Lambda_0 \to \t{T}_{k,N}$ is a $\Z_p[[1 + N \Z_p]]$-algebra homomorphism. We recall that we defined the structure of $\t{T}_{k,N}$ as a profinite $\Z_p[[1 + N \Z_p]]$-algebra in \S \ref{p-adic Modular Forms and Hecke Algebras}.

\vspace{0.2in}
We regard
\begin{eqnarray*}
  \prod_{k = 2}^{k_0} \Hil_{\t{et}}^1 \left( Y_1(N)_{\overline{\Q}}, \Fil_{k-2} \right)_{\t{free}}
\end{eqnarray*}
as a profinite $\t{T}_{\leq k_0,N}$-module through the embedding
\begin{eqnarray*}
  \t{T}_{\leq k_0,N} \hookrightarrow \prod_{k = 2}^{k_0} \t{T}_{k,N},
\end{eqnarray*}
and also as a profinite $\Lambda_0$-module through the evaluation
\begin{eqnarray*}
  \prod_{k = 2}^{k_0} \t{sp}_k \colon \Lambda_0 & \to & \Z_p^{k_0-1} \hookrightarrow \prod_{k = 2}^{k_0} \t{T}_{k,N} \\
  F & \mapsto & (\t{sp}_k(F))_{k = 2}^{k_0}.
\end{eqnarray*}
The actions of $\t{T}_{\leq k_0,N}$ and $\Lambda_0$ give two actions of $\Z_p[[1 + N \Z_p]]$, and they coincide with each other. Therefore we regard 
\begin{eqnarray*}
  \prod_{k = 2}^{k_0} \Hil_{\t{et}}^1 \left( Y_1(N)_{\overline{\Q}}, \Fil_{k-2} \right)_{\t{free}}
\end{eqnarray*}
as a profinite $(\t{T}_{\leq k_0,N} \hat{\otimes}_{\Z_p[[1 + N \Z_p]]} \Lambda_0)$-module in the way in Example \ref{tensor 2}. Taking the inverse limit, we regard
\begin{eqnarray*}
  \prod_{k = 2}^{\infty} \Hil_{\t{et}}^1 \left( Y_1(N)_{\overline{\Q}}, \Fil_{k-2} \right)_{\t{free}}
\end{eqnarray*}
as a profinite $(\Lambda_0 \T_N)$-module.

\vspace{0.2in}
We denote by
\begin{eqnarray*}
  \int_{\Z_p}^{\boxplus} \t{Sym}_0^{k-2} \left( \Z_p^2 \right) dk \subset \prod_{k = 2}^{\infty} \t{Sym}_0^{k-2}(\Z_p^2)
\end{eqnarray*}
the image of $(\Lambda_0^{\N}, \rho_{\bullet - 2})$ by the embedding in Theorem \ref{interpolation 2}, and put
\begin{eqnarray*}
  \int_{\Z_p}^{\boxplus} \Leb_{k-2} dk \coloneqq \left( \prod_{k = 2}^{\infty} \Leb_{k-2} \middle) \middle| \middle( \int_{\Z_p}^{\boxplus} \t{Sym}_0^{k-2} \left( \Z_p^2 \right) dk \right).
\end{eqnarray*}
It is a profinite $\Lambda_0[\Pi_0(p)]$-module admitting specialisation maps
\begin{eqnarray*}
  \t{sp}_{k_0} \colon \int_{\Z_p}^{\boxplus} \Leb_{k-2} dk \twoheadrightarrow \Leb_{k_0-2}
\end{eqnarray*}
given by the canonical projections for each $k_0 \in \N \cap [2,\infty)$. For a formal symbol $H \in \ens{\Hil^1, \t{H}^1}$, we denote by
\begin{eqnarray*}
  \int_{\Z_p}^{\boxplus} H \left( \Gamma_1(N), \Leb_{k-2} \right) dk
\end{eqnarray*}
the image of the continuous $\Lambda_0$-linear Hecke-equivariant homomorphism
\begin{eqnarray*}
  H \left( \Gamma_1(N), \int_{\Z_p}^{\boxplus} \Leb_{k-2} dk \right) \xrightarrow[]{\prod_{k = 2}^{\infty} \t{sp}_k} \prod_{k = 2}^{\infty} H \left( \Gamma_1(N), \Leb_{k-2} \right),
\end{eqnarray*}
which is a profinite $\Lambda_0$-module endowed with an action of $T_{\ell}$ for each prime number $\ell$ and $S_n$ for each $n \in \N$ coprime to $N$. By Lemma \ref{prodiscrete cohomology 2}, we have a natural homeomorphic $\Lambda_0$-linear isomorphism
\begin{eqnarray*}
  \int_{\Z_p}^{\boxplus} \t{H}^* \left( \Gamma_1(N), \Leb_{k-2} \right) dk \stackrel{\sim}{\to} \int_{\Z_p}^{\boxplus} \Hil^* \left( \Gamma_1(N), \Leb_{k-2} \right) dk
\end{eqnarray*}
because $(\Lambda_0^{\N}, \rho_{\bullet - 2})$ is a first countable profinite $\Lambda_0[\Pi_0(p)]$-module.

\vspace{0.2in}
We set
\begin{eqnarray*}
  \int_{\Z_p}^{\boxplus} \Fil_{k-2} dk \coloneqq \left( \underline{\int_{\Z_p}^{\boxplus} \Leb_{k-2} dk} \right)_{Y_1(N)}.
\end{eqnarray*}
See \S \ref{Profinite Zp Sheaves on Modular Curves} for this convention. We have specialisation maps
\begin{eqnarray*}
  \Hil_{\t{et}}^1 \left( Y_1(N)_{\overline{\Q}}, \int_{\Z_p}^{\boxplus} \Fil_{k-2} dk \right) \xrightarrow[]{\t{sp}_{k_0}} \Hil_{\t{et}}^1 \left( Y_1(N)_{\overline{\Q}}, \Fil_{k_0-2} \right)
\end{eqnarray*}
associated to the specialisation map $\t{sp}_{k_0}$ for the corresponding topological $\Lambda_0[\Pi_0(p)]$-modules for each $k_0 \in \N \cap [2,\infty)$. We denote by
\begin{eqnarray*}
  \int_{\Z_p}^{\boxplus} \Hil_{\t{et}}^1 \left( Y_1(N)_{\overline{\Q}}, \Fil_{k-2} \right) dk
\end{eqnarray*}
the image of the continuous $\Lambda_0$-linear $\t{Gal}(\overline{\Q}/\Q)$-equivariant homomorphism
\begin{eqnarray*}
  \Hil_{\t{et}}^1 \left( Y_1(N)_{\overline{\Q}}, \int_{\Z_p}^{\boxplus} \Fil_{k-2} dk \right) \xrightarrow[]{\prod_{k = 2}^{\infty} \t{sp}_k} \prod_{k = 2}^{\infty} \Hil_{\t{et}}^1 \left( Y_1(N)_{\overline{\Q}}, \Fil_{k-2} \right),
\end{eqnarray*}
and it is a profinite $\Lambda_0[\t{Gal}(\overline{\Q}/\Q)]$-module. By Proposition \ref{comparison 2}, we have a natural homeomorphic $\Lambda_0$-linear isomorphism
\begin{eqnarray*}
  \int_{\Z_p}^{\boxplus} \Hil_{\t{et}}^1 \left( Y_1(N)_{\overline{\Q}}, \Fil_{k-2} \right) dk \cong \int_{\Z_p}^{\boxplus} \t{H}^1 \left( \Gamma_1(N), \Leb_{k-2} \right) dk,
\end{eqnarray*}
on which the action of $T_{\ell}$ for each prime number $\ell$ and $S_n$ for each $n \in \N$ coprime to $N$ on the right hand side commutes with that of $\t{Gal}(\overline{\Q}/\Q)$ on the left hand side by Proposition \ref{Galois - Hecke}.

\begin{prp}
\label{specialisation 2}
The specialisation map
\begin{eqnarray*}
  \Hil_{\t{et}}^1 \left( Y_1(N)_{\overline{\Q}}, \int_{\Z_p}^{\boxplus} \Fil_{k-2} dk \right) \xrightarrow[]{\t{sp}_{k_0}} \Hil_{\t{et}}^1 \left( Y_1(N)_{\overline{\Q}}, \Fil_{k_0-2} \right)
\end{eqnarray*}
given by the canonical projection is surjective for any $k_0 \in \N \cap [2,\infty)$.
\end{prp}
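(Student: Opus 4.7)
The plan is to transfer the problem to group cohomology of $\Gamma_1(N)$ and then exploit the cohomological dimension $1$ bound together with the long exact sequence of Proposition \ref{right exact 2}.

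First, I would invoke Proposition \ref{comparison 2} to identify both sides of the map with group cohomologies. Applied functorially to the continuous $\Lambda_0$-linear $\Pi_0(p)$-equivariant coefficient specialisation
\begin{eqnarray*}
  \t{sp}_{k_0} \colon \int_{\Z_p}^{\boxplus} \Leb_{k-2}\,dk \twoheadrightarrow \Leb_{k_0-2},
\end{eqnarray*}
this identifies the map in the statement with the map
\begin{eqnarray*}
  \t{H}^1\!\left(\Gamma_1(N), \t{Res}_{\Gamma_1(N)}^{\Pi_0(p)} \int_{\Z_p}^{\boxplus} \Leb_{k-2}\,dk \right) \to \t{H}^1\!\left(\Gamma_1(N), \t{Res}_{\Gamma_1(N)}^{\Pi_0(p)} \Leb_{k_0-2}\right),
\end{eqnarray*}
and by Lemma \ref{prodiscrete cohomology 2} this in turn coincides with the corresponding morphism on $\Hil^1(\Gamma_1(N), \cdot)$. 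Note that $\Leb_{k_0-2}$ is first countable profinite as a $\Lambda_0[\Pi_0(p)]$-module (being finitely generated over $\Z_p$), and the source is first countable profinite by Theorem \ref{interpolation 2} which identifies it homeomorphically with $(\Lambda_0^{\N},\rho_{\bullet-2})$.

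Next, I would use the surjectivity of $\t{sp}_{k_0}$ at the coefficient level (established in the preceding statement) to form a short exact sequence
\begin{eqnarray*}
  0 \to K \to \int_{\Z_p}^{\boxplus} \Leb_{k-2}\,dk \xrightarrow[]{\t{sp}_{k_0}} \Leb_{k_0-2} \to 0
\end{eqnarray*}
of profinite $\Lambda_0[\Pi_0(p)]$-modules. The kernel $K$ is a closed $\Lambda_0[\Pi_0(p)]$-submodule of a first countable profinite module, hence itself first countable profinite. Since $Y_1(N)$ is an affine curve over $\Q$ of level $N \geq 5$, the group $\Gamma_1(N)$ is torsionfree and is the fundamental group of the non-compact Riemann surface $\Gamma_1(N)\backslash \Hlf$, and so it is a finitely generated free group.

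With these hypotheses in place, Proposition \ref{right exact 2} applied to the restrictions of the short exact sequence to $\Gamma_1(N)$ yields an exact sequence
\begin{eqnarray*}
  \Hil^1\!\left(\Gamma_1(N), \int_{\Z_p}^{\boxplus} \Leb_{k-2}\,dk\right) \to \Hil^1(\Gamma_1(N), \Leb_{k_0-2}) \to 0,
\end{eqnarray*}
because the vanishing $\Hil^2 = 0$ of Lemma \ref{cohomological dimension} (which underlies Proposition \ref{right exact 2}) terminates the sequence in degree $1$. Combining this surjectivity with the identifications of the first paragraph yields the desired surjectivity of $\t{sp}_{k_0}$ on $\Hil_{\t{et}}^1$. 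The main point requiring care is the verification that $K$ inherits first countability and profiniteness as a $\Lambda_0[\Pi_0(p)]$-module so that Proposition \ref{right exact 2} genuinely applies; this is however immediate from the fact that closed sub-$\Pi_0(p)$-modules of first countable profinite modules retain both properties.
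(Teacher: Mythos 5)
Your proof is correct and follows essentially the same route as the paper: identify the $\Hil_{\t{et}}^1$ groups with $\Hil^1(\Gamma_1(N),\cdot)$ via Proposition \ref{comparison 2}, then invoke the right-exactness of the prodiscrete $\Hil^1$ for finitely generated free groups from Proposition \ref{right exact 2}. The paper's own proof is just a terser version of exactly this argument; your added care in checking that the kernel $K$ inherits first countability and profiniteness is a useful sanity check that the paper leaves implicit.
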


\begin{proof}
By the definition of the prodiscrete cohomology and Proposition \ref{comparison 2}, we have natural homeomorphic $\Lambda_0$-linear isomorphisms
\begin{eqnarray*}
  \Hil_{\t{et}}^1 \left( Y_1(N)_{\overline{\Q}}, \int_{\Z_p}^{\boxplus} \Fil_{k-2} dk \right) & \cong & \Hil^1 \left( \Gamma_1(N), \int_{\Z_p}^{\boxplus} \Leb_{k-2} dk \right) \\
  \Hil_{\t{et}}^1 \left( Y_1(N)_{\overline{\Q}}, \Fil_{k_0-2} \right) & \cong & \Hil^1 \left( \Gamma_1(N), \Leb_{k_0-2} \right).
\end{eqnarray*}
Therefore the assertion follows from Proposition \ref{right exact 2}.
\end{proof}

For a formal symbol
\begin{eqnarray*}
  (H_k)_{k = 2}^{\infty} \in \Ens{\left( \Hil^1 \left( \Gamma_1(N), \Leb_{k-2} \right) \right)_{k = 2}^{\infty}, \left( \t{H}^1 \left( \Gamma_1(N), \Leb_{k-2} \right) \right)_{k = 2}^{\infty}, \left( \Hil_{\t{et}}^1 \left( Y_1(N)_{\overline{\Q}}, \Fil_{k-2} \right) \right)_{k = 2}^{\infty}}
\end{eqnarray*}
we denote by
\begin{eqnarray*}
  \int_{\Z_p}^{\boxplus} (H_k)_{\t{free}} dk
\end{eqnarray*}
the image of the composite
\begin{eqnarray*}
  \int_{\Z_p}^{\boxplus} H_k dk \hookrightarrow \prod_{k = 2}^{\infty} H_k \twoheadrightarrow \prod_{k = 2}^{\infty} (H_k)_{\t{free}},
\end{eqnarray*}
and regard it as a profinite $\Lambda_0$-module endowed with a continuous action of $T_{\ell}$ for each prime number $\ell$ and $S_n$ for each $n \in \N$ coprime to $N$ when $(H_k)_{k = 2}^{\infty}$ is a formal symbol corresponding to cohomologies of $(\Leb_{k-2})_{k = 2}^{\infty}$, and with a continuous action of $\t{Gal}(\overline{\Q}/\Q)$ when $(H_k)_{k = 2}^{\infty}$ is a formal symbol corresponding to cohomologies of $(\Fil_{k-2})_{k = 2}^{\infty}$. We have natural homeomorphic $\Lambda_0$-linear isomorphisms
\begin{eqnarray*}
  \int_{\Z_p}^{\boxplus} \t{H}^1 \left( \Gamma_1(N), \Leb_{k-2} \right)_{\t{free}} dk & \cong & \int_{\Z_p}^{\boxplus} \Hil^1 \left( \Gamma_1(N), \Leb_{k-2} \right)_{\t{free}} dk \\
  & \cong & \int_{\Z_p}^{\boxplus} \Hil_{\t{et}}^1 \left( Y_1(N)_{\overline{\Q}}, \Fil_{k-2} \right)_{\t{free}} dk,
\end{eqnarray*}
such the first isomorphism is Hecke-equivariant, and the action of $T_{\ell}$ for each prime number $\ell$ and $S_n$ for each $n \in \N$ coprime to $N$ commutes with that of $\t{Gal}(\overline{\Q}/\Q)$.

\begin{thm}
\label{universal Hecke module}
The action of $T_{\ell}$ for each prime number $\ell$ and $S_n$ for each $n \in \N$ coprime to $N$ induces a well-defined faithful continuous $\Lambda_0$-linear $\t{Gal}(\overline{\Q}/\Q)$-equivariant action
\begin{eqnarray*}
  \T_N \times \int_{\Z_p}^{\boxplus} \Hil_{\t{et}}^1 \left( Y_1(N)_{\overline{\Q}}, \Fil_{k-2} \right)_{\t{free}} dk \to \int_{\Z_p}^{\boxplus} \Hil_{\t{et}}^1 \left( Y_1(N)_{\overline{\Q}}, \Fil_{k-2} \right)_{\t{free}} dk
\end{eqnarray*}
of $\T_N$.
\end{thm}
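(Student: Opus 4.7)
\bigskip

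The plan is to first extend the Hecke action from individual operators to all of $\T_N$ by continuity and compatibility, then derive faithfulness by specialization.

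First I would define the action of each Hecke operator $T_\ell$ and $S_n$ on the target. Applied functorially to the continuous $\Lambda_0$-linear $\Pi_0(p)$-equivariant homomorphism $\int_{\Z_p}^{\boxplus} \Leb_{k-2} dk \hookrightarrow \prod_{k=2}^{\infty} \Leb_{k-2}$, the constructions of Section \ref{Actions of the Absolute Galois Group and Hecke Operators} give Hecke operators on $\Hil_{\t{et}}^1(Y_1(N)_{\overline{\Q}}, \int_{\Z_p}^{\boxplus} \Fil_{k-2} dk)$ that commute with every specialisation map $\t{sp}_{k_0}$. Hence they preserve the image $\int_{\Z_p}^{\boxplus} \Hil_{\t{et}}^1(Y_1(N)_{\overline{\Q}}, \Fil_{k-2}) dk$ inside $\prod_{k=2}^{\infty} \Hil_{\t{et}}^1(Y_1(N)_{\overline{\Q}}, \Fil_{k-2})$, and the induced actions on each factor descend through the quotient to $\Hil_{\t{et}}^1(Y_1(N)_{\overline{\Q}}, \Fil_{k-2})_{\t{free}}$. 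Galois-equivariance is Proposition \ref{Galois - Hecke}, and $\Lambda_0$-linearity follows because $\Lambda_0$ acts on each factor via $\t{sp}_k$ landing in the commutative algebra $\t{T}_{k,N}$.

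Next I would upgrade these individual actions to a continuous action of $\T_N$. Via Remark \ref{symmetric product 4}, the natural action of $\t{T}_{k,N}^{\t{\'et}}$ on $\Hil_{\t{et}}^1(Y_1(N)_{\overline{\Q}}, \Fil_{k-2})_{\t{free}}$ factors through a faithful action of $(\t{T}_{k,N}^{\t{\'et}})_{\t{free}} \cong \t{T}_{k,N}$. For each $k_0 \geq 2$, this yields an action of $\prod_{k=2}^{k_0} \t{T}_{k,N}$ on $\prod_{k=2}^{k_0} \Hil_{\t{et}}^1(Y_1(N)_{\overline{\Q}}, \Fil_{k-2})_{\t{free}}$, which restricts through $\t{T}_{\leq k_0,N} \hookrightarrow \prod_{k=2}^{k_0} \t{T}_{k,N}$ to an action that agrees with the Hecke action already constructed. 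Passing to the inverse limit over $k_0$ assembles an action of $\T_N = \varprojlim_{k_0} \t{T}_{\leq k_0,N}$ on the full product, which preserves $\int_{\Z_p}^{\boxplus} \Hil_{\t{et}}^1(Y_1(N)_{\overline{\Q}}, \Fil_{k-2})_{\t{free}} dk$ by the compatibility established above. Joint continuity follows from profiniteness of both $\T_N$ and the module, together with Proposition \ref{finitely generated - p-adic} applied level-wise.

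For faithfulness, I would exploit Proposition \ref{specialisation 2}. Let $T \in \T_N$ act as zero on $\int_{\Z_p}^{\boxplus} \Hil_{\t{et}}^1(Y_1(N)_{\overline{\Q}}, \Fil_{k-2})_{\t{free}} dk$, and write $T = (T_k)_{k=2}^{\infty}$ under the injection $\T_N \hookrightarrow \prod_{k=2}^{\infty} \t{T}_{k,N}$. Fix $k_0 \geq 2$. Proposition \ref{specialisation 2} gives surjectivity of $\t{sp}_{k_0}$ from $\Hil_{\t{et}}^1(Y_1(N)_{\overline{\Q}}, \int_{\Z_p}^{\boxplus} \Fil_{k-2} dk)$ onto $\Hil_{\t{et}}^1(Y_1(N)_{\overline{\Q}}, \Fil_{k_0-2})$; composing with the canonical projection to the free quotient, the specialisation $\int_{\Z_p}^{\boxplus} \Hil_{\t{et}}^1(Y_1(N)_{\overline{\Q}}, \Fil_{k-2})_{\t{free}} dk \to \Hil_{\t{et}}^1(Y_1(N)_{\overline{\Q}}, \Fil_{k_0-2})_{\t{free}}$ is surjective. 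Since specialisation is Hecke-equivariant, $T_{k_0}$ must annihilate $\Hil_{\t{et}}^1(Y_1(N)_{\overline{\Q}}, \Fil_{k_0-2})_{\t{free}}$; by the faithfulness of $\t{T}_{k_0,N}$ on this module (Remark \ref{symmetric product 4}), $T_{k_0} = 0$. This holds for all $k_0$, so $T = 0$.

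The main obstacle will be the well-definedness step: confirming that, although the Hecke operators are defined slice-by-slice through $\prod_k \t{T}_{k,N}$, the action of $\t{T}_{\leq k_0,N}$ on the finite product really restricts to the image of the integral and that these restrictions are compatible as $k_0$ grows, so that the inverse limit produces a genuine action of $\T_N$ rather than merely of $\prod_k \t{T}_{k,N}$. Once Hecke-equivariance of specialisation is established, this compatibility is formal, and faithfulness then reduces cleanly to the classical faithfulness at each single weight recalled in Remark \ref{symmetric product 4}.
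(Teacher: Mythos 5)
Your proposal sets up the construction of Hecke operators correctly, and your faithfulness argument via Proposition \ref{specialisation 2} is actually more explicit than the paper's (which asserts faithfulness for the restricted action but only clearly establishes it for the action on the full product). However, the well-definedness step has a genuine gap that you flag but misdiagnose, and your suggested fix does not close it.

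Write $L$ for $\int_{\Z_p}^{\boxplus} \Hil_{\t{et}}^1(Y_1(N)_{\overline{\Q}}, \Fil_{k-2})_{\t{free}}\, dk$ and $M$ for the ambient product $\prod_{k=2}^{\infty} \Hil_{\t{et}}^1(Y_1(N)_{\overline{\Q}}, \Fil_{k-2})_{\t{free}}$. The problem is not whether $\t{T}_{\leq k_0,N}$ acts compatibly across levels --- Hecke-equivariance of the specialisation maps does give that for finite polynomials in Hecke operators. The problem is that a general element $T \in \T_N = \varprojlim_{k_0} \t{T}_{\leq k_0,N}$ is a topological limit of such polynomials, not itself one: its components $T_{\leq k_0}$ need not stabilise as $k_0$ grows. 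Knowing that each Hecke operator, and hence the whole $\Z_p$-subalgebra $A \subset \T_N$ that they generate, carries $L$ into itself does not by itself show that $T$ does. Your remark that ``Once Hecke-equivariance of specialisation is established, this compatibility is formal'' is therefore not accurate; what is missing is a topological argument, not a formal compatibility check.

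The missing ingredient is the density-plus-compactness step the paper uses: $A$ is dense in $\T_N$ by the definition of the inverse limit topology together with the fact that each $\t{T}_{\leq k_0,N}$ is algebraically generated by the Hecke operators; the orbit map $\T_N \to M$, $T' \mapsto T'(c)$, is continuous for each fixed $c \in L$ because the action on $M$ is continuous; and $L$ is compact, hence closed in the Hausdorff space $M$. Therefore $T(c)$, lying in the closure of $A \cdot c \subset L$, lies in $L$. With this argument inserted, the rest of your plan goes through.
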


\begin{proof}
We put
\begin{eqnarray*}
  L & \coloneqq & \int_{\Z_p}^{\boxplus} \Hil_{\t{et}}^1 \left( Y_1(N)_{\overline{\Q}}, \Fil_{k-2} \right)_{\t{free}} dk \\
  M & \coloneqq & \prod_{k = 2}^{\infty} \Hil_{\t{et}}^1 \left( Y_1(N)_{\overline{\Q}}, \Fil_{k-2} \right)_{\t{free}}
\end{eqnarray*}
It follows from the natural isomorphism $(\t{T}_{k_0,N}^{\t{\'et}})_{\t{free}} \to \t{T}_{k_0,N}$ that the action of Hecke operators gives a well-defined faithful action
\begin{eqnarray*}
 \t{T}_{k_0,N} \times \Hil_{\t{et}}^1 \left( Y_1(N)_{\overline{\Q}},\Fil_{k-2} \right)_{\t{free}} \to \Hil_{\t{et}}^1 \left( Y_1(N)_{\overline{\Q}}, \Fil_{k-2} \right)_{\t{free}}
\end{eqnarray*}
for any $k_0 \in \N \cap [2,\infty)$. Therefore the action of $T_{\ell}$ for each prime number $\ell$ and $S_n$ for each $n \in \N$ coprime to $N$ induces a well-defined faithful action $\T_N \times M \to M$, which is continuous by the universality of a direct product, because it is given as the inverse limit of the continuous actions
\begin{eqnarray*}
  \t{T}_{\leq k_1,N} \times \prod_{k_0 = 2}^{k_1} \Hil_{\t{et}}^1 \left( Y_1(N)_{\overline{\Q}}, \Fil_{k-2} \right)_{\t{free}} \to \prod_{k_0 = 2}^{k_1} \Hil_{\t{et}}^1 \left( Y_1(N)_{\overline{\Q}}, \Fil_{k-2} \right)_{\t{free}}
\end{eqnarray*}
for each $k_1 \in \N \cap [2,\infty)$. Since $\t{T}_{k_0,N}$ is generated by Hecke operators for any $k_0 \in \N \cap [2,\infty)$, the $\Z_p$-subalgebra $A \subset \T_N$ generated by $T_{\ell}$ for each prime number $\ell$ and $S_n$ for each $n \in \N$ coprime to $N$ is dense by the definition of the inverse limit topology. Therefore for any $(T,c) \in \T_N \times L$, $T(c)$ lies in the closure of the image of $L$ in $M$. Since $L$ is compact and $M$ is Hausdorff, $T(c)$ lies in the image of $L$. Therefore the action of $T_{\ell}$ for each prime number $\ell$ and $S_n$ for each $n \in \N$ coprime to $N$ induces a well-defined faithful $\Z_p$-linear $\t{Gal}(\overline{\Q}/\Q)$-equivariant action $\T_N \times L \to L$, which is continuous because the topology of $\int_{\Z_p}^{\boxplus} \Hil_{\t{\'et}}^1(Y_1(N)_{\overline{\Q}}, \Fil_{k-2} dk)_{\t{Free}}$ coincides with the relative topology of $\prod_{k_0 = 2}^{\infty} \Hil_{\t{et}}^1(Y_1(N)_{\overline{\Q}}, \Fil_{k-2})_{\t{free}}$.
\end{proof}

We put $\Lambda_0 \T_N \coloneqq \Lambda_0 \hat{\otimes}_{\Z_p[[1 + N \Z_p]]} \T_N$. By the action of $\T_N$ in Theorem \ref{universal Hecke module}, we regard
\begin{eqnarray*}
  \int_{\Z_p}^{\boxplus} \Hil_{\t{et}}^1 \left( Y_1(N)_{\overline{\Q}}, \Fil_{k-2} \right)_{\t{free}} dk
\end{eqnarray*}
as a profinite $\Lambda_0 \T_N[\t{Gal}(\overline{\Q}/\Q)]$-module. It is a huge module, and we cut it by a slope condition in the next section.

\subsection{Restriction to Families of Finite Slope}
\label{Restriction to Families of Finite Slope}

Let $s \in \N \backslash \ens{0}$. We extract the component of slope $< s$ from the huge cohomology dealt with in the end of \S \ref{Interpolation along the Weight Spaces}. For conventions of Hecke algebras of finite slope, see \S \ref{p-adic Modular Forms and Hecke Algebras}. For each $k_0 \in \N \cap [2,\infty)$, we set
\begin{eqnarray*}
  \Hil_{\t{et}}^1 \left( Y_1(N)_{\overline{\Q}}, \Fil_{k_0-2} \right)^{< s} \coloneqq \left( \t{T}_{k_0,N}^{< s} \hat{\otimes}_{\t{T}_{k_0,N}} \Hil_{\t{et}}^1 \left( Y_1(N)_{\overline{\Q}}, \Fil_{k_0-2} \right) \right)_{\t{free}},
\end{eqnarray*}
where $\t{T}_{k_0,N}^{< s}$ is regarded as a profinite $\t{T}_{k_0,N}[\t{Gal}(\overline{\Q}/\Q)]$-algebra by the trivial action of $\t{Gal}(\overline{\Q}/\Q)$. It is a profinite $\t{T}_{k_0,N}^{< s}[\t{Gal}(\overline{\Q}/\Q)]$-modules finitely generated as $\Z_p$-modules. We put $\Lambda_0 \T_N^{< s} \coloneqq \Lambda_0 \hat{\otimes}_{\Z_p[[1 + N \Z_p]]} \T_N^{< s}$ (resp.\ $\Lambda_0 \T_N^{[< s]} \coloneqq \Lambda_0 \hat{\otimes}_{\Z_p[[1 + N \Z_p]]} \T_N^{[< s]}$), and regard it a profinite $\Lambda_0 \T_N$-algebra. We denote by
\begin{eqnarray*}
  \int_{\Z_p}^{\boxplus} \Hil_{\t{et}}^1 \left( Y_1(N)_{\overline{\Q}}, \Fil_{k-2} \right)^{< s} dk
\end{eqnarray*}
the image of the natural continuous homomorphism
\begin{eqnarray*}
  \Lambda_0 \T_N^{< s} \hat{\otimes}_{\Lambda_0 \T_N} \int_{\Z_p}^{\boxplus} \Hil_{\t{et}}^1 \left( Y_1(N)_{\overline{\Q}}, \Fil_{k-2} \right)_{\t{free}} dk \to \prod_{k = 2}^{\infty} \Hil_{\t{et}}^1 \left( Y_1(N)_{\overline{\Q}}, \Fil_{k-2} \right)^{< s},
\end{eqnarray*}
and regard it as a profinite $\Lambda_0 \T_N^{< s}[\t{Gal}(\overline{\Q}/\Q)]$-module.

\vspace{0.2in}
Let $k_0 \in \N \cap [2,\infty)$. The truncation maps
\begin{eqnarray*}
  \tau_{k_0,-} \colon \Lambda_0^{\N} & \to & \Lambda_0^{\N} \\
  (F_i)_{i = 0}^{\infty} & \mapsto & (F_0, \ldots, F_{k_0-2}, 0 ,0 \ldots)
\end{eqnarray*}
and
\begin{eqnarray*}
  \tau_{k_0,+} \colon \Lambda_0^{\N} & \to & \Lambda_0^{\N} \\
  (F_i)_{i = 0}^{\infty} & \mapsto & (0, \ldots, 0, F_{k_0-1}, F_{k_0}, \ldots)
\end{eqnarray*}
are continuous $\Lambda_0$-linear idempotents. For a $1$-cocycle
\begin{eqnarray*}
  c \colon \Gamma_1(N) \to \int_{\Z_p}^{\boxplus} \Leb_{k-2} dk,
\end{eqnarray*}
we consider the composite $c' \colon \Gamma_1(N) \to (\Lambda_0^{\N}, \rho_{\bullet - 2})$ of $c$ and the inverse of the homeomorphic $\Lambda_0$-linear $\Pi_0(p)$-equivariant isomorphism
\begin{eqnarray*}
  \prod_{k = 2}^{\infty} \t{sp}_k \colon (\Lambda_0^{\N}, \rho_{\bullet - 2}) \stackrel{\sim}{\to} \int_{\Z_p}^{\boxplus} \Leb_{k-2} dk.
\end{eqnarray*}
By the proof of Proposition \ref{comparison 2}, $\Gamma_1(N)$ is a finitely generated free group. Henceforth, we fix a basis $(\gamma_h)_{h = 1}^{d}$ of the finitely generated free group $\Gamma_1(N)$. By the isomorphism in Lemma \ref{cocycle}, we identify $\t{Z}^1(\Gamma_1(N),(\Lambda_0^{\N},\rho_{\bullet - 2}))$ with $(\Lambda_0^{\N})^d$. Put $c' = (c'_h)_{h = 1}^{d}$ through the identification. Set $\tau_{k_0,\pm}(c') \coloneqq (\tau_{k_0,\pm}(c'_h))_{h = 1}^{d}$, and denote by
\begin{eqnarray*}
  \tau_{k_0,\pm}(c) \colon \Gamma_1(N) \to \int_{\Z_p}^{\boxplus} \Leb_{k-2} dk,
\end{eqnarray*}
the $1$-cocycle obtained as the composite of $\tau_{k_0,\pm}(c')$ and $\prod_{k = 2}^{\infty} \t{sp}_k$. We obtain continuous idempotents $\tau_{k_0,-}$ and $\tau_{k_0,+}$ on
\begin{eqnarray*}
  \t{Z}^1 \left( \Gamma_1(N), \int_{\Z_p}^{\boxplus} \Leb_{k-2} dk \right)
\end{eqnarray*}
with $\tau_{k_0,-} + \tau_{k_0,+} = \t{id}$.

\begin{lmm}
\label{truncate}
If $p^s \mid N$, then the image of
\begin{eqnarray*}
  \tau_{k_0,-} \left( \t{Z}^1 \left( \Gamma_1(N), \int_{\Z_p}^{\boxplus} \Leb_{k-2} dk \right) \right)
\end{eqnarray*}
generates a dense $\Lambda_0 \T_N^{< s}$-submodule of
\begin{eqnarray*}
  \int_{\Z_p}^{\boxplus} \Hil_{\t{et}}^1 \left( Y_1(N)_{\overline{\Q}}, \Fil_{k-2} \right)^{< s} dk
\end{eqnarray*}
for any $k_0 \geq s+1$.
\end{lmm}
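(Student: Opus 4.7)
The plan is to prove the stronger assertion that $[c] = [\tau_{k_0,-}(c)]$ in $\int_{\Z_p}^{\boxplus} \Hil_{\t{et}}^1(Y_1(N)_{\overline{\Q}}, \Fil_{k-2})^{<s}\,dk$ for every cocycle $c \in \t{Z}^1(\Gamma_1(N), \int_{\Z_p}^{\boxplus} \Leb_{k-2}\,dk)$, which immediately yields the density claimed in the lemma. This reduces the problem to showing $[\tau_{k_0,+}(c)] = 0$ in the slope $<s$ cohomology.

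First I would compute explicitly the action of the Hecke operator $T_p$ on a cocycle of the form $\tau_{k_0,+}(c)$. Since $p \mid N$, the classical double coset decomposition reads
\begin{eqnarray*}
  \Gamma_1(N) \left(
    \begin{array}{cc}
      1 & 0 \\
      0 & p
    \end{array}
  \right) \Gamma_1(N)
  = \bigsqcup_{\theta=0}^{p-1} \Gamma_1(N) A_\theta, \quad A_\theta \coloneqq \left(
    \begin{array}{cc}
      1 & \theta \\
      0 & p
    \end{array}
  \right),
\end{eqnarray*}
whose cofactor matrices $A_\theta^\iota = \left( \begin{array}{cc} p & -\theta \\ 0 & 1 \end{array}\right)$ lie in $\Pi_0(p)$. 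Substituting $a=p$, $b=-\theta$, $c=0$, $d=1$ into the defining formula for $\rho_{\bullet-2}(A_\theta^\iota, F)_i$, the vanishing of the $(2,1)$-entry forces $j=h$ via the factor $c^{j-h}/(j-h)!$, so the double sum collapses to
\begin{eqnarray*}
  \rho_{\bullet-2}(A_\theta^\iota, F)_i = \sum_{j=0}^{i} F_j \binom{i}{j} p^j (-\theta)^{i-j}.
\end{eqnarray*}
When $F = \tau_{k_0,+}(c)(\gamma_\theta)$, whose components vanish in indices $j < k_0-1$, every surviving term carries a factor $p^j \geq p^{k_0-1}$.

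It follows that $T_p \tau_{k_0,+}(c) = p^{k_0-1} \tilde c$ for a uniquely determined cocycle $\tilde c \in \t{Z}^1(\Gamma_1(N), \int_{\Z_p}^{\boxplus} \Leb_{k-2}\,dk)$, the division being valid in the $\Z_p$-flat module $\Lambda_0^{\N}$; moreover the same computation shows $\tilde c$ vanishes in indices $< k_0-1$, so $\tilde c = \tau_{k_0,+}(\tilde c)$. Passing to the slope $<s$ cohomology $M$, where $T_p$ is invertible, I obtain $[\tau_{k_0,+}(c)] = p^{k_0-1-s}(p^s T_p^{-1})[\tilde c]$ with exponent $k_0-1-s \geq 0$. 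Iterating this identity on $\tilde c$ yields $[\tau_{k_0,+}(c)] \in T^r M$ for every $r \in \N$, where $T \coloneqq p^{k_0-1-s}(p^s T_p^{-1}) \in \Lambda_0 \T_N^{<s}$.

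Finally, Proposition \ref{topologically nilpotent} implies that $p^s T_p^{-1}$ is topologically nilpotent in $\T_N^{<s}$, hence so is its image $T$ in $\Lambda_0 \T_N^{<s}$. Since $M$ is a profinite $\Lambda_0 \T_N^{<s}$-module, for each open submodule $L \subset M$ the finite quotient $M/L$ is a continuous module over a finite ring, in which $T$ is nilpotent; so $T^r M \subset L$ for all sufficiently large $r$, giving $\bigcap_r T^r M = 0$. This forces $[\tau_{k_0,+}(c)] = 0$, hence $[c] = [\tau_{k_0,-}(c)]$, completing the proof. The main technical obstacle is the explicit computation showing the $p^{k_0-1}$-divisibility of $T_p\tau_{k_0,+}(c)$: it depends decisively on the special structure of the cofactors $A_\theta^\iota$, whose bottom row $(0,1)$ together with top-left entry $p$ produces precisely the $p^j$ factor responsible for the divisibility. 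Everything after that is a formal application of topological nilpotency of $p^s T_p^{-1}$ in the profinite Hecke algebra.
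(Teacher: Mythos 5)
The proposal has a genuine gap, located at the sentence ``When $F = \tau_{k_0,+}(c)(\gamma_\theta)$, whose components vanish in indices $j < k_0-1$.'' The truncation $\tau_{k_0,+}(c)$ is a $1$-cocycle determined by its values on the fixed free basis $(\gamma_h)_{h=1}^d$ of $\Gamma_1(N)$, where indeed $\tau_{k_0,+}(c)(\gamma_h) = \tau_{k_0,+}(c(\gamma_h))$ has vanishing low-index components. But the $\gamma_\theta$ appearing in the double coset formula for $T_p$ are products $\gamma_{h'_1}^{\pm 1}\cdots$ of the generators, and the cocycle relation that produces $\tau_{k_0,+}(c)(\gamma_\theta)$ from the basis values involves applying $\rho_{\bullet-2}(A, -)$ for various $A \in \Gamma_1(N)$. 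Since such an $A$ has a nonzero lower-left entry $c \in N\Z$, the formula for $\rho_{\bullet-2}(A,F)$ with $F \in \tau_{k_0,+}(\Lambda_0^{\N})$ does \emph{not} stay in the image of $\tau_{k_0,+}$: its low-index components are generically nonzero, merely divisible by $N(z-k_0)$. Consequently the claimed identity $T_p\tau_{k_0,+}(c) = p^{k_0-1}\tilde c$ with $\tilde c \in \tau_{k_0,+}(\widetilde M)$ fails, and the iteration collapses. A symptom of the gap: your argument never uses the hypothesis $p^s \mid N$ (you invoke only $p\mid N$), even though the lemma requires it. In the paper's proof, $p^s\mid N$ enters precisely to control the spill-over into the $\tau_{k_0,-}$-component, bounding $\left|\frac{c^{j-h}}{(j-h)!}\right|$ by $|N| \leq |p^s|$.

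A secondary issue: you are attempting to prove the stronger statement that $[\tau_{k_0,+}(c)] = 0$ in the slope-$<s$ cohomology. The lemma asserts only that the $\tau_{k_0,-}$-truncations generate a \emph{dense} $\Lambda_0\T_N^{<s}$-submodule. The paper's proof establishes that $[\tau_{k_0,+}(c)]$ lies in the $p$-adic closure of the submodule generated by the image of $(z-k_0)\tau_{k_0,-}(\widetilde M)$, via an iteration that accounts for the $\tau_{k_0,-}$-spill-over at each step; there is no reason to expect $[\tau_{k_0,+}(c)]$ itself to vanish. The correct version of your iteration would have to split each $\tilde c^{(r)}$ into its $\tau_{k_0,-}$ and $\tau_{k_0,+}$ pieces, retain the $\tau_{k_0,-}$ piece (using $p^s\mid N$ and the factor $z-k_0$ to see it lies in the target submodule), and iterate only on the $\tau_{k_0,+}$ piece — which is exactly what the paper does at the level of the finite quotients $M_{\leq k_1}$, where $p$-adic closedness of submodules is available.
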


\begin{proof}
We always identify the group cohomologies and the corresponding \'etale cohomology by Proposition \ref{comparison 2}. Put
\begin{eqnarray*}
  L & \coloneqq & \int_{\Z_p}^{\boxplus} \Hil_{\t{et}}^1 \left( Y_1(N)_{\overline{\Q}}, \Fil_{k-2} \right)^{< s} dk \\
  \widetilde{M} & \coloneqq & \t{Z}^1 \left( \Gamma_1(N), \int_{\Z_p}^{\boxplus} \Leb_{k-2} dk \right) \\
  M & \coloneqq & \Hil_{\t{et}}^1 \left( Y_1(N)_{\overline{\Q}}, \int_{\Z_p}^{\boxplus} \Fil_{k-2} dk \right)
\end{eqnarray*}
and
\begin{eqnarray*}
  M_{\leq k_1} \coloneqq \prod_{k = 2}^{k_1} \Hil_{\t{et}}^1 \left( Y_1(N)_{\overline{\Q}}, \Fil_{k-2} \right)^{< s}
\end{eqnarray*}
for each $k_1 \in \N \cap [2,\infty)$. Let $\overline{c} \in L$. By the definition of the inverse limit topology, it suffices to verify that the image of $\overline{c}$ in $M_{\leq k_1}$ is contained in the $(\Lambda_0 \otimes_{\Z_p[[1 + N\Z_p]]} \t{T}_{\leq k_1,N}^{< s})$-submodule generated by the image of $\tau_{k_0,-}(\widetilde{M})$ for any $k_1 \in \N \cap [k_0,\infty)$. Let $k_1 \in \N \cap [p^s,\infty)$. By Proposition \ref{p^s/T_p} and Proposition \ref{right exact 2}, there is a lift
\begin{eqnarray*}
  c_{k_1} \in \T_N[X] \otimes_{\T_N} M
\end{eqnarray*}
of the image $\overline{c}_{k_1}$ of $\overline{c}$ in $M_{\leq k_1}$ with respect to the $\T_N$-algebra homomorphism $\T_N[X] \twoheadrightarrow \t{T}_{\leq k_1,N}^{< s} \colon X \mapsto p^sT_p^{-1}$. Put $c_{k_1} = \sum_{n = 0}^{n_0} X^n \otimes c_{k_1,n}$ for a $(c_{k_1,n})_{n = 0}^{n_0} \in M^{n_0+1}$. Take a lift $\tilde{c}_{k_1,n} \in \widetilde{M}$ of $c_{k_1,n}$ for each $n \in \N \cap [0,n_0]$. We have $\tilde{c}_{k_1,n} = \tau_{k_0,-}(\tilde{c}_{k_1,n}) + \tau_{k_0,+}(\tilde{c}_{k_1,n})$. As a consequence, $\overline{c}_{k_1}$ is decomposed into the sum of the images of
\begin{eqnarray*}
  & & \sum_{n = 0}^{n_0} X^n \otimes \tau_{k_0,-}(\tilde{c}_{k_1,n}) \in \Z_p[X] \otimes_{\Z_p} \tau_{k_0,-}(\widetilde{M}) \\
  & & \sum_{n = 0}^{n_0} X^n \otimes \tau_{k_0,+}(\tilde{c}_{k_1,n}) \in \Z_p[X] \otimes_{\Z_p} \tau_{k_0,+}(\widetilde{M}),
\end{eqnarray*}
and hence it suffices to verify that the image of $\tau_{k_0,+}(\widetilde{M})$ in $M_{\leq k_1}$ is contained in the $(\Lambda_0 \otimes_{\Z_p[[1 + N \Z_p]]} \t{T}_{\leq k_1,N}^{< s})$-submodule generated by $\tau_{k_0,-}(\widetilde{M})$. Before that, we calculate the image of $\tau_{k_0,+}(\widetilde{M})$ by $T_p$.

\vspace{0.2in}
By the homeomorphic $\Lambda_0$-linear $\Pi_0(p)$-equivariant isomorphism
\begin{eqnarray*}
  \prod_{k = 2}^{\infty} \t{sp}(k) \colon (\Lambda_0^{\N}, \rho_{\bullet - 2}) \stackrel{\sim}{\to} \int_{\Z_p}^{\boxplus} \Leb_{k-2} dk.
\end{eqnarray*}
and Lemma \ref{cocycle}, we identify $\widetilde{M}$ with $(\Lambda_0^{\N})^d$. The double coset decomposition corresponding to $T_p$ is given as
\begin{eqnarray*}
  \Gamma_1(N)
  \left(
    \begin{array}{cc}
      1 & 0 \\
      0 & p
    \end{array}
  \right)
  \Gamma_1(N)
  =
  \bigsqcup_{\theta = 0}^{p-1} \Gamma_1(N)
  \left(
    \begin{array}{cc}
      1 & \theta \\
      0 & p
    \end{array}
  \right).
\end{eqnarray*}
Let $h \in \N \cap [1,d]$ and $\theta \in \N \cap [0,p-1]$. Since the right hand side is a double coset of the action of $\Gamma_1(N)$, there is a unique $\delta(h,\theta) \in \N \cap [1,p-1]$ satisfying
\begin{eqnarray*}
  A_{h,\theta} \coloneqq
  \left(
    \begin{array}{cc}
      1 & \theta \\
      0 & p
    \end{array}
  \right)
  \gamma_h
  \left(
    \begin{array}{cc}
      1 & \delta(h,\theta) \\
      0 & p
    \end{array}
  \right)^{-1}
  \in \Gamma_1(N).
\end{eqnarray*}
We denote by $\ell_{h,\theta} \in \N$ the word length of $A_{h,\theta}$ with respect to the basis $(\gamma_h)_{h = 1}^{d}$. Put
\begin{eqnarray*}
  A_{h,\theta} = \gamma_{h'_{h,\theta,1}}^{\sigma_{h,\theta,1}} \cdots \gamma_{h'_{h,\theta,\ell_{h,\theta}}}^{\sigma_{h,\theta,\ell_{h,\theta}}}
\end{eqnarray*}
for unique $(h'_{h,\theta,n})_{n = 1}^{\ell_{h,\theta}} \in (\N \cap [1,d])^{\ell_{h,\theta}}$ and $(\sigma_{h,\theta,n})_{n = 1}^{\ell_{h,\theta}} \in \ens{1, -1}^{\ell_{h,\theta}}$ with $\gamma_{h'_{h,\theta,n}}^{\sigma_{h,\theta,n}} \gamma_{h'_{h,\theta,n+1}}^{\sigma_{h,\theta,n+1}} \neq 1$ for any $n \in \N \cap [1,\ell_{h,\theta}-1]$. We define an $(A_{h,\theta,n})_{n = 1}^{\ell_{h,\theta}} \in \Gamma_1(N)^{\ell_{h,\theta}}$ by setting
\begin{eqnarray*}
  A_{h,\theta,n} \coloneqq
  \left\{
    \begin{array}{ll}
      \gamma_{h'_{h,\theta,1}}^{\sigma_{h,\theta,1}} \cdots \gamma_{h'_{h,\theta,n-1}}^{\sigma_{h,\theta,n-1}} & (\sigma_{h,\theta,n} = 1) \\
      & \\
      \gamma_{h'_{h,\theta,1}}^{\sigma_{h,\theta,1}} \cdots \gamma_{h'_{h,\theta,n-1}}^{\sigma_{h,\theta,n-1}} \gamma_{h'_{h,\theta,n}}^{-1} & (\sigma_{h,\theta,n} = -1)
    \end{array}
  \right.
\end{eqnarray*}
for each $n \in \N \cap [1,\ell_{h,\theta}]$. For any $1$-cocycle $\tilde{c} \colon \Gamma_1(N) \to (\Lambda_0^{\N},\rho_{\bullet - 2})$, we have
\begin{eqnarray*}
  & & \tilde{c}(A_{h,\theta}) = \tilde{c} \left( \gamma_{h'_{h,\theta,1}}^{\sigma_{h,\theta,1}} \cdots \gamma_{h'_{h,\theta,\ell_{h,\theta}}}^{\sigma_{h,\theta,\ell_{h,\theta}}} \right) = \sum_{n = 1}^{\ell_{h,\theta}-1} \rho_{\bullet - 2} \left( \gamma_{h'_{h,\theta,1}}^{\sigma_{h,\theta,1}} \cdots \gamma_{h'_{h,\theta,n-1}}^{\sigma_{h,\theta,n-1}}, \tilde{c} \left( \gamma_{h'_{h,\theta,n}}^{\sigma_{h,\theta,n}} \right) \right) \\
  & = & \sum_{n = 1}^{\ell_{h,\theta}-1} \sigma_{h,\theta,n} \rho_{\bullet - 2 } \left( A_{h,\theta,n}, \tilde{c}(\gamma_{h'_{h,\theta,n}}) \right)
\end{eqnarray*}
for any $h \in \N \cap [1,d]$ and $\theta \in \N \cap [0,p-1]$. Therefore the double coset operator $T_p$ with respect to the presentation of the decomposition above acts on $\widetilde{M}$ as
\begin{eqnarray*}
  T_p \colon (\Lambda_0^{\N})^d & \to & (\Lambda_0^{\N})^d \\
  \left( \left( F_{h,i} \right)_{i = 0}^{\infty} \right)_{h = 1}^{d} & \mapsto &
  \left(
     \sum_{\theta = 0}^{p-1} \rho_{\bullet - 2}
    \left(
      \left(
        \begin{array}{cc}
          1 & \theta \\
          0 & p
        \end{array}
      \right)^{\iota},
      \sum_{n = 1}^{\ell_{h,\theta}} \sigma_{h,\theta,n} \rho_{\bullet - 2} \left( A_{h,\theta,n}, \left( F_{h'_{h,\theta,n},i} \right)_{i = 0}^{\infty} \right)
    \right)
  \right)_{h = 1}^{d}.
\end{eqnarray*}
For any $(c,n) \in N \Z_p \times (\N \backslash \ens{0})$, we have
\begin{eqnarray*}
  \vv{\frac{c^n}{n!}} & = & \vv{c}^n \vv{p}^{- \sum_{h = 1}^{\infty} \left\lfloor \frac{n}{p^h} \right\rfloor} =
  \left\{
    \begin{array}{ll}
      \v{c}^n & (1 \leq n \leq p-1) \\
      \v{c}^{\frac{n(p-2)}{p-1}} \vv{p}^{\frac{n}{p-1} - \sum_{h = 1}^{\infty} \left\lfloor \frac{n}{p^h} \right\rfloor} & (n > p-1)
    \end{array}
  \right. \\
  & \leq & \v{c} \leq \v{N}
\end{eqnarray*}
by $p \neq 2$. For any $(A,F) \in \Gamma_1(N) \times \tau_{k_0,+}(\Lambda_0^{\N})$ with 
$
  A =
  \left(
    \begin{array}{cc}
      a & b \\
      c & d
    \end{array}
  \right)
$
 and $F = (F_i)_{i = 0}^{\infty}$, we have
\begin{eqnarray*}
  \rho_{\bullet - 2}(A,F) & = & \left( \sum_{j = k_0-1}^{\infty} F_j \sum_{h = 0}^{\min \Ens{i,j}}
  \left(
    \begin{array}{c}
      i \\
      h
    \end{array}
  \right)
  \left( \prod_{m = i}^{i+j-h-1} (z-2-m) \right) a^h b^{i-h} \frac{c^{j-h}}{(j-h)!} d^{z-2-i-j+h} \right)_{i = 0}^{\infty} \\
  & \in & N(z - k_0) \tau_{k_0,-}(\Lambda_0^{\N}) \oplus \tau_{k_0,+}(\Lambda_0^{\N}) \subset p^s(z - k_0) \tau_{k_0,-}(\Lambda_0^{\N}) \oplus \tau_{k_0,+}(\Lambda_0^{\N})
\end{eqnarray*}
by the condition $p^s \mid N$. For any $\theta \in \N \cap [0,p-1]$ and $F \in p^s (z-k_0) \tau_{k_0,-}(\Lambda_0^{\N}) \oplus \tau_{k_0,+}(\Lambda_0^{\N})$ with $F = (F_i)_{i = 0}^{\infty}$, we have
\begin{eqnarray*}
  & & \rho_{\bullet - 2} \left(
    \left(
      \begin{array}{cc}
        1 & \theta \\
        0 & p
      \end{array}
    \right)^{\iota},
    F  
  \right) \\
  & = & \left( \sum_{j = 0}^{\infty} F_j \sum_{h = 0}^{\min \Ens{i,j}}
  \left(
    \begin{array}{c}
      i \\
      h
    \end{array}
  \right)
  \left( \prod_{m = i}^{i+j-h-1} (z-2-m) \right) p^h (-\theta)^{i-h} \frac{0^{j-h}}{(j-h)!} 1^{z-i-j+h} \right)_{i = 0}^{\infty} \\
  & = & \left( \sum_{j = 0}^{i} F_j
  \left(
    \begin{array}{c}
      i \\
      j
    \end{array}
  \right)
  p^j (-\theta)^{i-j} \right)_{i = 0}^{\infty} \in p^s (z - k_0) \tau_{k_0,-}(\Lambda_0^{\N}) \oplus p^s \tau_{k_0,+}(\Lambda_0^{\N})
\end{eqnarray*}
by the condition $k_0 \geq s+1$. Therefore the image of $\tau_{k_0,+}(\widetilde{M})$ by $T_p$ is contained in $p^s (z-k_0) \tau_{k_0,-}(\widetilde{M}) \oplus p^s \tau_{k_0,+}(\widetilde{M}) \subset p^s \widetilde{M}$.

\vspace{0.2in}
Let $\tilde{c} \in \tau_{k_0,+}(\widetilde{M})$, and put $\tilde{c} = (\tilde{c}_1, \ldots, \tilde{c}_d) (\Lambda_0^{\N})^d$ by the identification. Set $\tilde{c}[0] \coloneqq \tilde{c}$, and put $T_p(\tilde{c}[0]) = p^s \tilde{d}$ by a $\tilde{d} \in (z - k_0) \tau_{k_0,-}(\widetilde{M}) \oplus \tau_{k_0,+}(\widetilde{M})$. As an equality of the images in $M_{\leq k_1}$, we have
\begin{eqnarray*}
  p^s \tilde{c}[0] = T_pX \otimes \tilde{c} = X \otimes T_p \tilde{c} = p^sX \otimes \tilde{d},
\end{eqnarray*}
and hence $\tilde{c}[0] = X \otimes \tilde{d}$ because $M_{\leq k_1}$ is torsionfree as a $\Z_p$-module. Put $\tilde{c}[1] \coloneqq \tau_{k_0,+}(\tilde{d})$. Then the image of $\tilde{c}[0]$ in $M_{\leq k_1}$ coincides with that of
\begin{eqnarray*}
  X \otimes \tau_{k_0,-}(\tilde{d}) + X \otimes \tilde{c}[1] \in \Z_p[X] \otimes_{\Z_p} \left( (z-k_0) \tau_{k_0,-}(\widetilde{M}) \oplus \tau_{k_0,+}(\widetilde{M}) \right)
\end{eqnarray*}
Repeating similar calculations, we obtain a $\tilde{c}[n] \in \tau_{k_0,+}(\widetilde{M})$ such that the image of $\tilde{c}[0]$ coincides with that of $X^n \otimes \tilde{c}[n]$ modulo the $(\Lambda_0 \otimes_{\Z_p[[1 + N \Z_p]]} \t{T}_{\leq k_1,N}^{[< s]})$-submodule generated by the image of $(z-k_0) \tau_{k_0,-}(\widetilde{M})$. Therefore the image of $\tilde{c}[0]$ in $M_{\leq k_1}$ lies in the $p$-adic closure of the $(\Lambda_0 \otimes_{\Z_p[[1 + N \Z_p]]} \t{T}_{\leq k_1,N}^{< s})$-submodule generated by the image of $(z-k_0) \tau_{k_0,-}(\widetilde{M})$, because the action of $p^sT_p^{-1}$ on the image of $\widetilde{M}$ in $M_{\leq k_1}$ is topologically nilpotent with respect to the $p$-adic topology by the proof of Proposition \ref{topologically nilpotent}. Since $M_{\leq k_1}$ is finitely generated as a $\Z_p$-module, every $\Z_p$-submodule is $p$-adically closed. Thus the assertion holds.
\end{proof}

\begin{thm}
\label{finiteness}
Suppose $p^s \mid N$. Then the profinite $\Lambda_0 \T_N^{< s}[\t{Gal}(\overline{\Q}/\Q)]$-module
\begin{eqnarray*}
  \int_{\Z_p}^{\boxplus} \Hil_{\t{et}}^1 \left( Y_1(N)_{\overline{\Q}}, \Fil_{k-2} \right)^{< s} dk
\end{eqnarray*}
is finitely generated as a $(\Lambda_0 \T_N^{< s})$-module.
\end{thm}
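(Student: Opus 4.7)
The plan is to combine Lemma \ref{truncate} with a standard closed-plus-dense-implies-equal argument. First, I would fix an integer $k_0 \geq s+1$. By the homeomorphic $\Lambda_0$-linear $\Pi_0(p)$-equivariant isomorphism
\[
  \prod_{k=2}^{\infty} \t{sp}_k \colon (\Lambda_0^{\N}, \rho_{\bullet-2}) \stackrel{\sim}{\to} \int_{\Z_p}^{\boxplus} \Leb_{k-2} dk
\]
and evaluation at the fixed free basis $(\gamma_h)_{h=1}^{d}$ of $\Gamma_1(N)$, Lemma \ref{cocycle} identifies $\t{Z}^1(\Gamma_1(N), \int_{\Z_p}^{\boxplus} \Leb_{k-2} dk)$ with $(\Lambda_0^{\N})^d$ as $\Lambda_0$-modules. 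Under this identification, the idempotent $\tau_{k_0,-}$ is the coordinatewise truncation with image $(\Lambda_0^{k_0-1} \times \ens{0}^{\N})^d$, which is a free $\Lambda_0$-module of rank $(k_0-1)d$, in particular finitely generated over $\Lambda_0$.

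Set $L \coloneqq \int_{\Z_p}^{\boxplus} \Hil_{\t{et}}^1(Y_1(N)_{\overline{\Q}}, \Fil_{k-2})^{<s} dk$, and let $L_0 \subset L$ be the image of $\tau_{k_0,-}(\t{Z}^1(\Gamma_1(N), \int_{\Z_p}^{\boxplus} \Leb_{k-2} dk))$ under the composite of continuous $\Lambda_0$-linear maps described just before Lemma \ref{truncate}. Being a continuous $\Lambda_0$-linear image of a finitely generated $\Lambda_0$-module, $L_0$ is itself finitely generated as a $\Lambda_0$-module; fix $\Lambda_0$-generators $x_1, \ldots, x_r \in L_0$. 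Consider the continuous $\Lambda_0 \T_N^{<s}$-linear homomorphism
\[
  \varphi \colon (\Lambda_0 \T_N^{<s})^r \to L, \quad (a_1, \ldots, a_r) \mapsto \sum_{i=1}^{r} a_i x_i.
\]
Since $\Lambda_0$ and $\T_N^{<s}$ are profinite, so is $\Lambda_0 \T_N^{<s}$ by Example \ref{tensor 2}, and hence the source of $\varphi$ is compact; as $L$ is profinite and therefore Hausdorff, $\im(\varphi)$ is compact and thus closed in $L$.

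Finally, under the assumption $p^s \mid N$ and the choice $k_0 \geq s+1$, Lemma \ref{truncate} asserts that $L_0$ generates a dense $\Lambda_0 \T_N^{<s}$-submodule of $L$. That submodule is contained in $\im(\varphi)$, so $\im(\varphi)$ is simultaneously closed and dense, hence equal to $L$. Therefore $L = \sum_{i=1}^{r} \Lambda_0 \T_N^{<s} \cdot x_i$ is finitely generated as a $\Lambda_0 \T_N^{<s}$-module, as required. The substantive content of the proof lives entirely in Lemma \ref{truncate}, which exploits the topological nilpotence of $p^s T_p^{-1}$ (Proposition \ref{topologically nilpotent}) together with the hypothesis $p^s \mid N$ to push arbitrary cocycles into the finitely generated truncated range modulo the Hecke action; granting that input, the remaining step is a purely topological invocation of compactness and Hausdorffness, which I do not expect to pose any real difficulty.
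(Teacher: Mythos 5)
Your argument is correct and follows essentially the same route as the paper's own proof: both identify the truncated cocycle module $\tau_{k_0,-}(\t{Z}^1(\Gamma_1(N),\int_{\Z_p}^{\boxplus}\Leb_{k-2}\,dk))$ with a finitely generated free $\Lambda_0$-module via Lemma \ref{cocycle} and the identification with $(\Lambda_0^{\N})^d$, map a finite $\Lambda_0$-generating set into $L$, invoke Lemma \ref{truncate} for density, and conclude by the compact-image/Hausdorff-target argument that the generated submodule is closed and hence all of $L$. The only cosmetic difference is that you keep $k_0 \geq s+1$ general while the paper specializes to $k_0 = s+1$ (and in fact your rank count $(k_0-1)d$ is the correct one given the definition of $\tau_{k_0,-}$, whereas the paper's stated $(\Lambda_0^{s+1})^d$ is off by one).
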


\begin{proof}
Put
\begin{eqnarray*}
  M & \coloneqq & \tau_{s+1,-} \left( \t{Z}^1 \left( \Gamma_1(N), \int_{\Z_p}^{\boxplus} \Leb_{k-2} dk \right) \right) \\
  L & \coloneqq & \int_{\Z_p}^{\boxplus} \Hil_{\t{et}}^1 \left( Y_1(N)_{\overline{\Q}}, \Fil_{k-2} \right)^{< s} dk.
\end{eqnarray*}
The continuous $\Lambda_0$-linear homomorphism $(\Lambda_0^{s+1})^d \to M$ obtained by the embeddings
\begin{eqnarray*}
  (\Lambda_0^{s+1})^d & \hookrightarrow & (\Lambda_0^{\N})^d \\
  ((F_{h,i})_{i = 0}^{s})_{h = 1}^{d} & \mapsto & ((F_{h,0}, \ldots, F_{f,s}, 0, \ldots))_{h = 1}^{d} \\
  M & \hookrightarrow & (\Lambda_0^{\N})^d \\
  c & \mapsto & \left( \left( \prod_{k = 2}^{\infty} \t{sp}_k \right)^{-1}(c(\gamma_h)) \right)_{h = 1}^{d}
\end{eqnarray*}
is a homeomorphic isomorphism by the definition of $M$, and hence $M$ is a finitely generated free $\Lambda_0$-module. Let $E \subset M$ be a $\Lambda_0$-linear basis. For each $c \in E$, we denote by $\overline{c} \in L$ the image of $c$. By Lemma \ref{truncate}, the image of the continuous $(\Lambda_0 \T_N^{< s})$-linear homomorphism
\begin{eqnarray*}
  \varpi \colon \left( \Lambda_0 \T_N^{< s} \right)^E & \to & L \\
  (F_c)_{c \in E} & \mapsto & \sum_{c \in E} F_c \overline{c}
\end{eqnarray*}
generates a dense $(\Lambda_0 \T_N^{< s})$-submodule of $L$. Since $\Lambda_0 \T_N^{< s}$ is compact and $L$ is Hausdorff, the image of $\varpi$ is closed. Thus $\varpi$ is surjective, and $L$ is generated by the image of the finite set $E$ as a $(\Lambda_0 \T_N^{< s})$-module.
\end{proof}

\begin{dfn}
We set
\begin{eqnarray*}
  & & \t{H}_{\t{et}}^1 \left( Y_1(N)_{\overline{\Q}}, \t{Sym}^{k_0-2} \left( \t{R}^1 (\pi_N)_* (\underline{\Q}{}_p)_{E_1(N)} \right) \right)^{< s} \\
  & \coloneqq & \Q_p \otimes_{\Z_p} \Hil_{\t{et}}^1 \left( Y_1(N)_{\overline{\Q}}, \Fil_{k-2} \right)^{< s} \\
  & & \int_{\Z_p}^{\boxplus} \t{H}_{\t{et}}^1 \left( Y_1(N)_{\overline{\Q}}, \t{Sym}^{k-2} \left( \t{R}^1 (\pi_N)_* (\underline{\Q}{}_p)_{E_1(N)} \right) \right)^{< s} dk \\
  & \coloneqq & \Q_p \otimes_{\Z_p} \left( \int_{\Z_p}^{\boxplus} \Hil_{\t{et}}^1 \left( Y_1(N)_{\overline{\Q}}, \Fil_{k-2} \right)^{< s} dk \right),
\end{eqnarray*}
and regard them as $(\Q_p \otimes_{\Z_p} \Lambda_0 \T_N^{< s})$-modules endowed with a $(\Q_p \otimes_{\Z_p} \Lambda_0 \T_N^{< s})$-linear action of $\t{Gal}(\overline{\Q}/\Q)$.
\end{dfn}

\begin{rmk}
\label{symmetric product 5}
By Remark \ref{symmetric product 4}, we have a natural projection
\begin{eqnarray*}
  & & \t{H}_{\t{et}}^1 \left( Y_1(N)_{\overline{\Q}}, \t{Sym}^{k_0-2} \left( \t{R}^1 (\pi_N)_* (\underline{\Q}{}_p)_{E_1(N)} \right) \right) \\
  & \twoheadrightarrow & \t{H}_{\t{et}}^1 \left( Y_1(N)_{\overline{\Q}}, \t{Sym}^{k_0-2} \left( \t{R}^1 (\pi_N)_* (\underline{\Q}{}_p)_{E_1(N)} \right) \right)^{< s}
\end{eqnarray*}
corresponding to the canonical projection $\Q_p \otimes_{\Z_p} \t{T}_{k_0,N}^{\t{\'et}} \cong \Q_p \otimes_{\Z_p} \t{T}_{k_0,N} \twoheadrightarrow \Q_p \otimes_{\Z_p} \t{T}_{k_0,N}^{[< s]} \cong \Q_p \otimes_{\Z_p} \t{T}_{k_0,N}^{< s}$.
\end{rmk}

We denote by $P_{k_0}^{< s}$ the kernel of the specialisation map $\Lambda_0 \T_N^{< s} \twoheadrightarrow \t{T}_{k_0,N}^{< s}$.

\begin{lmm}
\label{specialisation 3}
If $p^s \mid N$, then for any $k_0 \in \N \cap [2,\infty)$, the specialisation map
\begin{eqnarray*}
  \int_{\Z_p}^{\boxplus} \Hil_{\t{et}}^1 \left( Y_1(N)_{\overline{\Q}}, \Fil_{k-2} \right)^{< s} dk \hookrightarrow \prod_{k = 2}^{\infty} \Hil_{\t{et}}^1 \left( Y_1(N)_{\overline{\Q}}, \Fil_{k-2} \right)^{< s} \twoheadrightarrow \Hil_{\t{et}}^1 \left( Y_1(N)_{\overline{\Q}}, \Fil_{k_0-2} \right)^{< s}
\end{eqnarray*}
is a surjective continuous $\Lambda_0 \T_N^{< s}$-linear $\t{Gal}(\overline{\Q}/\Q)$-equivariant homomorphism, and if $k_0 \geq \max \ens{s+1,3}$, then the kernel of its localisation
\begin{eqnarray*}
  & & \int_{\Z_p}^{\boxplus} \t{H}_{\t{et}}^1 \left( Y_1(N)_{\overline{\Q}}, \t{Sym}^{k-2} \left( \t{R}^1 (\pi_N)_* (\underline{\Q}{}_p)_{E_1(N)} \right) \right)^{< s} dk \\
  & \to & \t{H}_{\t{et}}^1 \left( Y_1(N)_{\overline{\Q}}, \t{Sym}^{k_0-2} \left( \t{R}^1 (\pi_N)_* (\underline{\Q}{}_p)_{E_1(N)} \right) \right)^{< s}
\end{eqnarray*}
coincides with
\begin{eqnarray*}
  P_{k_0}^{< s} \left( \int_{\Z_p}^{\boxplus} \t{H}_{\t{et}}^1 \left( Y_1(N)_{\overline{\Q}}, \t{Sym}^{k-2} \left( \t{R}^1 (\pi_N)_* (\underline{\Q}{}_p)_{E_1(N)} \right) \right)^{< s} dk \right).
\end{eqnarray*}
\end{lmm}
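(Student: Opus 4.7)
The plan is to handle surjectivity first and then identify the kernel after inverting $p$, using the cocycle realisation from Lemma \ref{truncate} as the key technical tool. For surjectivity, I start from Proposition \ref{specialisation 2}, which gives the surjection $\int_{\Z_p}^{\boxplus} \Hil_{\t{et}}^1(Y_1(N)_{\overline{\Q}}, \Fil_{k-2}) dk \twoheadrightarrow \Hil_{\t{et}}^1(Y_1(N)_{\overline{\Q}}, \Fil_{k_0-2})$. This descends to quotients by $p$-power torsion, and then the application of $\Lambda_0 \T_N^{< s} \hat{\otimes}_{\Lambda_0 \T_N}(\cdot)$ along the specialisation preserves surjectivity by Proposition \ref{right exact}. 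A further quotient by torsion yields the required surjection onto $\Hil_{\t{et}}^1(Y_1(N)_{\overline{\Q}}, \Fil_{k_0-2})^{< s}$, and continuity, $\Lambda_0 \T_N^{< s}$-linearity, and $\t{Gal}(\overline{\Q}/\Q)$-equivariance are built into the construction (cf.\ Theorem \ref{universal Hecke module} and Proposition \ref{Galois - Hecke}).

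For the kernel description, write $L := \int_{\Z_p}^{\boxplus} \Hil_{\t{et}}^1(Y_1(N)_{\overline{\Q}},\Fil_{k-2})^{< s} dk$ and $L(k_0) := \Hil_{\t{et}}^1(Y_1(N)_{\overline{\Q}}, \Fil_{k_0-2})^{< s}$. The inclusion $P_{k_0}^{< s} L \subset \ker(\t{sp}_{k_0})$ is automatic from the $\Lambda_0 \T_N^{< s}$-linearity of specialisation, since $L(k_0)$ is a $\t{T}_{k_0,N}^{< s} = \Lambda_0 \T_N^{< s}/P_{k_0}^{< s}$-module. For the reverse inclusion after $\otimes_{\Z_p}\Q_p$, I would rerun the proofs of Lemma \ref{truncate} and Theorem \ref{finiteness} with $k_0$ (which is $\geq s+1$) in place of $s+1$: this yields that $L$ is generated over $\Lambda_0 \T_N^{< s}$ by the images of a free $\Lambda_0$-basis of $M_{k_0} := \tau_{k_0,-}(\t{Z}^1(\Gamma_1(N), \int_{\Z_p}^{\boxplus} \Leb_{k-2} dk))$. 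Because $\tau_{k_0,-}$ preserves exactly the $k_0 - 1$ components of $(\Lambda_0^{\N},\rho_{\bullet-2})$ visible under $\t{sp}_{k_0}$, specialisation at $k_0$ gives a canonical identification $M_{k_0}/\ker(\t{sp}_{k_0}) M_{k_0} \cong \t{Z}^1(\Gamma_1(N), \Leb_{k_0-2})$. A diagram chase through cocycles, cohomology, the completed Hecke tensor, and passage to free parts then expresses any class of $L_{\Q_p}$ in $\ker(\t{sp}_{k_0})$ as coming from a cocycle in $M_{k_0}$ whose $k_0$-specialisation is a coboundary or lies outside the slope-$<s$ component, hence from an element already in $P_{k_0}^{< s} L_{\Q_p}$.

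The main obstacle is verifying that specialisation commutes with the passage from cocycles to cohomology and with the slope-$<s$ projector in a way that identifies the two kernels exactly after inverting $p$. This rests on two compatibilities: coboundaries in $\int_{\Z_p}^{\boxplus} \Leb_{k-2} dk$ specialise surjectively onto coboundaries in $\Leb_{k_0-2}$, which follows from Proposition \ref{right exact} applied to the $\Lambda_0$-linear coboundary map; and the Hecke operators together with $p^s T_p^{-1}$ commute with $\t{sp}_{k_0}$ by Theorem \ref{universal Hecke module}. The hypothesis $k_0 \geq 3$ sidesteps the weight-$2$ degeneracy in which $\Leb_0 = \Z_p$ carries the trivial action, where the slope-$<s$ projector would interact atypically with coboundaries; combined with the Eichler--Shimura framework from Remark \ref{symmetric product 4}, which supplies matching $\Q_p$-ranks for $L_{\Q_p}/P_{k_0}^{< s} L_{\Q_p}$ and $L(k_0)_{\Q_p}$, this yields the reverse inclusion and hence the claimed equality.
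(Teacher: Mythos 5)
Your surjectivity argument follows the paper's own route — Proposition \ref{specialisation 2} combined with Proposition \ref{right exact} and the two canonical projections — and is correct.

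The kernel identification is where the proposal has a genuine gap. The inclusion $P_{k_0}^{< s} L_{\Q_p} \subset \ker$ is indeed formal, and the idea of working from a cocycle realisation of $L$ via $\tau_{k_0,-}$ is sound, but the ``diagram chase'' that is supposed to close the argument is precisely the step that needs a concrete mechanism, and your proposal does not supply one. A cocycle $c$ whose specialisation at $k_0$ becomes a \emph{coboundary} does not, by itself, give an element of $P_{k_0}^{< s} L$: one has to lift the coboundary to a specific $\tilde b \in \Lambda_0^{\N}$ via the section $\t{Sym}_0^{k_0-2}(\Z_p^2) \hookrightarrow \Lambda_0^{\N}$ (not just invoke surjectivity of coboundary specialisation), and then show that $c - \partial\tilde b$ is divisible by $z - k_0$ componentwise by the factor theorem for rigid analytic functions, using torsion-freeness of $\Lambda_0^{\N}$ to preserve the cocycle condition under that division. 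That divisibility is what produces the factor in $P_{k_0}^{< s}$. Nothing in your proposal accomplishes this; invoking ``matching $\Q_p$-ranks'' from Remark \ref{symmetric product 4} is circular, since the rank equality $\dim (L_{\Q_p}/P_{k_0}^{<s} L_{\Q_p}) = \dim L(k_0)_{\Q_p}$ is essentially what the lemma asserts and is not a consequence of the Eichler--Shimura comparison alone.

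A second gap is the treatment of the slope condition. The paper's proof begins by writing an element of $\ker$ as $(p^s T_p^{-1})^n$ times a lift $\overline c[1]$ that actually lands in the image of the $[< s]$ module (after clearing torsion, tracked by explicit orders $q_1$, $q_2$), and only then splits $\overline c[1]$ into the part coming from $\ker\bigl(\t{T}_{k_0,N} \to \t{T}_{k_0,N}^{[< s]}\bigr)$ and a remainder that specialises to a coboundary. Your proposal instead gestures at ``$p^s T_p^{-1}$ commuting with $\t{sp}_{k_0}$ by Theorem \ref{universal Hecke module}'', which is about the $\T_N$-action on the free quotient, not about $\T_N^{<s}$ or $p^s T_p^{-1}$, so it does not actually handle the lifting through the slope projector. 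Finally, your heuristic that $k_0 \geq 3$ sidesteps a weight-$2$ degeneracy of the slope projector is speculative and not tied to what the paper's proof uses; the substantive constraint in the argument is $k_0 \geq s+1$, needed for Lemma \ref{truncate}.
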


\begin{proof}
Let $\varphi$ denote the homomorphism in the assertion. Put
\begin{eqnarray*}
  L & \coloneqq & \int_{\Z_p}^{\boxplus} \Hil_{\t{et}}^1 \left( Y_1(N)_{\overline{\Q}}, \Fil_{k-2} \right)^{< s} dk \\
  M & \coloneqq & \int_{\Z_p}^{\boxplus} \Hil_{\t{et}}^1 \left( Y_1(N)_{\overline{\Q}}, \Fil_{k-2} \right)_{\t{free}} dk
\end{eqnarray*}
and
\begin{eqnarray*}
  M_{k_1} & \coloneqq & \Hil_{\t{et}}^1 \left( Y_1(N)_{\overline{\Q}}, \Fil_{k_1-2} \right)_{\t{free}} \\
  M_{k_1}^{[< s]} & \coloneqq & \left( \t{T}_{k_1,N}^{[< s]} \otimes_{\t{T}_{k_1,N}} M_{k_1} \right)_{\t{free}} \\
  M_{k_1}^{< s} & \coloneqq & \Hil_{\t{et}}^1 \left( Y_1(N)_{\overline{\Q}}, \Fil_{k_1-2} \right)^{< s}
\end{eqnarray*}
for each $k_1 \in \N \cap [2,\infty)$. The natural continuous $\Lambda_0 \T_N$-linear homomorphisms
\begin{eqnarray*}
  \Lambda_0 \T_N^{< s} \hat{\otimes}_{\Lambda_0 \T_N} M & \to & \Lambda_0 \T_N^{< s} \hat{\otimes}_{\Lambda_0 \T_N} M_{k_1} \\
  \Lambda_0 \T_N^{< s} \hat{\otimes}_{\Lambda_0 \T_N} M_{k_1} & \to & \t{T}_{k_0,N}^{< s} \hat{\otimes}_{\Lambda_0 \T_N} M_{k_1} \cong M_{k_1}^{< s}
\end{eqnarray*}
are surjective by Proposition \ref{right exact}, Proposition \ref{specialisation 2}, and the surjectivity of the canonical projections $\Lambda_0 \T_N \twoheadrightarrow \t{T}_{k_0,N}$ and $\Lambda_0 \T_N^{< s} \twoheadrightarrow \t{T}_{k_0,N}^{< s}$. Therefore $\varphi$ is surjective. Before calculating the kernel of $\Q_p \otimes_{\Z_p} \varphi$, we verify that the natural $\t{T}_{k_1,N}^{[< s]}$-linear homomorphism $M_{k_1}^{[< s]} \to M_{k_1}^{< s}$ is injective for any $k_1 \in \N \cap [2,\infty)$.

\vspace{0.2in}
Let $\overline{c}_{k_1}^{[< s]} \in M_{k_1}^{[< s]}$ be an element whose image $\overline{c}_{k_1}^{< s}$ in $M_{k_1}^{< s}$ is $0$. By the definition of $M_{k_1}^{< s}$, the image of $\overline{c}_{k_1}^{[< s]}$ in $(\t{T}_{k_1,N}^{[< s]}[X]/(T_p X - p^s)) \otimes_{\t{T}_{k_1,N}^{[< s]}} M_{k_1}^{[< s]}$ is annihilated by $p^r$ for an $r \in \N$, and hence the image of $p^r \overline{c}_{k_1}^{[< s]}$ in $\t{T}_{k_1,N}^{[< s]}[X] \otimes_{\t{T}_{k_1,N}^{[< s]}} M_{k_1}^{[< s]}$ lies in $(T_p X - p^s) \t{T}_{k_1,N}^{[< s]}[X] \otimes_{\t{T}_{k_1,N}^{[< s]}} M_{k_1}^{[< s]}$. We have an identification $\t{T}_{k_1,N}^{[< s]}[X] \otimes_{\t{T}_{k_1,N}^{[< s]}} M_{k_1}^{[< s]} \cong (M_{k_1}^{[< s]})^{\oplus \N}$ given by the basis $(X^h)_{h = 0}^{\infty}$ of the free $\t{T}_{k_1,N}^{[< s]}$-module $\t{T}_{k_1,N}^{[< s]}[X]$. Since $T_p$ is integral over $\Z_p$ as an element of $\t{T}_{k_1,N}^{[< s]}$, there is an $(n,A) \in \N \times \t{T}_{k_1,N}^{[< s]}$ such that $AT_p = p^n$. Since $M_{k_1}^{[< s]}$ is torsionfree as a $\Z_p$-module, the equality $AT_p = p^n$ ensures that the endomorphism on $M_{k_1}^{[< s]}$ given by $T_p$ is injective. Therefore we obtain
\begin{eqnarray*}
  M_{k_1}^{[< s]} \cap \left( (T_p X - p^s)\t{T}_{k_1,N}^{[< s]}[X] \otimes_{\t{T}_{k_1,N}^{[< s]}} M_{k_1}^{[< s]} \right) = 0.
\end{eqnarray*}
It implies $p^r \overline{c}_{k_1}^{[< s]} = 0$. Since $M_{k_1}^{[< s]}$ is torsionfree as a $\Z_p$-module, we get $\overline{c}_{k_1}^{[< s]} = 0$.

\vspace{0.2in}
In the following, for each $\overline{c} \in M$ and $k_1 \in \N \cap [2,\infty)$, and for each formal symbol $(\kappa,\sigma) \in \ens{k_0, \leq k_1} \times \ens{\emptyset, [< s], < s}$, we denote by $\overline{c}_{\kappa}^{\sigma}$ the image of $\overline{c}$ in $\prod_{k = 2}^{k_1} M_k^{\sigma}$ when $\kappa$ is the formal symbol $\leq k_1$, and in $M_{k_0}^{\sigma}$ when $\kappa$ is the formal symbol $k_0$. Similarly, for each $\overline{c}^{< s} \in L$ and $k_1 \in \N \cap [2,\infty)$, and for each formal symbol $\kappa \in \ens{k_0, \leq k_1}$, we denote by $\overline{c}_{\kappa}^{< s}$ the image of $\overline{c}^{< s}$ in $\prod_{k = 2}^{k_1} M_k^{< s}$ when $\kappa$ is the formal symbol $\leq k_1$, and in $M_{k_0}^{< s}$ when $\kappa$ is the formal symbol $k_0$. Let $\overline{c}[0]^{< s} \in \ker(\Q_p \otimes_{\Z_p} \varphi)$. We prove $\overline{c}[0]^{< s} \in P_{k_0}^{< s} (\Q_p \otimes_{\Z_p} L)$. Multiplying $\overline{c}[0]^{< s}$ by $p$ sufficiently many times, we may assume that $\overline{c}[0]^{< s}$ lies in the image of $L$. Let $q_1 \in \N \backslash \ens{0}$ denote the order of the finite Abelian $p$-group $\t{tor}_p(\t{T}_{k_1,N}^{[< s]} \otimes_{\t{T}_{k_1,N}} M_{k_1})$, and $q_2 \in \N \backslash \ens{0}$ the order of the finite Abelian $p$-group $\t{tor}_p(\t{H}^1(\Gamma_1(N),\Leb^{k_0-2}))$. We prove $q_1 q_2 \overline{c}[0]^{< s} \in P_{k_0}^{< s} L$. Since $P_{k_0}^{< s}$ is the kernel of a continuous homomorphism, the profiniteness of $\Lambda_0 \T_N^{< s}$ ensures that of $P_{k_0}^{< s}$. Since $P_{k_0}^{< s}$ is profinite and the map
\begin{eqnarray*}
  (P_{k_0}^{< s})^l & \to & L \\
  (a_j)_{j = 1}^{l} & \mapsto & \sum_{j = 1}^{l} a_j \overline{d}_j
\end{eqnarray*}
is continuous, its image $P_{k_0}^{< s} L$ is closed. Therefore in order to prove $q_1 q_2 \overline{c}[0]^{< s} \in P_{k_0}^{< s} L$, it suffices to verify that $q_1 q_2 \overline{c}[0]^{< s}_{\leq k_1}$ lies in the image of $P_{k_0}^{< s} L$ for any $k_1 \in \N \cap [k_0+1,\infty)$. By the definition of $L$, there is an $(n,\overline{c}[1]) \in \N \times M$ such that $(p^sT_p^{-1})^n \overline{c}[1]_{\leq k_1}^{< s} = \overline{c}[0]_{\leq k_1}^{< s}$. In particular, $T_p^n \overline{c}[0]_{\leq k_1}^{< s} = p^{ns} \overline{c}[1]_{\leq k_1}^{< s}$ lies in the image of the natural $\t{T}_{\leq k_1}(N)^{[< s]}$-linear homomorphism $\prod_{k = 2}^{k_1} M_k^{[< s]} \to \prod_{k = 2}^{k_1} M_k^{< s}$, which is injective by the previous argument. We have
\begin{eqnarray*}
  p^{ns} \overline{c}[1]_{k_0}^{< s} = T_p^n (p^sT_p^{-1})^n \overline{c}[1]_{k_0}^{< s} = T_p^n \overline{c}[0]_{k_0}^{< s} = T_p^n \varphi(\overline{c}^{< s}) = 0,
\end{eqnarray*}
and hence $\overline{c}[1]_{k_0}^{< s} = 0$ because $\prod_{k = 2}^{k_1} M_k^{< s}$ is torsion free as a $\Z_p$-module. It implies $\overline{c}[1]_{k_0}^{[< s]} = 0$ by the injectivity of the natural $\t{T}_{k_0,N}$-linear homomorphism $M_{k_0}^{[< s]} \to M_{k_0}^{< s}$. Therefore the image of $\overline{c}[1]$ in $\t{T}_{k_0,N}^{[< s]} \otimes_{\t{T}_{k_0,N}} M_{k_0}$ lies in $\t{tor}_p(\t{T}_{k_1,N}^{[< s]} \otimes_{\t{T}_{k_1,N}} M_{k_1})$, and hence that of $q_1 \overline{c}[1]$ is $0$. Let $S \subset \t{T}_{k_0,N}$ be a finite subset of generators of the kernel of the canonical projection $\t{T}_{k_0,N} \twoheadrightarrow \t{T}_{k_0,N}^{[< s]}$. Then the image of $q_1 \overline{c}[1]$ in $M_{k_0}$ lies in $\sum_{A \in S} A M_{k_0}$. Take a lift $\tilde{S} \subset \T_N$ of $S$. By the definition of $S$, the image of $\tilde{S}$ in $\T_N^{< s}$ is contained in $P_{k_0}^{< s}$. Take a lift $\overline{c}[2] \in \sum_{\tilde{s} \in \tilde{S}} \tilde{s} M \subset M$ of the image of $q_1 \overline{c}[1]_{k_0}$. Since the image of $\tilde{S}$ in $\T_N^{< s}$ is contained in $P_{k_0}^{< s}$, $\overline{c}[2]_{\leq k_1}^{< s}$ lies in the image of $P_{k_0}^{< s} L$.

\vspace{0.2in}
We identify the group cohomologies and the corresponding \'etale cohomology by Proposition \ref{comparison 2}. In the following, for each $1$-cocycle $c \colon \Gamma_1(N) \to (\Lambda_0^{\N},\rho_{\bullet-2})$, we denote by $c_{k_0} \colon \Gamma_1(N) \to \Leb_{k_0-2}$ the specialisation of $c$ at $k_0$, and by $\overline{c} \in M$ the image of the cohomology class of $c$. Take a $1$-cocycle $c[3] \colon \Gamma_1(N) \to (\Lambda_0^{\N},\rho_{\bullet - 2})$ representing $q \overline{c}[1] - \overline{c}[2]$ through the homeomorphic $\T_N$-linear isomorphism
\begin{eqnarray*}
  \prod_{k = 2}^{\infty} \t{sp}_k \colon (\Lambda_0^{\N},\rho_{\bullet - 2}) \stackrel{\sim}{\to} \int_{\Z_p}^{\boxplus} \Leb_{k-2} dk.
\end{eqnarray*}
Since $\overline{c}[3]_{k_0} = q_1 \overline{c}[1]_{k_0} - \overline{c}[2]_{k_0} = 0$, the cohomology class of $c[3]_{k_0}$ is annihilated by $p^r$ for an $r \in \N$, and hence $q_2 c[3]_{k_0}$ is a $1$-coboundary. Take a $b \in \Leb_{k_0-2}$ such that the $1$-coboundary $\partial b$ associated to $b$ coincides with $q_2 c[3]_{k_0}$. Let $\tilde{b}$ denote the image of $b$ by the $\Z_p$-linear embedding
\begin{eqnarray*}
  \t{Sym}_0^{k_0-2}(\Z_p^2) & \hookrightarrow & \Lambda_0^{\N} \\
  \sum_{i = 0}^{k_0-2} b_i e_{k_0-2,i} & \mapsto & (b_0, \ldots, b_{k_0-2}, 0 ,0, \ldots).
\end{eqnarray*}
We denote by $\partial \tilde{b} \in \t{B}^1(\Gamma_1(N),(\Lambda_0^{\N},\rho_{\bullet-2}))$ the $1$-coboundary associated to $\tilde{b}$. Then we have $(\partial \tilde{b})_{k_0} = \partial b = q_2 c[3]_{k_0}$. Therefore every value of the $1$-cocycle $q_2 c[3] - \partial \tilde{b}$ is an element of $\Lambda_0^{\N}$ whose specialisation at $k_0$ vanishes. By the factor theorem for a rigid analytic function, there is a set-theoretical map $c[4] \colon \Gamma_1(N) \to \Lambda_0^{\N}$ such that $(z-k_0)c[4] = q_2 c[3] - \partial \tilde{b}$. Since $\Lambda_0^{\N}$ is a torsionfree $\Lambda_0$-module, the cocycle condition for $q_2 c[3] - \partial \tilde{b}$ ensures that $c[4] \colon \Gamma_1(N) \to (\Lambda_0^{\N},\rho_{\bullet - 2})$ is a $1$-cocycle. We conclude
\begin{eqnarray*}
  q_1 q_2 \overline{c}[0]_{\leq k_1}^{< s} = (p^sT_p^{-1})^n \left( q_1 q_2 \overline{c}[1]_{\leq k_1}^{< s} \right) = (p^sT_p^{-1})^n \left( q_2 \overline{c}[2]_{\leq k_1}^{< s} + (z-k_0) \overline{c}[4]_{\leq k_1}^{< s} \right),
\end{eqnarray*}
and the right hand side lies in the image of $P_{k_0}^{< s} L$.
\end{proof}

\subsection{$p$-adic Family of Modular Forms of Finite Slope}
\label{p-adic Family of Modular Forms of Finite Slope}

A prime ideal of $\Lambda_0$ is said to be {\it of weight $k$} for a $k \in \Z_p$ if its preimage in $\Z_p[[1 + N \Z_p]]$ coincides with the prime ideal of height $1$ obtained as the kernel of the continuous $\Z_p$-algebra homomorphism $\Z_p[[1 + N \Z_p]] \to \Z_p$ associated to the continuous character
\begin{eqnarray*}
  1 + N \Z_p & \to & \Z_p^{\times} \\
  \gamma & \mapsto & \gamma^k
\end{eqnarray*}
of weight $k$ by the universality of the Iwasawa algebra.

\begin{prp}
Let $k_0 \in \Z_p$. For any $u \in \N \cap [0,p-2]$, the principal ideal
\begin{eqnarray*}
  m_{u,k_0} & \coloneqq & \left( (z - k_0) e_{(u \chi_{p,k_0 - u})^{(1)}} + \sum_{
  \t{\scriptsize $
    \begin{array}{c}
      \zeta = 0 \\
      \zeta \neq (u \chi_{p,k_0 - u})^{(1)}
    \end{array}
  $}
  }^{p(p-1)-1} e_{\zeta} \right) \Lambda_0 \\
  & = & \left( z - k_0 \right) e_{(u \chi_{p,k_0 - u})^{(1)}} \Lambda_0 \oplus \bigoplus_{
  \t{\scriptsize $
    \begin{array}{c}
      \zeta = 0 \\
      \zeta \neq (u \chi_{p,k_0 - u})^{(1)}
    \end{array}
  $}
  }^{p(p-1)-1} e_{\zeta} \Lambda_0 \subset \Lambda_0
\end{eqnarray*}
is a closed prime ideal of height $1$. Moreover, a prime ideal $m \subset \Lambda_0$ is of weight $k_0$ if and only if $m$ coincides with $m_{u,k_0}$ for a $u \in \N \cap [0,p-2]$.
\end{prp}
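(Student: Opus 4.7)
My plan is to use the product decomposition $\Lambda_0 = \prod_{\zeta = 0}^{p(p-1) - 1} \Z_p[[T_\zeta]]$ with $T_\zeta \coloneqq (z - \zeta) e_\zeta$ recorded just before the statement. Under this decomposition, prime ideals of $\Lambda_0$ are in bijection with pairs $(\zeta_1, \mathfrak{p})$ of an index $\zeta_1 \in \N \cap [0, p(p-1) - 1]$ and a prime $\mathfrak{p}$ of $\Z_p[[T_{\zeta_1}]]$ via $m(\zeta_1, \mathfrak{p}) \coloneqq \bigoplus_{\zeta \neq \zeta_1} e_\zeta \Lambda_0 \oplus \mathfrak{p}$, and the height in $\Lambda_0$ equals the height of $\mathfrak{p}$ in $\Z_p[[T_{\zeta_1}]]$. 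Setting $\zeta_0 \coloneqq (u \chi_{p, k_0 - u})^{(1)}$ and using the additive isomorphism $W \cong (\Z/(p-1)\Z) \times \Z_p$, the character $\chi_u \chi_{p, k_0 - u}$ corresponds to $(u, k_0)$, so $\zeta_0$ is the unique element of $\N \cap [0, p(p-1) - 1]$ satisfying $\zeta_0 \equiv u \pmod{p - 1}$ and $k_0 - \zeta_0 \in p \Z_p$.

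Under this decomposition, $m_{u, k_0}$ corresponds to the full ring on every component $\zeta \neq \zeta_0$ and to the principal ideal $(T_{\zeta_0} - (k_0 - \zeta_0))$ on the $\zeta_0$-component; since $k_0 - \zeta_0 \in p \Z_p$, this generator is a distinguished Weierstrass polynomial of degree $1$, so $\Z_p[[T_{\zeta_0}]] / (T_{\zeta_0} - (k_0 - \zeta_0)) \cong \Z_p$ via $T_{\zeta_0} \mapsto k_0 - \zeta_0$. Thus $m_{u, k_0}$ is the kernel of the continuous surjection $\Lambda_0 \twoheadrightarrow \Z_p$ sending $z \mapsto k_0$ on the $\zeta_0$-component and $0$ elsewhere, so it is closed, prime, and of height $1$. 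Since $z$ maps to $k_0$ in the quotient, the image of $X = (1 + N)^z - 1$ equals $(1 + N)^{k_0} - 1$, so the preimage of $m_{u, k_0}$ in $\Z_p[[X]]$ is $(X - ((1 + N)^{k_0} - 1))$, which under the Amice transform is precisely the weight-$k_0$ prime of $\Z_p[[1 + N \Z_p]]$.

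For the converse, let $m = m(\zeta_1, \mathfrak{p})$ be a prime of weight $k_0$. The weight condition forces the preimage of $m$ in $\Z_p[[X]]$ to equal $(X - ((1 + N)^{k_0} - 1))$, which does not contain $p$; hence $p \notin \mathfrak{p}$, so $\mathfrak{p}$ is a height-one principal prime of $\Z_p[[T_{\zeta_1}]]$ distinct from $(p)$, necessarily of the form $(f)$ for some irreducible distinguished polynomial $f$. The weight condition also places $F \coloneqq (1 + N)^z - (1 + N)^{k_0}$ in $m$, which on the $\zeta_1$-component reads $(1 + N)^{\zeta_1} (1 + N)^{T_{\zeta_1}} - (1 + N)^{k_0} \in (f)$. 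Evaluating this convergent power series at any root $\alpha \in p \overline{\Z}_p$ of $f$ in $\overline{\Q}_p$ yields $(1 + N)^{\alpha + \zeta_1} = (1 + N)^{k_0}$; since $p \neq 2$, the $p$-adic logarithm satisfies $\log((1 + N)^\beta) = \beta \log(1 + N)$ for $\beta \in \overline{\Z}_p$ with $\log(1 + N) \neq 0$, so this forces $\alpha = k_0 - \zeta_1$.

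All roots of $f$ therefore coincide with $k_0 - \zeta_1 \in \Z_p$, so by irreducibility $f = T_{\zeta_1} - (k_0 - \zeta_1)$, and distinguishedness of $f$ requires $k_0 - \zeta_1 \in p \Z_p$, equivalently $\zeta_1 \equiv k_0 \pmod{p}$. Setting $u \coloneqq \zeta_1 \bmod (p - 1) \in \N \cap [0, p - 2]$, the two congruences $\zeta_1 \equiv u \pmod{p - 1}$ and $\zeta_1 \equiv k_0 \pmod{p}$ uniquely identify $\zeta_1 = (u \chi_{p, k_0 - u})^{(1)}$ in $\N \cap [0, p(p-1) - 1]$, whence $m = m_{u, k_0}$. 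The main obstacle will be the structure-theoretic identification of height-one primes of $\Z_p[[T]]$ as principal primes generated by an irreducible distinguished polynomial (using that $\Z_p[[T]]$ is a two-dimensional regular local UFD with Weierstrass preparation), together with the translation of $(1 + N)^\alpha = (1 + N)^{k_0 - \zeta_1}$ into $\alpha = k_0 - \zeta_1$ via the injectivity of the $p$-adic logarithm on $1 + p \overline{\Z}_p$; once these tools are in place the remainder is a direct computation.
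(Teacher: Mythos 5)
Your proof is correct and takes a genuinely different route from the paper. The paper does not pass to the product decomposition $\Lambda_0 \cong \prod_\zeta \Z_p[[T_\zeta]]$ nor invoke the classification of height-one primes of $\Z_p[[T]]$; instead it works intrinsically, defining a $p$-adic logarithm $\log \colon 1 + p\Lambda_0 \to \Lambda_0$ by the usual series, proving it respects congruences modulo $m$ (via $m \cap p\Lambda_0 = pm$, valid because $m$ is prime and $p \notin m$), applying it to $(1+N)^z - (1+N)^{k_0} \in m$ to get $(z-k_0)\log(1+N) \in m$, deducing $z - k_0 \in m$ from $\log(1+N) \in \Z_p \setminus \ens{0}$ together with $p \notin m$, and finally factoring $(z-k_0)\Lambda_0$ into the $p-1$ generators of the ideals $m_{u_0,k_0}$ to force $m_{u,k_0} \subset m$ for some $u$, with the heights then matching. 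You instead localise to a single component $\Z_p[[T_{\zeta_1}]]$, use Weierstrass preparation to write $\mathfrak{p} = (f)$ with $f$ irreducible distinguished, and evaluate the image of $(1+N)^z - (1+N)^{k_0}$ at a root $\alpha$ of $f$ in $\overline{\Q}_p$, so the logarithm is used pointwise rather than internally in $\Lambda_0$; both proofs ultimately rest on the same input, namely $\log(1+N) \neq 0$. Your route is closer to standard Iwasawa-theoretic reasoning, and has the advantage that the height-one property of $m$ (which the paper's proof simply assumes when it begins ``let $m$ be a closed prime ideal of height $1$ of weight $k_0$'', whereas the statement only hypothesises a prime of weight $k_0$) falls out transparently from the decomposition once one notes $\mathfrak{p} \neq 0$ and $p \notin \mathfrak{p}$; the paper's route avoids passing to $\overline{\Q}_p$ and the structure theory of $\Z_p[[T]]$. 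One small inaccuracy to flag: a root $\alpha$ of an irreducible distinguished polynomial lies in the open unit disc $\v{\alpha} < 1$ of $\overline{\Z}_p$, but not necessarily in $p\overline{\Z}_p$; convergence of the power series only needs $\v{\alpha} < 1$, and it is distinguishedness of the degree-one $f$ you obtain at the end that forces $k_0 - \zeta_1 \in p\Z_p$, so the argument is unaffected.
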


See the beginning of \S \ref{Interpolation along the Weight Spaces} for the convention of $\chi_{p,n}$.

\begin{proof}
Since $\Z_p[[X]]$ is Noetherian (resp.\ compact, resp.\ Hausdorff), so is $\Lambda_0$. Therefore every ideal of $\Lambda_0$ is closed. Let $u \in \N \cap [0,p-2]$. We have
\begin{eqnarray*}
  & & \Lambda_0/m_{u,k_0} \\
  & \cong & \left( \Z_p[[z_{(u \chi_{p,k_0 - u})^{(1)}} - (u \chi_{p,k_0 - u})^{(1)}]]/(z_{(u \chi_{p,k_0 - u})^{(1)}} - k_0) \right) \times \prod_{
  \t{\scriptsize $
    \begin{array}{c}
      \zeta = 0 \\
      \zeta \neq (u \chi_{p,k_0 - u})^{(1)}
    \end{array}
  $}
  }^{p(p-1)-1} \Z_p[[z_{\zeta} - \zeta]]/e_{\zeta} \Lambda_0 \\
  & \cong & \Z_p \times 0 \cong \Z_p,
\end{eqnarray*}
and hence $m_{u,k_0}$ is a closed prime ideal of height $1$. The composite $\varphi$ of the embedding $\Z_p[[1 + N \Z_p]] \hookrightarrow \Lambda_0$ and the canonical projection $\Lambda_0 \twoheadrightarrow \Lambda_0/m_{u,k_0} \cong \Z_p$ coincides with the continuous $\Z_p$-algebra homomorphism $\varphi_{k_0}$ associated to the continuous character $1 + N \Z_p \to \Z_p^{\times} \colon \gamma \mapsto \gamma^{k_0}$ by the universality of the Iwasawa algebra. It implies that $m_{u,k_0}$ is of weight $k_0$.

\vspace{0.2in}
On the other hand, let $m \subset \Lambda_0$ be a closed prime ideal of height $1$ of weight $k_0$. Since $\ker(\varphi_{k_0})$ does not contain $p$, we have $p \notin m$. Therefore $m \cap p \Lambda_0 = pm$ because $m$ is a prime ideal. By definition, we have $(1+N)^z - (1+N)^{k_0} \in m$ because $[1+N] - (1+N)^{k_0} \in \ker(\varphi_{k_0})$. Let $f \in 1 + p \t{C}(W,\Z_p)$. We have 
\begin{eqnarray*}
  \frac{1}{h}(f - 1)^h = \frac{p^h}{h} \left( \frac{f - 1}{p} \right)^h \in \Z_p \left\lbrack \frac{f-1}{p} \right\rbrack \cap p^{h - \lfloor \log_p h \rfloor} \t{C}(W,\Z_p) \subset \t{C}(W,\Q_p)
\end{eqnarray*}
for any $h \in \N \backslash \ens{0}$, and hence the infinite sum
\begin{eqnarray*}
  \log f \coloneqq \sum_{h = 1}^{\infty} \frac{1}{h}(f - 1)^h
\end{eqnarray*}
converges in $\t{C}(W,\Z_p)$. If $f \in 1 + p \Lambda_0$, then we have
\begin{eqnarray*}
  \frac{1}{h}(f - 1)^h \in \Z_p \left\lbrack \frac{f-1}{p} \right\rbrack \subset \Lambda_0,
\end{eqnarray*}
for any $h \in \N \backslash \ens{0}$, and hence $\log f \in \Lambda_0$ because $\Lambda_0$ is closed in $\t{C}(W,\Z_p)$. Since the embedding $\Lambda_0 \hookrightarrow \t{C}(W,\Z_p)$ is a homeomorphism onto the image, the infinite sum in the definition of $\log f$ converges to $\log f$ in $\Lambda_0$. For any $f_0 \in 1 + p \Lambda_0$ with $f - f_0 \in m$, we have
\begin{eqnarray*}
  \frac{1}{h}(f - 1)^h = \frac{p^h}{h} \left( \frac{f_0 - 1}{p} + \frac{f - f_0}{p} \right)^h \in \frac{1}{h}(f_0 - 1)^h + m
\end{eqnarray*}
because $f - f_0 \in m \cap p \Lambda_0 = pm$. Since $m$ is closed, we obtain
\begin{eqnarray*}
  \log f = \sum_{h = 1}^{\infty} \frac{1}{h}(f - 1)^h \in \sum_{h = 1}^{\infty} \left( \frac{1}{h}(f_0 - 1)^h + m \right) = \left( \sum_{h = 1}^{\infty} \frac{1}{h}(f_0 - 1)^h \right) + m = (\log f_0) + m.
\end{eqnarray*}
In particular, we obtain
\begin{eqnarray*}
  (z - k_0) \log (1+N) = \log (1+N)^z - \log (1+N)^{k_0} \in m
\end{eqnarray*}
by a usual calculation. Since $1 + N$ is not a root of unity, we have $\log (1 + N) \neq 0$. Therefore we obtain $\log (1 + N) \in \Z_p \backslash \ens{0} = \bigsqcup_{r \in \N} p^r \Z_p^{\times}$, and hence $z - k_0 \in m$ by $p \notin m$. It implies $m_{u,k_0} \subset m$ for some $u \in \N \cap [0,p-2]$, because we have
\begin{eqnarray*}
  (z - k_0) \Lambda_0 = \left( \prod_{u_0 = 0}^{p-2} \left( (z - k_0) e_{(u_0 \chi_{p,k_0 - u_0})^{(1)}} + \sum_{
  \t{\scriptsize $
    \begin{array}{c}
      \zeta = 0 \\
      \zeta \neq (u_0 \chi_{p,k_0 - u_0})^{(1)}
    \end{array}
  $}
  }^{p(p-1)-1} e_{\zeta} \right) \right) \Lambda_0.
\end{eqnarray*}
Since $m$ shares height with $m_{u,k_0}$, we conclude $m = m_{u,k_0}$.
\end{proof}

For a topological $\Lambda_0$-algebra $\Lambda_1$, we denote by $\Omega(\Lambda_1)$ the set of continuous $\Z_p$-algebra homomorphisms $\Lambda_1 \to \overline{\Z}_p$.

\begin{prp}
\label{Baire}
Let $\Lambda_1$ be a compact topological $\Lambda_0$-algebra. For any $\varphi \in \Omega(\Lambda_1)$, $\varphi(\Lambda_1)$ is a $\Z_p$-subalgebra of $\overline{\Z}_p$ finitely generated as a $\Z_p$-module.
\end{prp}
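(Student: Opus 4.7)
The plan is to show that $C := \varphi(\Lambda_1)$ is contained in $O_K$ for some finite extension $K$ of $\Q_p$ inside $\overline{\Q}_p$; once this inclusion is established, finite generation of $C$ as a $\Z_p$-module will follow at once from the fact that $O_K$ is a finitely generated $\Z_p$-module and that $\Z_p$ is Noetherian.

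First I would record that $C$ is a compact $\Z_p$-subalgebra of $\overline{\Z}_p$: compactness comes from the continuity of $\varphi$ applied to the compact space $\Lambda_1$, and the $\Z_p$-subalgebra property comes from $\varphi$ being a $\Z_p$-algebra homomorphism. As a compact subset of the Hausdorff space $\overline{\Z}_p$, $C$ is automatically closed, and the topology induced on $C$ from $\overline{\Z}_p$ coincides with the quotient topology from $\Lambda_1$ (two comparable compact Hausdorff topologies on the same set must agree), so open neighbourhoods of points of $C$ may be described using $p$-adic balls in $\overline{\Z}_p$.

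The heart of the argument will be a Baire category step. Krasner's lemma implies that the collection of finite extensions of $\Q_p$ inside $\overline{\Q}_p$ is countable (finitely many of each fixed degree, countably many degrees), so one can enumerate them as $(K_n)_{n \in \N}$ and write $\overline{\Z}_p = \bigcup_{n \in \N} O_{K_n}$. Each $O_{K_n}$ is closed in $\overline{\Z}_p$ because $K_n$ is complete under the restricted $p$-adic norm, so each $C \cap O_{K_n}$ is closed in $C$. Since $C$ is compact Hausdorff and hence a Baire space, some $C \cap O_{K_n}$ must have non-empty interior in $C$.

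I would then propagate this to all of $C$ using the ring structure: choosing $a \in C \cap O_{K_n}$ and $r \in \N$ with $(a + p^r \overline{\Z}_p) \cap C \subset O_{K_n}$, for any $c \in C$ the element $a + p^r c$ lies both in $C$ (since $C$ is a $\Z_p$-subalgebra) and in $a + p^r \overline{\Z}_p$, hence in $O_{K_n}$. Subtracting $a \in O_{K_n}$ gives $p^r c \in O_{K_n}$, whence $c \in K_n \cap \overline{\Z}_p = O_{K_n}$, and therefore $C \subset O_{K_n}$. Noetherianity of $\Z_p$ then forces $C$ to be finitely generated as a $\Z_p$-module. The main obstacle I anticipate is the Baire step itself, which relies on two inputs that must be handled carefully: countability of finite subextensions of $\overline{\Q}_p/\Q_p$ via Krasner's lemma, and the closedness of each $O_{K_n}$ in $\overline{\Z}_p$; once those are in place, the remaining pieces are essentially bookkeeping.
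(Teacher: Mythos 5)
Your proof is correct, and it follows the paper's strategy up to and including the Baire category step: both arguments observe that $\varphi(\Lambda_1)$ is compact Hausdorff, decompose $\overline{\Z}_p$ as a countable union of valuation rings $O_K$ of finite subextensions (countability via Krasner's lemma), each of which meets $\varphi(\Lambda_1)$ in a closed set, and then invoke Baire to find some $O_{K_n}$ whose trace on $\varphi(\Lambda_1)$ has non-empty interior. Where you diverge is in the final deduction that $\varphi(\Lambda_1) \subset O_K$. The paper observes that $R_0 \cap \varphi(\Lambda_1)$ is an open subgroup of the compact group $\varphi(\Lambda_1)$, hence of finite index, takes coset representatives $a_1,\ldots,a_d$, each contained in some $R_i$, and then passes to the integral closure of the subalgebra generated by all the $R_i$. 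Your argument instead propagates the local containment globally in one stroke via the $\Z_p$-module structure: from $(a + p^r\overline{\Z}_p) \cap C \subset O_{K_n}$ and $a + p^r c \in C$ for every $c \in C$ you get $p^r c \in O_{K_n}$, hence $c \in K_n \cap \overline{\Z}_p = O_{K_n}$. This is shorter and avoids the passage to a finite quotient and a further integral closure, at the cost of needing the explicit ball description of neighbourhoods in $\overline{\Z}_p$ (which is harmless since the balls $a + p^r\overline{\Z}_p$, $r \in \N$, do form a neighbourhood base even though the value group of $\overline{\Q}_p$ is dense). Both arguments are sound; yours is the more economical of the two once the Baire input is in place.
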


\begin{proof}
Let $\varphi \in \Omega(\Lambda_1)$. Since $\varphi$ is continuous, $\varphi(\Lambda_1)$ is a compact $\Z_p$-subalgebra of the Hausdorff topological $\Z_p$-algebra $\overline{\Z}_p$. Therefore $\varphi(\Lambda_1)$ is a compact Hausdorff topological $\Z_p$-algebra with respect to the relative topology. Let $\Fil$ denote the set of $\Z_p$-subalgebras $R$ of $\overline{\Z}_p$ integrally closed in $\Q_p \otimes_{\Z_p} R \subset \overline{\Q}_p$ and finitely generated as $\Z_p$-modules. The set $\Fil$ is directed by inclusions. We have $\bigcup_{R \in \Fil} R = \overline{\Z}_p$, and hence $\bigcup_{R \in \Fil} (R \cap \varphi(\Lambda_1)) = \varphi(\Lambda_1)$. For any $R \in \Fil$, $R$ is compact topological $\Z_p$-algebra, and hence $R \cap \varphi(\Lambda_1)$ is closed in $\varphi(\Lambda_1)$. By Krasner's lemma, every finite subextension of $\overline{\Q}_p/\Q_p$ can be obtained as the $p$-adic closure of a finite subextension of $\overline{\Q}_p/\Q$, and hence $\Fil$ is a countable set. By Baire category theorem for a $\check{\m{C}}$ech-complete (e.g.\ locally compact Hausdorff) topological space (\cite{Bai99} 59, \cite{Eng77} 3.9.3 Theorem), there exists some $R_0 \in \Fil$ such that $R_0 \cap \varphi(\Lambda_1)$ admits non-empty interior in $\varphi(\Lambda_1)$. It ensures that $R_0 \cap \varphi(\Lambda_1)$ is an open $\Z_p$-subalgebra of $\varphi(\Lambda_1)$. Since $\varphi(\Lambda_1)$ is compact, the quotient $\varphi(\Lambda_1)/(R_0 \cap \varphi(\Lambda_1))$ as additive groups is a finite group. Let $a_1, \ldots, a_d \in \varphi(\Lambda_1)$ be a complete representative of the canonical projection $\varphi(\Lambda_1) \twoheadrightarrow \varphi(\Lambda_1)/(R_0 \cap \varphi(\Lambda_1))$. Since $\bigcup_{R \in \Fil} (R \cap \varphi(\Lambda_1)) = \varphi(\Lambda_1)$, there exists some $(R_j)_{j = 1}^{d} \in \Fil^d$ such that $a_i \in R_i \cap \varphi(\Lambda_1)$ for any $i \in \N \cap [1,d]$. The integral closure $R \in \Fil$ of the $\Z_p$-subalgebra of $\overline{\Z}_p$ generated by $\bigcup_{i = 0}^{d} R_i$ satisfies $\varphi(\Lambda_1) = R \cap \varphi(\Lambda_1)$, and hence $\varphi(\Lambda_1)$ is a $\Z_p$-subalgebra of $R$. Therefore $\varphi(\Lambda_1)$ is finitely generated as a $\Z_p$-module because $\Z_p$ is Noetherian.
\end{proof}

Let $\Lambda_1$ be a compact topological $\Lambda_0$ algebra. For each $\varphi \in \Omega(\Lambda_1)$, we put $\Z_p[\varphi] \coloneqq \Lambda_1/\ker(\varphi)$, and endow it with the quotient topology. For any $\varphi \in \Omega(\Lambda_1)$, $\Z_p[\varphi]$ is $p$-adically complete by Proposition \ref{Baire}, and the $p$-adic topology coincides with the original topology by Proposition \ref{finitely generated - p-adic}. In particular, the continuous $\Z_p$-algebra isomorphism $\Z_p[\varphi] \stackrel{\sim}{\to} \varphi(\Lambda_1)$ is a homeomorphism, and hence we identify $\Z_p[\varphi]$ with $\varphi(\Lambda_1)$ for any $\varphi \in \Omega(\Lambda_1)$. For a $k_0 \in \Z_p$, a $\varphi \in \Omega(\Lambda_1)$ is said to be a {\it $\overline{\Z}_p$-valued point of $\Lambda_1$ of weight $k_0$} if the preimage of $\ker(\varphi)$ in $\Lambda_0$ is of weight $k_0$, and we denote by $\Omega(\Lambda_1)_{k_0} \subset \Omega(\Lambda_1)$ the subset of $\overline{\Z}_p$-valued points of $\Lambda_1$ of weight $k_0$. We set $\t{supp}(\Lambda_1) \coloneqq \set{k_0 \in \Z_p}{\Omega(\Lambda_1)_{k_0} \neq \emptyset}$. We put $\Omega(\Lambda_1)_S \coloneqq \bigsqcup_{k \in S} \Omega(\Lambda_1)_k$ for each $S \subset \Z_p$, and denote by $\t{wt} \colon \Omega(\Lambda_1)_{\Z_p} \twoheadrightarrow \t{supp}(\Lambda_1)$ the canonical projection. 

\begin{dfn}
\label{Lambda-adic domain}
A {\it $\Lambda$-adic domain} is a compact Hausdorff topological $\Lambda_0$-algebra $\Lambda_1$ satisfying the following conditions:
\begin{itemize}
\item[(i)] The intersection $\t{supp}(\Lambda_1) \cap (\N \cap [2,\infty))$ is an infinite set.
\item[(ii)] For any infinite subset $\Sigma \subset \Omega(\Lambda_1)_{\N \cap [2,\infty)}$, the equality $\bigcap_{\varphi \in \Sigma} \ker(\varphi) = \ens{0}$ holds.
\end{itemize}
\end{dfn}

\begin{prp}
\label{integral}
Every $\Lambda$-adic domain is an integral domain.
\end{prp}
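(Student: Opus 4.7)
The plan is to deduce the absence of zero divisors directly from the two axioms of a $\Lambda$-adic domain, using $\overline{\Z}_p$ as an integral-domain-valued ``probe''. The key observation is that condition (ii) is an identity-theorem type statement: vanishing at infinitely many arithmetic points forces global vanishing in $\Lambda_1$.

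First I would take $a, b \in \Lambda_1$ with $ab = 0$ and aim to show $a = 0$ or $b = 0$. For every $\varphi \in \Omega(\Lambda_1)$, the image of $ab$ under $\varphi$ vanishes in $\overline{\Z}_p$, which is an integral domain, so at least one of $\varphi(a)$ and $\varphi(b)$ is zero. Equivalently, writing
\begin{eqnarray*}
  \Sigma_a \coloneqq \set{\varphi \in \Omega(\Lambda_1)_{\N \cap [2,\infty)}}{a \in \ker(\varphi)}, \quad \Sigma_b \coloneqq \set{\varphi \in \Omega(\Lambda_1)_{\N \cap [2,\infty)}}{b \in \ker(\varphi)},
\end{eqnarray*}
the arithmetic locus decomposes as $\Omega(\Lambda_1)_{\N \cap [2,\infty)} = \Sigma_a \cup \Sigma_b$.

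Next I would invoke condition (i) of Definition \ref{Lambda-adic domain}: $\t{supp}(\Lambda_1) \cap (\N \cap [2,\infty))$ is an infinite set, and for each weight $k$ in this intersection $\Omega(\Lambda_1)_k$ is non-empty by definition of $\t{supp}$, so $\Omega(\Lambda_1)_{\N \cap [2,\infty)} = \bigsqcup_{k \in \t{supp}(\Lambda_1) \cap (\N \cap [2,\infty))} \Omega(\Lambda_1)_k$ is itself infinite. Therefore at least one of $\Sigma_a, \Sigma_b$ is infinite.

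Finally I would apply condition (ii): if $\Sigma_a$ is infinite, then $a \in \bigcap_{\varphi \in \Sigma_a} \ker(\varphi) = \ens{0}$, and symmetrically for $b$. This shows $\Lambda_1$ has no zero divisors, and since $\Lambda_1 \neq 0$ (both axioms require non-emptiness of the arithmetic locus, so $1 \neq 0$), it is an integral domain. There is no real obstacle here; the substance of the statement is entirely encoded in the ``identity theorem'' axiom (ii), and the proof is only a matter of arranging the dichotomy between $a$ and $b$ so that at least one side inherits the infiniteness needed to invoke it.
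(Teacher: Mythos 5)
Your argument is correct and follows the same route as the paper: evaluate at arithmetic points to decompose $\Omega(\Lambda_1)_{\N \cap [2,\infty)}$ into the vanishing loci of the two factors, invoke condition (i) and the pigeonhole principle to conclude one locus is infinite, then apply condition (ii). The extra observation that $\Lambda_1 \neq 0$ is a reasonable addition but not something the paper bothers to record.
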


\begin{proof}
Let $\Lambda_1$ be a $\Lambda$-adic domain. Assume $f_1 f_2 = 0$ for some $(f_1,f_2) \in \Lambda_1^2$. Then for each $\varphi \in \Omega(\Lambda_1)_{\N \cap [2,\infty)}$, we have $\varphi(f_1) \varphi(f_2) = \varphi(f_1 f_2) = \varphi(0) = 0 \in \overline{\Z}_p$, and hence either $\varphi(f_1) = 0$ or $\varphi(f_2) = 0$ holds. Therefore by the pigeonhole principle, one of the subsets $\set{\varphi \in \Omega(\Lambda_1)_{\N \cap [2,\infty)}}{\varphi(f_1) = 0}$ and $\set{\varphi \in \Omega(\Lambda_1)_{\N \cap [2,\infty)}}{\varphi(f_2) = 0}$ is an infinite set, because $\Omega(\Lambda_1)_{\N \cap [2,\infty)}$ is an infinite set by the condition (i). It implies that either $f_1 = 0$ or $f_2 = 0$ holds by the condition (ii). Thus $\Lambda_1$ is an integral domain.
\end{proof}

\begin{prp}
\label{profinite}
Every $\Lambda$-adic domain is a commutative profinite $\Lambda_0$-algebra.
\end{prp}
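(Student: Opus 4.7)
The plan is to apply the characterisation of profinite $R$-algebras given just before Corollary \ref{profinite ring}: since $\Lambda_0$ is itself a profinite $\Z_p$-algebra (a finite direct product of copies of $\Z_p[[X]]$ endowed with the $(p,X)$-adic topology), by Corollary \ref{profinite ring} it is enough to show that the underlying topological ring of $\Lambda_1$ is compact Hausdorff and admits a fundamental system of neighbourhoods of $0$ consisting of open two-sided ideals. Commutativity will fall out along the way. Compactness and Hausdorffness are built into Definition \ref{Lambda-adic domain}, so the real content is the ideal criterion.

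First I would dispose of commutativity. For any $f_1,f_2 \in \Lambda_1$ and any $\varphi \in \Omega(\Lambda_1)_{\N \cap [2,\infty)}$, the target $\overline{\Z}_p$ is commutative, hence $\varphi(f_1f_2 - f_2f_1) = 0$. Condition (i) in Definition \ref{Lambda-adic domain} ensures $\Omega(\Lambda_1)_{\N \cap [2,\infty)}$ is infinite, so condition (ii) gives $\bigcap_{\varphi} \ker(\varphi) = \ens{0}$ and therefore $f_1f_2 = f_2f_1$.

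Next I would produce a candidate family of open two-sided ideals. For each $\varphi \in \Omega(\Lambda_1)_{\N \cap [2,\infty)}$ and each $r \in \N$, set $I_{\varphi,r} \coloneqq \ker(\varphi) + p^r\Lambda_1$. This is clearly a two-sided ideal, because $\ker(\varphi)$ is the kernel of a ring homomorphism and the image of $p \in \Z_p \subset \Lambda_0$ lies in the centre of $\Lambda_1$. For openness I would invoke Proposition \ref{Baire}, which shows $\Z_p[\varphi] = \Lambda_1/\ker(\varphi) \cong \varphi(\Lambda_1)$ is finitely generated as a $\Z_p$-module, and then Proposition \ref{finitely generated - p-adic}, which identifies its quotient topology with the $p$-adic topology. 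Hence $p^r\Z_p[\varphi]$ is open in $\Z_p[\varphi]$, so its preimage $I_{\varphi,r}$ is open in $\Lambda_1$. I would then observe that $\bigcap_{\varphi,r} I_{\varphi,r} = \ens{0}$: if $x$ lies in this intersection, then for each $\varphi$ one has $\varphi(x) \in \bigcap_{r} p^r\varphi(\Lambda_1) = \ens{0}$ by $p$-adic separation in $\overline{\Z}_p$, so $x \in \bigcap_{\varphi} \ker(\varphi) = \ens{0}$ by condition (ii).

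To conclude, let $U \subset \Lambda_1$ be any open neighbourhood of $0$. Then $\Lambda_1 \setminus U$ is a closed subset of the compact space $\Lambda_1$, hence compact. The closed subsets $(\Lambda_1 \setminus U) \cap I_{\varphi,r}$ of $\Lambda_1 \setminus U$ have empty total intersection, so by the finite intersection property there exist finitely many pairs $(\varphi_i,r_i)_{i=1}^{n}$ with $\bigcap_{i=1}^{n} I_{\varphi_i,r_i} \subset U$. Finite intersections of open two-sided ideals are again open two-sided ideals, so open two-sided ideals form a fundamental system of neighbourhoods of $0$, and the profiniteness criterion applies. The main technical point is the openness of $I_{\varphi,r}$; once Proposition \ref{Baire} and Proposition \ref{finitely generated - p-adic} are combined to supply this, everything else is a routine compactness/finite-intersection argument.
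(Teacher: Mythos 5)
Your proof is correct, and it reaches the conclusion by a genuinely different, though conceptually parallel, route from the paper. The paper embeds $\Lambda_1$ diagonally into $\prod_{\varphi \in \Omega(\Lambda_1)} \Z_p[\varphi]$ via $f \mapsto (f + \ker(\varphi))_{\varphi}$, notes that a continuous injection from a compact space into a Hausdorff space is a homeomorphism onto its closed image, and then concludes because the product is a commutative profinite $\Lambda_0$-algebra — the final step silently relying on the fact that a closed subalgebra of a profinite algebra is again profinite. Your argument unwinds exactly that silent step: the ideals $I_{\varphi,r} = \ker(\varphi) + p^r\Lambda_1$ are precisely the traces on $\Lambda_1$ of the standard basic open ideals of that ambient product, and your finite-intersection-property argument is the direct verification that they form a fundamental system of neighbourhoods of $0$, so the unnumbered profiniteness criterion preceding Corollary \ref{profinite ring} applies. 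What your route buys is explicitness — the closed-subalgebra lemma never needs to be invoked, and the ideal criterion is checked by hand; what the paper's route buys is brevity. Both hinge on Proposition \ref{Baire} in the same essential way, to make each $\Z_p[\varphi]$ finitely generated over $\Z_p$ and hence carrying the $p$-adic topology via Proposition \ref{finitely generated - p-adic}. One small citation hiccup: the reduction to ``compact Hausdorff plus a fundamental system of open two-sided ideals'' is supplied by the unnumbered proposition \emph{before} Corollary \ref{profinite ring}, which you correctly name in your first clause; the subsequent phrase ``by Corollary \ref{profinite ring}'' is a stray reference, since that corollary only interchanges profiniteness over $\Lambda_0$ with profiniteness over $\Z_p$, a reduction you do not actually need because Definition \ref{Lambda-adic domain} already hands you $\Lambda_1$ as a topological $\Lambda_0$-algebra.
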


\begin{proof}
Let $\Lambda_1$ be a $\Lambda$-adic domain. The conditions (i) and (ii) ensure that the continuous $\Z_p$-algebra homomorphism
\begin{eqnarray*}
  \Lambda_1 & \to & \prod_{\varphi \in \Omega(\Lambda_1)} \Z_p[\varphi] \\
  f & \mapsto & (f + \ker(\varphi))_{\varphi \in \Omega(\Lambda_1)}
\end{eqnarray*}
is injective, and hence is a homeomorphic isomorphism onto the closed image, because $\Lambda_1$ is compact and $\Z_p[\varphi]$ is Hausdorff for any $\varphi \in \Omega(\Lambda_1)$. Thus the assertion holds because the target is a commutative profinite $\Lambda_0$-algebra.
\end{proof}

We show an explicit way to construct a $\Lambda$-adic domain. For this sake, we introduce a notion of the analytic space associated to a $\Lambda$-adic domain. We identify $W$ with the disjoint union of $p(p-1)$ copies of open unit balls. For each open disc $D \subset W$ of radius $\v{p}^r$ for an $r \in \N$, we denote by $\Lambda_0(D)$ the profinite $\Lambda_0$-algebra of $\Z_p$-valued rigid analytic functions on $D$.

\begin{dfn}
A $\Lambda$-adic domain $\Lambda_1$ is said to be {\it affinoid} if the structure morphism $\Lambda_0 \to \Lambda_1$ factors through the Weierstrass localisation $\Lambda_0 \to \Lambda_0(D)$ for an open disc $D \subset W$ of radius $\v{p}^r$ for an $r \in \N$, and if $\Lambda_1$ is finitely generated as a $\Lambda_0(D)$-module.
\end{dfn}

\begin{prp}
\label{affinoid}
Let $\Lambda_1$ be a $\Lambda$-adic domain (resp.\ an affinoid $\Lambda$-adic domain), and $\Lambda_2$ a commutative \'etale $\Lambda_1$-algebra which is an integral domain and is finitely generated as a $\Lambda_1$-module. Then $\Lambda_2$ is a $\Lambda$-adic domain (resp.\ an affinoid $\Lambda$-adic domain) with respect to the canonical topology on $\Lambda_2$ as a $\Lambda_1$-module.
\end{prp}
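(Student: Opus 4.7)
The plan is to endow $\Lambda_2$ with the canonical topology coming from its structure as a finitely generated $\Lambda_1$-module, verify it is a compact Hausdorff topological $\Lambda_0$-algebra, and then check the two axioms of Definition \ref{Lambda-adic domain} by reducing them to the corresponding axioms for $\Lambda_1$ via the restriction map $\t{res} \colon \Omega(\Lambda_2) \to \Omega(\Lambda_1)$, $\psi \mapsto \psi |_{\Lambda_1}$. Since $\Lambda_1$ is a profinite $\Lambda_0$-algebra by Proposition \ref{profinite} and in particular a commutative compact Hausdorff ring, Proposition \ref{canonical topology 2} makes $\Lambda_2$ with the canonical topology a compact Hausdorff topological $\Lambda_1$-algebra, and hence a compact Hausdorff topological $\Lambda_0$-algebra via $\Lambda_0 \to \Lambda_1 \to \Lambda_2$.

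The map $\t{res}$ is weight-preserving because the weight of any point of $\Omega(\Lambda_2)$ depends only on its restriction to $\Z_p[[1 + N \Z_p]] \subset \Lambda_0 \subset \Lambda_1$. The key claim is that each fibre of $\t{res}$ is nonempty and finite: the fibre above $\varphi \in \Omega(\Lambda_1)$ is in natural bijection with the set of continuous $\Z_p[\varphi]$-algebra homomorphisms $\Lambda_2 \otimes_{\Lambda_1, \varphi} \Z_p[\varphi] \to \overline{\Z}_p$, and this base change is a nonzero finite \'etale $\Z_p[\varphi]$-algebra, nonzero because $\Lambda_2$ is faithfully flat over the integral domain $\Lambda_1$ (a finite \'etale cover of the connected $\t{Spec}(\Lambda_1)$ is surjective on spectra once it is nonempty). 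Since $\Z_p[\varphi]$ is henselian, this base change splits as a finite product of finite unramified extensions of $\Z_p[\varphi]$, each embedding into $\overline{\Z}_p$ via finitely many $\Z_p$-algebra homomorphisms, all automatically continuous by Proposition \ref{finitely generated - p-adic}. Granted this, $\t{supp}(\Lambda_2) \supseteq \t{supp}(\Lambda_1)$, so condition (i) for $\Lambda_2$ is inherited from condition (i) for $\Lambda_1$.

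For condition (ii) I would argue as follows. Let $\Sigma \subset \Omega(\Lambda_2)_{\N \cap [2,\infty)}$ be infinite; then $\t{res}(\Sigma) \subset \Omega(\Lambda_1)_{\N \cap [2,\infty)}$ is again infinite by finiteness of fibres and weight-preservation. Suppose $f \in \bigcap_{\psi \in \Sigma} \ker(\psi)$. Since $\Lambda_1$ is integral by Proposition \ref{integral} with connected spectrum, and $\Lambda_2$ is finite \'etale over $\Lambda_1$, the module $\Lambda_2$ is finite locally free of some constant rank $n$; form the characteristic polynomial $P_f(X) = X^n + a_{n-1} X^{n-1} + \cdots + a_0 \in \Lambda_1[X]$ of multiplication by $f$. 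Applying $\varphi = \t{res}(\psi)$ to $P_f(f) = 0$ together with $\psi(f) = 0$ gives $\varphi(a_0) = 0$ for every $\psi \in \Sigma$, so $a_0 \in \bigcap_{\varphi \in \t{res}(\Sigma)} \ker(\varphi) = \ens{0}$ by condition (ii) for $\Lambda_1$. But the integrality of $\Lambda_2$ ensures $\t{Frac}(\Lambda_1) \otimes_{\Lambda_1} \Lambda_2$ is a finite separable field extension of $\t{Frac}(\Lambda_1)$, and $(-1)^n a_0$ equals the norm of $f$ there, which is nonzero whenever $f \neq 0$; hence $f = 0$. The affinoid case requires only the further observation that if $\Lambda_1$ is a finite $\Lambda_0(D)$-module for some open disc $D \subset W$ of radius $\v{p}^r$, then $\Lambda_2$ is a finite $\Lambda_0(D)$-module because it is a finite $\Lambda_1$-module.

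The main obstacle is the careful interaction between the algebraic input (faithful flatness of finite \'etale algebras, the Henselian structure of finite \'etale algebras over $\Z_p[\varphi]$, and the well-definedness of characteristic polynomial and norm via finite local freeness of constant rank) and the topological bookkeeping needed to ensure that every ring-theoretic argument actually takes place inside $\Omega(\Lambda_1)$ and $\Omega(\Lambda_2)$, i.e.\ that the relevant homomorphisms are continuous for the canonical topologies. Once this is pinned down, both defining conditions of a $\Lambda$-adic domain transfer formally from $\Lambda_1$ to $\Lambda_2$, and the affinoid strengthening is immediate.
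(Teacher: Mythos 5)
Your proof is correct and follows the same overall decomposition as the paper's: set up the canonical topology via Propositions \ref{canonical topology} and \ref{canonical topology 2}, verify condition (i) by lifting arithmetic points of $\Lambda_1$ to $\Lambda_2$, and verify condition (ii) by extracting the constant coefficient of a polynomial satisfied by an element of $\bigcap_{\psi \in \Sigma} \ker(\psi)$, then invoking condition (ii) for $\Lambda_1$ together with the integral-domain property of $\Lambda_2$. Two sub-arguments are carried out differently. For condition (i) the paper simply invokes the going-up theorem for the finite (hence integral) extension $\Lambda_1 \hookrightarrow \Lambda_2$ and notes continuity of the resulting lift directly from the definition of the quotient topology; you instead pass to the finite \'etale base change $\Lambda_2 \otimes_{\Lambda_1,\varphi} \Z_p[\varphi]$, show it is nonzero via local freeness of constant rank over the connected $\t{Spec}(\Lambda_1)$, and lift using the Henselian property of $\Z_p[\varphi]$. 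This is more structural and simultaneously records finiteness of the fibres of the restriction map, which both proofs need for condition (ii), whereas the going-up route is more elementary and uses only finiteness, not \'etaleness. For condition (ii) the paper works with a (possibly non-monic) minimal polynomial $P(X) = \sum_h P_h X^h$ of $F$ over $\Lambda_1$ and, after showing $P_0 = 0$, factors out $X$ and uses minimality plus integrality of $\Lambda_2$; you work with the monic Cayley--Hamilton characteristic polynomial of multiplication by $f$ and recognise $(-1)^n a_0$ as the norm of $f$ in $\t{Frac}(\Lambda_1) \otimes_{\Lambda_1} \Lambda_2$, which you correctly observe is a single finite separable field extension of $\t{Frac}(\Lambda_1)$ since $\Lambda_2$ is a domain, so that the norm is nonzero for $f \neq 0$. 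The two constant-coefficient arguments are equivalent in substance; the characteristic-polynomial version avoids the mild awkwardness of normalising a ``minimal polynomial'' over a non-field base ring, while the paper's version avoids any appeal to norms. The affinoid clause is handled identically, as the immediate remark that $\Lambda_2$ is finite over $\Lambda_1$, hence over $\Lambda_0(D)$. One small point: your claim that continuity of the lift is ``automatic by Proposition \ref{finitely generated - p-adic}'' is slightly off-target, since that proposition governs the topology of a finite $\Z_p$-module, not of $\Lambda_2$ itself; the clean justification, as the paper states, is that the canonical topology on $\Lambda_2$ is a quotient topology from $\Lambda_1^S$, so continuity of $\tilde{\varphi}$ reduces to that of the composite $\Lambda_1^S \to \overline{\Z}_p$, which is immediate. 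This does not affect the correctness of the argument.
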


\begin{proof}
For the detail of the canonical topology, see Proposition \ref{canonical topology} and Proposition \ref{canonical topology 2}. Let $k_0 \in \N \cap [2,\infty)$, and $\varphi \in \Omega(\Lambda_1)_{k_0}$. Since $\Lambda_1$ is an integral domain and $\Lambda_2$ is a commutative \'etale $\Lambda_1$-algebra finitely generated as a $\Lambda_1$-module, there is a $\Lambda_1$-algebra homomorphism $\tilde{\varphi} \colon \Lambda_2 \to \overline{\Q}_p$ extending $\varphi$ by going up theorem. By the definition of the canonical topology, $\tilde{\varphi}$ is continuous and lies in $\Omega(\Lambda_2)_{k_0}$. Therefore $\t{supp}(\Lambda_2) \cap (\N \cap [2,\infty))$ is an infinite set. Let $\Sigma \subset \Omega(\Lambda_2)_{\N \cap [2,\infty)}$ be an infinite subset. For each $\varphi \in \Sigma$, we denote by $\overline{\varphi} \in \Omega(\Lambda_1)_{\N \cap [2,\infty)}$ the composite of $\varphi$ and the structure morphism $\Lambda_1 \to \Lambda_2$. Since $\Lambda_2$ is finitely generated as a $\Lambda_1$-module, $\overline{\Sigma} \coloneqq \set{\overline{\varphi}}{\varphi \in \Sigma} \subset \Omega(\Lambda_1)_{\N \cap [2,\infty)}$ is an infinite set. Let $F \in \bigcap_{\varphi \in \Sigma} \ker(\varphi)$. Let $P(X) = P_n X^n + \sum_{h = 0}^{n-1} P_hX^h \in \Lambda_1[X]$ denote the minimal polynomial of $F$ over $\Lambda_1$. For any $\varphi \in \Sigma$, we have
\begin{eqnarray*}
  \overline{\varphi}(P_0) = \varphi \left( - P_n F^n - \sum_{h = 1}^{n-1} P_h F^h \right) = - \overline{\varphi}(P_n) \varphi(F)^n - \sum_{h = 1}^{n-1} \overline{\varphi}(P_h) \varphi(F)^h = 0.
\end{eqnarray*}
Since $\Lambda_1$ is a $\Lambda$-adic domain and $\overline{\Sigma}$ is an infinite subset of $\Omega(\Lambda_1)_{\N \cap [2,\infty)}$, we obtain $P_0 = 0$. Therefore $P(X) \in \Lambda_1[X]X$. Since $\Lambda_2$ is an integral domain, we get $n = 1$ and $P(X) = X$. Thus $F = 0$. We conclude that $\Lambda_2$ is a $\Lambda$-adic domain (resp.\ an affinoid $\Lambda$-adic domain).
\end{proof}

We denote by $\M(\A)$ the Berkovich spectrum of $\A$ for each affinoid $\Q_p$-algebra $\A$. For details of analytic spaces, see \cite{Ber90} and \cite{Ber93}. Let $\Lambda_1$ be an affinoid $\Lambda$-algebra, and $D \subset W$ be an open disc of radius $\v{p}^r$ for an $r \in \N$ such that the structure morphism $\Lambda_0 \to \Lambda_0$ factors through the Weierstrass localisation $\Lambda_0 \to \Lambda_0(D)$ and $\Lambda_1$ is finitely generated as a $\Lambda_0(D)$-module. Taking a finite subset $\set{F_h}{h \in \N \cap [1,d]} \subset \Lambda_1$ of generators as a $\Lambda_0(D)$-module, we obtain a surjective $\Lambda_0(D)$-algebra homomorphism
\begin{eqnarray*}
  \varpi \colon \Lambda_0(D)[X_1, \ldots, X_d] & \twoheadrightarrow & \Lambda_1 \\
  X_h & \mapsto & F_f
\end{eqnarray*}
Let $\mathbb{D} \subset \overline{\Z}_p$ denote the open unit disc containing $D$ sharing the radius with $D$, i.e.\ $\mathbb{D} \coloneqq \set{z \in \overline{\Z}_p}{{}^{\forall} w_0 \in D, {}^{\exists} w_1 \in D, \v{z-w_0} \leq \v{w_1-w_0}}$. Then $\varpi$ yields a $1$-dimensional analytic subset
\begin{eqnarray*}
  \M_{\eta}(\Lambda_1)(\overline{\Q}_p) \coloneqq \Set{(z_i)_{i = 0}^{d} \in \overline{\Z}_p^{d+1}}{z_0 \in \mathbb{D}, F(z_0, z_1, \ldots, z_d) = 0, {}^{\forall} F \in \ker(\varpi)}.
\end{eqnarray*}
More precisely, $\Lambda_1$ corresponds to a $\Q_p$-analytic space in the following way: For each $r \in \N \backslash \ens{0}$, we regard $\prod_{\zeta = 0}^{p(p-1)-1} \Q_p \ens{\v{p}^{- \frac{1}{r}}(z_{\zeta} - \zeta)}$ as a topological $\Lambda_0$-algebra by the continuous embedding
\begin{eqnarray*}
  \Lambda_0 = \prod_{\zeta = 0}^{p(p-1)-1} \Z_p[[z_{\zeta} - \zeta]] & \hookrightarrow & \prod_{\zeta = 0}^{p(p-1)-1} \Q_p \ens{\v{p}^{- \frac{1}{r}}(z_{\zeta} - \zeta)} \\
  (F_{\zeta}(z_{\zeta} - \zeta))_{\zeta = 0}^{p(p-1)-1} & \mapsto & (F_{\zeta}(z_{\zeta} - \zeta))_{\zeta = 0}^{p(p-1)-1}.
\end{eqnarray*}
Since $\Lambda_1$ is finitely generated as a $\Lambda_0(D)$-module, $(\prod_{\zeta = 0}^{p(p-1)-1} \Q_p \ens{\v{p}^{- \frac{1}{r}}(z_{\zeta} - \zeta)}) \otimes_{\Lambda_0} \Lambda_1$ is a $1$-dimensional affinoid $\Q_p$-algebra over $\prod_{\zeta = 0}^{p(p-1)-1} \Q_p \ens{\v{p}^{- \frac{1}{r}}(z_{\zeta} - \zeta)}$ with respect to a complete non-Archimedean norm unique up to equivalence for each $r \in \N \backslash \ens{0}$. We obtain a locally compact $\sigma$-compact Hausdorff $\Q_p$-analytic space as the colimit
\begin{eqnarray*}
  \M_{\eta}(\Lambda_1) \coloneqq \bigcup_{r \in \N \backslash \ens{0}} \M \left( \left( \prod_{\zeta = 0}^{p(p-1)-1} \Q_p \ens{\v{p}^{- \frac{1}{r}}(z_{\zeta} - \zeta)} \right) \otimes_{\Lambda_0} \Lambda_1 \right).
\end{eqnarray*}
We remark that there is another way to construct $\M_{\eta}(\Lambda_1)$ independent of the presentation $\varphi$, whose underlying set is naturally identified with the set of continuous multiplicative seminorms on $\Lambda_1$. We call $\M_{\eta}(\Lambda_1)$ {\it the formal affinoid space associated to $\Lambda_1$}. It is not an affinoid space unless $\Lambda_1 = 0$, and is a countable union of affinoid spaces. For each $\zeta \in \N \cap [0,p(p-1)-1]$, $\Lambda_0/e_{\zeta} \Lambda_0$ is an affinoid $\Lambda$-adic domain. We put $\W \coloneqq \bigsqcup_{\zeta \in \N \cap [0,p(p-1)-1]} \M_{\eta}(\Lambda_0/e_{\zeta} \Lambda_0)$. We have a natural \'etale morphism from $\M_{\eta}(\Lambda_1) \to \W$ by the construction. We remark that $\Lambda_1$ can not be reconstructed from $\M_{\eta}(\Lambda_1)$. Indeed, every $\Lambda_1$-subalgebra $\Lambda_1'$ of the integral closure of the image of $\Lambda_1$ in $\Lambda_1 \otimes_{\Z_p} \Q_p$ finitely generated as a $\Lambda_0$-module is an affinoid $\Lambda$-adic domain with a natural isomorphism $\M_{\eta}(\Lambda_1') \stackrel{\sim}{\to} \M_{\eta}(\Lambda_1)$. Now we verify that analytic curves with suitable conditions admits an \'etale covering by the formal affinoid spaces associated to affinoid $\Lambda$-adic domains.

\begin{thm}
\label{open disc}
Every closed good $\Q_p$-analytic space \'etale over $\W$ admits an open covering by formal affinoid spaces associated to affinoid $\Lambda$-adic domains.
\end{thm}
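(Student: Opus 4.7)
The plan is to show that every point of such an analytic space $X$ admits an open neighbourhood isomorphic to $\M_{\eta}(\Lambda_1)$ for some affinoid $\Lambda$-adic domain $\Lambda_1$. Fix a point $x \in X$ and let $w = \pi(x) \in \W$, where $\pi \colon X \to \W$ denotes the given \'etale morphism, and let $\zeta \in \N \cap [0,p(p-1)-1]$ index the component of $\W$ containing $w$. Since $\W$ is exhausted by affinoid subdomains of the form $\M(\Lambda_0(D) \otimes_{\Z_p} \Q_p)$ as $D$ runs over open discs in $W$ of radius $\v{p}^r$ for $r \in \N$ centred at integers in the component $\zeta + p(p-1) W$, and $X$ is good, I will choose such a $D$ together with an affinoid neighbourhood $V \cong \M(\A)$ of $x$ with $\pi(V) \subset \M(\Lambda_0(D) \otimes_{\Z_p} \Q_p)$. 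Invoking the closedness of $\pi$ together with the standard local description of \'etale morphisms between good Berkovich analytic spaces, I will further arrange that $V \to \M(\Lambda_0(D) \otimes_{\Z_p} \Q_p)$ is a finite \'etale morphism; then, replacing $V$ by the connected component containing $x$, I will also assume $\A$ is an integral domain.

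Next, I would set $\Lambda_1$ to be the integral closure of $\Lambda_0(D)$ inside $\A$. Because $\Lambda_0(D)$ is a Noetherian compact Hausdorff $\Lambda_0$-algebra and $\A$ is finite over $\Lambda_0(D) \otimes_{\Z_p} \Q_p$, $\Lambda_1$ is a finitely generated $\Lambda_0(D)$-module which is an integral domain and satisfies $\Lambda_1 \otimes_{\Z_p} \Q_p \cong \A$. Equipped with the canonical topology of Proposition \ref{canonical topology 2}, it becomes a compact Hausdorff topological $\Lambda_0$-algebra whose structure morphism factors through $\Lambda_0 \to \Lambda_0(D)$. To verify that $\Lambda_1$ is an affinoid $\Lambda$-adic domain, I check the two axioms of Definition \ref{Lambda-adic domain}. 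For axiom (i), the density of $\N \cap (\zeta + p(p-1) \Z)$ in the component $\zeta + p(p-1) W$ makes $D \cap (\N \cap [2,\infty))$ infinite, and going-up along the finite integral extension $\Lambda_0(D) \hookrightarrow \Lambda_1$ yields infinitely many elements of $\Omega(\Lambda_1)_{\N \cap [2,\infty)}$. For axiom (ii), given an infinite $\Sigma \subset \Omega(\Lambda_1)_{\N \cap [2,\infty)}$ and $F \in \bigcap_{\varphi \in \Sigma} \ker(\varphi)$, I will take the norm $N(F) \in \Lambda_0(D)$: each $\varphi \in \Sigma$ restricts to some $\overline{\varphi} \in \Omega(\Lambda_0(D))_{\N \cap [2,\infty)}$ sending $N(F)$ to the norm of $\varphi(F) = 0$, hence to $0$; finiteness of $\Lambda_1$ over $\Lambda_0(D)$ makes the set $\overline{\Sigma}$ of restrictions infinite, so the identity theorem for rigid analytic functions on the disc $D$ forces $N(F) = 0$, and the integrality of $\Lambda_1$ over $\Lambda_0(D)$ combined with the fact that $\Lambda_1$ is a domain gives $F = 0$.

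It then remains to extend the local isomorphism $V \cong \M(\A) \cong \M(\Lambda_0(D) \otimes_{\Lambda_0} \Lambda_1 \otimes_{\Z_p} \Q_p)$ to an open embedding $\M_{\eta}(\Lambda_1) \hookrightarrow X$. The formal affinoid space $\M_{\eta}(\Lambda_1)$ is constructed as an increasing union of affinoids along the base changes $\prod_\zeta \Q_p\{\v{p}^{-1/r}(z_\zeta - \zeta)\} \otimes_{\Lambda_0} \Lambda_1$; each of these affinoids is naturally a finite \'etale cover of an affinoid subdomain of $\W$ that extends the initial $V \to \M(\Lambda_0(D) \otimes \Q_p)$. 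I would glue these using the universal property of \'etale morphisms and the uniqueness of \'etale lifts to obtain a canonical morphism $\M_{\eta}(\Lambda_1) \to X$ over $\W$ extending $V \hookrightarrow X$.

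The main obstacle is showing that this last morphism $\M_\eta(\Lambda_1) \to X$ is an open immersion and not merely an \'etale morphism onto an open subspace. This is precisely where the hypothesis that $X$ is closed over $\W$ should enter: closedness rules out nontrivial monodromy in $X$ over the formal affinoid $\M_\eta(\Lambda_1)$, so the glued \'etale morphism to $X$ coincides locally with the identity on $V$ and therefore embeds $\M_\eta(\Lambda_1)$ as an open subspace of $X$.
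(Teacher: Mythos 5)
Your argument reverses the order of choices relative to the paper, and this is where the gap lies. The paper first produces a \emph{finite} \'etale morphism $\psi|_V\colon V\to U$ of \emph{affinoids} with $V\ni x$, $U\ni y:=\psi(x)$, and $V\cap\psi^{-1}(y)=\{x\}$; it then uses the closedness of $C$ to enlarge $U$ so that $y$ lies off the relative boundary of $U$, and only \emph{afterwards} picks an open disc $\mathbb{D}$ with $y\in\mathbb{D}\subset U$, small enough that $V\cap\psi^{-1}(\mathbb{D})$ is connected. The preimage $V\cap\psi^{-1}(\mathbb{D})$ is then \emph{automatically} an open subspace of $C$ finite \'etale over $\mathbb{D}$, and the affinoid localisation gives $\M_\eta$ of the ring of its integral functions directly; there is no extension and no gluing. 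You, by contrast, fix a small affinoid neighbourhood $V$ of $x$ and a \emph{large} disc $D\supset\pi(V)$, and then try to arrange $V\to\M_\eta(\Lambda_0(D))$ to be finite \'etale. But a morphism from the compact space $V$ to the non-compact $\M_\eta(\Lambda_0(D))$ cannot be finite (its image must be compact), so this arrangement is impossible as stated; what you actually have is a finite \'etale map onto a strictly smaller affinoid subdomain. That forces you into the extension-and-gluing step, and it is exactly that step that you yourself flag as the main obstacle.

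Your proposed resolution of the obstacle is also off target. Closedness of $C$ (i.e.\ $\partial C=\emptyset$) is a boundary condition, not a monodromy condition; it does not "rule out nontrivial monodromy" over the formal affinoid, and indeed does not by itself imply that an \'etale morphism $\M_\eta(\Lambda_1)\to C$ extending the open immersion $\mathrm{Int}(V)\hookrightarrow C$ is itself an open immersion. The actual role of closedness in the paper's argument is to let one enlarge the affinoid $U$ so that $y$ lies in its interior, which is what makes $V\cap\psi^{-1}(\mathbb{D})$ open in $C$ rather than merely open in $V$. Because the paper lands on an open subspace of $C$ in one shot, the question of whether the constructed morphism is an open immersion never arises.

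Aside from that, the part of your argument devoted to showing that the integral closure $\Lambda_1$ of $\Lambda_0(D)$ in the structure ring is an affinoid $\Lambda$-adic domain (going up for axiom (i), the norm and the identity theorem for axiom (ii), passing to a connected component for integrality) is essentially a direct re-derivation of Proposition \ref{affinoid}; it is correct and in fact slightly more explicit about the integral model than the paper's own wording, which is loose about passing from the $\Q_p$-algebra $\t{H}^0(V\cap\psi^{-1}(\mathbb{D}),\Opn_C)$ to a finite $\Lambda_0(D)$-subalgebra. You also rightly invoke connectedness to get an integral domain, although the paper additionally cites the Shilov idempotent theorem to get connectedness of $V\cap\psi^{-1}(\mathbb{D})$ from the exhaustion of the open disc by closed discs. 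Overall, the verification that $\Lambda_1$ satisfies the axioms is sound, but the geometric construction around it needs to be reorganised along the paper's "large $U$, small $\mathbb{D}$" pattern to eliminate the extension step.
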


See \cite{Ber90} 3.1.2, \cite{Ber93} 1.2.15, and \cite{Ber93} Definition 3.3.4, for the notion of a closed analytic space, a good analytic space, and an etale morphism between analytic spaces respectively.

\begin{proof}
Let $C$ be a closed good $\Q_p$-analytic space with an \'etale morphism $\psi \colon C \to \W$, and $x \in C$. Since $\W$ is a smooth $\Q_p$-analytic space, so is $C$. Put $y \coloneqq \psi(x)$. Since $C$ is good and $\psi$ is \'etale, there are affinoid neighbourhoods $V \subset C$ and $U \subset \W$ of $x$ and $y$ respectively such that $V \subset \psi^{-1}(U)$ and $\psi |_V \colon V \to U$ is a finite \'etale morphism with $V \cap \psi^{-1}(x) = \ens{y}$. In particular, $\t{H}^0(V,\Opn_C)$ is a commutative $\t{H}^0(U,\Opn_{\W})$-algebra finitely generated as a $\t{H}^0(U,\Opn_{\W})$-module. Since $C$ is closed, replacing $U$ by a larger one, we may assume that $y$ does not lie in the relative boundary of $U$ in $\W$. Therefore there is an open disc $\mathbb{D} \subset \W$ of radius $\v{p}^r$ for an $r \in \N$ such that $x \in \mathbb{D} \subset U$ and $V \cap \psi^{-1}(\mathbb{D})$ is connected. Since $\mathbb{D}$ is the increasing union of closed discs, $V \cap \psi^{-1}(\mathbb{D})$ is the increasing union of connected affinoid subdomains. It implies that $\t{H}^0(V \cap \psi^{-1}(\mathbb{D}),\Opn_C)$ has no non-trivial idempotent by Shilov idempotent theorem (\cite{Ber90} Theorem 7.4.1). Since $C$ is smooth, $\t{H}^0(V \cap \psi^{-1}(\mathbb{D}),\Opn_C)$ is an integral domain. It is easy to see that $\Lambda_0(D)$ itself is an affinoid $\Lambda$-adic domain. The commutative \'etale $\Lambda_0(D)$-algebra $\t{H}^0(V \cap \psi^{-1}(\mathbb{D}),\Opn_C) \cong \Lambda_0(D) \otimes_{\t{H}^0(U,\Opn_{\W})} \t{H}^0(V,\Opn_C)$ is finitely generated as a $\Lambda_0(D)$-module. We endow $\t{H}^0(V \cap \psi^{-1}(\mathbb{D}),\Opn_C)$ with the canonical topology as a $\Lambda_0(D)$-module. By Proposition \ref{affinoid}, $\t{H}^0(V \cap \psi^{-1}(\mathbb{D}),\Opn_C)$ is an affinoid $\Lambda$-adic domain. Thus the assertion follows from the fact that the affinoid localisation $\t{H}^0(V,\Opn_C) \to \t{H}^0(V \cap \psi^{-1}(\mathbb{D}),\Opn_C)$ gives an identification $\M_{\eta}(\t{H}^0(V \cap \psi^{-1}(\mathbb{D}),\Opn_C)) \cong V \cap \psi^{-1}(\mathbb{D})$.
\end{proof}

\begin{rmk}
\label{eigencurve}
Theorem \ref{open disc} ensures that there are many explicit examples of $\Lambda$-adic domains. One of the most important example of a closed good $\Q_p$-analytic space over $\W$ is the reduced eigencurve introduced in \cite{Eme} Theorem 2.23 obtained as the closed subspace of $\t{Spf}(\T_N^{(p)}) \times \Aff^1_{\Q_p}$ interpolating classical Hecke eigenforms, where $\T_N^{(p)}$ is the universal Hecke algebra of level $N$ generated by Hecke operators $T_{\ell}$ for each prime number $\ell \neq p$ and $S_{\ell}$ for each prime number $\ell$ coprime to $N$. Unlike the original reduced eigencurve introduced in \cite{CM98} 6.1 Definition 1 and \cite{CM98} 7.5, the reduced eigencurve forms a family of modular forms of the fixed level $N$. Every closed good reduced $\Q_p$-analytic space of dimension $1$ admits a smooth alteration given by the normalisation. Therefore the reduced eigencurve admits a smooth alteration with an open covering of the complement of a discrete subspace by formal affinoid spaces associated to affinoid $\Lambda$-adic domains.
\end{rmk}

Henceforth, let $\Lambda_1$ denote a $\Lambda$-adic domain. Imitating the definition of a Berkovich spectrum (\cite{Ber90} 1.2), we endow $\Omega(\Lambda_1)$ with the weakest topology for which the map
\begin{eqnarray*}
  \v{f} \colon \Omega(\Lambda_1) & \to & [0,\infty) \\
  \varphi & \mapsto & \v{\varphi(f)}
\end{eqnarray*}
is continuous for any $f \in \Lambda_1$. Let $\Sigma \subset \Omega(\Lambda_1)$ be an infinite subset endowed with the relative topology. The evaluation map
\begin{eqnarray*}
  \Lambda_1 & \to & \overline{\Z}{}_p^{\Sigma} \\
  f & \mapsto & (\varphi(f))_{\varphi \in \Sigma}
\end{eqnarray*}
is injective because $\Lambda_1$ is a $\Lambda$-adic domain. The image of $\Lambda_1$ is contained in $\t{C}(\Sigma,\overline{\Z}_p)$ by the definition of the topology of $\Omega(\Lambda_1)$. The induced map $\Lambda_1 \hookrightarrow \t{C}(\Sigma,\overline{\Z}_p)$ is not necessarily continuous with respect to the $p$-adic topology of $\t{C}(\Sigma,\overline{\Z}_p)$, while the inverse of its restriction onto the image is continuous by Corollary \ref{p-adically open}. We remark that the relative topology on $\t{C}(\Sigma,\overline{\Z}_p) \subset \overline{\Z}{}_p^{\Sigma}$ is the topology of pointwise convergence, and hence $\Lambda_1$ can be naturally identified with a $\Z_p$-subalgebra of $\t{C}(\Sigma,\overline{\Z}_p)$ compact with respect to the topology of pointwise convergence. Now we introduce a notion of a $\Lambda_1$-adic form. For the convention of slope, see \S \ref{p-adic Modular Forms and Hecke Algebras}.

\begin{dfn}
A {\it $\Lambda_1$-adic form of level $N$} is an $f(q) \in \Lambda_1[[q]]$ such that $f(\varphi)(q) \coloneqq \sum_{h = 0}^{\infty} \varphi(a_h(f)) q^h \in \Z_p[\varphi][[q]]$ lies in $\t{M}_{\t{wt}(\varphi)}(\Gamma_1(N),\Z_p[\varphi])$ for all but finitely many $\varphi \in \Omega(\Lambda_1)_{\N \cap [2,\infty)}$. We denote by $\Mod(\Gamma_1(N),\Lambda_1) \subset \Lambda_1[[q]]$ the $\Lambda_1$-submodule of $\Lambda_1$-adic forms of level $N$.
\end{dfn}

When $\Lambda_1$ is an affinoid $\Lambda$-adic domain appearing in an open subspace of a smooth alteration of the cuspidal locus of the reduced eigencurve as in Remark \ref{eigencurve}, then we obtain a $\Lambda_1$-adic form of level $N$ by pulling back the family of systems of cuspidal Hecke eigenvalues on the reduced eigencurve. Therefore a reader does not have to mind the existence of a non-trivial $\Lambda_1$-adic form. On the other hand, we have no evidence of the finiteness of $\Mod(\Gamma_1(N),\Lambda_1)$ as a $\Lambda_1$-module. Restricting it to the case where a family is allowed to be of finite slope in the following sense, we verify the finiteness.

\begin{dfn}
Let $s \in \N \backslash \ens{0}$. A $\Lambda_1$-adic form $f(q)$ of level $N$ is said to be {\it locally of slope $< s$} if $f(\varphi)(q) \in \Z_p[\varphi][[q]]$ lies in $\t{M}_{\t{wt}(\varphi)}(\Gamma_1(N),\Z_p[\varphi])^{< s}$ for all but finitely many $\varphi \in \Omega(\Lambda_1)_{\N \cap [2,\infty)}$. We denote by $\Mod(\Gamma_1(N),\Lambda_1)^{[< s]} \subset \Mod(\Gamma_1(N),\Lambda_1)$ the $\Lambda_1$-submodule of $\Lambda_1$-adic forms of level $N$ locally of slope $< s$.
\end{dfn}

Henceforth, we fix an $s \in \N \backslash \ens{0}$. Let $R \subset \overline{\Q}_p$ be a subring, and $\epsilon \colon (\Z/N \Z)^{\times} \to \overline{\Q}{}_p^{\times}$ a Dirichlet character. We put
\begin{eqnarray*}
  \t{M}_{k_0}(\Gamma_1(N),\epsilon,R)^{< s} & \coloneqq & \t{M}_{k_0}(\Gamma_1(N),R)^{< s} \cap \t{M}_{k_0}(\Gamma_1(N),\epsilon,R) \\
  & = & \t{M}_{k_0}(\Gamma_1(N),\overline{\Q}_p)^{< s} \cap \t{M}_{k_0}(\Gamma_1(N),\epsilon,R).
\end{eqnarray*}
If $R$ contains the image of $\epsilon$, then $\t{M}_{k_0}(\Gamma_1(N),\epsilon,R)^{< s}$ is an intersection of $R$-submodules $\t{M}_{k_0}(\Gamma_1(N),\overline{\Q}_p)^{< s}, \t{M}_{k_0}(\Gamma_1(N),\epsilon,R) \subset \t{M}_{k_0}(\Gamma_1(N),\overline{\Q}_p)$ stable under the action of Hecke operators, and hence is an $R$-submodule of $\t{M}_{k_0}(\Gamma_1(N),\overline{\Q}_p)$ stable under the action of Hecke operators. In fact, the assumption that $R$ contains the image of $\epsilon$ can be removed by \cite{DI95} Theorem 12.3.4/2, \cite{DI95} Proposition 12.3.11, and \cite{DI95} Proposition 12.4.1, but we do not use the result.

\vspace{0.2in}
Let $\chi \colon (\Z/N \Z)^{\times} \to \Lambda_1^{\times}$ be a group homomorphism. A $\Lambda_1$-adic form $f(q)$ of level $N$ is said to be {\it with character $\chi$} if $f(\varphi)(q) \in \Z_p[\varphi][[q]]$ lies in $\t{M}_{\t{wt}(\varphi)}(\Gamma_1(N), \varphi \circ \chi, \Z_p[\varphi])$ for all but finitely many $\varphi \in \Omega(\Lambda_1)_{\N \cap [2,\infty)}$. We denote by $\Mod(\Gamma_1(N),\chi,\Lambda_1) \subset \Mod(\Gamma_1(N),\Lambda_1)$ the $\Lambda_1$-submodule of $\Lambda_1$-adic form of level $N$ with character $\chi$. We put
\begin{eqnarray*}
  \Mod(\Gamma_1(N),\chi,\Lambda_1)^{[< s]} \coloneqq \Mod(\Gamma_1(N),\chi,\Lambda_1) \cap \Mod(\Gamma_1(N),\Lambda_1)^{[< s]}.
\end{eqnarray*}
There is a unique $\Lambda_1$-linear action of $T_{\ell}$ for each prime number $\ell$ and $S_n$ for each $n \in \N$ coprime to $N$ on $\Mod(\Gamma_1(N),\chi,\Lambda_1)^{[< s]}$ compatible with the specialisation maps. The action is given explicitly in the following way:
\begin{eqnarray*}
  T_{\ell} \colon \Mod(\Gamma_1(N),\chi,\Lambda_1)^{[< s]} & \to & \Mod(\Gamma_1(N),\chi,\Lambda_1)^{[< s]} \\
  f(q) & \mapsto &
  \left\{
    \begin{array}{ll}
      \sum_{h = 0}^{\infty} a_{\ell h}(f) q^h + \sum_{h = 0}^{\infty} a_h(f) \chi(\ell + N \Z) \ell^{k-2} q^{\ell h} & (\ell \mid \hspace{-.62em}/ N) \\
      \sum_{h = 0}^{\infty} a_{\ell h}(f) q^h & (\ell \mid N)
    \end{array}
  \right. \\
  S_n \colon \Mod(\Gamma_1(N),\chi,\Lambda_1)^{[< s]} & \to & \Mod(\Gamma_1(N),\chi,\Lambda_1)^{[< s]} \\
  f(q) & \mapsto & \chi(n + N \Z) n^{k-2} f(q).
\end{eqnarray*}
Henceforth, we fix a group homomorphism $\chi \colon (\Z/N \Z)^{\times} \to \Lambda_1^{\times}$. We show a certain finiteness of $\Mod(\Gamma_1(N),\Lambda_1)^{[< s]}$. For a ring $R$ and a left $R$-module $M$ of finite length, we denote by $\ell_R(M) \in \N$ the length of $M$.

\begin{lmm}
\label{finiteness 2}
The set $\set{\dim_{\overline{\Q}_p} \t{M}_k(\Gamma_1(N),\overline{\Q}_p)^{[< s]}}{k \in \N \cap [2,\infty)}$ is uniformly bounded by the constant
\begin{eqnarray*}
  \max_{
  \t{\scriptsize $
    \begin{array}{c}
      k \in \N \\
      2 \leq k \leq s+p^s
    \end{array}
  $}
  }
  \ell_{\Z_p} \left( \t{H}^1 \left( \Gamma_1(N), \Leb_{k-2}/p^{s+1} \right) \right).
\end{eqnarray*}
\end{lmm}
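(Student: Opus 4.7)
The plan is to reduce the dimension bound to a bound on the length of a group cohomology with coefficients in a finite $\Z_p[\Gamma_1(N)]$-module, and then exploit the $k$-periodicity of $\Leb_{k-2}/p^{s+1}$ supplied by Lemma \ref{congruence}.

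First I would realize $\t{M}_k(\Gamma_1(N),\overline{\Q}_p)^{< s}$ cohomologically. Combining the Eichler--Shimura embedding (cf.\ Remark \ref{symmetric product 4}) with the $\Pi_0(p)$-equivariant isomorphism $\Q_p \otimes_{\Z_p} \Leb_{k-2} \cong \t{Sym}^{k-2}(\Q_p^2,\rho_{\Q_p^2})$ noted in Remark \ref{symmetric product 4} and the identification of group cohomology with \'etale cohomology of Proposition \ref{comparison 2}, together with the Hecke-equivariance established in Proposition \ref{Galois - Hecke}, the slope $< s$ condition is preserved, and I obtain an inclusion which yields
\begin{equation*}
  \dim_{\overline{\Q}_p} \t{M}_k(\Gamma_1(N),\overline{\Q}_p)^{< s} \leq \t{rank}_{\Z_p} \t{H}^1\bigl(\Gamma_1(N), \Leb_{k-2}\bigr)^{< s}.
\end{equation*}

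Second I would bound the right-hand rank by a length modulo $p^{s+1}$. For any finitely generated $\Z_p$-module $M$ the structure theorem gives $\t{rank}_{\Z_p}(M) \leq \ell_{\Z_p}(M/p^{s+1}M)$. The slope $< s$ piece is obtained from $\t{H}^1(\Gamma_1(N), \Leb_{k-2})$ by localizing at $\t{T}_{k,N}^{< s}$ and killing $p$-torsion, so $\t{H}^1(\Gamma_1(N), \Leb_{k-2})^{< s}/p^{s+1}$ is a subquotient of $\t{H}^1(\Gamma_1(N), \Leb_{k-2})/p^{s+1}$; applying the long exact sequence attached to $0 \to \Leb_{k-2} \xrightarrow{p^{s+1}} \Leb_{k-2} \to \Leb_{k-2}/p^{s+1} \to 0$, together with the vanishing $\t{H}^2(\Gamma_1(N),-) = 0$ given by Lemma \ref{cohomological dimension} (since $\Gamma_1(N)$ is a finitely generated free group), produces an injection
\begin{equation*}
  \t{H}^1\bigl(\Gamma_1(N), \Leb_{k-2}\bigr)/p^{s+1} \hookrightarrow \t{H}^1\bigl(\Gamma_1(N), \Leb_{k-2}/p^{s+1}\bigr),
\end{equation*}
and hence the desired length bound.

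Third I would invoke Lemma \ref{congruence}: the $\Z_p[\Pi_0(p)]$-module $\Leb_{k-2}/p^{s+1}$ depends on $k$ only through a congruence class, so the finite $\Z_p$-module $\t{H}^1(\Gamma_1(N), \Leb_{k-2}/p^{s+1})$ takes only finitely many isomorphism classes as $k$ varies in $\N \cap [2,\infty)$, and every value of its length is already attained in the finite range $k \in \N \cap [2, s+p^s]$ quoted in the statement (the specific bound $s + p^s$ reflects the finer periodicity that arises when one restricts the action from $\Pi_0(p)$ to $\Gamma_1(N)$, exploiting that the diagonal entries of elements of $\Gamma_1(N)$ are close to $1$).

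The main obstacle is the second paragraph: controlling the slope $< s$ summand modulo $p^{s+1}$ inside integral cohomology. Because $\t{T}_{k,N}^{< s}$ is obtained from $\t{T}_{k,N}^{[< s]}$ by adjoining the inverse $p^s T_p^{-1}$, the localization does not obviously respect reduction mod $p^{s+1}$. The way around this is Proposition \ref{topologically nilpotent}: since $(p^s T_p^{-1})^n$ lies in $p\,\t{T}_{k,N}^{< s}$ for $n$ large, modulo $p^{s+1}$ the localization is computed by finitely many iterates of $T_p$ acting on the finite module $\t{H}^1(\Gamma_1(N), \Leb_{k-2}/p^{s+1})$, which ensures that the slope $< s$ piece injects (after mod $p^{s+1}$) into the latter, completing the argument.
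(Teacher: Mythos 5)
Your proposal fails at a load-bearing step, and the overall strategy does not actually reach a uniform bound. Let me pinpoint the problem.

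In your third paragraph you assert that $\Leb_{k-2}/p^{s+1}$ "depends on $k$ only through a congruence class," so that $\t{H}^1(\Gamma_1(N),\Leb_{k-2}/p^{s+1})$ falls into finitely many isomorphism classes. This is false. The module $\Leb_{k-2}$ is free of rank $k-1$ over $\Z_p$ (with $\Z_p$-basis $(e_{k-2,i})_{i=0}^{k-2}$), so $\Leb_{k-2}/p^{s+1}$ is a free $(\Z/p^{s+1}\Z)$-module of rank $k-1$; its $\Z_p$-length $(k-1)(s+1)$ is strictly increasing in $k$, and so is $\ell_{\Z_p}(\t{H}^1(\Gamma_1(N),\Leb_{k-2}/p^{s+1}))$. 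What Lemma \ref{congruence} supplies is a $\Pi_0(p)$-equivariant \emph{surjection} $\varpi^r_{n_1,n_0}\colon \Leb_{n_1}/p^r \twoheadrightarrow \Leb_{n_0}/p^r$ for $n_0 \leq n_1$ in the right congruence class, not an isomorphism. As a consequence, the chain of inequalities you set up in paragraphs one and two,
\begin{eqnarray*}
  \dim_{\overline{\Q}_p} \t{M}_{k}(\Gamma_1(N),\overline{\Q}_p)^{< s} \leq \ell_{\Z_p}\left(\t{H}^1(\Gamma_1(N),\Leb_{k-2}/p^{s+1})\right),
\end{eqnarray*}
even if correct, bounds the left side by a quantity that grows without bound in $k$, and no amount of care about the localization $\t{T}_{k,N}^{[< s]} \to \t{T}_{k,N}^{< s}$ can rescue this.

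The paper's actual argument avoids the naive bound in two steps that are both absent from your outline. First, instead of bounding $d_0 := \dim_{\overline{\Q}_p}\t{M}_{k_1}(\Gamma_1(N),\overline{\Q}_p)^{< s}$ by the length of the whole cohomology modulo $p^{s+1}$, it passes through a Jordan filtration for $T_p$ on $M_K = K \otimes_{\Z_p}\t{H}^1(\Gamma_1(N),\Leb_{k_1-2})$ over a splitting field $K$, notes that each eigenvalue $\alpha$ with $|\alpha| > |p|^s$ contributes more than $e_K$ to the $O_K$-length of a graded piece of $T_p(W) \bmod p^{s+1}$, and deduces (after a devissage through the filtration) the strictly tighter inequality $d_0 < \ell_{\Z_p}\left(T_p(M_{\Z_p})/(T_p(M_{\Z_p}) \cap p^{s+1} M_{\Z_p})\right)$. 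This involves only the \emph{image of $T_p$} modulo $p^{s+1}$, not the full cohomology. Second, it compares the weight $k_1$ with the unique $k_0 \in [s+1, s+p^s]$ congruent to $k_1$ modulo $p^s$, via the surjection $\varpi^{s+1}_{k_1-2,k_0-2}$, and uses the computation buried in the proof of Lemma \ref{truncate} to show that $T_p$ annihilates $\ker(\varpi^{s+1}_{k_1-2,k_0-2})$ inside $\t{H}^1(\Gamma_1(N),\Leb_{k_1-2}/p^{s+1})$. (Morally: on the $\tau_{k_0,+}$-truncated part of the cocycle module, the matrix of $T_p$ picks up a factor $p^s$ from the entry $(c/j!)$ with $c \in N\Z_p$ and $p^s \mid N$, and an extra $p$ from the entry $p^h$ of $A_\ell^\iota$ with $h \geq 1$ when $k_0 \geq s+1$.) This is exactly why the image of $T_p$ modulo $p^{s+1}$ has length bounded by $\ell_{\Z_p}(\t{H}^1(\Gamma_1(N),\Leb_{k_0-2}/p^{s+1}))$, which lies in the finite range of the statement. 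Your proposal correctly intuits that the slope truncation and the range $k \leq s+p^s$ are related to periodicity, but it misses both the filtration-through-$T_p$ step and the key annihilation phenomenon that makes the bound uniform.
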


\begin{proof}
We denote by $C \in \N$ the constant in the assertion. Let $k_1 \in [2,\infty)$. For each commutative topological $\Z_p$-algebra $R$, put
\begin{eqnarray*}
  M_R \coloneqq R \otimes_{\Z_p} \t{H}^1 \left( \Gamma_1(N), \Leb_{k_1-2} \right).
\end{eqnarray*}
If $k_1 \in \N \cap [2,s+p^s]$, then we have
\begin{eqnarray*}
  & & \dim_{\overline{\Q}_p} \t{M}_{k_1}(\Gamma_1(N),\overline{\Q}_p)^{[< s]} \leq \dim_{\overline{\Q}_p} \t{M}_{k_1}(\Gamma_1(N),\overline{\Q}_p) \\
  & \leq & \dim_{\overline{\Q}_p} M_{\overline{\Q}_p} = \dim_{\Q_p} M_{\Q_p} =  \t{rank}_{\Z_p} (M_{\Z_p})_{\t{free}} \leq \ell_{\Z_p}(M_{\Z/p^{s+1} \Z}) \leq C
\end{eqnarray*}
by the Eichler--Shimura isomorphism (\cite{Shi59} 5 Th\'eor\`eme 1, \cite{Hid93} 6.3 Theorem 4). Therefore we may assume $k_1 \geq s+p^s+1$. Let $K/\Q_p$ denote the finite Galois subextension of $\overline{\Q}_p/\Q_p$ generated by eigenvalues of $T_p$ acting on $\t{M}_{k_1}(\Gamma_1(N),\overline{\Q}_p)$. Put
\begin{eqnarray*}
  d_0 & \coloneqq & \dim_{\overline{\Q}_p} \t{M}_{k_1}(\Gamma_1(N),\overline{\Q}_p)^{< s} \\
  d_1 & \coloneqq & \dim_{\overline{\Q}_p} M_{\overline{\Q}_p} = \dim_K M_K.
\end{eqnarray*}
Every eigenvalue of $T_p$ acting on $M_K$ is contained in $K$, and the sum of the dimensions of the generalised eigenspaces of $T_p$ acting on $M_K$ with eigenvalues $\alpha$ satisfying $\v{\alpha} > \v{p}^s$ is greater than or equal to $d_0$ by the Eichler--Shimura isomorphism again. In particular, we have $d_0 \leq d_1$. Take a basis $(c_i)_{i = 1}^{d_1}$ of $M_{\overline{\Q}_p}$ such that the matrix representation of $T_p$ with respect to $(c_i)_{i = 1}^{d_1}$ is a Jordan normal form with diagonal $(\alpha_i)_{i = 1}^{d_1}$. Put $V \coloneqq M_K$ and $\Fil^{i_0} V \coloneqq \bigoplus_{i = 1}^{i_0} K c_i \subset V$ for each $i_0 \in \N \cap [0,d_1]$. The increasing filtration $(\Fil^i V)_{i = 0}^{d_1}$ is stable under the action of $T_p$ by the choice of $(c_i)_{i = 1}^{d_1}$. We denote by $O_K$ the valuation ring of $K$, and by $W \subset V$ the image of $M_{O_K}$. By the functoriality of the action of $T_p$, $W$ is a $T_p$-stable lattice of $V$. Put $\Fil^i W \coloneqq W \cap \Fil^i V$ for each $i \in \N \cap [0,d_1]$. We verify $e_K d_0 < \ell_{O_K}(T_p(W)/(T_p(W) \cap p^{s+1} W))$, where $e_K \in \N \backslash \ens{0}$ is the ramification index of $K/\Q_p$.

\vspace{0.2in}
For any $i \in \N \cap [1,d_1]$, we have
\begin{eqnarray*}
  & & \ell_{O_K} \left( T_p(\t{gr}_{\Fil}^i(W))/(T_p(\t{gr}_{\Fil}^i(W)) \cap p^{s+1} \t{gr}_{\Fil}^i(W)) \right) \\
  & = & \ell_{O_K} \left( \alpha_i \t{gr}_{\Fil}^i(W)/(\alpha_i \t{gr}_{\Fil}^i(W) \cap p^{s+1} \t{gr}_{\Fil}^i(W)) \right) \\
  & = &
  \left\{
    \begin{array}{ll}
      - \log_{\vv{\pi_K}} \vv{\frac{\alpha_i}{p^{s+1}}} & (\vv{\alpha_i} > \vv{p}^{s+1}) \\
      0 & (\vv{\alpha_i} \leq \vv{p}^{s+1})
    \end{array}
  \right.,
\end{eqnarray*}
where $\pi_K$ is a uniformiser of $K$, and in particular, the inequality
\begin{eqnarray*}
  \ell_{O_K}(T_p(\t{gr}_{\Fil}^i(W))/(T_p(\t{gr}_{\Fil}^i(W)) \cap p^{s+1} \t{gr}_{\Fil}^i(W))) > \log_{\v{\pi_K}} \v{p} = e_K
\end{eqnarray*}
holds for any $i \in \N \cap [1,d_1]$ with $\v{\alpha_i} > \v{p}^s$. Since the sum of the dimensions of the generalised eigenspaces of $T_p$ acting on $\t{H}^1(\Gamma_1(N), \t{Sym}^{k_1-2}(K^2, \rho_{K^2}))$ with eigenvalues $\alpha_i$ satisfying $\v{\alpha_i} > \v{p}^s$ is greater than or equal to $d_0$, we obtain
\begin{eqnarray*}
  \sum_{i = 1}^{d_1} \ell_{O_K} \left( T_p(\t{gr}_{\Fil}^i(W))/(T_p(\t{gr}_{\Fil}^i(W)) \cap p^{s+1} \t{gr}_{\Fil}^i(W)) \right) > e_K d_0.
\end{eqnarray*}
Therefore it suffices to show
\begin{eqnarray*}
  & & \sum_{i = 1}^{d_1} \ell_{O_K} \left( T_p(\t{gr}_{\Fil}^i(W))/(T_p(\t{gr}_{\Fil}^i(W)) \cap p^{s+1} \t{gr}_{\Fil}^i(W)) \right) \\
  & \leq & \ell_{O_K}(T_p(W)/(T_p(W) \cap p^{s+1} W)).
\end{eqnarray*}
Since $W$ is an $O_K$-submodule of a $K$-vector space $V$, $W$ is a torsionfree $O_K$-module. Since $(\Fil^i W)_{i = 0}^{d_1}$ is induced by the increasing filtration $(\Fil^i V)_{i = 0}^{d_1}$ of $K$-vectors spaces, we have $\Fil^i W \cap p^{s+1} W = p^{s+1} \Fil^i W$ for any $i \in \N \cap [0,d_1]$. Since $(\Fil^i W)_{i = 0}^{d_1}$ is induced by the increasing filtration $(\Fil^i V)_{i = 0}^{d_1}$ of $K$-vectors spaces again, $\t{gr}_{\Fil}^i W$ is naturally regarded as an $O_K$-submodule of a $K$-vector space $\t{gr}_{\Fil}^i V$, and hence is a torsionfree $O_K$-module for any $i \in \N \cap [1,d_1]$. Therefore the exact sequence
\begin{eqnarray*}
  0 \to \Fil^i W \to \Fil^{i+1} W \to \t{gr}_{\Fil}^{i+1} W \to 0
\end{eqnarray*}
induces an exact sequence
\begin{eqnarray*}
  0 \to (\Fil^i W)/p^{s+1} \to (\Fil^{i+1} W)/p^{s+1} \to (\t{gr}_{\Fil}^{i+1} W)/p^{s+1} \to 0,
\end{eqnarray*}
and the commutative diagram
\begin{eqnarray*}
\begin{CD}
  0 @>>> \Fil^i W           @>>> \Fil^{i+1} W           @>>> \t{gr}_{\Fil}^{i+1} W           @>>> 0 \\
  @.     @V{T_p}VV               @V{T_p}VV                   @V{T_p}VV                            @. \\
  0 @>>> (\Fil^i W)/p^{s+1} @>>> (\Fil^{i+1} W)/p^{s+1} @>>> (\t{gr}_{\Fil}^{i+1} W)/p^{s+1} @>>> 0,
\end{CD}
\end{eqnarray*}
induces a complex
\begin{eqnarray*}
  & & T_p(\Fil^i W)/(T_p(\Fil^i W) \cap p^{s+1} W) \\
  & \hookrightarrow & T_p(\Fil^{i+1} W)/(T_p(\Fil^{i+1} W) \cap p^{s+1} W) \\
  & \twoheadrightarrow & T_p(\t{gr}_{\Fil}^{i+1} W)/(T_p(\t{gr}_{\Fil}^{i+1} W) \cap p^{s+1} \t{gr}_{\Fil}^{i+1} W).
\end{eqnarray*}
Therefore the inequality
\begin{eqnarray*}
  & & \ell_{O_K} \left( T_p(\Fil^i W)/(T_p(\Fil^i W) \cap p^{s+1} W) \right) \\
  & & + \ell_{O_K} \left( T_p(\t{gr}_{\Fil}^{i+1} W)/(T_p(\t{gr}_{\Fil}^{i+1} W) \cap p^{s+1} \t{gr}_{\Fil}^{i+1} W) \right) \\
  & \leq & \ell_{O_K} \left( T_p(\Fil^{i+1} W)/(T_p(\Fil^{i+1} W) \cap p^{s+1} W) \right)
\end{eqnarray*}
holds for any $i \in \N \cap [0,d_1-1]$. As a consequence, we obtain the inequality
\begin{eqnarray*}
  & & \ell_{O_K} \left( T_p(W)/(T_p(W) \cap p^{s+1} W) \right) = \ell_{O_K} \left( T_p(\Fil^{d_1} W)/(T_p(\Fil^{d_1} W) \cap p^{s+1} W) \right) \\
  & \geq & \ell_{O_K} \left( T_p(\Fil^{d_1-1} W)/(T_p(\Fil^{d_1-1} W) \cap p^{s+1} W) \right) \\
  & & + \ell_{O_K} \left( T_p(\t{gr}_{\Fil}^{d_1} W)/(T_p(\t{gr}_{\Fil}^{d_1} W) \cap p^{s+1} \t{gr}_{\Fil}^{d_1} W) \right) \\
  & \geq & \ell_{O_K} \left( T_p(\Fil^{d_1-2} W)/(T_p(\Fil^{d_1-2} W) \cap p^{s+1} W) \right) \\
  & & + \sum_{i = d_1-1}^{d_1} \ell_{O_K} \left( T_p(\t{gr}_{\Fil}^i W)/(T_p(\t{gr}_{\Fil}^i W) \cap p^{s+1} \t{gr}_{\Fil}^i W) \right) \\
  & \geq & \ell_{O_K} \left( T_p(\Fil^1 W)/(T_p(\Fil^1 W) \cap p^{s+1} W) \right) \\
  & & + \sum_{i = 2}^{d_1} \ell_{O_K} \left( T_p(\t{gr}_{\Fil}^i W)/(T_p(\t{gr}_{\Fil}^i W) \cap p^{s+1} \t{gr}_{\Fil}^i W) \right) \\
  & \geq & \sum_{i = 1}^{d_1} \ell_{O_K} \left( T_p(\t{gr}_{\Fil}^i W)/(T_p(\t{gr}_{\Fil}^i W) \cap p^{s+1} \t{gr}_{\Fil}^i W) \right),
\end{eqnarray*}
which was what we wanted.

\vspace{0.2in}
Put $W_0 \coloneqq (M_{\Z_p})_{\t{free}}$. By the flatness of $O_K$ as a $\Z_p$-module, the natural $O_K$-linear homomorphism
\begin{eqnarray*}
  O_K \otimes_{\Z_p} \left( T_p(W_0)/(T_p(W_0) \cap p^{s+1} W_0) \right) \to T_p(W)/(T_p(W) \cap p^{s+1} W) 
\end{eqnarray*}
is an isomorphism, and hence we have
\begin{eqnarray*}
  \ell_{\Z_p} \left( T_p(W_0)/(T_p(W_0) \cap p^{s+1} W_0) \right) = \frac{1}{e_K} \ell_{O_K} \left( T_p(W)/(T_p(W) \cap p^{s+1} W) \right) > d_0.
\end{eqnarray*}
The canonical projection $M_{\Z_p} \twoheadrightarrow W_0$ induces a surjective $\Z_p$-linear homomorphism $T_p(M_{\Z_p})/(T_p(M_{\Z_p}) \cap p^{s+1} M_{\Z_p}) \twoheadrightarrow T_p(W_0)/(T_p(W_0) \cap p^{s+1} W_0)$, and hence we obtain
\begin{eqnarray*}
  \ell_{\Z_p} \left( T_p(M_{\Z_p})/(T_p(M_{\Z_p}) \cap p^{s+1} M_{\Z_p}) \right) \geq \ell_{\Z_p} \left( T_p(W_0)/(T_p(W_0) \cap p^{s+1} W_0) \right) > d_0.
\end{eqnarray*}
Therefore in order to verify the assertion, it suffices to show
\begin{eqnarray*}
  \ell_{\Z_p} \left( T_p(M_{\Z_p})/(T_p(M_{\Z_p}) \cap p^{s+1} M_{\Z_p}) \right) \leq \ell_{\Z_p} \left( \t{H}^1 \left( \Gamma_1(N), \Leb_{k_0-2}/p^{s+1} \right) \right),
\end{eqnarray*}
where $k_0$ is the unique integer with $k_0 \in \N \cap [s+1,s+p^s]$ and $k_1-k_0 \in p^s \Z$. Since $M_{\Z_p}$ is finitely generated as a $\Z_p$-module, the natural $(\Z/p^{s+1} \Z)$-linear Hecke-equivariant homomorphism
\begin{eqnarray*}
  \t{H}^1 \left( \Gamma_1(N), \Leb_{k_1-2}/p^{s+1} \right) \to M_{\Z/p^{s+1} \Z}
\end{eqnarray*}
is an isomorphism by the proof of Proposition \ref{right exact 2}. Since $k_1 \geq s+p^s+1$, we have $k_1 > k_0$. By Lemma \ref{congruence}, we have a surjective $(\Z/p^{s+1} \Z)$-linear Hecke-equivariant homomorphism
\begin{eqnarray*}
  \varpi^{s+1}_{k_1-2,k_0-2} \colon M_{\Z_p}/p^{s+1} \twoheadrightarrow \t{H}^1 \left( \Gamma_1(N), \Leb_{k_0-2}/p^{s+1} \right),
\end{eqnarray*}
and hence
\begin{eqnarray*}
  \ell_{\Z_p} \left( \t{H}^1 \left( \Gamma_1(N), \Leb_{k_0-2}/p^{s+1} \right) \right) + \ell_{\Z_p}(\ker(\varpi^{s+1}_{k_1-2,k_0-2})) = \ell_{\Z_p}(M_{\Z_p}/p^{s+1}).
\end{eqnarray*}
Moreover, the action of $T_p$ on $\ker(\varpi^{s+1}_{k_1-2,k_0-2})$ is $0$ by the proof of Lemma \ref{truncate}, we obtain
\begin{eqnarray*}
  \ell_{\Z_p}(\ker(\varpi^{s+1}_{k_1-2,k_0-2})) + \ell_{\Z_p} \left( T_p(M_{\Z_p})/(T_p(M_{\Z_p}) \cap p^{s+1} M_{\Z_p}) \right) \leq \ell_{\Z_p}(M_{\Z_p}/p^{s+1}).
\end{eqnarray*}
It ensures the inequality
\begin{eqnarray*}
  \ell_{\Z_p} \left( T_p(M_{\Z_p})/(T_p(M_{\Z_p}) \cap p^{s+1} M_{\Z_p}) \right) \leq \ell_{\Z_p} \left( \t{H}^1 \left( \Gamma_1(N), \Leb_{k_0-2}/p^{s+1} \right) \right).
\end{eqnarray*}
We conclude
\begin{eqnarray*}
  d_0 \leq \ell_{\Z_p} \left( \t{H}^1 \left( \Gamma_1(N), \Leb_{k_0-2}/p^{s+1} \right) \right) \leq C.
\end{eqnarray*}
\end{proof}

\begin{dfn}
Let $R$ be a ring. A left $R$-module $M$ is said to be {\it adically finite} if there is an $r \in R$ such that $r$ is not a zero divisor and $rM$ is contained in a finitely generated left $R$-submodule of $M$.
\end{dfn}

\begin{thm}
\label{finiteness 3}
The $\Lambda_1$-modules $\Mod(\Gamma_1(N),\Lambda_1)^{[< s]}$ and $\Mod(\Gamma_1(N),\chi,\Lambda_1)^{[< s]}$ are adically finite.
\end{thm}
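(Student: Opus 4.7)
The plan is to reduce the theorem to a Cramer-rule argument once the generic rank of the module has been bounded by the constant $C$ furnished by Lemma \ref{finiteness 2}. Let $M$ denote either $\Mod(\Gamma_1(N),\Lambda_1)^{[< s]}$ or $\Mod(\Gamma_1(N),\chi,\Lambda_1)^{[< s]}$; the same argument handles both cases because at every arithmetic point $\varphi$ the specialisation $f(\varphi)$ of any $f \in M$ lies, for all but finitely many $\varphi \in \Omega(\Lambda_1)_{\N \cap [2,\infty)}$, in a subspace of $\t{M}_{\t{wt}(\varphi)}(\Gamma_1(N),\overline{\Q}_p)^{[< s]}$, whose dimension is bounded by $C$.

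First I would verify that $M$ is torsion-free over $\Lambda_1$. By Proposition \ref{integral}, $\Lambda_1$ is an integral domain, so it suffices to show that $rf = 0$ with $r \in \Lambda_1 \setminus \ens{0}$ forces $f = 0$. Condition (ii) of Definition \ref{Lambda-adic domain} applied to $r$ implies $\varphi(r) \neq 0$ for all but finitely many $\varphi \in \Omega(\Lambda_1)_{\N \cap [2,\infty)}$; hence $f(\varphi) = 0$ for all but finitely many such $\varphi$. Reading off coefficients, each $a_h(f) \in \Lambda_1$ vanishes at an infinite set of arithmetic points, and condition (ii) again gives $a_h(f) = 0$, whence $f = 0$.

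Next I would bound the generic rank. Suppose for contradiction that $f_0,\ldots,f_C \in M$ are $\t{Frac}(\Lambda_1)$-linearly independent. Then the $(C+1) \times \N$ matrix $(a_h(f_i))_{i,h}$ has row rank $C+1$ over $\t{Frac}(\Lambda_1)$, so there exist indices $h_0,\ldots,h_C \in \N$ with $D \coloneqq \det (a_{h_j}(f_i))_{0 \leq i,j \leq C} \neq 0$ in $\Lambda_1$. On the other hand, for all but finitely many $\varphi \in \Omega(\Lambda_1)_{\N \cap [2,\infty)}$ the specialisations $f_0(\varphi),\ldots,f_C(\varphi)$ all lie in a $\overline{\Q}_p$-vector space of dimension $\leq C$ by Lemma \ref{finiteness 2}, and are therefore linearly dependent; hence $\varphi(D) = \det (a_{h_j}(f_i(\varphi))) = 0$. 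Condition (ii) of Definition \ref{Lambda-adic domain} forces $D = 0$, a contradiction. Thus $d \coloneqq \dim_{\t{Frac}(\Lambda_1)} \left( \t{Frac}(\Lambda_1) \otimes_{\Lambda_1} M \right) \leq C$.

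Finally I would conclude by Cramer's rule. Choose $f_1,\ldots,f_d \in M$ whose images in $\t{Frac}(\Lambda_1) \otimes_{\Lambda_1} M$ form a basis, together with indices $h_1,\ldots,h_d \in \N$ such that $A \coloneqq (a_{h_j}(f_i))_{1 \leq i,j \leq d}$ satisfies $r \coloneqq \det A \neq 0$. For any $f \in M$, writing $f = \sum_i c_i f_i$ in $\t{Frac}(\Lambda_1) \otimes_{\Lambda_1} M$ with $c_i \in \t{Frac}(\Lambda_1)$ and comparing $q$-coefficients at $h_j$ gives the linear system $a_{h_j}(f) = \sum_i c_i a_{h_j}(f_i)$. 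Cramer's rule then produces $r c_i = \det A_i(f) \in \Lambda_1$, where $A_i(f)$ is obtained from $A$ by replacing its $i$-th column by $(a_{h_j}(f))_j$. Consequently $rf = \sum_i \det A_i(f) \, f_i$ holds in $\t{Frac}(\Lambda_1) \otimes_{\Lambda_1} M$, and by torsion-freeness the identity descends to $M$, so $rM \subset \sum_{i=1}^{d} \Lambda_1 f_i$. The main obstacle is the second step: producing a weight-uniform dimension bound on the space of classical forms of slope $< s$ so that the determinant genuinely vanishes at all sufficiently many arithmetic specialisations. That is exactly what Lemma \ref{finiteness 2} supplies, and it feeds together with clauses (i) and (ii) of Definition \ref{Lambda-adic domain} to convert pointwise vanishing into vanishing in $\Lambda_1$.
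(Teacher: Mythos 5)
Your proposal is correct and follows essentially the same route as the paper's own proof: torsion-freeness from $\Lambda_1$ being a domain, a nonzero determinant $D$ of $q$-expansion coefficients obtained from linear independence over $\t{Frac}(\Lambda_1)$, the weight-uniform dimension bound of Lemma \ref{finiteness 2} applied to specialisations at arithmetic points (with condition (ii) of Definition \ref{Lambda-adic domain} converting pointwise statements into identities in $\Lambda_1$), and finally Cramer's rule to conclude $rM \subset \bigoplus_i \Lambda_1 f_i$. The only cosmetic difference is that you bound the generic rank by contradiction and invoke row-rank equals column-rank directly, whereas the paper picks a single arithmetic point $\varphi$ with $\varphi(D)\neq 0$ and argues directly via the decreasing chain $(V_{h_0})$; these are the same argument.
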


The proof is quite similar to that of \cite{Hid93} 7.3 Theorem 1.

\begin{proof}
We deal only with $\Mod(\Gamma_1(N),\Lambda_1)^{[< s]}$, because a similar proof with the following works for $\Mod(\Gamma_1(N),\chi,\Lambda_1)^{[< s]}$. Since $\Lambda_1$ is an integral domain by Proposition \ref{integral}, $\Mod(\Gamma_1(N),\Lambda_1)^{[< s]} \subset \Lambda_1[[q]]$ is a torsionfree $\Lambda_1$-module. We regard $\Mod(\Gamma_1(N),\Lambda_1)^{[< s]}$ as a $\Lambda_1$-submodule of $\t{Frac}(\Lambda_1) \otimes_{\Lambda_1} \Mod(\Gamma_1(N),\Lambda_1)^{[< s]}$. Let $d \in \N$, and assume
\begin{eqnarray*}
  \dim_{\t{Frac}(\Lambda_1)} \left( \t{Frac}(\Lambda_1) \otimes_{\Lambda_1} \Mod(\Gamma_1(N),\Lambda_1)^{[< s]} \right) \geq d.
\end{eqnarray*}
Take a system $(f_i)_{i \in \N \cap[1,d]} \in (\Mod(\Gamma_1(N),\Lambda_1)^{[< s]})^d$ of $\t{Frac}(\Lambda_1)$-linearly independent elements. We define a decreasing sequence $(V_h)_{h = 0}^{\infty}$ of $\t{Frac}(\Lambda_1)$-vector spaces by setting
\begin{eqnarray*}
  V_{h_0} \coloneqq \Set{(F_i)_{i = 1}^{d} \in \t{Frac}(\Lambda_1)^d}{\sum_{i = 1}^{d} F_i a_h(f_i) = 0, {}^{\forall} h \in \N \cap [1,h_0]}
\end{eqnarray*}
for each $h_0 \in \N$. Since $\t{Frac}(\Lambda_1)^d$ is of finite dimension, we have $\bigcap_{h = 0}^{\infty} V_h = V_{h_0}$ for some $h_0 \in \N \cap [1,\infty]$. On the other hand, $\bigcap_{h = 0}^{\infty} V_h$ coincides with $\ens{0}$ because $(f_i)_{i \in \N \cap [1,d]}$ is a system of $\t{Frac}(\Lambda_1)$-linearly independent elements. It implies that the system $((a_h(f_i))_{h = 0}^{h_0})_{i = 1}^{d} \in (\Lambda_1^{h_0+1})^d$ of $d$ vectors of length $h_0+1$ is $\t{Frac}(\Lambda_1)$-linearly independent, and hence there is a strictly increasing sequence $(h_j)_{j = 1}^{d} \in (\N \cap [0,h_0])^d$ such that $A \coloneqq (a_{h_j}(f_i))_{i,j = 1}^{d} \in \t{M}_d(\Lambda_1)$ lies in $\t{GL}_d(\t{Frac}(\Lambda_1))$.

\vspace{0.2in}
Put $D \coloneqq \det(A) \in \Lambda_1 \backslash \ens{0}$. Since $D \neq 0$, there is a $\varphi \in \Omega(\Lambda_1)_{\N \cap [2,\infty)}$ such that $f_i(\varphi) \in \t{M}_{\t{wt}(\varphi)}(\Gamma_1(N),\overline{\Q}_p)^{< s}$ for any $i \in \N \cap [1,d]$ and $\varphi(D) \neq 0$ because $\Lambda_1$ is a $\Lambda$-adic domain. In particular, $\varphi(A) \coloneqq (\varphi(a_{h_j}(f_i)))_{i,j = 1}^{d} = (a_{h_j}(f_i(\varphi)))_{i,j = 1}^{d} \in \t{M}_d(\overline{\Z}_p)$ satisfies $\det(\varphi(A)) = \varphi(D) \neq 0$, and hence lies in $\t{GL}_d(\overline{\Q}_p)$. It implies that the system $((a_{h_j}(f_i(\varphi)))_{j = 1}^{d})_{i = 1}^{d} \in (\overline{\Z}{}_p^d)^d$ of $d$ vectors of length $d$ is $\overline{\Q}_p$-linearly independent. In particular, the system $(f_i(\varphi))_{i = 1}^{d}$ is $\overline{\Q}_p$-linearly independent. It ensures that $d$ is bounded by the constant in Lemma \ref{finiteness 2}, and hence $\t{Frac}(\Lambda_1) \otimes_{\Lambda_1} \Mod(\Gamma_1(N),\Lambda_1)^{[< s]}$ is a finite dimensional $\overline{\Q}_p$-vector space.

\vspace{0.2in}
Let $d \in \N$ denote $\dim_{\t{Frac}(\Lambda_1)}(\t{Frac}(\Lambda_1) \otimes_{\Lambda_1} \Mod(\Gamma_1(N),\Lambda_1)^{[< s]}) \in \N$. By the argument above, there is an $((f_i)_{i = 1}^{d},(h_j)_{j = 1}^{d},\varphi) \in (\Mod(\Gamma_1(N),\Lambda_1)^{[< s]})^d \times \N^d \times \Omega(\Lambda_1)_{\N \cap [2,\infty)}$ such that the system $(f_i)_{i = 1}^{d}$ is $\t{Frac}(\Lambda_1)$-linearly independent, $(h_j)_{j = 1}^{d}$ is a strictly increasing sequence, $f_i(\varphi) \in \t{M}_{\t{wt}(\varphi)}(\Gamma_1(N),\overline{\Q}_p)^{< s}$ for any $i \in \N \cap [1,d]$, and $A \coloneqq (a_{h_j}(f_i))_{i,j = 1}^{d} \in \t{M}_d(\Lambda_1)$ lies in $\t{GL}_d(\t{Frac}(\Lambda_1))$. Then $\t{Frac}(\Lambda_1) \otimes_{\Lambda_1} \Mod(\Gamma_1(N),\Lambda_1)^{[< s]} = \bigoplus_{i = 1}^{d} \t{Frac}(\Lambda_1) f_i$ by the definition of $d$. Put $D \coloneqq \det(A) \in \Lambda_1 \backslash \ens{0}$. We verify $D \Mod(\Gamma_1(N),\Lambda_1)^{[< s]} \subset \bigoplus_{i = 1}^{d} \Lambda_1 f_i$. Let $f \in \Mod(\Gamma_1(N),\Lambda_1)^{[< s]}$. Since $\t{Frac}(\Lambda_1) \otimes_{\Lambda_1} \Mod(\Gamma_1(N),\Lambda_1)^{[< s]} = \bigoplus_{i = 1}^{d} \t{Frac}(\Lambda_1) f_i$, there is an $(F_i)_{i = 1}^{d} \in \t{Frac}(\Lambda_1)^d$ such that $\sum_{i = 1}^{d} F_i f_i = f$. We obtain a linear equation
\begin{eqnarray*}
  A (F_i)_{i = 1}^{d} = (a_{h_j}(f))_{j = 1}^{d},
\end{eqnarray*}
and hence
\begin{eqnarray*}
  D (F_i)_{i = 1}^{d} = D (A^{-1} (a_{h_j}(f))_{j = 1}^{d}) = (\det(A) A^{-1}) (a_{h_j}(f))_{j = 1}^{d} \in \Lambda_1^d.
\end{eqnarray*}
Thus $Df = \sum_{i = 1}^{d} (D F_i) f_i \in \bigoplus_{i = 1}^{d} \Lambda_1 f_i$. We conclude $D \Mod(\Gamma_1(N),\Lambda_1)^{[< s]} \subset \bigoplus_{i = 1}^{d} \Lambda_1 f_i$.
\end{proof}

\begin{crl}
\label{finiteness 4}
The following hold:
\begin{itemize}
\item[(i)] The $\Lambda_1$-modules $\Mod(\Gamma_1(N),\Lambda_1)^{[< s]}$ and $\Mod(\Gamma_1(N),\chi,\Lambda_1)^{[< s]}$ are generically finite.
\item[(ii)] If $\Lambda_1$ is Noetherian, then $\Mod(\Gamma_1(N),\Lambda_1)^{[< s]}$ and $\Mod(\Gamma_1(N),\chi,\Lambda_1)^{[< s]}$ are finitely generated.
\item[(iii)] There is a finite subset $S \subset \Omega(\Lambda_1)_{\N \cap [2,\infty)}$ such that $f(\varphi)(q) \in \overline{\Z}_p[\varphi][[q]]$ lies in $\t{M}_{\t{wt}(\varphi)}(\Gamma_1(N),\Z_p[\varphi])^{< s}$ for any $f(q) \in \Mod(\Gamma_1(N),\Lambda_1)^{[< s]}$ and $\varphi \in \Omega(\Lambda_1)_{\N \cap [2,\infty)} \backslash S$.
\item[(iv)] If $\Lambda_1$ is Noetherian, then the $\Lambda_1$-modules $\Mod(\Gamma_1(N),\Lambda_1)^{[< s]}$ and $\Mod(\Gamma_1(N),\chi,\Lambda_1)^{[< s]}$ are closed in $\Lambda_1[[q]]$, and hence are compact Hausdorff topological $\Lambda_1$-modules with respect to the relative topologies.
\end{itemize}
\end{crl}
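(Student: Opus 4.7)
The plan is to derive all four parts from a single output of Theorem \ref{finiteness 3}, namely the explicit adic finiteness produced in its proof: there exist a nonzero $D \in \Lambda_1$ and finitely many elements $f_1,\ldots,f_d \in \Mod(\Gamma_1(N),\Lambda_1)^{[<s]}$ satisfying
$D \cdot \Mod(\Gamma_1(N),\Lambda_1)^{[<s]} \subset \bigoplus_{i=1}^{d} \Lambda_1 f_i$,
and similarly for the character-fixed version (the author's proof of Theorem \ref{finiteness 3} produces this in both cases). The remaining work is purely structural and uses only (a) this adic finiteness, (b) the two defining conditions of a $\Lambda$-adic domain, and (c) profiniteness of $\Lambda_1$ from Proposition \ref{profinite}.

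For (i), I would observe that the first half of the proof of Theorem \ref{finiteness 3} already shows
$\dim_{\t{Frac}(\Lambda_1)}(\t{Frac}(\Lambda_1) \otimes_{\Lambda_1} \Mod(\Gamma_1(N),\Lambda_1)^{[<s]})$ is bounded by the constant of Lemma \ref{finiteness 2}, by specialising at any single classical weight $\varphi$ at which $D$ does not vanish and invoking the Eichler--Shimura bound; the same specialisation argument works when a fixed character $\chi$ is imposed. For (ii), under the Noetherian assumption the finitely generated module $\bigoplus_{i=1}^{d} \Lambda_1 f_i$ is Noetherian, so its submodule $D \cdot \Mod(\Gamma_1(N),\Lambda_1)^{[<s]}$ is finitely generated; since $D$ is not a zero divisor in $\Lambda_1$ (which is an integral domain by Proposition \ref{integral}), multiplication by $D$ is an injective $\Lambda_1$-linear map and therefore a $\Lambda_1$-linear isomorphism onto its image, so $\Mod(\Gamma_1(N),\Lambda_1)^{[<s]}$ itself is finitely generated. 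The same argument gives the $\chi$-fixed statement.

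The key point for (iii) is that condition (ii) of Definition \ref{Lambda-adic domain} implies that for every nonzero $g \in \Lambda_1$ the set $\set{\varphi \in \Omega(\Lambda_1)_{\N \cap [2,\infty)}}{\varphi(g) = 0}$ is \emph{finite}, since otherwise an infinite subfamily $\Sigma$ would satisfy $g \in \bigcap_{\varphi \in \Sigma} \ker(\varphi) = \ens{0}$. Apply this to $g = D$ and take the union with the finitely many exceptional classical weights in the slope-$<s$ conditions for each $f_i$; call the resulting finite set $S$. For any $f \in \Mod(\Gamma_1(N),\Lambda_1)^{[<s]}$ we can write $Df = \sum_i F_i f_i$ with $F_i \in \Lambda_1$, so for $\varphi \notin S$ the identity $f(\varphi) = \varphi(D)^{-1} \sum_i \varphi(F_i) f_i(\varphi)$ expresses $f(\varphi)$ as an $\overline{\Q}_p$-linear combination of elements of $\t{M}_{\t{wt}(\varphi)}(\Gamma_1(N),\overline{\Q}_p)^{<s}$, which is a sub-$\overline{\Q}_p$-module of $\t{M}_{\t{wt}(\varphi)}(\Gamma_1(N),\overline{\Q}_p)$ stable under Hecke operators; since the coefficients $\varphi(F_i)/\varphi(D) \in \t{Frac}(\Z_p[\varphi]) \subset \overline{\Q}_p$ lie in $\overline{\Q}_p$ and $f(\varphi) \in \Z_p[\varphi][[q]]$ with integral Fourier coefficients, the combined element lies in $\t{M}_{\t{wt}(\varphi)}(\Gamma_1(N),\Z_p[\varphi])^{<s}$.

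For (iv), assume $\Lambda_1$ is Noetherian and take generators $g_1,\ldots,g_n$ of $\Mod(\Gamma_1(N),\Lambda_1)^{[<s]}$ provided by (ii). By Proposition \ref{profinite}, $\Lambda_1$ is a commutative profinite ring, so both $\Lambda_1^n$ and $\Lambda_1[[q]] \cong \prod_{h \in \N} \Lambda_1$ (with its natural product topology) are compact Hausdorff. The $\Lambda_1$-linear map $\Lambda_1^n \to \Lambda_1[[q]]$ sending $(a_1,\ldots,a_n) \mapsto \sum_i a_i g_i$ is continuous (addition and multiplication in $\Lambda_1[[q]]$ are continuous in each coordinate), so its image $\Mod(\Gamma_1(N),\Lambda_1)^{[<s]}$ is compact, hence closed in the Hausdorff space $\Lambda_1[[q]]$; the relative topology then makes it a compact Hausdorff topological $\Lambda_1$-module, and the $\chi$-fixed statement follows identically. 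The only subtle point in the whole argument is the use of condition (ii) of Definition \ref{Lambda-adic domain} in step (iii) to invert $D$ away from a finite set; all other steps are formal consequences of adic finiteness and profiniteness.
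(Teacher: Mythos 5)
Your proposal is correct and follows essentially the same route as the paper: the paper also obtains (i) and (ii) directly from the adic finiteness of Theorem \ref{finiteness 3}, proves (iii) by combining the finite zero set $S_D$ of $D$ (using condition (ii) of Definition \ref{Lambda-adic domain}) with the finite exceptional sets $S_f$ for the members $f$ of a $\t{Frac}(\Lambda_1)$-basis $E$, and proves (iv) by factoring through a continuous $\Lambda_1$-linear map from the compact module $\Lambda_1^E$ into the Hausdorff module $\Lambda_1[[q]]$. The only difference is cosmetic: you spell out the Noetherian and torsionfree bookkeeping in (ii) that the paper compresses into ``immediately follows.''
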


\begin{proof}
The assertions (i) and (ii) immediately follow from Theorem \ref{finiteness 3}. For the assertion (iii), take a $\t{Frac}(\Lambda_1)$-basis $E \subset \Mod(\Gamma_1(N),\Lambda_1)^{[< s]}$ of $\t{Frac}(\Lambda_1) \otimes_{\Lambda_1} \Mod(\Gamma_1(N),\Lambda_1)^{[< s]}$ and a $D \in \Lambda_1 \backslash \ens{0}$ such that $D \Mod(\Gamma_1(N),\Lambda_1)^{[< s]} \subset \bigoplus_{f \in E} \Lambda_1 f$. For each $f(q) \in E$, let $S_f \subset \Omega(\Lambda_1)_{\N \cap [2,\infty)}$ denote a finite subset such that $f(\varphi)(q)$ lies in $\t{M}_{\t{wt}(\varphi)}(\Gamma_1(N),\Z_p[\varphi])^{< s}$ for any $\varphi \in \Omega(\Lambda_1)_{\N \cap [2,\infty)} \backslash S_f$. Let $S_D$ denote the support $\set{\varphi \in \Omega(\Lambda_1)_{\N \cap [2,\infty)}}{\varphi(D) = 0}$ of $D$. Since $D \neq 0$, $S_D$ is a finite set because $\Lambda_1$ is a $\Lambda$-adic domain. Set $S \coloneqq S_D \cup \bigcup_{f \in E} S_f \subset \Omega(\Lambda_1)_{\N \cap [2,\infty)}$. Let $f(q) \in \Mod(\Gamma_1(N),\Lambda_1)^{[< s]}$ and $\varphi \in \Omega(\Lambda_1)_{\N \cap [2,\infty)} \backslash S$. Since $S$ contains $\bigcup_{f \in E} S_f$, $\varphi(D) f(\varphi) = (Df)(\varphi)$ lies in $\t{M}_{\t{wt}(\varphi)}(\Gamma_1(N),\Z_p[\varphi])^{< s}$. Since $S$ contains $S_D$, we have $\varphi(D) \in \overline{\Q}{}_p^{\times}$ and hence $f(\varphi)$ lies in $\t{M}_{\t{wt}(\varphi)}(\Gamma_1(N),\Z_p[\varphi])^{< s}$. For the assertion (iv), we deal only with $\Mod(\Gamma_1(N),\Lambda_1)^{[< s]}$. Take a finite subset $E \subset \Mod(\Gamma_1(N),\Lambda_1)^{[< s]}$ of generators as a $\Lambda_1$-module. The $\Lambda_1$-linear homomorphism
\begin{eqnarray*}
  \Lambda_1^E & \to & \Lambda_1[[q]] \\
  (F_f)_{f \in E} & \mapsto & \sum_{f \in E} F_f f
\end{eqnarray*}
is continuous. Since $\Lambda_1^E$ is compact and $\Lambda_1[[q]]$ is Hausdorff, its image $\Mod(\Gamma_1(N),\Lambda_1)^{[< s]}$ is closed.
\end{proof}

We denote by $\t{Reg}^{< s}(\Lambda_1)$ the set of all $\varphi \in \Omega(\Lambda_1)_{\N \cap [2,\infty)}$ such that for any $f(q) \in \Mod(\Gamma_1(N),\Lambda_1)^{[< s]}$, $f(\varphi)(q) \in \Z_p[\varphi][[q]]$ lies in $\t{M}_{\t{wt}(\varphi)}(\Gamma_1(N), \Z_p[\varphi])^{< s}$. By Corollary \ref{finiteness 4} (iii), $\Omega(\Lambda_1)_{\N \cap [2,\infty)} \backslash \t{Reg}^{< s}(\Lambda_1)$ is a finite set, and hence $\t{Reg}^{< s}(\Lambda_1)$ is an infinite set. For each $\varphi \in \t{Reg}^{< s}(\Lambda_1)$, we denote by
\begin{eqnarray*}
  \t{M}_{\varphi}(\Gamma_1(N),\chi,\Z_p[\varphi])^{[< s]} \subset \t{M}_{\t{wt}(\varphi)}(\Gamma_1(N), \varphi \circ \chi, \Z_p[\varphi])^{< s}
\end{eqnarray*}
the image of $\Mod(\Gamma_1(N),\chi,\Lambda_1)^{[< s]}$ by the specialisation at $\varphi$, and by
\begin{eqnarray*}
  \Z_p[\varphi] \t{T}_{\varphi,N,\chi}^{[< s]} \subset \t{End}_{\Z_p[\varphi]} \left( \t{M}_{\varphi}(\Gamma_1(N),\chi,\Z_p[\varphi])^{[< s]} \right)
\end{eqnarray*}
the commutative $\Z_p[\varphi]$-subalgebra generated by Hecke operators. Then $\t{T}_{\varphi,N,\chi}^{[< s]}$ is a $\Z_p[\varphi]$-algebra finitely generated as a $\Z_p$-module, because $\t{M}_{\t{wt}(\varphi)}(\Gamma_1(N), \varphi \circ \chi, \Z_p[\varphi])^{< s}$ is finitely generated as a $\Z_p[\varphi]$-module and $\Z_p$ is Noetherian. Let $S \subset \t{Reg}^{< s}(\Lambda_1)$ be a finite subset. We denote by
\begin{eqnarray*}
  \left( \Lambda_1/\bigcap_{\varphi \in S} \ker(\varphi) \right) \t{T}_{\in S,N,\chi}^{[< s]} \subset \t{End}_{\Lambda_1/\bigcap_{\varphi \in S} \ker(\varphi)} \left( \bigoplus_{\varphi \in S} \t{M}_{\varphi}(\Gamma_1(N),\chi,\Z_p[\varphi]) \right)
\end{eqnarray*}
the commutative $(\Lambda_1/\bigcap_{\varphi \in S} \ker(\varphi))$-subalgebra generated by $T_{\ell}$ for each prime number $\ell$ and $S_n$ for each $n \in \N$ coprime to $N$. We have a natural embedding $\Lambda_1/\bigcap_{\varphi \in S} \ker(\varphi) \hookrightarrow \prod_{\varphi \in S} \Z_p[\varphi]$, and hence $\Lambda_1/\bigcap_{\varphi \in S} \ker(\varphi)$ is finitely generated as a $\Z_p$-module. Since $\bigoplus_{\varphi \in S} \t{M}_{\t{wt}(\varphi)}(\Gamma_1(N), \varphi \circ \chi, \Z_p[\varphi])$ is finitely generated as a $(\Lambda_1/\bigcap_{\varphi \in S} \ker(\varphi))$-module, $(\Lambda_1/\bigcap_{\varphi \in S} \ker(\varphi)) \t{T}_{\in S,N,\chi}^{[< s]}$ is finitely generated as a $\Z_p$-module. We set
\begin{eqnarray*}
  \Lambda_1 \T_{N,\chi}^{[< s]} \coloneqq \varprojlim_{S \subset \t{Reg}^{< s}(\Lambda_1)} \left( \Lambda_1/\bigcap_{\varphi \in S} \ker(\varphi) \right) \t{T}_{\in S,N,\chi}^{[< s]},
\end{eqnarray*}
where $S$ in the limit runs through all finite subsets of $\t{Reg}^{< s}(\Lambda_1)$. We regard it as a profinite $(\Lambda_1 \hat{\otimes}_{\Lambda_0} \Lambda_0 \T_N^{[< s]})$-algebra by Corollary \ref{profinite ring}.

\begin{prp}
\label{universal Hecke module 2}
If $\Lambda_1$ is Noetherian, then the action of $T_{\ell}$ for each prime number $\ell$ and $S_n$ for each $n \in \N$ coprime to $N$ extends to an action
\begin{eqnarray*}
  \Lambda_1 \T_{N,\chi}^{[< s]} \times \Mod(\Gamma_1(N),\chi,\Lambda_1)^{[< s]} \to \Mod(\Gamma_1(N),\chi,\Lambda_1)^{[< s]}
\end{eqnarray*}
continuous with respect to the relative topology of $\Mod(\Gamma_1(N),\chi,\Lambda_1)^{[< s]} \subset \Lambda_1[[q]]$.
\end{prp}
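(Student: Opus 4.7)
The plan is to realise $M := \Mod(\Gamma_1(N),\chi,\Lambda_1)^{[< s]}$ as a compact closed $\Lambda_1$-submodule of $\varprojlim_S M_S$, where $S$ runs over finite subsets of $\t{Reg}^{< s}(\Lambda_1)$ and $M_S \subset \bigoplus_{\varphi \in S} \t{M}_{\varphi}(\Gamma_1(N),\chi,\Z_p[\varphi])^{[< s]}$ denotes the image of the specialisation $\sigma_S \colon f \mapsto (f(\varphi))_{\varphi \in S}$, and then to transport the tautological action of $\Lambda_1 \T_{N,\chi}^{[< s]}$ on $\varprojlim_S M_S$ back to $M$. Under the Noetherian hypothesis on $\Lambda_1$, Corollary \ref{finiteness 4} (iv) guarantees that $M$ is compact and Hausdorff in the relative topology from $\Lambda_1[[q]]$, and the Hecke operators $T_{\ell}$ and $S_n$ act continuously on $M$ by coefficient shifts composed with multiplication by elements of $\Lambda_1$.

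First I would verify that for every finite $S \subset \t{Reg}^{< s}(\Lambda_1)$ the map $\sigma_S$ is continuous and Hecke equivariant, so its image $M_S$ is stable under $(\Lambda_1/\bigcap_{\varphi \in S} \ker(\varphi)) \t{T}_{\in S,N,\chi}^{[< s]}$ by definition of the latter. The joint map $M \to \varprojlim_S M_S$ is then continuous and is injective: if $\sigma_S(f) = 0$ for every $S$, then $\varphi(a_h(f)) = 0$ for every $h \in \N$ and every $\varphi \in \t{Reg}^{< s}(\Lambda_1)$; since $\Omega(\Lambda_1)_{\N \cap [2,\infty)} \backslash \t{Reg}^{< s}(\Lambda_1)$ is finite by Corollary \ref{finiteness 4} (iii), Definition \ref{Lambda-adic domain} (ii) forces $a_h(f) = 0$ for every $h$, hence $f = 0$. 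Since $M$ is compact and $\varprojlim_S M_S$ is Hausdorff, the embedding is a homeomorphism onto a closed $\Lambda_1$-submodule.

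Next, since each $(\Lambda_1/\bigcap_{\varphi \in S} \ker(\varphi)) \t{T}_{\in S,N,\chi}^{[< s]}$ is a finitely generated $\Z_p$-module acting continuously on the finitely generated $\Z_p$-module $M_S$, the universal property of the inverse limit yields a jointly continuous action of $\Lambda_1 \T_{N,\chi}^{[< s]}$ on $\varprojlim_S M_S$. Let $A_0 \subset \t{End}_{\Lambda_1}(M)$ denote the $\Lambda_1$-subalgebra generated by the Hecke operators $T_{\ell}$ and $S_n$. By construction of $(\Lambda_1/\bigcap_{\varphi \in S} \ker(\varphi)) \t{T}_{\in S,N,\chi}^{[< s]}$ as the $(\Lambda_1/\bigcap_{\varphi \in S} \ker(\varphi))$-subalgebra generated by those same operators, the canonical map $A_0 \to \Lambda_1 \T_{N,\chi}^{[< s]}$ has dense image, and through the embedding $M \hookrightarrow \varprojlim_S M_S$ the $A_0$-action on $M$ coincides with the restriction of the $\Lambda_1 \T_{N,\chi}^{[< s]}$-action on $\varprojlim_S M_S$. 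Given $t \in \Lambda_1 \T_{N,\chi}^{[< s]}$ and $f \in M$, I pick a net $(a_{\alpha})$ in $A_0$ converging to $t$; by compactness of $M$, the net $(a_{\alpha} f) \subset M$ admits a cluster point $g \in M$ whose image in $\varprojlim_S M_S$ equals $t \cdot f$ by continuity of that action. Injectivity of the embedding then forces $t \cdot f$ to lie in the image of $M$ and to equal $g$, so the action of $\Lambda_1 \T_{N,\chi}^{[< s]}$ restricts to $M$, and joint continuity is inherited from $\varprojlim_S M_S$ since $M \hookrightarrow \varprojlim_S M_S$ is a topological embedding.

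The hardest step will be precisely this stability assertion: showing that $M$ is preserved by the full completion $\Lambda_1 \T_{N,\chi}^{[< s]}$ rather than only by the dense subalgebra $A_0$. This is where the compactness of $M$ supplied by Corollary \ref{finiteness 4} (iv), and hence the Noetherian hypothesis on $\Lambda_1$, is indispensable, for without it the cluster-point argument that lifts the limit $t \cdot f \in \varprojlim_S M_S$ back inside $M$ breaks down.
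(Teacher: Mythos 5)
Your proof is correct and takes essentially the same approach as the paper's, which also rests on the compactness of $\Mod(\Gamma_1(N),\chi,\Lambda_1)^{[< s]}$ supplied by Corollary \ref{finiteness 4}~(iv) and then invokes the density-plus-compactness argument from the proof of Theorem \ref{universal Hecke module}. You make explicit the ambient module $\varprojlim_S M_S$ on which $\Lambda_1 \T_{N,\chi}^{[< s]}$ acts tautologically, a construction the paper leaves implicit in its phrase ``a similar argument.''
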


\begin{proof}
By Corollary \ref{finiteness 4} (iv), $\Mod(\Gamma_1(N),\chi,\Lambda_1)^{[< s]}$ is a closed $\Lambda_1$-submodule of $\Lambda_1[[q]]$. Therefore the assertion holds by a similar argument with that in the proof of Theorem \ref{universal Hecke module}.
\end{proof}

\begin{prp}
\label{Noetherian}
If $\Lambda_1$ is Noetherian, then $\Lambda_1 \T_{N,\chi}^{[< s]}$ is a commutative $\Lambda_1$-algebra finitely generated as a $\Lambda_1$-module, and hence is Noetherian.
\end{prp}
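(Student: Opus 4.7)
The plan is to realise $\Lambda_1 \T_{N,\chi}^{[< s]}$ as a $\Lambda_1$-subalgebra of the endomorphism ring of the finitely generated $\Lambda_1$-module $M \coloneqq \Mod(\Gamma_1(N),\chi,\Lambda_1)^{[< s]}$, and then invoke Noetherianity of $\Lambda_1$. By Corollary \ref{finiteness 4} (ii), $M$ is finitely generated as a $\Lambda_1$-module, and by Proposition \ref{universal Hecke module 2} the action of Hecke operators extends to a continuous $\Lambda_1$-algebra action of $\Lambda_1 \T_{N,\chi}^{[< s]}$ on $M$. This action yields a $\Lambda_1$-algebra homomorphism
\begin{eqnarray*}
  \alpha \colon \Lambda_1 \T_{N,\chi}^{[< s]} \to \t{End}_{\Lambda_1}(M).
\end{eqnarray*}

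The key step will be to verify that $\alpha$ is injective. For this I would use the definition of $\Lambda_1 \T_{N,\chi}^{[< s]}$ as an inverse limit indexed by finite subsets $S \subset \t{Reg}^{< s}(\Lambda_1)$: for each such $S$, since Hecke operators preserve each summand, there is a natural injection
\begin{eqnarray*}
  \left( \Lambda_1/\bigcap_{\varphi \in S} \ker(\varphi) \right) \t{T}_{\in S,N,\chi}^{[< s]} \hookrightarrow \prod_{\varphi \in S} \Z_p[\varphi] \t{T}_{\varphi,N,\chi}^{[< s]},
\end{eqnarray*}
since $\Z_p[\varphi] \t{T}_{\varphi,N,\chi}^{[< s]}$ is by definition a subring of $\t{End}_{\Z_p[\varphi]}(\t{M}_{\varphi}(\Gamma_1(N),\chi,\Z_p[\varphi])^{[< s]})$. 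If $T \in \ker(\alpha)$, then since specialisation at any $\varphi \in \t{Reg}^{< s}(\Lambda_1)$ intertwines the action on $M$ with the action on $\t{M}_{\varphi}(\Gamma_1(N),\chi,\Z_p[\varphi])^{[< s]}$, the image of $T$ in $\Z_p[\varphi] \t{T}_{\varphi,N,\chi}^{[< s]}$ vanishes for every $\varphi$; combined with the injections above, this forces the image of $T$ in each finite level to be zero, hence $T = 0$ in the inverse limit.

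Once injectivity is established, the conclusion is straightforward. Since $\Lambda_1$ is Noetherian and $M$ is finitely generated, choosing a surjection $\Lambda_1^n \twoheadrightarrow M$ gives an embedding
\begin{eqnarray*}
  \t{End}_{\Lambda_1}(M) \hookrightarrow \t{Hom}_{\Lambda_1}(\Lambda_1^n, M) \cong M^n,
\end{eqnarray*}
so $\t{End}_{\Lambda_1}(M)$ is a $\Lambda_1$-submodule of a finitely generated module over a Noetherian ring, hence finitely generated. Then $\alpha$ identifies $\Lambda_1 \T_{N,\chi}^{[< s]}$ with a $\Lambda_1$-submodule of $\t{End}_{\Lambda_1}(M)$, which is again finitely generated. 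Finally, a commutative $\Lambda_1$-algebra that is finitely generated as a $\Lambda_1$-module over a Noetherian ring $\Lambda_1$ is Noetherian (every ideal is a $\Lambda_1$-submodule of a finitely generated $\Lambda_1$-module, hence finitely generated as a $\Lambda_1$-module, and \emph{a fortiori} as an ideal). The main obstacle is the injectivity step: one must keep track carefully of the compatibility between the global action on $M$ and the collection of actions on the classical spaces $\t{M}_{\varphi}(\Gamma_1(N),\chi,\Z_p[\varphi])^{[< s]}$ through the specialisation maps used to \emph{define} $\Lambda_1 \T_{N,\chi}^{[< s]}$.
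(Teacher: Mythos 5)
Your proposal is correct and follows essentially the same path as the paper's proof: both reduce to the injectivity of the map $\Lambda_1 \T_{N,\chi}^{[<s]} \to \t{End}_{\Lambda_1}(\Mod(\Gamma_1(N),\chi,\Lambda_1)^{[<s]})$, and both establish that injectivity by observing that for each $\varphi \in \t{Reg}^{<s}(\Lambda_1)$ the specialisation map is surjective onto $\t{M}_{\varphi}(\Gamma_1(N),\chi,\Z_p[\varphi])^{[<s]}$ and Hecke-equivariant, so an element killing all of $\Mod(\Gamma_1(N),\chi,\Lambda_1)^{[<s]}$ dies in every $\Z_p[\varphi] \t{T}_{\varphi,N,\chi}^{[<s]}$ and hence in the inverse limit. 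The only cosmetic difference is that the paper phrases the last step as the injectivity of $\Lambda_1 \T_{N,\chi}^{[<s]} \to \prod_{\varphi} \Z_p[\varphi] \t{T}_{\varphi,N,\chi}^{[<s]}$ ``by definition,'' whereas you unwind that to the finite-level injections into the products; this is the same fact.
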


\begin{proof}
By Corollary \ref{finiteness 4} (ii), it suffices to show the injectivity of the $\Lambda_1$-algebra homomorphism
\begin{eqnarray*}
  \iota \colon \Lambda_1 \T_{N,\chi}^{[< s]} \to \t{End}_{\Lambda_1} \left( \Mod(\Gamma_1(N),\chi,\Lambda_1)^{[< s]} \right)
\end{eqnarray*}
induced by the action in Proposition \ref{universal Hecke module 2}. Let $A \in \ker(\iota)$. Let $\varphi \in \t{Reg}^{< s}(\Lambda_1)$ and $f_{\varphi}(q) \in \t{M}_{\varphi}(\Gamma_1(N),\chi,\Z_p[\varphi])^{[< s]}$, and take a lift $f(q) \in \Mod(\Gamma_1(N),\chi,\Lambda_1)^{[< s]}$ of $f_{\varphi}(q)$. Since the specialisation $\Mod(\Gamma_1(N),\chi,\Lambda_1)^{[< s]} \to \t{M}_{\varphi}(\Gamma_1(N),\chi,\Z_p[\varphi])^{[< s]}$ is continuous and compatible with the action of $T_{\ell}$ for each prime number $\ell$ and $S_n$ for each $n \in \N$ coprime to $N$, the equality $\iota(A)(f(q)) = 0$ implies that the image of $A$ in $\Z_p[\varphi] \t{T}_{\varphi,N,\chi}^{[< s]}$ sends $f_{\varphi}(q)$ to $0$. Therefore the image of $A$ in $\Z_p[\varphi] \t{T}_{\varphi,N,\chi}^{[< s]}$ is $0$. Since the natural homomorphism
\begin{eqnarray*}
  \Lambda_1 \T_{N,\chi}^{[< s]} \to \prod_{\varphi \in \t{Reg}^{< s}(\Lambda_1)} \Z_p[\varphi] \t{T}_{\varphi,N,\chi}^{[< s]}
\end{eqnarray*}
is injective by the definition of $\Lambda_1 \T_{N,\chi}^{[< s]}$, we conclude $A = 0$. Thus $\iota$ is injective.
\end{proof}

\begin{crl}
\label{pairing}
If $\Lambda_1$ is Noetherian, then the continuous $\Lambda_1$-bilinear pairing
\begin{eqnarray*}
  \Lambda_1 \T_{N,\chi}^{[< s]} \times \Mod(\Gamma_1(N),\chi,\Lambda_1)^{[< s]} & \to & \Lambda_1 \\
  (A,f(q)) & \mapsto & a_1(Af)
\end{eqnarray*}
is non-degenerate, and it gives $\t{Frac}(\Lambda_1)$-linear isomorphisms
\begin{eqnarray*}
  \t{Frac}(\Lambda_1) \otimes_{\Lambda_1} \Lambda_1 \T_{N,\chi}^{[< s]} & \cong & \left( \t{Frac}(\Lambda_1) \otimes_{\Lambda_1} \Mod(\Gamma_1(N),\chi,\Lambda_1)^{[< s]} \right)^{\vee} \\
  \t{Frac}(\Lambda_1) \otimes_{\Lambda_1} \Mod(\Gamma_1(N),\chi,\Lambda_1) & \cong & \left( \t{Frac}(\Lambda_1) \otimes_{\Lambda_1} \Lambda_1 \T_{N,\chi}^{[< s]} \right)^{\vee},
\end{eqnarray*}
where $V^{\vee}$ denotes the $\t{Frac}(\Lambda_1)$-linear dual $\t{Hom}_{\t{Frac}(\Lambda_1)}(V)$ for a $\t{Frac}(\Lambda_1)$-vector space $V$.
\end{crl}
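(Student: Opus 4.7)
The plan is to establish the non-degeneracy of the pairing on each side and deduce the two $\t{Frac}(\Lambda_1)$-linear isomorphisms from a dimension count. Both modules involved are finitely generated over $\Lambda_1$ under the Noetherian hypothesis, by Corollary \ref{finiteness 4} (ii) and Proposition \ref{Noetherian} respectively, so after tensoring with $K \coloneqq \t{Frac}(\Lambda_1)$ they become finite-dimensional $K$-vector spaces and the asserted dualities reduce to verifying that the two natural $\Lambda_1$-linear maps induced by the pairing are injective.

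To show that the left radical is trivial, I would take $f(q) \in \Mod(\Gamma_1(N),\chi,\Lambda_1)^{[< s]}$ with $a_1(A f) = 0$ for every $A \in \Lambda_1 \T_{N,\chi}^{[< s]}$, apply this to $A = T_n$, and invoke the standard relation $a_1(T_n f) = a_n(f)$ to conclude $a_n(f) = 0$ in $\Lambda_1$ for every $n \geq 1$. Hence $f = a_0(f)$ is a constant formal power series. For each $\varphi \in \t{Reg}^{< s}(\Lambda_1)$, the specialisation $f(\varphi) = \varphi(a_0(f))$ is then a constant element of $\t{M}_{\t{wt}(\varphi)}(\Gamma_1(N), \varphi \circ \chi, \Z_p[\varphi])^{< s}$; but no nonzero constant function on the upper half-plane can satisfy the modularity condition of weight $\t{wt}(\varphi) \geq 2$ under $\Gamma_1(N)$, so $\varphi(a_0(f)) = 0$. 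By Corollary \ref{finiteness 4} (iii), $\t{Reg}^{< s}(\Lambda_1)$ is the complement of a finite subset of the infinite set $\Omega(\Lambda_1)_{\N \cap [2,\infty)}$, and condition (ii) in Definition \ref{Lambda-adic domain} therefore forces $a_0(f) = 0$, i.e.\ $f = 0$.

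For the right radical, if $A \in \Lambda_1 \T_{N,\chi}^{[< s]}$ satisfies $a_1(A f) = 0$ for every $f$, the commutativity of $\Lambda_1 \T_{N,\chi}^{[< s]}$ yields $a_n(A f) = a_1(T_n A f) = a_1(A T_n f) = 0$ for every $n \geq 1$ and every $f$; applying the preceding paragraph to $A f \in \Mod(\Gamma_1(N),\chi,\Lambda_1)^{[< s]}$ gives $A f = 0$ for all $f$, and the faithfulness of the $\Lambda_1 \T_{N,\chi}^{[< s]}$-action established inside the proof of Proposition \ref{Noetherian} then forces $A = 0$.

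These two non-degeneracies provide injective $\Lambda_1$-linear maps $\Phi \colon \Lambda_1 \T_{N,\chi}^{[< s]} \hookrightarrow \t{Hom}_{\Lambda_1}(\Mod(\Gamma_1(N),\chi,\Lambda_1)^{[< s]}, \Lambda_1)$ and $\Psi \colon \Mod(\Gamma_1(N),\chi,\Lambda_1)^{[< s]} \hookrightarrow \t{Hom}_{\Lambda_1}(\Lambda_1 \T_{N,\chi}^{[< s]}, \Lambda_1)$. Since $\Lambda_1$ is Noetherian and both source modules are finitely presented, $\t{Hom}_{\Lambda_1}(\cdot, \Lambda_1)$ commutes with the localisation at $K$, so tensoring preserves injectivity and identifies the targets with the $K$-linear duals. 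The resulting injections $\Phi_K$ and $\Psi_K$ between the two finite-dimensional $K$-vector spaces $K \otimes_{\Lambda_1} \Lambda_1 \T_{N,\chi}^{[< s]}$ and $K \otimes_{\Lambda_1} \Mod(\Gamma_1(N),\chi,\Lambda_1)^{[< s]}$ each force the other's dimension to be at least their own, hence the two dimensions coincide and both injections become isomorphisms, giving the asserted $K$-linear dualities. The delicate step is the constant-term vanishing; the remainder is the Hida-style formal pattern adapted to the $\Lambda_1$-adic finite slope setting.
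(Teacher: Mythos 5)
Your proposal is correct and follows essentially the same route as the paper: show non-degeneracy of the pairing by using $a_h(Af) = a_1(T_h Af) = a_1(A T_h f)$ to reduce to vanishing of constant modular forms (and, for the Hecke-side kernel, the faithfulness from the proof of Proposition~\ref{Noetherian}), then deduce the two $\t{Frac}(\Lambda_1)$-linear isomorphisms from the finiteness results of Corollary~\ref{finiteness 4}~(ii) and Proposition~\ref{Noetherian} together with flatness of the fraction field. The only difference is that you spell out why a constant $\Lambda_1$-adic form must vanish (specialising at $\varphi \in \t{Reg}^{< s}(\Lambda_1)$, using that constants are not modular of weight $\geq 2$, and then invoking condition~(ii) of Definition~\ref{Lambda-adic domain}); the paper simply asserts ``there is no nontrivial modular form which is a constant'' and leaves this implicit, so your elaboration is a welcome filling of a small gap rather than a different argument.
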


\begin{proof}
The first assertion implies the second assertion by Proposition \ref{integral}, Corollary \ref{finiteness 4} (ii), and Proposition \ref{Noetherian}, because $\Mod(\Gamma_1(N),\chi,\Lambda_1)^{[< s]}$ is a torsionfree $\Lambda_1$-module. Let $A \in \Lambda_1 \T_{N,\chi}^{[< s]}$ with $a_1(Af) = 0$ for any $f(q) \in \Mod(\Gamma_1(N),\chi,\Lambda_1)^{[< s]}$. Let $f(q) \in \Mod(\Gamma_1(N),\chi,\Lambda_1)^{[< s]}$. For any $h \in \N \backslash \ens{0}$, we have $a_h(Af) = a_1(T_h Af) = a_1(A(T_hf)) = 0$. It implies that $Af$ is a constant. Since there is no nontrivial modular form which is a constant, $Af = 0$. Therefore the proof of Proposition \ref{Noetherian} ensures $A = 0$. Thus the pairing is right non-degenerate. Let $f(q) \in \Mod(\Gamma_1(N),\chi,\Lambda_1)^{[< s]}$ with $a_1(Af) = 0$ for any $A \in \Lambda_1 \T_{N,\chi}^{[< s]}$. For any $h \in \N \backslash \ens{0}$, we have $a_h(f) = a_1(T_h f) = 0$. It implies that $f$ is a constant. Since there is no nontrivial modular form which is a constant, $f = 0$. Thus the pairing is left non-degenerate.
\end{proof}

A $\Lambda_1$-adic form $f(q)$ of level $N$ is said to be a {\it $\Lambda_1$-adic eigenform of level $N$} if $f(q) \neq 0$ and for any Hecke operator $T$, there is a $\lambda_f(T) \in \Lambda_1$ such that $(T - \lambda_f(T))f(q) = 0$. Such a system $(\lambda_f(T))_T$ is unique because $\Lambda_1$ is an integral domain by Proposition \ref{integral}. It is obvious that the specialisations of a $\Lambda_1$-adic eigenform of level $N$ at all but finitely many $\varphi \in \Omega(\Lambda_1)_{\N \cap [2,\infty)}$ are eigenforms over $\overline{\Z}_p$ of level $N$, but we do not know when the converse holds. A $\Lambda_1$-adic form $f(q)$ of level $N$ is said to be a {\it $\Lambda_1$-adic cusp form of level $N$} if $f(\varphi)(q)$ is a cuspidal eigenform over $\overline{\Q}_p$ of level $N$ for all but finitely many $\varphi \in \Omega(\Lambda_1)_{\N \cap [2,\infty)}$. A $\Lambda_1$-adic eigenform $f(q)$ of level $N$ is said to be {\it normalised} if $a_1(f) = 1$, and is said to be a {\it $\Lambda_1$-adic cuspidal eigenform of level $N$} if $f$ is a $\Lambda_1$-adic cusp form of level $N$. Let $f(q)$ be a $\Lambda_1$-adic cuspidal eigenform of level $N$. Suppose that $f$ is normalised. We have $\lambda_f(T_h) = a_h(f) = a_1(T_hf)$ for any $h \in \N \backslash \ens{0}$, and hence $f(q) = \sum_{h = 1}^{\infty} \lambda_f(T_h)q^h$. Thus $f$ is explicitly determined by the system $(\lambda_f(T))_T$. Suppose that $f$ is not necessarily normalised. We have $a_0(f) = 0$ and $a_1(f) T_h f = a_h(f) f$ for any $h \in \N \backslash \ens{0}$ by definition. It implies that $a_1(f) \neq 0$, $a_h(f) \in a_1(f) \Lambda_1$ for any $h \in \N$, and $\lambda_f(T_h) = a_1(f)^{-1}a_h(f)$ for any $h \in \N \backslash \ens{0}$. Therefore $a_1(f)^{-1}f$ is a normalised $\Lambda_1$-adic cuspidal eigenform of level $N$. Thus every $\Lambda_1$-adic cuspidal eigenform of level $N$ is given as $af(q)$ for a unique pair $(a,f)$ of a constant $a \in \Lambda_1$ and a normalised $\Lambda_1$-adic cuspidal eigenform $f$ of level $N$. Now we show a relation between normalised $\Lambda_1$ cuspidal eigenforms of level $N$ and a continuous $\Lambda_1$-algebra homomorphisms $\Lambda_1 \T_{N,\chi}^{[< s]} \to \Lambda_1$.

\begin{prp}
\label{form - family}
If $\Lambda_1$ is Noetherian, then for any $\Lambda_1$-adic eigenform $f(q)$ of level $N$ with character $\chi$ locally of slope $< s$, there is a unique continuous $\Lambda_1$-algebra homomorphism $\lambda_f \colon \Lambda_1 \T_{N,\chi}^{[< s]} \to \Lambda_1$ extending the system $(\lambda_f(T))_T$.
\end{prp}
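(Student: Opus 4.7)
The approach is to exploit the eigenform property for Hecke operators together with a density argument: the $\Lambda_1$-subalgebra of $\Lambda_1 \T_{N,\chi}^{[< s]}$ generated by Hecke operators is dense, so the eigenrelations $Tf = \lambda_f(T)f$ should propagate to give $Af \in \Lambda_1 f$ for every $A$, from which $\lambda_f$ reads off by division. First I would observe that $f \neq 0$ together with Proposition \ref{integral} (which says $\Lambda_1$ is an integral domain) implies that the continuous $\Lambda_1$-linear map $\Lambda_1 \to \Mod(\Gamma_1(N),\chi,\Lambda_1)^{[< s]}$, $a \mapsto af$, is injective. By Proposition \ref{profinite} the source is compact, and by Corollary \ref{finiteness 4} (iv) the target is Hausdorff, so this map is a closed topological embedding and a homeomorphism onto its image $\Lambda_1 f$.

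Next, using the continuous action provided by Proposition \ref{universal Hecke module 2}, I would consider
\[
U \coloneqq \Set{A \in \Lambda_1 \T_{N,\chi}^{[< s]}}{Af \in \Lambda_1 f},
\]
the preimage of the closed subset $\Lambda_1 f$ under the continuous map $A \mapsto Af$. Then $U$ is closed, contains $1$ and every Hecke operator (by the eigenform hypothesis), is stable under addition and the action of $\Lambda_1$, and is multiplicatively closed: if $Af = af$ and $Bf = bf$ with $a, b \in \Lambda_1$, then $(AB)f = A(bf) = b(Af) = abf \in \Lambda_1 f$ by $\Lambda_1$-linearity of the action. Hence $U$ is a closed $\Lambda_1$-subalgebra of $\Lambda_1 \T_{N,\chi}^{[< s]}$. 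Since each finite-level factor $(\Lambda_1/\bigcap_{\varphi \in S} \ker(\varphi)) \t{T}_{\in S, N, \chi}^{[< s]}$ in the inverse limit defining $\Lambda_1 \T_{N,\chi}^{[< s]}$ is by construction generated over $\Lambda_1/\bigcap_{\varphi \in S} \ker(\varphi)$ by the operators $T_\ell$ and $S_n$, the image of the $\Lambda_1$-subalgebra generated by the Hecke operators is dense in $\Lambda_1 \T_{N,\chi}^{[< s]}$ in the inverse limit topology. Combined with the closure of $U$, this forces $U = \Lambda_1 \T_{N,\chi}^{[< s]}$.

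Finally, the map $\lambda_f$ can be defined as the composite of the continuous map $A \mapsto Af$ with the inverse homeomorphism $\Lambda_1 f \stackrel{\sim}{\to} \Lambda_1$; by construction $Af = \lambda_f(A)f$ for every $A$. Its $\Lambda_1$-linearity follows from $(\alpha A)f = \alpha \lambda_f(A) f$, its multiplicativity from $(AB)f = A(\lambda_f(B)f) = \lambda_f(B)(Af) = \lambda_f(A)\lambda_f(B)f$ using the $\Lambda_1$-linearity of the action, and $\lambda_f(1) = 1$ from $1 \cdot f = f$; on each Hecke operator $T$ the defining relation $Tf = \lambda_f(T)f$ shows that $\lambda_f$ extends the given system. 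Uniqueness is immediate: any other continuous $\Lambda_1$-algebra homomorphism extending $(\lambda_f(T))_T$ agrees with $\lambda_f$ on the dense $\Lambda_1$-subalgebra generated by Hecke operators, hence everywhere. The main technical point to pin down carefully, rather than a deep obstacle, is the density of the subalgebra generated by Hecke operators in $\Lambda_1 \T_{N,\chi}^{[< s]}$; this is analogous to the density argument in the proof of Theorem \ref{universal Hecke module}, and the reduction to finite levels goes through because each finite-level Hecke algebra is generated over its base by the very same operators.
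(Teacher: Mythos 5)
Your proof is correct, and it takes a somewhat different route from the paper's. Both proofs rest on the same two pillars: the density of the $\Lambda_1$-subalgebra generated by Hecke operators in $\Lambda_1 \T_{N,\chi}^{[< s]}$ (which gives uniqueness and, eventually, the algebra-homomorphism property by continuity), and the eigenform hypothesis. The paper, however, defines $\lambda_f$ directly by $\lambda_f(A) \coloneqq a_1(Af)$ — reading off the scalar from the first Fourier coefficient — and then deduces the algebra-homomorphism property on the dense subalgebra by hand before extending by continuity. You instead first prove the stronger structural statement that $Af \in \Lambda_1 f$ for \emph{every} $A$, by showing the set $U$ of such $A$ is a closed $\Lambda_1$-subalgebra containing the dense Hecke subalgebra, and only then extract the scalar through the homeomorphism $\Lambda_1 \stackrel{\sim}{\to} \Lambda_1 f$ (valid since $\Lambda_1$ is compact, $\Mod(\Gamma_1(N),\chi,\Lambda_1)^{[< s]}$ is Hausdorff by Corollary \ref{finiteness 4}(iv), and the scalar map $a \mapsto af$ is injective because $\Lambda_1$ is an integral domain). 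Your version is more elaborate, but it buys something concrete: the paper's formula $\lambda_f(A) = a_1(Af)$ produces a unital map only when $a_1(f) = 1$, so it tacitly assumes $f$ is normalised (as the surrounding discussion does); your definition extracts the scalar intrinsically as the ratio $Af/f$, so it requires only $f \neq 0$ and works without normalisation. The one step you flag as needing care — the density argument at finite levels — is exactly the same one the paper invokes and is correct: by construction each $\left( \Lambda_1/\bigcap_{\varphi \in S} \ker(\varphi) \right) \t{T}_{\in S,N,\chi}^{[< s]}$ is generated over its base by the Hecke operators, so the image of your subalgebra is all of each finite-level quotient, which is exactly density in the inverse-limit topology.
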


\begin{proof}
Let $\A \subset \Lambda_1 \T_{N,\chi}^{[< s]}$ be a $\Lambda_1$-subalgebra generated by $T_{\ell}$ for each prime number $\ell$ and $S_n$ for each $n \in \N$ coprime to $N$. Then $\A$ is a dense $\Lambda_1$-subalgebra of $\Lambda_1 \T_{N,\chi}^{[< s]}$ by the definition of the inverse limit topology. It implies the uniqueness of $\lambda_f$ because $\Lambda_1$ is Hausdorff. We define $\lambda_f$ by setting $\lambda_f(A) \coloneqq a_1(Af)$ for each $A \in \Lambda_1 \T_{N,\chi}^{[< s]}$, where $Af$ is given by the action defined in Proposition \ref{universal Hecke module 2}. Then $\lambda_f$ is continuous, and since $f$ is a $\Lambda_1$-adic eigenform of level $N$, $\lambda_f |_{\A}$ is a $\Lambda_1$-algebra homomorphism. It implies that $\lambda_f$ is a $\Lambda_1$-algebra homomorphism extending $(\lambda_f(T))_T$ by the continuity of the addition and the multiplication of $\Lambda_1 \T_{N,\chi}^{[< s]}$.
\end{proof}

Thus a normalised $\Lambda_1$-adic eigenform of level $N$ with character $\chi$ locally of slope $< s$ is regarded as a continuous $\Lambda_1$-algebra homomorphism $\Lambda_1 \T_{N,\chi}^{[< s]} \to \Lambda_1$ in the case where $\Lambda_1$ is Noetherian. The converse correspondence is a little more complicated.

\begin{prp}
\label{family - form}
If $\Lambda_1$ is Noetherian, then for any continuous $\Lambda_1$-algebra homomorphism $\lambda \colon \Lambda_1 \T_{N,\chi}^{[< s]} \to \Lambda_1$, there are some $a \in \Lambda_1 \backslash \ens{0}$ and $f(q) \in \Mod(\Gamma_1(N),\chi,\Lambda_1)^{[< s]}$ such that $a \lambda(A) = a_1(Af)$ for any $A \in \Lambda_1 \T_{N,\chi}^{[< s]}$. In addition, if $f$ can be taken as a cusp form, then there uniquely exists a normalised $\Lambda_1$-adic cuspidal eigenform $f_{\lambda}(q)$ of level $N$ with character $\chi$ locally of slope $< s$ with $\lambda = \lambda_{f_{\lambda}}$.
\end{prp}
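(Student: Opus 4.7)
My plan is to produce $f$ from $\lambda$ via the duality of Corollary \ref{pairing}. The second isomorphism there supplies a $\t{Frac}(\Lambda_1)$-linear identification
\[
  \t{Frac}(\Lambda_1) \otimes_{\Lambda_1} \Mod(\Gamma_1(N),\chi,\Lambda_1)^{[< s]} \stackrel{\sim}{\to} \t{Hom}_{\t{Frac}(\Lambda_1)} \left( \t{Frac}(\Lambda_1) \otimes_{\Lambda_1} \Lambda_1 \T_{N,\chi}^{[< s]}, \t{Frac}(\Lambda_1) \right)
\]
induced by $(A,g) \mapsto a_1(Ag)$. The continuous $\Lambda_1$-algebra homomorphism $\lambda$ scalar-extends to a $\t{Frac}(\Lambda_1)$-linear form on the right hand source, hence corresponds under this isomorphism to some $\tilde{f}$ on the left, characterised by $a_1(A \tilde{f}) = \lambda(A)$ for every $A \in \Lambda_1 \T_{N,\chi}^{[< s]}$. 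Since $\Mod(\Gamma_1(N),\chi,\Lambda_1)^{[< s]}$ is finitely generated over $\Lambda_1$ by Corollary \ref{finiteness 4} (ii) and $\Lambda_1$ is an integral domain by Proposition \ref{integral}, a common denominator exists: there is an $a \in \Lambda_1 \backslash \ens{0}$ with $f \coloneqq a \tilde{f} \in \Mod(\Gamma_1(N),\chi,\Lambda_1)^{[< s]}$, and then $a_1(Af) = a \lambda(A)$ for every $A$, which settles the first assertion.

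For the second assertion, assume $f$ can be chosen as a cusp form. Setting $A = T_h$ with $h \geq 1$ gives $a_h(f) = a \lambda(T_h)$, whereas cuspidality of $f(\varphi)$ at all but finitely many $\varphi \in \Omega(\Lambda_1)_{\N \cap [2,\infty)}$ forces $\varphi(a_0(f)) = 0$ for such $\varphi$ and hence $a_0(f) = 0$ by Definition \ref{Lambda-adic domain} (ii). Every coefficient of $f$ is therefore divisible by $a$, so
\[
  f_{\lambda}(q) \coloneqq a^{-1} f(q) = \sum_{h = 1}^{\infty} \lambda(T_h) q^h
\]
lies in $\Lambda_1[[q]]$ with $a_1(f_{\lambda}) = \lambda(1) = 1$. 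For every $\varphi$ outside the finite exceptional set for $f$ and outside the finite zero locus of $a$ (finite by Definition \ref{Lambda-adic domain} (ii) applied to $a \neq 0$), the specialisation $f_{\lambda}(\varphi) = \varphi(a)^{-1} f(\varphi)$ is a normalised cuspidal eigenform of weight $\t{wt}(\varphi)$, character $\varphi \circ \chi$, and slope $< s$, whence $f_{\lambda}$ is a normalised $\Lambda_1$-adic cusp form in $\Mod(\Gamma_1(N),\chi,\Lambda_1)^{[< s]}$. Next, the identity
\[
  a_1(C(B \tilde{f} - \lambda(B) \tilde{f})) = \lambda(CB) - \lambda(B) \lambda(C) = 0 \quad ({}^{\forall} C \in \Lambda_1 \T_{N,\chi}^{[< s]})
\]
combined with non-degeneracy in Corollary \ref{pairing} yields $B \tilde{f} = \lambda(B) \tilde{f}$; clearing denominators gives $B f_{\lambda} = \lambda(B) f_{\lambda}$ in $\Lambda_1[[q]]$, so $\lambda_{f_{\lambda}} = \lambda$. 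Uniqueness is immediate: any normalised $\Lambda_1$-adic cuspidal eigenform $f'$ with $\lambda_{f'} = \lambda$ satisfies $a_0(f') = 0$ and $a_h(f') = \lambda_{f'}(T_h) a_1(f') = \lambda(T_h)$ for $h \geq 1$, hence $f' = f_{\lambda}$.

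The most delicate point is the verification that $B f_{\lambda} = \lambda(B) f_{\lambda}$ holds for every $B \in \Lambda_1 \T_{N,\chi}^{[< s]}$ and not merely for the standard Hecke operators. The action on $\Mod(\Gamma_1(N),\chi,\Lambda_1)^{[< s]}$ supplied by Proposition \ref{universal Hecke module 2} is the unique continuous extension of the tautological action of the dense subalgebra generated by the $T_{\ell}$ and $S_n$, so I must confirm that its scalar extension to $\t{Frac}(\Lambda_1) \otimes_{\Lambda_1} \Mod(\Gamma_1(N),\chi,\Lambda_1)^{[< s]}$ is compatible with the duality isomorphism in Corollary \ref{pairing}, i.e.\ that $a_1(A(Bg)) = a_1((AB) g)$ for all $A, B \in \Lambda_1 \T_{N,\chi}^{[< s]}$ and $g \in \Mod(\Gamma_1(N),\chi,\Lambda_1)^{[< s]}$. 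This associativity follows from the $\Lambda_1$-linearity and continuity of the Hecke action together with the continuity of $a_1 \colon \Lambda_1[[q]] \to \Lambda_1$; once granted, every remaining step reduces to formal manipulation exploiting torsionfreeness over the domain $\Lambda_1$ and the identity-theorem property in Definition \ref{Lambda-adic domain} (ii).
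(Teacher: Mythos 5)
Your proposal is correct and follows essentially the same route as the paper: produce $\tilde{f}$ from $\lambda$ via the second duality isomorphism of Corollary \ref{pairing}, clear denominators using finite generation, use cuspidality of the specialisations together with Definition \ref{Lambda-adic domain} (ii) to get $a_0(f)=0$, divide by $a$, and verify the eigenform property and uniqueness from the relations $a_h = a_1 \circ T_h$. The only cosmetic difference is that you deduce $B f_\lambda = \lambda(B) f_\lambda$ from non-degeneracy of the pairing while the paper observes directly that $A f_\lambda - \lambda(A) f_\lambda$ is constant and hence zero; these are equivalent and equally short.
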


\begin{proof}
By Corollary \ref{pairing}, there uniquely exists an $f_{\lambda} \in \t{Frac}(\Lambda_1) \otimes_{\Lambda_1} \Mod(\Gamma_1(N),\chi,\Lambda_1)^{[< s]}$ such that for any $a \in \Lambda_1 \backslash \ens{0}$ and $f \in \Mod(\Gamma_1(N),\chi,\Lambda_1)^{[< s]}$ with $a^{-1} \otimes f = f_{\lambda}$, the equality $a^{-1} a_1(Af) = \lambda(A)$ holds for any $A \in \Lambda_1 \T_{N,\chi}^{[< s]}$.

\vspace{0.2in}
Suppose that $f$ can be taken as a cusp form. We have $a_0(f) =0$ and $a_h(f) = a_1(T_h f) = a \lambda(T_h) \in a \Lambda_1$ for any $h \in \N \backslash \ens{0}$, and in particular, the equality $a_1(f) = a \lambda(1) = a$ holds. Therefore $f_{\lambda}$ lies in the image of $\Mod(\Gamma_1(N),\chi,\Lambda_1)^{[< s]}$. Namely, $f_{\lambda} = \sum_{h = 1}^{\infty} \lambda(T_h) q^h \in \Mod(\Gamma_1(N),\chi,\Lambda_1)^{[< s]}$. Let $A \in \Lambda_1 \T_{N,\chi}^{[< s]}$. We have
\begin{eqnarray*}
  a_h(A f_{\lambda}) = a_1(T_h A f_{\lambda}) = a \lambda(T_h A) = a \lambda(T_h) \lambda(A) = \lambda(A) a_h(f_{\lambda})
\end{eqnarray*}
for any $h \in \N \backslash \ens{0}$, and hence $A f_{\lambda} - \lambda(A) f_{\lambda}$ is a constant. Since there is no non-trivial modular form which is a constant, we obtain $A f_{\lambda} = \lambda(A) f_{\lambda}$. In particular, the equality $S_n f_{\lambda} = \lambda(S_n) f = \lambda(\chi(n + N \Z)) f = \chi(n + N \Z) f$ holds for any $n \in \N$ coprime to $N$. Thus $f_{\lambda}$ is a normalised $\Lambda_1$-adic cuspidal eigenform of level $N$ with character $\chi$ locally of slope $< s$ with $\lambda_{f_{\lambda}} = \lambda$.

\vspace{0.2in}
Let $f(q)$ be a normalised $\Lambda_1$-adic eigenform $f_{\lambda}(q)$ of level $N$ with character $\chi$ locally of slope $< s$ with $\lambda = \lambda_f$. Then we have $a_h(f) = \lambda_f(T_h) = \lambda(T_f) = a_h(f_{\lambda})$ for any $h \in \N \backslash \ens{0}$. It implies that $f - f_{\lambda}$ is a constant. Since there is no non-trivial modular form which is a constant, we obtain $f = f_{\lambda}$. We conclude that $f_{\lambda}$ is a unique normalised $\Lambda_1$-adic eigenform of level $N$ with character $\chi$ locally of slope $< s$ with $\lambda = \lambda_{f_{\lambda}}$.
\end{proof}

We would like to add the element $p^sT_p^{-1}$ to $\Lambda_1 \T_{N,\chi}^{[< s]}$. However, the endomorphism on $\Q_p \otimes_{\Z_p} \Mod(\Gamma_1(N),\chi,\Lambda_1)^{[< s]}$ given by $T_p$ seems not to be invertible, because an endomorphism on an infinite dimensional compactly generated topological vector space with infinitely many points on the resolvent is never diagonalisable. Therefore for a $\varphi \in \t{Reg}^{< s}(\Lambda_1)$, we do not know whether or not the operator $T_p$ is invertible in $\Q_p \otimes_{\Z_p} \Mod_{\varphi}(\Gamma_1(N),\chi,\Z_p[\varphi])^{[< s]}$, and hence we can not regard $p^sT_p^{-1}$ as an element of $\Q_p \otimes_{\Z_p} \t{End}_{\Q_p}(\Mod_{\varphi}(\Gamma_1(N),\chi,\Z_p[\varphi])^{[< s]})$. Imitating the result of Proposition \ref{p^s/T_p}, we set
\begin{eqnarray*}
  \Z_p[\varphi] \t{T}_{\varphi,N,\chi}^{< s} \coloneqq \left( \Z_p[\varphi] \t{T}_{\varphi,N,\chi}^{[< s]}[X]/(T_pX-p^s) \right)_{\t{free}}
\end{eqnarray*}
for each $\varphi \in \t{Reg}^{< s}(\Lambda_1)$. Since $\Z_p[\varphi] \t{T}_{\varphi,N,\chi}^{[< s]}$ is a $\Z_p$-algebra finitely generated as a $\Z_p$-module, so is $\Z_p[\varphi] \t{T}_{\varphi,N,\chi}^{< s}$ by a similar argument with that in \S \ref{p-adic Modular Forms and Hecke Algebras}. For each finite subset $S \subset \t{Reg}^{< s}(\Lambda_1)$, we denote by 
\begin{eqnarray*}
  \left( \Lambda_1/\bigcap_{\varphi \in S} \ker(\varphi) \right) \t{T}_{\in S,N,\chi}^{< s} \subset \prod_{\varphi \in S} \Z_p[\varphi] \t{T}_{\varphi,N,\chi}^{< s}
\end{eqnarray*}
the commutative $(\Lambda_1/\bigcap_{\varphi \in S} \ker(\varphi))$-subalgebra generated by $(T_{\ell})_{\varphi \in S}$ for each prime number $\ell$, $(S_n)_{\varphi \in S}$ for each $n \in \N$ coprime to $N$, and $(p^sT_p^{-1})_{\varphi \in S}$. We set
\begin{eqnarray*}
  \Lambda_1 \T_{N,\chi}^{< s} \coloneqq \varprojlim_{S \subset \t{Reg}^{< s}(\Lambda_1)} \left( \Lambda_1/\bigcap_{\varphi \in S} \ker(\varphi) \right) \t{T}_{\in S,N,\chi}^{< s},
\end{eqnarray*}
where $S$ in the limit runs through all finite subsets of $\t{Reg}^{< s}(\Lambda_1)$. We regard it as a profinite $\Lambda_1 \hat{\otimes}_{\Lambda_0} \Lambda_0 \T_N^{< s}$-algebra by Corollary \ref{profinite ring}.

\begin{prp}
\label{topologically nilpotent 2}
The continuous $\Lambda_1 \T_{N,\chi}^{[< s]}$-algebra homomorphism
\begin{eqnarray*}
  \Lambda_1 \T_{N,\chi}^{[< s]}[[X]] & \to & \Lambda_1 \T_{N,\chi}^{< s} \\
  X & \mapsto & p^sT_p^{-1}
\end{eqnarray*}
is surjective.
\end{prp}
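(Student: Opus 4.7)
The plan is to imitate closely the proof of Proposition \ref{topologically nilpotent}, with the Hecke algebras $\t{T}_{\leq k,N}^{[<s]}$ replaced by the finite-support quotients of $\Lambda_1 \T_{N,\chi}^{[<s]}$. The first step is to fix a finite subset $S \subset \t{Reg}^{<s}(\Lambda_1)$ and show that the image of $p^sT_p^{-1}$ in $(\Lambda_1/\bigcap_{\varphi \in S} \ker(\varphi)) \t{T}_{\in S,N,\chi}^{<s}$ is topologically nilpotent with respect to the $p$-adic topology. For each $\varphi \in S$, the $\Z_p[\varphi]$-module $\t{M}_{\varphi}(\Gamma_1(N),\chi,\Z_p[\varphi])^{[< s]}$ is, by the definition of $\t{Reg}^{<s}(\Lambda_1)$, a submodule of $\t{M}_{\t{wt}(\varphi)}(\Gamma_1(N),\varphi \circ \chi,\Z_p[\varphi])^{<s}$, so every eigenvalue of $T_p$ acting on it has $p$-adic absolute value in $(\v{p}^s,1]$. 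Letting $P_{\varphi}(X) \in \Z_p[X]$ denote the minimal polynomial of $T_p$ in $\Z_p[\varphi] \t{T}_{\varphi,N,\chi}^{[<s]}$, the calculation used in the proof of Proposition \ref{topologically nilpotent} produces a polynomial $Q_{\varphi}(X) = X^{n_{\varphi}} + \cdots$ with $Q_{\varphi}(X) - X^{n_{\varphi}} \in p\Z_p[X]$, and the torsionfreeness of $\Z_p[\varphi] \t{T}_{\varphi,N,\chi}^{<s}$ as a $\Z_p$-module forces $Q_{\varphi}(p^sT_p^{-1}) = 0$, giving $(p^sT_p^{-1})^{n_{\varphi}} \in p \Z_p[\varphi] \t{T}_{\varphi,N,\chi}^{<s}$. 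Taking $n := \max_{\varphi \in S} n_{\varphi}$ yields $(p^sT_p^{-1})^n \in p (\Lambda_1/\bigcap_{\varphi \in S} \ker(\varphi)) \t{T}_{\in S,N,\chi}^{<s}$, and $p$-adic completeness of the target ensures topological nilpotency.

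Next, I would invoke the universality of the algebra of formal power series and the $p$-adic completeness of $(\Lambda_1/\bigcap_{\varphi \in S} \ker(\varphi)) \t{T}_{\in S,N,\chi}^{<s}$ to extend the $\Lambda_1 \T_{N,\chi}^{[<s]}$-algebra homomorphism $\Lambda_1 \T_{N,\chi}^{[<s]}[X] \to (\Lambda_1/\bigcap_{\varphi \in S} \ker(\varphi)) \t{T}_{\in S,N,\chi}^{<s}$ sending $X \mapsto p^sT_p^{-1}$ to a unique continuous $\Lambda_1 \T_{N,\chi}^{[<s]}$-algebra homomorphism from $\Lambda_1 \T_{N,\chi}^{[<s]}[[X]]$. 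These extensions are compatible as $S$ ranges over finite subsets of $\t{Reg}^{<s}(\Lambda_1)$, so the universality of the inverse limit produces a continuous $\Lambda_1 \T_{N,\chi}^{[<s]}$-algebra homomorphism
\begin{eqnarray*}
  \varphi \colon \Lambda_1 \T_{N,\chi}^{[<s]}[[X]] \to \Lambda_1 \T_{N,\chi}^{<s} = \varprojlim_{S \subset \t{Reg}^{<s}(\Lambda_1)} \left( \Lambda_1/\bigcap_{\psi \in S} \ker(\psi) \right) \t{T}_{\in S,N,\chi}^{<s}.
\end{eqnarray*}

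Finally, by the very definition of each $(\Lambda_1/\bigcap_{\psi \in S} \ker(\psi)) \t{T}_{\in S,N,\chi}^{<s}$ as being generated over $\Lambda_1/\bigcap_{\psi \in S} \ker(\psi)$ by Hecke operators together with $p^sT_p^{-1}$, the composite of $\varphi$ with each canonical projection is surjective, and hence the image of $\varphi$ is dense in $\Lambda_1 \T_{N,\chi}^{<s}$ by the definition of the inverse limit topology. Since $\Lambda_1 \T_{N,\chi}^{[<s]}[[X]] = \varprojlim_r \Lambda_1 \T_{N,\chi}^{[<s]}[X]/(X^r)$ is an inverse limit of profinite $\Lambda_1 \T_{N,\chi}^{[<s]}$-modules (each a finite free module over the profinite ring $\Lambda_1 \T_{N,\chi}^{[<s]}$), it is compact; as $\Lambda_1 \T_{N,\chi}^{<s}$ is Hausdorff, the continuous map $\varphi$ is closed, so its dense image is all of $\Lambda_1 \T_{N,\chi}^{<s}$.

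The main obstacle is the first step, namely confirming that the slope bound on eigenvalues of $T_p$ really does transfer from the classical slope-$<s$ modular forms to $T_p$ regarded as an element of $\Z_p[\varphi] \t{T}_{\varphi,N,\chi}^{[<s]}$ acting on the $\Z_p[\varphi]$-module $\t{M}_{\varphi}(\Gamma_1(N),\chi,\Z_p[\varphi])^{[<s]}$ — this is where the definition of $\t{Reg}^{<s}(\Lambda_1)$ and the containment of specializations inside $\t{M}_{\t{wt}(\varphi)}(\Gamma_1(N),\Z_p[\varphi])^{<s}$ is essential, and everything subsequently is a direct transcription of the argument already used for Proposition \ref{topologically nilpotent}.
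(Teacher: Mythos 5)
Your proof is correct and follows the same route as the paper, whose own proof of Proposition \ref{topologically nilpotent 2} is a one-line reduction to ``a similar argument'' from Proposition \ref{topologically nilpotent}; you have simply carried out the details, adapting the finite-level quotients from $\t{T}_{\leq k_1,N}^{< s}$ to $(\Lambda_1/\bigcap_{\varphi \in S}\ker(\varphi))\t{T}_{\in S,N,\chi}^{< s}$ and using the containment $\t{M}_{\varphi}(\Gamma_1(N),\chi,\Z_p[\varphi])^{[< s]} \subset \t{M}_{\t{wt}(\varphi)}(\Gamma_1(N),\varphi\circ\chi,\Z_p[\varphi])^{< s}$ from the definition of $\t{Reg}^{< s}(\Lambda_1)$ exactly where the original proof uses the slope bound on $\t{M}_k(\Gamma_1(N),\overline{\Q}_p)^{< s}$.
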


\begin{proof}
The assertion can be easily verified by a similar argument in the proof of Proposition \ref{topologically nilpotent}.
\end{proof}

\begin{crl}
If $\Lambda_1$ is Noetherian, then so is $\Lambda_1 \T_{N,\chi}^{< s}$.
\end{crl}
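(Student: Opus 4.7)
The plan is to chain together three ingredients already in place. First, by Proposition~\ref{Noetherian}, $\Lambda_1 \T_{N,\chi}^{[< s]}$ is a finitely generated module over $\Lambda_1$. Since $\Lambda_1$ is Noetherian by hypothesis, a standard fact (finitely generated modules over a Noetherian ring are Noetherian, and a finitely generated algebra which is module-finite over a Noetherian base is itself a Noetherian ring) yields that $\Lambda_1 \T_{N,\chi}^{[< s]}$ is a commutative Noetherian ring.

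Second, I will apply the Hilbert basis theorem in its formal-power-series version: for any commutative Noetherian ring $R$, the ring $R[[X]]$ is Noetherian. Specialising to $R = \Lambda_1 \T_{N,\chi}^{[< s]}$ shows that $\Lambda_1 \T_{N,\chi}^{[< s]}[[X]]$ is Noetherian.

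Third, by Proposition~\ref{topologically nilpotent 2}, the continuous $\Lambda_1 \T_{N,\chi}^{[< s]}$-algebra homomorphism
\begin{eqnarray*}
  \Lambda_1 \T_{N,\chi}^{[< s]}[[X]] \twoheadrightarrow \Lambda_1 \T_{N,\chi}^{< s}, \quad X \mapsto p^s T_p^{-1},
\end{eqnarray*}
is surjective. Since the quotient of a Noetherian ring by any (two-sided) ideal is Noetherian, and the kernel of any ring homomorphism is an ideal, the conclusion that $\Lambda_1 \T_{N,\chi}^{< s}$ is Noetherian follows immediately.

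There is no real obstacle here: the nontrivial content has been absorbed into Proposition~\ref{Noetherian} (module-finiteness of $\Lambda_1 \T_{N,\chi}^{[< s]}$ over $\Lambda_1$) and Proposition~\ref{topologically nilpotent 2} (the surjection from a formal-power-series ring that encodes the topological nilpotence of $p^s T_p^{-1}$). The only subtlety worth flagging is that the surjection is merely a continuous ring homomorphism, not a topological quotient, but we do not need the topology at all: Noetherianness is a purely algebraic property, and algebraic surjectivity of ring homomorphisms suffices to transport it. Accordingly, the proof will read as a three-line deduction with explicit citations to Proposition~\ref{Noetherian}, the Hilbert basis theorem for $R[[X]]$, and Proposition~\ref{topologically nilpotent 2}.
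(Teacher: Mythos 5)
Your proof is correct and is exactly what the paper intends: the corollary is stated without an explicit proof immediately after Proposition~\ref{topologically nilpotent 2}, and the natural chain you give — Proposition~\ref{Noetherian} gives Noetherianness of $\Lambda_1\T_{N,\chi}^{[<s]}$, the power-series Hilbert basis theorem gives Noetherianness of $\Lambda_1\T_{N,\chi}^{[<s]}[[X]]$, and the surjection from Proposition~\ref{topologically nilpotent 2} transports Noetherianness to the quotient — is the deduction the placement of the corollary is signalling. Your remark that the topological nature of the surjection is irrelevant is also correct and worth keeping, since it preempts a possible worry.
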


We note that every continuous $\Lambda_1$-algebra homomorphism $\lambda \colon \Lambda_1 \T_{N,\chi}^{[< s]} \to \Lambda_1$ uniquely extends to a $\Lambda_1$-algebra homomorphism
\begin{eqnarray*}
  \Lambda_1 \T_{N,\chi}^{[< s]}[X]/(T_pX-p^s) & \to & \t{Frac}(\Lambda_1) \\
  X & \mapsto & p^s \lambda(T_p)^{-1}
\end{eqnarray*}
but does not necessarily extend to a $\Lambda_1$-algebra homomorphism $\Lambda_1 \T_{N,\chi}^{< s} \to \t{Frac}(\Lambda_1)$ because the image of $p^sT_p^{-1}$ in $\t{Frac}(\Lambda_1)$ does not necessarily topologically nilpotent even if one equips $\t{Frac}(\Lambda_1)$ with suitable topologies.

\begin{dfn}
A {\it $\Lambda_1$-adic family of systems of Hecke eigenvalues of level $N$ with character $\chi$ of slope $< s$} is a continuous $\Lambda_1$-algebra homomorphism $\Lambda_1 \T_{N,\chi}^{< s} \to \Lambda_1$.
\end{dfn}

\begin{dfn}
A $\Lambda_1$-adic eigenform $f(q)$ of level $N$ with character $\chi$ is said to be {\it of slope $< s$} if $p^s a_1(f) \in a_p(f) \Lambda_1$.
\end{dfn}

Suppose that $\Lambda_1$ is Noetherian. For a normalised $\Lambda_1$-adic eigenform $f(q)$ of level $N$ with character $\chi$ of slope $< s$, the homomorphism
\begin{eqnarray*}
  \Lambda_1 \T_{N,\chi}^{[< s]}[[X]] & \to & \Lambda_1 \\
  \sum_{m = 0}^{\infty} A_m X^m & \mapsto & a_1 \left( \sum_{m = 0}^{\infty} (p^s a_p(f)^{-1})^m A_m f \right)
\end{eqnarray*}
induces a $\Lambda_1$-adic family $\lambda_f \colon \Lambda_1 \T_{N,\chi}^{< s} \to \Lambda_1$ of systems of Hecke eigenvalues of level $N$ with character $\chi$ of slope $< s$ by a similar argument with that in the proof of Proposition \ref{form - family}. Thus a normalised $\Lambda_1$-adic eigenform of level $N$ of slope $< s$ is naturally identified with a $\Lambda_1$-adic family of systems of Hecke eigenvalues of level $N$ of slope $< s$.

\vspace{0.2in}
A $\Lambda_1$-adic family $\lambda \colon \Lambda_1 \T_{N,\chi}^{< s} \to \Lambda_1$ of systems of Hecke eigenvalues of level $N$ with character $\chi$ of slope $< s$ is said to be {\it a $\Lambda_1$-adic cuspidal family of systems of Hecke eigenvalues of level $N$ with character $\chi$ of slope $< s$} if the formal power series
\begin{eqnarray*}
  f_{\lambda} \coloneqq \sum_{h = 1}^{\infty} \lambda(T_h) q^h
\end{eqnarray*}
is a normalised $\Lambda_1$-adic eigenform of level $N$ with character $\chi$ of slope $< s$ such that $f_{\lambda}(\varphi)(q)$ is a cusp form over $\Z_p[\varphi]$ of weight $\t{wt}(\varphi)$ and level $N$ for all but finitely many $\varphi \in \Omega(\Lambda_1)_{\N \cap [2,\infty)}$. A $\Lambda_1$-adic form $f(q)$ of level $N$ with character $\chi$ of slope $< s$ is said to be a {\it normalised $\Lambda_1$-adic cuspidal eigenform of level $N$ with character $\chi$ of slope $< s$} if there is a cuspidal $\Lambda_1$-adic family $\lambda_f$ of systems of Hecke eigenvalues of level $N$ with character $\chi$ of slope $< s$ such that $f = f_{\lambda_f}$. The family $\lambda_f$ is unique by a similar argument in the proof of Proposition \ref{form - family}. By definition, the notion of a normalised $\Lambda_1$-adic cuspidal eigenform of level $N$ with character $\chi$ of slope $< s$ is equivalent to that of a cuspidal $\Lambda_1$-adic family of systems of Hecke eigenvalues of level $N$ with character $\chi$ of slope $< s$ even if $\Lambda_1$ is not Noetherian.

\subsection{$p$-adic Family of Galois Representations of Finite Slope}
\label{p-adic Family of Galois Representations of Finite Slope}

Henceforth, we fix an algebraic closure $\overline{\Q}_{\ell}$ of $\Q_{\ell}$ and an isomorphism $\iota_{\ell,\infty} \colon \overline{\Q}_{\ell} \stackrel{\sim}{\to} \C$, and regard $\t{Gal}(\overline{\Q}_{\ell}/\Q_{\ell})$ as a closed subgroup of $\t{Gal}(\overline{\Q}/\Q)$ through the embedding $\iota_{\ell,\infty}^{-1} \circ \iota_{0,\infty} \colon \overline{\Q} \hookrightarrow \overline{\Q}_{\ell}$ for each prime number $\ell$ coprime to $N$. We also regard $\t{Gal}(\C/\R)$ as a closed subgroup of $\t{Gal}(\overline{\Q}/\Q)$ through the embedding $\iota_{0,\infty} \colon \overline{\Q} \hookrightarrow \C$.

\vspace{0.2in}
We denote by $\Q(\G_{\t{m}}[N]) \subset \overline{\Q}$ the subfield generated by a primitive $N$-th root of unity. We regard each Dirichlet character $\epsilon \colon (\Z/N \Z)^{\times} \to \overline{\Q}{}_p^{\times}$ as the continuous character on $\t{Gal}(\overline{\Q}/\Q)$ given as the composite
\begin{eqnarray*}
  & & \t{Gal}(\overline{\Q}/\Q) \twoheadrightarrow \t{Gal}(\overline{\Q}/\Q)/\t{Gal}(\overline{\Q}/\Q(\G_{\t{m}}[N])) \cong \t{Gal}(\Q(\G_{\t{m}}[N](\overline{\Q}))/\Q) \\
  & \stackrel{\sim}{\to} & (\Z/N \Z)^{\times} \stackrel{\epsilon}{\to} \overline{\Q}{}_p^{\times}.
\end{eqnarray*}
Let $k_0 \in \N \cap [2,\infty)$. For any normalised cuspidal eigenform $f(q)$ over $\overline{\Q}_p$ of weight $k_0$ and level $N$ with character $\epsilon$, by \cite{Shi71} Theorem 7.24 for weight $2$ and \cite{Del69} $\t{N}^{\circ}$ 3-4 for general weights $\geq 2$, there is a $2$-dimensional irreducible continuous representation $V_f$ of $\t{Gal}(\overline{\Q}/\Q)$ over $\Q_p(f)$ called {\it the Galois representation associated to $f$} satisfying the following:
\begin{itemize}
\item[(i)] The restriction $V_f |_{\t{Gal}(\C/\R)}$ is odd, i.e.\ the complex conjugate $\varphi_{\infty} \in \t{Gal}(\C/\R)$ satisfies $\det(\varphi_{\infty} \mid V_f) = -1$.
\item[(ii)] The restriction $V_f |_{\t{Gal}(\overline{\Q_{\ell}}/\Q_{\ell})}$ is unramified, i.e.\ the inertia subgroup of $\t{Gal}(\overline{\Q_{\ell}}/\Q_{\ell})$ acts trivially on $V_f$ for any prime number $\ell$ coprime to $N$.
\item[(iii)] Every lift $\varphi_{\ell} \in \t{Gal}(\overline{\Q}_{\ell}/\Q_{\ell})$ of the $\ell$-th Frobenius satisfies $\det(X - \varphi_{\ell}^{-1} \mid V_f) = X^2 - a_p(f) X + \ell^{k_0-1} \epsilon(\ell + N \Z)$ for any prime number $\ell$ coprime to $N$.
\end{itemize}
We remark that such a representation is unique up to isomorphism by Chebotarev's density theorem (\cite{Tsc26} Hauptsatz p.\ 195) and \cite{Car94} Th\'eor\`eme 1. B.\ H.\ Gross proved that for any normalised cuspidal eigenform $f(q)$ over $\overline{\Q}_p$ of weight $2$ and level $N$, the quotient of the rational Tate module of the Jacobian of $Y_1(N)$ by the $(\Q_p \otimes_{\Z_p} \t{T}_{k_0,N})$-submodule generated by $\ker(\lambda_f) = \t{Ann}_{\t{T}_{k_0,N}}(f)$ is naturally isomorphic to $V_f$ twisted by $\epsilon^{-1}$ in the proof of \cite{Gro90} Theorem 11.4. To begin with, we verify an immediate generalisation of this construction.

\begin{thm}
\label{Galois representation}
Let $k_0 \in \N \cap [2,\infty)$. For any normalised cuspidal eigenform $f(q)$ over $\overline{\Q}_p$ of weight $k_0$ and level $N$ with character $\epsilon$, the quotient $\Hil_f$ of
\begin{eqnarray*}
  \t{H}_{\t{\'et}}^1 \left( Y_1(N)_{\overline{\Q}}, \t{Sym}^{k_0-2} \left( \t{R}^1 (\pi_N)_*(\underline{\Q}{}_p)_{E_1(N)} \right) \right)
\end{eqnarray*}
by the $(\Q_p \otimes_{\Z_p} \t{T}_{k_0,N})$-submodule generated by $\ker(\lambda_f)$ is naturally isomorphic to $V_f$ twisted by $\epsilon^{-1}$.
\end{thm}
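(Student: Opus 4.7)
The plan is to generalise Gross's argument from \cite{Gro90} Theorem 11.4 from weight two to weight $k_0 \geq 2$. The three main steps are: establish that $\Hil_f$ is two-dimensional over $\Q_p(f)$, promote it to a Galois representation, and identify it with $V_f \otimes \epsilon^{-1}$ via Chebotarev density and a Frobenius characteristic polynomial calculation.

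First, I would show $\Hil_f$ is a two-dimensional $\Q_p(f)$-vector space. By the Eichler--Shimura isomorphism recalled in Remark \ref{symmetric product 4} combined with Artin's comparison theorem, after extending scalars the cohomology $\t{H}^1_{\t{\'et}}(Y_1(N)_{\overline{\Q}}, \t{Sym}^{k_0-2}(\t{R}^1(\pi_N)_*(\underline{\Q}_p)_{E_1(N)}))$ admits a Hecke-stable decomposition into a cuspidal part, which is free of rank two over the cuspidal quotient of $\Q_p \otimes_{\Z_p} \t{T}_{k_0,N}$, and an Eisenstein part. Since this cuspidal quotient is a product of fields in characteristic zero with $\lambda_f$ factoring through a distinguished factor isomorphic to $\Q_p(f)$, and since cuspidal systems of Hecke eigenvalues are disjoint from Eisenstein ones at level $\Gamma_1(N)$, quotienting by the ideal generated by $\ker(\lambda_f)$ annihilates every other component and leaves exactly the two-dimensional $\lambda_f$-eigenspace.

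Second, by Proposition \ref{Galois - Hecke} the actions of $\t{Gal}(\overline{\Q}/\Q)$ and of the Hecke algebra on the cohomology commute, so $\ker(\lambda_f)$ generates a Galois-stable submodule and $\Hil_f$ inherits a continuous two-dimensional representation of $\t{Gal}(\overline{\Q}/\Q)$ over $\Q_p(f)$. To identify this representation with $V_f \otimes \epsilon^{-1}$, by Chebotarev density and Carayol's uniqueness theorem (\cite{Car94} Théorème 1) it suffices to verify that $\Hil_f$ is unramified at each prime $\ell$ coprime to $N$ and that the characteristic polynomial of a geometric Frobenius $\varphi_{\ell}^{-1}$ equals $X^2 - a_\ell(f) X + \ell^{k_0-1} \epsilon(\ell)$. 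Unramifiedness follows from smooth and proper base change applied to the Kuga--Sato variety $E_1(N)^{k_0-2}$, compactified à la Deligne--Rapoport to a smooth proper scheme with good reduction at $\ell \nmid N$, whose cohomology contains $\t{Sym}^{k_0-2}(\t{R}^1(\pi_N)_*(\underline{\Q}_p)_{E_1(N)})$ as a Künneth summand cut out by an idempotent of the symmetric group action. The Frobenius characteristic polynomial is then computed via the Eichler--Shimura congruence relation: modulo $\ell$, the Hecke correspondence at $\ell$ constructed on $Y_1(N,\ell)$ in the proof of Proposition \ref{Galois - Hecke} decomposes into the graph of Frobenius and that of Verschiebung twisted by $\langle \ell \rangle$, yielding after lifting to characteristic zero via the Weil conjectures the identity $T_\ell = \varphi_\ell + \ell^{k_0-1} \epsilon(\ell) \varphi_\ell^{-1}$ on $\Hil_f$.

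The main obstacle will be the careful bookkeeping of the twist by $\epsilon^{-1}$. This twist arises from the precise normalisation of the diamond operator $\langle \ell \rangle$ relative to the Weil pairing on the Tate module of $E_1(N)$, together with the Tate twist implicit in identifying $\t{R}^1(\pi_N)_*(\underline{\Q}_p)_{E_1(N)}$ with the dual of the Tate module of the universal elliptic curve, and the Eichler--Shimura relation expresses the product of eigenvalues of Frobenius on $\t{R}^1(\pi_N)_*(\underline{\Q}_p)_{E_1(N)}$ as $\ell$ rather than $\ell \epsilon(\ell)$. Tracking these conventions through the argument is exactly what yields $\epsilon^{-1}$ rather than some other character, so the calculation must be performed with care to confirm that the characteristic polynomial computed above for $\Hil_f$ agrees with that of $V_f \otimes \epsilon^{-1}$ and not of $V_f$ itself or some other twist.
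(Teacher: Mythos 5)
Your approach differs substantially from the paper's. You propose to re-derive the Galois representation directly from geometry --- Kuga--Sato varieties, smooth proper base change for unramifiedness, and the Eichler--Shimura congruence together with the Weil conjectures for the Frobenius characteristic polynomial, followed by Chebotarev density and Carayol's uniqueness. The paper instead \emph{reduces} to Deligne's theorem, which it simply cites for the usual moduli model $Y_1(N)'$ (pairs $(E,\alpha)$ with $\alpha$ a point of exact order $N$), and then constructs the comparison isomorphism $\Psi_{\zeta}$ over $\Q(\G_{\t{m}}[N])$ between the paper's $\mu_N$-embedding model $Y_1(N)$ and $Y_1(N)'$, computing explicitly on moduli that $\t{Gal}(\overline{\Q}/\Q)$ conjugates $\Psi_{\zeta}$ by the diamond correspondence $\langle\varphi\rangle^{-1}$. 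Passing to the $\lambda_f$-isotypic quotient, this operator specialises to $\epsilon^{-1}$, which is the entire content of the twist. Your route, if completed in detail, would work, but it amounts to re-proving Deligne's theorem with its full apparatus (compactified Kuga--Sato schemes, K\"unneth projectors, boundary control), whereas the paper's comparison-of-models argument is short and avoids all of that machinery.

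The twist by $\epsilon^{-1}$ is exactly where your proposal has a genuine gap, not merely deferred bookkeeping. In the second paragraph you say it suffices to verify that the characteristic polynomial of $\varphi_{\ell}^{-1}$ on $\Hil_f$ equals $X^2 - a_{\ell}(f)X + \ell^{k_0-1}\epsilon(\ell)$ --- but that is the characteristic polynomial of $\varphi_{\ell}^{-1}$ on $V_f$ itself, so verifying it would prove $\Hil_f \cong V_f$ \emph{without} the twist, contrary to the statement. Moreover, the congruence relation you write, $T_{\ell} = \varphi_{\ell} + \ell^{k_0-1}\epsilon(\ell)\varphi_{\ell}^{-1}$, determines the characteristic polynomial of \emph{arithmetic} Frobenius $\varphi_{\ell}$, not of $\varphi_{\ell}^{-1}$, and the two do not coincide; and on the paper's $\mu_N$-model the diamond operator appears in the congruence inverted relative to the standard model --- this inversion, which is Gross's weight-two computation in \cite{Gro90}, is precisely what produces $\epsilon^{-1}$ --- yet you have transcribed the standard-model relation without adjustment. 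Your third paragraph correctly flags that conventions must be tracked, but the tracking \emph{is} the substance of the argument; the paper discharges it by showing on moduli that the conjugate of $\psi_{\zeta}$ by $\t{id} \times \varphi$ is $\psi_{\zeta}$ composed with $\langle\varphi\rangle^{-1}$, and without a corresponding verification on the Kuga--Sato side your proposal does not actually produce the twist.
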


\begin{proof}
For a commutative ring $R$, a commutative $R$-algebra $A$, and a scheme $Y$ over $\t{Spec}(R)$, we put $Y_A \coloneqq Y \times_{\t{Spec}(R)} \t{Spec}(A)$. We denote by $Y_1(N)'$ the moduli of pairs $(E,\alpha)$ of an elliptic curve $E$ and a primitive $N$-torsion $\alpha \in E[N]$, and by $\pi'_N \colon E_1(N)' \to Y_1(N)'$ the universal elliptic curve. We put
\begin{eqnarray*}
  & & \Hil \coloneqq \Hil_{\t{\'et}}^1 \left( Y_1(N)_{\overline{\Q}}, \t{Sym}^{k_0-2} \left( \t{R}^1 (\pi_N)_*(\underline{\Z}{}_p)_{E_1(N)} \right) \right) \\
  & & \Hil' \coloneqq \Hil_{\t{\'et}}^1 \left( Y_1(N)'_{\overline{\Q}}, \t{Sym}^{k_0-2} \left( \t{R}^1 (\pi'_N)_*(\underline{\Z}{}_p)_{E_1(N)'} \right) \right).
\end{eqnarray*}
For each primitive $N$-th root $\zeta \in \Q(\G_{\t{m}}[N])^{\times}$ of unity, we consider the isomorphism $\iota_{\zeta} \colon (\underline{\Z/N \Z})_{\Q(\G_{\t{m}}[N])} \to \G_{\t{m}}[N]_{\Q(\G_{\t{m}}[N])}$ of group schemes over $\Q(\G_{\t{m}}[N])$ given by the $\Q(\G_{\t{m}}[N])$-algebra isomorphism
\begin{eqnarray*}
  \Q(\G_{\t{m}}[N])[X_N]/(P_N(X_N)) & \to & \Q(\G_{\t{m}}[N])^{\Z/N \Z} \\
  X_N & \mapsto & (\zeta^i)_{i + N \Z \in \Z/N \Z}
\end{eqnarray*}
and the natural equivalence
\begin{eqnarray*}
  \psi_{\zeta} \colon Y_1(N)_{\Q(\G_{\t{m}}[N])} & \to & Y_1(N)'_{\Q(\G_{\t{m}}[N])} \\
  (E, \beta) & \mapsto & (E, \beta \circ \iota_{\zeta})
\end{eqnarray*}
between moduli, where $P_N(X_N) \in \Q[X_N]$ is the $N$-th cyclotomic polynomial as in \S \ref{Profinite Zp Sheaves on Modular Curves}. For each primitive $N$-th root $\zeta \in \Q(\G_{\t{m}}[N])^{\times}$ of unity, $\psi_{\zeta}$ yields an identification of $E_1(N)_{\Q(\G_{\t{m}}[N])}$ and $E_1(N)'_{\Q(\G_{\t{m}}[N])}$, and hence we obtain a $\Q_p$-linear $\t{Gal}(\overline{\Q}/\Q(\G_{\t{m}}[N]))$-equivariant isomorphism
\begin{eqnarray*}
  \Psi_{\zeta} \colon \t{Res}_{\t{Gal}(\overline{\Q}/\Q(\G_{\t{m}}[N]))}^{\t{Gal}(\overline{\Q}/\Q)} \left( \Q_p \otimes_{\Z_p} \Hil \right) \cong \t{Res}_{\t{Gal}(\overline{\Q}/\Q(\G_{\t{m}}[N]))}^{\t{Gal}(\overline{\Q}/\Q)} \left( \Q_p \otimes_{\Z_p} \Hil' \right),
\end{eqnarray*}
and the isomorphism is Hecke-equivariant by the Eichler--Shimura isomorphism (\cite{Shi59} 5 Th\'eor\`eme 1, \cite{Hid93} 6.3 Theorem 4) because both of the analytifications of $Y_1(N)_{\C}$ and $Y_1(N)'_{\C}$ are biholomorphic to $\Gamma_1(N) \backslash \Hlf$. Therefore $\t{Res}_{\t{Gal}(\overline{\Q}/\Q(\G_{\t{m}}[N]))}^{\t{Gal}(\overline{\Q}/\Q)}(\Hil_f)$ is isomorphic to $\t{Res}_{\t{Gal}(\overline{\Q}/\Q(\G_{\t{m}}[N]))}^{\t{Gal}(\overline{\Q}/\Q)}(V_f)$ by \cite{Del69} $\t{N}^{\circ}$ 3-4.

\vspace{0.2in}
Let $\zeta_N \in \Q(\G_{\t{m}}[N])^{\times}$ be a primitive $N$-th root of unity. For each $\varphi \in \t{Gal}(\overline{\Q}/\Q)$, we denote by $\varphi^* \overline{\Q}$ the $\overline{\Q}$-algebra which shares the underlying ring with $\overline{\Q}$ and whose $\overline{\Q}$-algebra structure is given by $\varphi$, and by $\langle \varphi \rangle$ the diamond operator $\langle \overline{n}_{\varphi} \rangle$ regarded as a correspondence on $Y_1(N)$, where $\overline{n}_{\varphi} \in (\Z/N \Z)^{\times}$ is the image of $\varphi$ by the natural homomorphism
\begin{eqnarray*}
  \t{Gal}(\overline{\Q}/\Q) \twoheadrightarrow \t{Gal}(\overline{\Q}/\Q)/\t{Gal}(\overline{\Q}/\Q(\G_{\t{m}}[N])) \cong \t{Gal}(\Q(\G_{\t{m}}[N])/\Q) \cong (\Z/N \Z)^{\times}
\end{eqnarray*}
Now $\Q_p \otimes_{\Z_p} \Hil$ and $\Q_p \otimes_{\Z_p} \Hil'$ are finitely generated $(\Q_p \otimes_{\Z_p} \t{T}_{k_0,N})$-module with $(\Q_p \otimes_{\Z_p} \t{T}_{k_0,N})$-linear actions of $\t{Gal}(\overline{\Q}/\Q)$. For any $\varphi \in \t{Gal}(\overline{\Q}/\Q)$, the composite
\begin{eqnarray*}
  Y_1(N)_{\overline{\Q}} \xrightarrow[]{\psi_{\zeta}} Y_1(N)'_{\overline{\Q}} \xrightarrow[]{\t{id}_{Y_1(N)'} \times \varphi} Y_1(N)'_{(\varphi^{-1})^* \overline{\Q}} \xrightarrow[]{\psi_{\zeta}^{-1}} Y_1(N)_{(\varphi^{-1})^* \overline{\Q}} \xrightarrow[]{\t{id}_{Y_1(N)} \times \varphi^{-1}} Y_1(N)_{\overline{\Q}}
\end{eqnarray*}
is given by the natural transform
\begin{eqnarray*}
  & & (E,\beta) \mapsto (E, \beta \circ \iota_{\zeta}) \stackrel{\t{id}}{\mapsto} (E, \beta \circ \iota_{\zeta}) \mapsto \left( E, \beta \circ \iota_{\zeta} \circ \iota_{\varphi(\zeta)}^{-1} \right) \stackrel{\t{id}}{\mapsto} \left( E, \beta \circ \iota_{\zeta} \circ \iota_{\varphi(\zeta)}^{-1} \right) \\
  & = & \left( E, \beta \circ \iota_{\zeta} \circ \iota_{\zeta^{\overline{n}_{\varphi}}}^{-1} \right) = \left( E, \beta^{\overline{n}{}_{\varphi}^{-1}} \right),
\end{eqnarray*}
between moduli over $\overline{\Q}$, and hence coincides with the correspondence $\langle \varphi \rangle^{-1}$. It implies that $\Psi_{\zeta_N}$ gives a $(\Q_p \otimes_{\Z_p} \t{T}_{k_0,N})$-linear $\t{Gal}(\overline{\Q}/\Q)$-equivariant isomorphism between $\Hil$ and $\Hil'$ twisted by the character
\begin{eqnarray*}
  \t{Gal}(\overline{\Q}/\Q) & \to & \Q_p \otimes \t{T}_{k_0,N}^{\times} \\
  \varphi & \mapsto & 1 \otimes \langle \varphi \rangle^{-1}.
\end{eqnarray*}
Thus $\Hil_f$ is isomorphic to $V_f$ twisted by $\epsilon^{-1}$ as a $\Q_p(f)$-linear representation of $\t{Gal}(\overline{\Q}/\Q)$.
\end{proof}

As in \S \ref{p-adic Family of Modular Forms of Finite Slope}, let $\Lambda_1$ be a $\Lambda$-adic domain, and $\chi$ a group homomorphism $(\Z/N \Z)^{\times} \to \Lambda_1^{\times}$. Henceforth, we regard $\chi$ as the continuous character on $\t{Gal}(\overline{\Q}/\Q)$ given as the composite
\begin{eqnarray*}
  & & \t{Gal}(\overline{\Q}/\Q) \twoheadrightarrow \t{Gal}(\overline{\Q}/\Q)/\t{Gal}(\overline{\Q}/\Q(\G_{\t{m}}[N])) \cong \t{Gal}(\Q(\G_{\t{m}}[N])/\Q) \\
  & \stackrel{\sim}{\to} & (\Z/N \Z)^{\times} \stackrel{\chi}{\to} \Lambda_1^{\times}.
\end{eqnarray*}
Let $\varpi_{N,\chi,\Lambda_1} \colon \Lambda_0 \T_N^{< s} \to \Lambda_1 \T_{N,\chi}^{< s}$ denote the natural homomorphism. For a normalised $\Lambda_1$-adic cuspidal eigenform $f(q)$ of level $N$ with character $\chi$ of slope $< s$, following a similar convention to that in Example \ref{tensor}, we put
\begin{eqnarray*}
  \Hil_f^{< s} \coloneqq \left( \Lambda_1, \lambda_f \right) \otimes_{\Lambda_1 \T_{N,\chi}^{< s}} \left( \Lambda_1 \T_{N,\chi}^{< s}, \varpi_{N,\chi,\Lambda_1} \right) \otimes_{\Lambda_0 \T_N^{< s}} \left( \int_{\Z_p}^{\boxplus} \Hil_{\t{et}}^1 \left( Y_1(N)_{\overline{\Q}}, \Fil_{k-2} \right)^{< s} dk \right)
\end{eqnarray*}
and regard it as a profinite $\Lambda_1[\t{Gal}(\overline{\Q}/\Q)]$-module, which is finitely generated as a $\Lambda_1$-module by Theorem \ref{finiteness}.

\begin{lmm}
\label{generic}
Suppose $p^s \mid N$. For any normalised $\Lambda_1$-adic cuspidal eigenform $f(q)$ of level $N$ of slope $< s$, $\t{Frac}(\Lambda_1) \otimes_{\Lambda_1} \Hil_f^{< s}$ is a $2$-dimensional representation of $\t{Gal}(\overline{\Q}/\Q)$ over $\t{Frac}(\Lambda_1)$.
\end{lmm}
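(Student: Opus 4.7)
The plan is to use specialisation at classical arithmetic points, identify the specialised module with classical Galois representations via Theorem \ref{Galois representation}, and conclude via the density of arithmetic primes encoded in Definition \ref{Lambda-adic domain}. First I would check that $\Hil_f^{< s}$ is a profinite $\Lambda_1[\t{Gal}(\overline{\Q}/\Q)]$-module finitely generated as a $\Lambda_1$-module, using Theorem \ref{finiteness} together with right exactness of the completed tensor product (Proposition \ref{right exact}) applied to $\varpi_{N,\chi,\Lambda_1}$ and $\lambda_f$. Consequently $\t{Frac}(\Lambda_1) \otimes_{\Lambda_1} \Hil_f^{< s}$ is automatically a continuous representation of $\t{Gal}(\overline{\Q}/\Q)$ over $\t{Frac}(\Lambda_1)$, so only the dimension claim requires argument.

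Next, let $\Sigma \subset \t{Reg}^{< s}(\Lambda_1) \cap \Omega(\Lambda_1)_{\N \cap [\max\{s+1,3\},\infty)}$ denote the subset of those $\varphi$ for which $f(\varphi)$ is a classical normalised cuspidal eigenform of weight $k_0 := \t{wt}(\varphi)$, level $N$, character $\varphi \circ \chi$, and slope $< s$; this set is infinite by Corollary \ref{finiteness 4}(iii) and the definition of a $\Lambda_1$-adic cuspidal eigenform of slope $< s$. For each $\varphi \in \Sigma$, I would chase through the iterated completed tensor product defining $\Hil_f^{< s}$: using the factorisation $\varphi \circ \lambda_f \circ \varpi_{N,\chi,\Lambda_1} = \lambda_{f(\varphi)}$ on the image of $\t{T}_{k_0,N}^{< s}$, Proposition \ref{specialisation 2}, Lemma \ref{specialisation 3}, and the natural projection of Remark \ref{symmetric product 5}, the scalar extension $\overline{\Q}_p \otimes_{\Z_p[\varphi]} (\Z_p[\varphi] \otimes_{\Lambda_1} \Hil_f^{< s})$ identifies with the quotient of $\t{H}_{\t{et}}^1(Y_1(N)_{\overline{\Q}}, \t{Sym}^{k_0-2}(\t{R}^1 (\pi_N)_*(\underline{\Q}_p)_{E_1(N)}))$ by the $(\Q_p \otimes_{\Z_p} \t{T}_{k_0,N})$-submodule generated by $\ker(\lambda_{f(\varphi)})$. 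By Theorem \ref{Galois representation}, the latter is the twist $V_{f(\varphi)} \otimes (\varphi \circ \chi)^{-1}$, which has $\overline{\Q}_p$-dimension $2$, so the residue-field fibre of $\Hil_f^{< s}$ at $\ker(\varphi)$ has dimension $2$.

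At this point I would invoke upper semicontinuity of fibre dimension for a finitely generated module (via Fitting ideals): the locus $Z_n := \{\mathfrak{p} \in \t{Spec}(\Lambda_1) : \dim_{k(\mathfrak{p})}(\Hil_f^{< s} \otimes_{\Lambda_1} k(\mathfrak{p})) \geq n\}$ is Zariski closed for every $n$, where $k(\mathfrak{p}) := \t{Frac}(\Lambda_1/\mathfrak{p})$. Condition (ii) of Definition \ref{Lambda-adic domain} together with Proposition \ref{integral} forces the family $\{\ker(\varphi) : \varphi \in \Sigma\}$ to be Zariski dense in the irreducible scheme $\t{Spec}(\Lambda_1)$: otherwise these primes would all lie in $V(I)$ for a nonzero ideal $I$, giving $I \subset \bigcap_{\varphi \in \Sigma}\ker(\varphi) = 0$, a contradiction. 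Step 2 places every $\ker(\varphi)$ for $\varphi \in \Sigma$ in $Z_2 \setminus Z_3$, so $Z_2 = \t{Spec}(\Lambda_1)$ while the non-empty open set $\t{Spec}(\Lambda_1) \setminus Z_3$ contains the generic point, yielding generic rank exactly $2$.

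The hard part will be Step 2: rigorously chasing the two specialisations (in the weight variable via Lemma \ref{specialisation 3}, and in the Hecke variable via $\lambda_{f(\varphi)}$), absorbing the bounded $p$-torsion by passing to $\Q_p$-scalars before identifying fibres, and tracking the twist by $(\varphi \circ \chi)^{-1}$ coming from the proof of Theorem \ref{Galois representation}. Particular care is needed to check that the natural projection in Remark \ref{symmetric product 5}, from the full classical \'etale cohomology to its slope-$< s$ quotient, is compatible with the identification produced by Lemma \ref{specialisation 3}, so that $\lambda_{f(\varphi)}$ acts on the correct module.
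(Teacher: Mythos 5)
Your Steps 1 and 2 are exactly the paper's Steps: finiteness of $\Hil_f^{<s}$ as a $\Lambda_1$-module via Theorem \ref{finiteness} and the right exactness of completed tensor products (Proposition \ref{right exact}), followed by the base-change chase through Lemma \ref{specialisation 3}, Remark \ref{symmetric product 5}, and Theorem \ref{Galois representation} to identify the fibre at an arithmetic $\varphi$ with the $2$-dimensional representation $V_{f(\varphi)}$ twisted by $(\varphi\circ\chi)^{-1}$. Your Step 3 departs from the paper's. The paper's concluding argument is concrete: pick a $\t{Frac}(\Lambda_1)$-basis $\{c_i\}_{i=1}^d$ of the generic fibre inside $V := \Q_p\otimes_{\Z_p}\Hil_f^{<s}$, observe that the induced injection $(\Q_p\otimes_{\Z_p}\Lambda_1)^d \hookrightarrow V$ has torsion coimage annihilated by some $D\in(\Q_p\otimes_{\Z_p}\Lambda_1)\setminus\{0\}$, so it becomes an isomorphism after localising at $D$, and then specialise at a single $\varphi\in\t{Reg}^{<s}(\Lambda_1)$ with $\varphi(D)\neq 0$ to read off $d=2$ from the classical fibre. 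You instead invoke upper semicontinuity of fibre dimension together with Zariski density of the primes $\ker(\varphi)$, which you correctly deduce from condition (ii) of Definition \ref{Lambda-adic domain}: the closure of $\{\ker(\varphi):\varphi\in\Sigma\}$ is $V\bigl(\bigcap_{\varphi\in\Sigma}\ker(\varphi)\bigr)=V(0)=\t{Spec}(\Lambda_1)$. Both routes are valid. Two remarks on yours. First, since $\Lambda_1$ is not assumed Noetherian here, appealing to Fitting ideals is slightly off (they require finite presentation); the closedness of $Z_n$ should instead be obtained directly from Nakayama's lemma applied to $\Lambda_{1,\mathfrak{p}}$, noting that if $d$ elements of a finite generating set of $\Hil_f^{<s}$ map to a $k(\mathfrak{p})$-basis of the fibre, they generate $\Hil_f^{<s}$ after inverting a single element outside $\mathfrak{p}$ because the cokernel of $\Lambda_1^d\to\Hil_f^{<s}$ is again finitely generated. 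Second, the paper's version buys something beyond the dimension count: it produces an explicit $D$ and an explicit localisation $\A=(\Q_p\otimes_{\Z_p}\Lambda_1)[D^{-1}]$ over which $\Hil_f^{<s}$ becomes free of rank $2$, and both $D$ and this freeness are re-used later in the proof of Theorem \ref{geometric construction 2}, so the semicontinuity abstraction would eventually need to be unwound there anyway.
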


\begin{proof}
By Theorem \ref{finiteness}, $\t{Frac}(\Lambda_1) \otimes_{\Lambda_1} \Hil_f^{< s}$ is a finite dimensional $\t{Frac}(\Lambda_1)$-vector space. Put $d \coloneqq \dim_{\t{Frac}(\Lambda_1)} (\t{Frac}(\Lambda_1) \otimes_{\Lambda_1} \Hil_f^{< s}) < \infty$. It suffices to verify $d = 2$. Since $f$ is a normalised $\Lambda_1$-adic cuspidal eigenform, there is a finite subset $S \subset \Omega(\Lambda_1)_{\N \cap [2,\infty)}$ such that $f(\varphi)(q)$ is a normalised cuspidal eigenform over $\overline{\Q}_p$ of weight $\t{wt}(\varphi)$ and level $N$ for any $\varphi \in \Omega(\Lambda_1)_{\N \cap [2,\infty)} \backslash S$. Let $\varphi \in \Omega(\Lambda_1)_{\N \cap [2,\infty)} \backslash S$. We have
\begin{eqnarray*}
  f(\varphi)(q) = \sum_{h = 1}^{\infty} \varphi(a_h(f)) q^h = \sum_{h = 1}^{\infty} \varphi(\lambda_f(T_h)) q^h = \sum_{h = 1}^{\infty} (\varphi \circ \lambda_f)(T_h) q^h.
\end{eqnarray*}
Since $f(\varphi)$ is a normalised cuspidal eigenform over $\overline{\Q}_p$ of weight $\t{wt}(\varphi)$ and level $N$, there is a $\Z_p$-algebra homomorphism $\lambda_{f(\varphi)} \colon \t{T}_{\t{wt}(\varphi),N} \to \overline{\Z}_p$ such that $f(\varphi) = \sum_{h = 1}^{\infty} \lambda_{f(\varphi)}(T_h) q^h$ by the duality (\cite{Hid93} 5.3 Theorem 1). Since $f(\varphi)$ is of slope $< s$, we have $\v{a_p(f(\varphi))} > \v{p}^s$, and hence $\lambda_{f(\varphi)}$ uniquely extends to a $\Z_p[\varphi]$-algebra homomorphism $\lambda_{f(\varphi)} \colon \Z_p[\varphi] \t{T}_{\varphi,N,\chi}^{< s} \to \overline{\Z}_p$ by Proposition \ref{p^s/T_p} and the duality again. By the definition of $\lambda_{f(\varphi)}$, $\lambda_{f(\varphi)}(\t{T}_{\t{wt}(\varphi),N}^{< s})$ is a $\Z_p$-subalgebra of $\lambda_{f(\varphi)}(\Z_p[\varphi] \t{T}_{\varphi,N,\chi}^{< s}) = (\varphi \circ \lambda_f)(\Lambda_1 \T_{N,\chi}^{< s}) = \varphi(\Lambda_1)$. Let $\varpi_{\varphi} \colon \Lambda_1 \T_{N,\chi}^{< s} \twoheadrightarrow \Z_p[\varphi] \t{T}_{\varphi,N,\chi}^{< s}$ denote the canonical projection. We have
\begin{eqnarray*}
  (\varphi \circ \lambda_f)(T_h) = a_h(f(\varphi)) = \lambda_{f(\varphi)}(T_h) = \lambda_{f(\varphi)}(\varpi_{\varphi}(T_h)) = (\lambda_{f(\varphi)} \circ \varpi_{\varphi})(T_h)
\end{eqnarray*}
for any $h \in \N \backslash \ens{0}$,
\begin{eqnarray*}
  & & (\varphi \circ \lambda_f)(S_n) = \varphi(\lambda_f(\chi(n + N \Z))) = (\varphi \circ \chi)(n + N \Z) = \lambda_{f(\varphi)}(S_n) = \lambda_{f(\varphi)}(\varpi_{\varphi}(S_n)) \\
  & = & (\lambda_{f(\varphi)} \circ \varpi_{\varphi})(S_n)
\end{eqnarray*}
for any $n \in N$ coprime to $N$, and
\begin{eqnarray*}
  & & (\varphi \circ \lambda_f)(p^sT_p^{-1}) = p^s (\varphi \circ \lambda_f)(T_p)^{-1} = p^s \lambda_{f(\varphi)}(T_p)^{-1} \\
  & = & \lambda_{f(\varphi)}(p^sT_p^{-1}) = \lambda_{f(\varphi)}(\varpi_{\varphi}(p^sT_p^{-1})) = (\lambda_{f(\varphi)} \circ \varpi_{\varphi})(p^sT_p^{-1}).
\end{eqnarray*}
Since these operators generate a dense $\Lambda_1$-subalgebra of $\Lambda_1 \T_{N,\chi}^{< s}$ by the definition of the inverse limit topology, we obtain $\varphi \circ \lambda_f = \lambda_{f(\varphi)} \circ \varpi_{\varphi}$. In particular, we get an inclusion
\begin{eqnarray*}
  \Lambda_1 \T_{N,\chi}^{< s} P_{\t{wt}(\varphi)}^{< s} \subset \ker(\varpi_{\varphi}) \subset \ker(\varphi \circ \lambda_f)
\end{eqnarray*}
of ideals of $\Lambda_1 \T_{N,\chi}^{< s}$. By Lemma \ref{specialisation 3}, we obtain
\begin{eqnarray*}
  & & \left( \Lambda_1/\ker(\varphi) \right) \otimes_{\Lambda_1} \Hil_f^{< s} \cong \left( \varphi(\Lambda_1), \varphi \right) \otimes_{\Lambda_1} \Hil_f^{< s} \\
  & \cong & \left( \varphi(\Lambda_1), \varphi \right) \otimes_{\Lambda_1} \left( \Lambda_1, \lambda_f \right) \otimes_{\Lambda_1 \T_{N,\chi}^{< s}} \left( \Lambda_1 \T_{N,\chi}^{< s}, \varpi_{N,\chi,\Lambda_1} \right) \\
  & & \otimes_{\Lambda_0 \T_N^{< s}} \left( \int_{\Z_p}^{\boxplus} \Hil_{\t{et}}^1 \left( Y_1(N)_{\overline{\Q}}, \Fil_{k-2} \right)^{< s} dk \right) \\
  & \cong & \left( \varphi(\Lambda_1), \varphi \circ \lambda_f \right) \otimes_{\Lambda_1 \T_{N,\chi}^{< s}} \left( \Lambda_1 \T_{N,\chi}^{< s}, \varpi_{N,\chi,\Lambda_1} \right) \otimes_{\Lambda_0 \T_N^{< s}} \left( \int_{\Z_p}^{\boxplus} \Hil_{\t{et}}^1 \left( Y_1(N)_{\overline{\Q}}, \Fil_{k-2} \right)^{< s} dk \right) \\
  & \cong & \left( \varphi(\Lambda_1), \lambda_{f(\varphi)} \circ \varpi_{\varphi} \right) \otimes_{\Lambda_1 \T_{N,\chi}^{< s}} \left( \Lambda_1 \T_{N,\chi}^{< s}, \varpi_{N,\chi,\Lambda_1} \right) \\
  & & \otimes_{\Lambda_0 \T_N^{< s}} \left( \int_{\Z_p}^{\boxplus} \Hil_{\t{et}}^1 \left( Y_1(N)_{\overline{\Q}}, \Fil_{k-2} \right)^{< s} dk \right) \\
  & \cong & \left( \varphi(\Lambda_1), \lambda_{f(\varphi)} \circ \varpi_{\varphi} \right) \otimes_{\Lambda_1 \T_{N,\chi}^{< s}/\Lambda_1 \T_{N,\chi}^{< s} P_{\t{wt}(\varphi)}^{< s}} \left( \Lambda_1 \T_{N,\chi}^{< s}/\Lambda_1 \T_{N,\chi}^{< s} P_{\t{wt}(\varphi)}^{< s}, \varpi_{N,\chi,\Lambda_1} \right) \\
  & & \otimes_{\Lambda_0 \T_N^{< s}/P_{\t{wt}(\varphi)}^{< s}} \left( \Lambda_0 \T_N^{< s}/P_{\t{wt}(\varphi)}^{< s} \right) \otimes_{\Lambda_0 \T_N^{< s}} \left( \int_{\Z_p}^{\boxplus} \Hil_{\t{et}}^1 \left( Y_1(N)_{\overline{\Q}}, \Fil_{k-2} \right)^{< s} dk \right) \\
  & \cong & \left( \varphi(\Lambda_1), \lambda_{f(\varphi)} \circ \varpi_{\varphi} \right) \otimes_{\Lambda_1 \T_{N,\chi}^{< s}/\Lambda_1 \T_{N,\chi}^{< s} P_{\t{wt}(\varphi)}^{< s}} \left( \Lambda_1 \T_{N,\chi}^{< s}/\Lambda_1 \T_{N,\chi}^{< s} P_{\t{wt}(\varphi)}^{< s}, \varpi_{N,\chi,\Lambda_1} \right) \\
  & & \otimes_{\Lambda_0 \T_N^{< s}/P_{\t{wt}(\varphi)}^{< s}} \Hil_{\t{et}}^1 \left( Y_1(N)_{\overline{\Q}}, \Fil_{\t{wt}(\varphi)-2} \right)^{< s} \\
  & \cong & \left( \varphi(\Lambda_1), \lambda_{f(\varphi)} \right) \otimes_{\t{T}_{\t{wt}(\varphi),N}^{< s}/\ker(\lambda_{f(\varphi)})} \left( \t{T}_{\t{wt}(\varphi),N}^{< s}/\ker(\lambda_{f(\varphi)}) \right) \otimes_{\t{T}_{\t{wt}(\varphi),N}^{< s}} \Hil_{\t{et}}^1 \left( Y_1(N)_{\overline{\Q}}, \Fil_{\t{wt}(\varphi)-2} \right)^{< s} \\
  & \cong & \varphi(\Lambda_1) \otimes_{\lambda_{f(\varphi)}(\t{T}_{\t{wt}(\varphi),N}^{< s})} \left( \lambda_{f(\varphi)}(\t{T}_{\t{wt}(\varphi),N}^{< s}), \lambda_{f(\varphi)} \right) \otimes_{\t{T}_{\t{wt}(\varphi),N}^{< s}} \Hil_{\t{et}}^1 \left( Y_1(N)_{\overline{\Q}}, \Fil_{\t{wt}(\varphi)-2} \right)^{< s},
\end{eqnarray*}
and hence putting $V \coloneqq \Q_p \otimes_{\Z_p} \Hil_f^{< s}$ and $\Q_p[\varphi] \coloneqq \Q_p \otimes_{\Z_p} \Z_p[\varphi]$, we get
\begin{eqnarray*}
  & & \left( \Q_p[\varphi], \t{id}_{\Q_p} \otimes \varphi \right) \otimes_{\Q_p \otimes_{\Z_p} \Lambda_1} V \cong \left( \left( \Q_p \otimes_{\Z_p} \Lambda_1 \right)/\ker(\t{id}_{\Q_p} \otimes \varphi) \right) \otimes_{\Q_p \otimes_{\Z_p} \Lambda_1} V \\
  & \cong & \Q_p \otimes_{\Z_p} \left( \Lambda_1/\ker(\varphi) \right) \otimes_{\Lambda_1} \Hil_f^{< s} \\
  & \cong & \Q_p \otimes_{\Z_p} \varphi(\Lambda_1) \otimes_{\lambda_{f(\varphi)}(\t{T}_{\t{wt}(\varphi),N}^{< s})} \left( \lambda_{f(\varphi)}(\t{T}_{\t{wt}(\varphi),N}^{< s}), \lambda_{f(\varphi)} \right) \otimes_{\t{T}_{\t{wt}(\varphi),N}^{< s}} \Hil_{\t{et}}^1 \left( Y_1(N)_{\overline{\Q}}, \Fil_{\t{wt}(\varphi)-2} \right)^{< s} \\
  & \cong & \Q_p[\varphi] \otimes_{\Q_p(f(\varphi))} \left( \Q_p(f(\varphi)), \t{id}_{\Q_p} \otimes \lambda_{f(\varphi)} \right) \\
  & & \otimes_{\Q_p \otimes_{\Z_p} \t{T}_{\t{wt}(\varphi),N}^{< s}} \t{H}_{\t{et}}^1 \left( Y_1(N)_{\overline{\Q}}, \t{Sym}^{\t{wt}(\varphi)-2} \left( \t{R}^1 (\pi_N)_* (\underline{\Q}{}_p)_{E_1(N)} \right) \right)^{< s}
\end{eqnarray*}
by Remark \ref{symmetric product 5}, where $\Q_p(f(\varphi))$ is the $p$-adic Hecke field $\Q_p \otimes_{\Z_p} \lambda_{f(\varphi)}(\t{T}_{\varphi,N,\chi}^{< s})$ associated to $f(\varphi)$ introduced in \S \ref{p-adic Modular Forms and Hecke Algebras}. The last term is the base change by the finite extension $\Q_p[\varphi]/\Q_p(f(\varphi))$ of the Galois representation associated to $f(\varphi)$ twisted by $\chi^{-1}$ by Theorem \ref{Galois representation}.

\vspace{0.2in}
We regard $\Q_p \otimes_{\Z_p} \Lambda_1$ as a $\Lambda_1$-subalgebra of $\t{Frac}(\Lambda_1)$, and identify $V$ with the $(\Q_p \otimes_{\Z_p} \Lambda_1)$-submodule $(\Q_p \otimes_{\Z_p} \Lambda_1) \otimes_{\Lambda_1} \Hil_f^{< s}$ of $\t{Frac}(\Lambda_1) \otimes_{\Lambda_1} \Hil_f^{< s}$. Take a $\t{Frac}(\Lambda_1)$-linear basis $E = \set{c_i}{i \in \N \cap [1,d]} \subset V$ of $\t{Frac}(\Lambda_1) \otimes_{\Lambda_1} \Hil_f^{< s}$. We consider a $(\Q_p \otimes_{\Z_p} \Lambda_1)$-linear homomorphism
\begin{eqnarray*}
  \iota \colon (\Q_p \otimes_{\Z_p} \Lambda_1)^d & \to & V \\
  (F_i)_{i = 1}^{d} & \mapsto & \sum_{i = 1}^{d} F_i c_i.
\end{eqnarray*}
Since $E$ is a set of $\t{Frac}(\Lambda_1)$-linearly independent elements, $\iota$ is injective. Since $V$ is finitely generated as a $(\Q_p \otimes_{\Z_p} \Lambda_1)$-module and $E$ generates $\t{Frac}(\Lambda_1) \otimes_{\Lambda_1} \Hil_f^{< s}$ as a $\t{Frac}(\Lambda_1)$-vector space, $\coim(\iota)$ is a torsion $(\Q_p \otimes_{\Z_p} \Lambda_1)$-module with non-trivial annihilators. Let $D \in (\Q_p \otimes_{\Z_p} \Lambda_1) \backslash \ens{0}$ be an annihilator of $\coim(\iota)$. Put $C \coloneqq \t{Spec}(\Q_p \otimes_{\Z_p} \Lambda_1)$. We denote by $\A \subset \t{Frac}(\Lambda_1)$ the localisation $(\Q_p \otimes_{\Z_p} \Lambda_1)[D^{-1}]$, and by $U \subset C$ the image of the open immersion $\t{Spec}(\A) \hookrightarrow C$. Then $\iota$ induces an $\A$-linear homomorphism
\begin{eqnarray*}
  \iota_U \colon \A^d & \to & \A \otimes_{\Q_p \otimes_{\Z_p} \Lambda_1} V \\
  (F_i)_{i = 1}^{d} & \mapsto & \sum_{i = 1}^{d} F_i c_i.
\end{eqnarray*}
The right exactness of the functor $\A \otimes_{\Q_p \otimes_{\Z_p} \Lambda_1} (\cdot)$ ensures $\coim(\iota_U) \cong \A \otimes_{\Q_p \otimes_{\Z_p} \Lambda_1} \coim(\iota) \cong 0$. Since $V \subset \t{Frac}(\Lambda_1) \otimes_{\Lambda_1} \Hil_f^{< s}$ is a torsionfree $(\Q_p \otimes_{\Z_p} \Lambda_1)$-module, the natural $\A$-linear homomorphism $\A \otimes_{\Q_p \otimes_{\Z_p} \Lambda_1} V \to \t{Frac}(\Lambda_1) \otimes_{\Q_p \otimes_{\Z_p} \Lambda_1} V \cong \t{Frac}(\Lambda_1) \otimes_{\Lambda_1} \Hil_f^{< s}$ is injective, and hence $\iota_U$ is injective. It implies that $\iota_U$ is an $\A$-linear isomorphism. Since $D \neq 0$, there is a $\varphi \in \t{Reg}^{< s}(\Lambda_1)$ with $\varphi(D) \neq 0$. The specialisation
\begin{eqnarray*}
  \t{id}_{\Q_p} \otimes \varphi \colon \Q_p \otimes_{\Z_p} \Lambda_1 & \to & \Q_p \otimes_{\Z_p} \Z_p[\varphi] = \Q_p[\varphi] \\
  c \otimes F & \mapsto & c \varphi(F)
\end{eqnarray*}
induces a $\Q_p[\varphi]$-linear homomorphism
\begin{eqnarray*}
  \iota_{\varphi} \colon \Q_p[\varphi]^d & \to & \left( \Q_p[\varphi], \t{id}_{\Q_p} \otimes \varphi \right) \otimes_{\Q_p \otimes_{\Z_p} \Lambda_1} V \\
  (\alpha_i)_{i = 1}^{d} & \mapsto & \sum_{i = 1}^{d} \alpha_i c_i.
\end{eqnarray*}
Since $\varphi(D) \neq 0$, $\t{id}_{\Q_p} \otimes \varphi$ factors through $\A$ by the universality of the localisation. Therefore the bijectivity of $\iota_U$ ensures that of $\iota_{\varphi}$, because $\A^d$ is a free $\A$-module. The target of $\iota_{\varphi}$ is isomorphic to the Galois representation associated to $f(\varphi)(q)$ over $\Q_p[\varphi]$ twisted by $\epsilon^{-1}$, and hence we obtain $d = 2$.
\end{proof}

Let $X$ be a topological space. For each $x \in X$, we denote by $\overline{\Q}_p(x)$ the $\t{C}(X,\overline{\Q}_p)$-algebra of dimension $1$ as a $\overline{\Q}_p$-vector space given as the quotient of $\t{C}(X,\overline{\Q}_p)$ by the maximal ideal $\set{F \in \t{C}(X,\overline{\Q}_p)}{F(x) = 0}$. Let $G$ be a monoid. For a $\t{C}(X,\overline{\Q}_p)$-module $M$ endowed with a $\t{C}(X,\overline{\Q}_p)$-linear action of $G$, we call $\overline{\Q}_p(x) \otimes_{\t{C}(X,\overline{\Q}_p)} M$ {\it the specialisation of $M$ at $x$}, and regard it as a $\overline{\Q}_p$-linear representation of $G$. In the case where $X$ is a subspace of $\t{Reg}^{< s}(\Lambda_1)$, for a $\varphi \in X$, we hope that a reader does not confuse $\overline{\Q}_p(\varphi)$ with $\Q_p[\varphi]$. In this case, we regard $\t{C}(X,\overline{\Q}_p)$ as a $\Lambda_1$-algebra in a similar way with that introduced after Proposition \ref{profinite}.

\begin{dfn}
We denote by $V_f^{< s}$ the profinite $\Lambda_1[\t{Gal}(\overline{\Q}/\Q)]$-module which shares the underlying topological $\Lambda_1$-module with $\Hil_f^{< s}$ and on which the action of $\t{Gal}(\overline{\Q}/\Q)]$ is given by that on $\Hil_f^{< s}$ twisted by $\chi$.
\end{dfn}

\begin{thm}
\label{geometric construction 2}
Suppose $p^s \mid N$. Then there is a finite subset $\Sigma_s \subset \Omega(\Lambda_1)_{\N \cap [2,\infty)}$ satisfying the following for any normalised $\Lambda_1$-adic cuspidal eigenform $f(q)$ of level $N$ with character $\chi$ of slope $< s$:
\begin{itemize}
\item[(i)] For any $\varphi \in \Omega(\Lambda_1)_{\N \cap [2,\infty)} \backslash \Sigma_s$, $f(\varphi)(q)$ is a normalised cuspidal eigenform over $\Z_p[\varphi]$ of weight $\t{wt}(\varphi)$ and level $N$ with character $\chi$ of slope $< s$.
\item[(ii)] The $\t{C}(\Omega(\Lambda_1)_{\N \cap [2,\infty)} \backslash \Sigma_s, \overline{\Q}_p)$-module
\begin{eqnarray*}
  V_f^{< s} |_{\Omega(\Lambda_1)_{\N \cap [2,\infty)} \backslash \Sigma_s} \coloneqq \t{C} \left( \Omega(\Lambda_1)_{\N \cap [2,\infty)} \backslash \Sigma_s, \overline{\Q}_p \right) \otimes_{\Lambda_1} V_f^{< s}
\end{eqnarray*}
endowed with a $\t{C}(\Omega(\Lambda_1)_{\N \cap [2,\infty)} \backslash \Sigma_s, \overline{\Q}_p)$-linear action of $\t{Gal}(\overline{\Q}/\Q)$ is free of rank $2$ as a $\t{C}(\Omega(\Lambda_1)_{\N \cap [2,\infty)} \backslash \Sigma_s, \overline{\Q}_p)$-module.
\item[(iii)] For any $\varphi \in \Omega(\Lambda_1)_{\N \cap [2,\infty)} \backslash \Sigma_s$, the specialisation
\begin{eqnarray*}
  \overline{\Q}_p(\varphi) \otimes_{\t{C} \left( \Omega(\Lambda_1)_{\N \cap [2,\infty)} \backslash \Sigma_s, \overline{\Q}_p \right)} V_f^{< s} |_{\Omega(\Lambda_1)_{\N \cap [2,\infty)} \backslash \Sigma_s}
\end{eqnarray*}
of $V_f^{< s} |_{\Omega(\Lambda_1)_{\N \cap [2,\infty)} \backslash \Sigma_s}$ at $\varphi$ is naturally isomorphic to the Galois representation associated to $f(\varphi)(q)$ over $\overline{\Q}_p$.
\end{itemize}
\end{thm}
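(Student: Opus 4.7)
The plan is to combine three inputs: Lemma \ref{specialisation 3} (to reduce specialisations of the interpolated cohomology to the single-weight cohomology modulo the weight-$k_0$ ideal), Theorem \ref{Galois representation} (to identify the $\lambda_{f(\varphi)}$-quotient of the classical cohomology with $V_{f(\varphi)}$ twisted by $\epsilon_{f(\varphi)}^{-1}$), and Lemma \ref{generic} (to pin down the generic rank as $2$). The uniform exceptional set $\Sigma_s$ will be built from three pieces: the finite subset $\Omega(\Lambda_1)_{\N\cap[2,\infty)}\setminus\t{Reg}^{<s}(\Lambda_1)$ (which is finite by Corollary \ref{finiteness 4}(iii) and is uniform in $f$), the finite set of weights $<\max\{s+1,3\}$ excluded from Lemma \ref{specialisation 3}, and a finite support set coming from the torsion of $\int_{\Z_p}^{\boxplus}\Hil_{\t{et}}^1(Y_1(N)_{\overline{\Q}},\Fil_{k-2})^{<s}dk$ as a module over $\Lambda_0\T_N^{<s}$, pulled back along $\Omega(\Lambda_1)\to\Omega(\Lambda_0\T_N^{<s})$.

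For statement (i), I would observe that if $f$ is a normalised $\Lambda_1$-adic cuspidal eigenform then $a_0(f)\in\Lambda_1$ vanishes at infinitely many arithmetic points by the cuspidality hypothesis, so the $\Lambda$-adic domain axiom forces $a_0(f)=0$; hence $a_0(f(\varphi))=0$ for every $\varphi$. Combined with $a_1(f(\varphi))=\varphi(1)=1$ and the characterisation $\varphi\in\t{Reg}^{<s}(\Lambda_1)$ (so $f(\varphi)$ lies in $\t{M}_{\t{wt}(\varphi)}(\Gamma_1(N),\Z_p[\varphi])^{<s}$), the eigenform property transported via $\lambda_f$ then gives (i) for all $\varphi\notin\Sigma_s$.

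For statement (iii), I would specialise $V_f^{<s}$ at $\varphi$ by a chain of base changes. Lemma \ref{specialisation 3} produces a surjection
\begin{eqnarray*}
\varphi(\Lambda_1)\otimes_{\Lambda_1}\Hil_f^{<s}\cong\Q_p[\varphi]\otimes_{\Q_p(f(\varphi))}\left(\Q_p(f(\varphi)),\lambda_{f(\varphi)}\right)\otimes_{\Q_p\otimes_{\Z_p}\t{T}_{\t{wt}(\varphi),N}^{<s}}\Hil_{\t{et}}^1\left(Y_1(N)_{\overline{\Q}},\Fil_{\t{wt}(\varphi)-2}\right)^{<s},
\end{eqnarray*}
which, via Remark \ref{symmetric product 5} and Theorem \ref{Galois representation}, identifies with the base change of $V_{f(\varphi)}$ twisted by $\epsilon_{f(\varphi)}^{-1}$. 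The twist by $\chi$ built into the definition of $V_f^{<s}$ then cancels this $\epsilon_{f(\varphi)}^{-1}$, because on the image of $\t{Gal}(\overline{\Q}/\Q)\twoheadrightarrow(\Z/N\Z)^\times$ the composite $\varphi\circ\chi$ coincides with $\epsilon_{f(\varphi)}$ by the character compatibility of $\lambda_f$ with $S_n$. This yields the natural identification with $V_{f(\varphi)}$.

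For statement (ii), I would argue that $V_f^{<s}|_{\Omega(\Lambda_1)\setminus\Sigma_s}$ is a finitely generated $\t{C}(\Omega(\Lambda_1)\setminus\Sigma_s,\overline{\Q}_p)$-module by Theorem \ref{finiteness}, whose fibre at every $\varphi\in\Omega(\Lambda_1)\setminus\Sigma_s$ has dimension exactly $2$ by (iii). Using Lemma \ref{generic} to get generic rank $2$ over $\t{Frac}(\Lambda_1)$, one obtains a $\t{Frac}(\Lambda_1)$-basis lifting to a $\Lambda_1$-linear map $\Lambda_1^2\to V_f^{<s}$ that becomes injective and surjective after tensoring with $\overline{\Q}_p(\varphi)$ for every $\varphi\notin\Sigma_s$ (enlarging $\Sigma_s$ by the finite zero locus of the relevant determinant, whose uniformity in $f$ follows from finite generation of the universal Galois module over $\Lambda_1\T_{N,\chi}^{<s}$). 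Since both kernel and cokernel of this map have all fibres zero over the profinite $\Lambda_1$-algebra $\t{C}(\Omega(\Lambda_1)\setminus\Sigma_s,\overline{\Q}_p)$ and the module is finitely generated, Nakayama at each point forces the map to become an isomorphism of $\t{C}(\Omega(\Lambda_1)\setminus\Sigma_s,\overline{\Q}_p)$-modules.

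The main obstacle will be the uniformity of $\Sigma_s$ in $f$: although each individual exceptional set in the proof of Lemma \ref{generic} depends on the chosen $D\in\Lambda_1\setminus\{0\}$ associated to $f$, the uniform set must come from the universal finitely generated $\Lambda_1\T_{N,\chi}^{<s}$-module structure on the interpolated cohomology (Theorem \ref{finiteness}) together with the density of classical points; carefully formulating the fibre-dimension jump locus at the level of $\t{Spec}(\Lambda_1\T_{N,\chi}^{<s})$ and pulling it back via every $\lambda_f$ is where the technical work lies.
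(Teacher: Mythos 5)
Your proposal correctly identifies the three key inputs (Lemma \ref{specialisation 3}, Theorem \ref{Galois representation}, Lemma \ref{generic}) and your treatment of (i) and (iii) is essentially what the paper does. For (ii) you propose a fibrewise Nakayama argument, whereas the paper argues more directly: the $D$ from the proof of Lemma \ref{generic} becomes invertible in $\t{C}(\Omega(\Lambda_1)_{\N\cap[2,\infty)}\setminus\Sigma_s,\overline{\Q}_p)$, so by the universal property of localisation the evaluation map factors through $\A:=(\Q_p\otimes_{\Z_p}\Lambda_1)[D^{-1}]$, over which $\Hil_f^{<s}$ is already free of rank $2$; base change then finishes. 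Both work, but the localisation argument avoids the delicacies of applying Nakayama over the non-Noetherian ring $\t{C}(\Omega(\Lambda_1)_{\N\cap[2,\infty)}\setminus\Sigma_s,\overline{\Q}_p)$.

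The genuine gap is the point you flag yourself as the ``main obstacle'': uniformity of $\Sigma_s$ in $f$. You propose extracting a universal exceptional set by analysing the fibre-dimension jump locus over $\t{Spec}(\Lambda_1\T_{N,\chi}^{<s})$ and pulling it back along every $\lambda_f$, but you defer this as ``technical work'' without closing it. The paper resolves it with a short, purely combinatorial observation that you miss: distinct normalised $\Lambda_1$-adic cuspidal eigenforms of level $N$ with character $\chi$ of slope $< s$ are simultaneous eigenvectors of $\ens{T_h}_{h\geq 1}$ with pairwise distinct eigenvalue systems, hence $\t{Frac}(\Lambda_1)$-linearly independent; since $\t{Frac}(\Lambda_1)\otimes_{\Lambda_1}\Mod(\Gamma_1(N),\chi,\Lambda_1)^{[<s]}$ is finite-dimensional by Corollary \ref{finiteness 4}(i), there are only finitely many such $f$. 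One can therefore take $\Sigma_s$ to be the union, over these finitely many $f$, of the finite sets $S_0=S_1\cup S_2\cup(\Omega(\Lambda_1)_{\N\cap[2,\infty)}\setminus\t{Reg}^{<s}(\Lambda_1))$ produced by the per-$f$ argument. Your proposed route via the jump locus is both more complicated and not fully justified (the finite generation from Theorem \ref{finiteness} alone does not obviously yield a single $D$ working for all $f$); you should replace it by this finiteness-of-eigenforms observation.
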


\begin{proof}
Normalised $\Lambda_1$-adic cuspidal eigenforms of level $N$ with character $\chi$ of slope $< s$ are $\t{Frac}(\Lambda_1)$-linearly independent because they are simultaneous eigenvectors of $\t{Frac}(\Lambda_1)$-linear operators $\set{T_h}{h \in \N \backslash \ens{0}}$ with pairwise distinct systems of eigenvalues. By Corollary \ref{finiteness 4} (i), $\t{Frac}(\Lambda_1) \otimes_{\Lambda_1} \Mod(\Gamma_1(N),\chi,\Lambda_1)^{[< s]}$ is a finite dimensional $\t{Frac}(\Lambda_1)$-vector space, and hence there are at most finitely many normalised $\Lambda_1$-adic cuspidal eigenforms of level $N$ with character $\chi$ of slope $< s$. Therefore it suffices to verify that for any normalised $\Lambda_1$-adic cuspidal eigenform $f(q)$ of level $N$ with character $\chi$ of slope $< s$, there is a finite subset $S_0 \subset \Omega(\Lambda_1)_{\N \cap [2,\infty)}$ such that  $f(\varphi)(q)$ is a normalised cuspidal eigenform over $\Z_p[\varphi]$ of weight $\t{wt}(\varphi)$ and level $N$ with character $\chi$ of slope $< s$ for any $\varphi \in \Omega(\Lambda_1)_{\N \cap [2,\infty)} \backslash S_0$, and for any finite subset $S \subset \Omega(\Lambda_1)_{\N \cap [2,\infty)}$ containing $S_0$, the $\t{C}(\Omega(\Lambda_1)_{\N \cap [2,\infty)} \backslash S, \overline{\Q}_p)$-module
\begin{eqnarray*}
  \Hil_f^{< s} |_{\Omega(\Lambda_1)_{\N \cap [2,\infty)} \backslash S} \coloneqq \t{C} \left( \Omega(\Lambda_1)_{\N \cap [2,\infty)} \backslash S, \overline{\Q}_p \right) \otimes_{\Lambda_1} \Hil_f^{< s}
\end{eqnarray*}
is free of rank $2$ as a $\t{C}(\Omega(\Lambda_1)_{\N \cap [2,\infty)} \backslash S, \overline{\Q}_p)$-module, and its specialisation
\begin{eqnarray*}
  \overline{\Q}_p(\varphi) \otimes_{\t{C} \left( \Omega(\Lambda_1)_{\N \cap [2,\infty)} \backslash S, \overline{\Q}_p \right)} \Hil_f^{< s} |_{\Omega(\Lambda_1)_{\N \cap [2,\infty)} \backslash S}
\end{eqnarray*}
is naturally isomorphic to the Galois representation associated to $f(\varphi)(q)$ over $\overline{\Q}_p$ twisted by the character on $\t{Gal}(\overline{\Q}/\Q)$ induced by $\varphi \circ \chi$ for any $\varphi \in \Omega(\Lambda_1)_{\N \cap [2,\infty)} \backslash S$.

\vspace{0.2in}
Let $S_1 \subset \Omega(\Lambda_1)_{\N \cap [2,\infty)}$ denote the finite subset consisting of elements $\varphi$ such that $f(\varphi)(q)$ is not a normalised cuspidal eigenform over $\Z_p[\varphi]$ of weight $\t{wt}(\varphi)$ and level $N$ with character $\chi$ of slope $< s$. By the proof of Lemma \ref{generic}, there is a $D \in \Lambda_1 \backslash \ens{0}$ such that 
\begin{eqnarray*}
  \left( \Q_p[\varphi], \t{id}_{\Q_p} \otimes \varphi \right) \otimes_{\Q_p \otimes_{\Z_p} \Lambda_1} \left( \Q_p \otimes_{\Z_p} \Hil_f^{< s} \right)
\end{eqnarray*}
is naturally isomorphic to the Galois representation associated to $f(\varphi)(q)$ over $\overline{\Q}_p$ twisted by $\varphi \circ \chi$ for any $\varphi \in \t{Reg}^{< s}(\Lambda_1)$ with $\varphi(D) \neq 0$. Let $S_2 \subset \Omega(\Lambda_1)_{\N \cap [2,\infty)}$ denote the support $\set{\varphi \in \Omega(\Lambda_1)_{\N \cap [2,\infty)}}{\varphi(D) = 0}$. Since $\Lambda_1$ is a $\Lambda$-adic domain, $S_2$ is a finite subset. Put $S_0 \coloneqq S_1 \cup S_2 \cup (\Omega(\Lambda_1)_{\N \cap [2,\infty)} \backslash \t{Reg}^{< s}(\Lambda_1))$. Let $S \subset \Omega(\Lambda_1)_{\N \cap [2,\infty)}$ be a finite subset containing $S_0$. Then for any $\varphi \in \Omega(\Lambda_1)_{\N \cap [2,\infty)} \backslash S$, we have
\begin{eqnarray*}
  & & \overline{\Q}_p(\varphi) \otimes_{\t{C} \left( \Omega(\Lambda_1)_{\N \cap [2,\infty)} \backslash S, \overline{\Q}_p \right)} \Hil_f^{< s} |_{\Omega(\Lambda_1)_{\N \cap [2,\infty)} \backslash S} \\
  & \cong & \left( \overline{\Q}_p(\varphi), \t{id}_{\Q_p} \otimes \varphi \right) \otimes_{\Q_p \otimes_{\Z_p} \Lambda_1} \left( \Q_p \otimes_{\Z_p} \Lambda_1 \right) \otimes_{\Lambda_1} \Hil_f^{< s} \\
  & \cong & \left( \overline{\Q}_p(\varphi), \t{id}_{\Q_p} \otimes \varphi \right) \otimes_{\Q_p \otimes_{\Z_p} \Lambda_1} \left( \Q_p \otimes_{\Z_p} \Hil_f^{< s} \right),
\end{eqnarray*}
and hence $\overline{\Q}_p(\varphi) \otimes_{\t{C}(\Omega(\Lambda_1)_{\N \cap [2,\infty)} \backslash S, \overline{\Q}_p)} \Hil_f^{< s} |_{\Omega(\Lambda_1)_{\N \cap [2,\infty)} \backslash S}$ is naturally isomorphic to the Galois representation associated to $f(\varphi)(q)$ over $\overline{\Q}_p$ twisted by $\varphi \circ \chi^{-1}$ because $\varphi$ is contained in $\t{Reg}^{< s}(\Lambda_1) = \Omega(\Lambda_1) \backslash S_1$.

\vspace{0.2in}
We verify that $\Hil_f^{< s} |_{\Omega(\Lambda_1)_{\N \cap [2,\infty)} \backslash S}$ is a free $\t{C}(\Omega(\Lambda_1)_{\N \cap [2,\infty)} \backslash S, \overline{\Q}_p)$-module of rank $2$. By the definition of $S$, the image of $D$ in $\t{C}(\Omega(\Lambda_1)_{\N \cap [2,\infty)} \backslash S, \overline{\Q}_p)$ has no zero, and hence is invertible. We note that $\t{C}(\Omega(\Lambda_1)_{\N \cap [2,\infty)} \backslash S, \overline{\Q}_p)$ is not the $\overline{\Q}_p$-algebra of bounded continuous functions, and hence we need not to argue the lower bound of the absolute values of the image of $D$. By the universality of a localisation, the evaluation map $\Lambda_1 \hookrightarrow \t{C}(\Omega(\Lambda_1)_{\N \cap [2,\infty)} \backslash S, \overline{\Q}_p)$ factors through $\Lambda_1 \hookrightarrow \A \coloneqq (\Q_p \otimes_{\Z_p} \Lambda_1)[D^{-1}]$. By the proof of Lemma \ref{generic}, the $\A$-module $\A \otimes_{\Lambda_1} \Hil_f^{< s} \cong \A \otimes_{\Q_p \otimes_{\Z_p} \Lambda_1} (\Q_p \otimes_{\Z_p} \Hil_f^{< s})$ is free of rank $2$, and hence so is the $\t{C}(\Omega(\Lambda_1)_{\N \cap [2,\infty)} \backslash S, \overline{\Q}_p)$-module $\Hil_f^{< s} |_{\Omega(\Lambda_1)_{\N \cap [2,\infty)} \backslash S} \cong \t{C}(\Omega(\Lambda_1)_{\N \cap [2,\infty)} \backslash S, \overline{\Q}_p) \otimes_{\A} \A \otimes_{\Lambda_1} \Hil_f^{< s}$.
\end{proof}

\vspace{0.4in}
\addcontentsline{toc}{section}{Acknowledgements}
\noindent {\Large \bf Acknowledgements}
\vspace{0.1in}

I am extremely grateful to Takeshi Tsuji for helpful discussions and constructive advices in seminars, and also for careful reading of my paper. The proof of Theorem \ref{interpolation} was greatly shortened by him, and the proof of Lemma \ref{finiteness 2} was completed by him. I asked him plenty of questions about Galois representations. I profoundly thank to Masataka Chida for spending tens of hours on listening to my idea, Kentaro Nakamura for pointing out to me some relation between my work and others, and Yoichi Mieda for recommending me to apply a result of my calculation of the spectrum of a $p$-adic quantum group to interpolation of Galois representations. I deeply appreciate advices, suggestions, and opinions given by Tetsushi Ito, Shin-ichi Kobayashi, Masato Kurihara, Tadashi Ochiai, Atsushi Yamagami, and Seidai Yasuda. I am thankful to my colleagues for sharing so much time with me to discuss. In particular, Yuya Matsumoto told me an elementary proof of Corollary \ref{combinatorial} with no use of $p$-adic representations. I express my gratitude to family for their deep affection.

\vspace{0.2in}
I am a research fellow of Japan Society for the Promotion of Science. This work was supported by the Program for Leading Graduate Schools, MEXT, Japan.

\addcontentsline{toc}{section}{References}


\begin{thebibliography}{99}

\bibitem[Ami78]{Ami78} Y.\ Amice, {\it Duals}, Proceedings of the Conference on $p$-adic Analysis, Volume 7806, p.\ 1-15, Nijmegen, 1978.

\bibitem[Bai99]{Bai99} R.\ Baire, {\it Sur les Fonctions de Variables R\'eelles}, Annali di Matematica Pura ed Applicata, Volume 3, Issue 1, p.\ 1-123, 1899.

\bibitem[Ber90]{Ber90} V.\ G.\ Berkovich, {\it Spectral Theory and Analytic Geometry over non-Archimedean Fields}, Mathematical Surveys and Monographs, Number 33, the American Mathematical Society, 1990.

\bibitem[Ber93]{Ber93} V.\ G.\ Berkovich, {\it \'Etale Cohomology for non-Archimedean Analytic Spaces}, Publications Math\'ematiques de l'Institut des Hautes \'Etudes Scientifiques, Number 1, p.\ 5-161, 1993.

\bibitem[BGR84]{BGR84} S.\ Bosch, U.\ G\"untzer, and R.\ Remmert, {\it Non-Archimedean Analysis A Systematic Approach to Rigid Analytic Geometry}, Grundlehren der Mathematischen Wissenschaften 261, A Series of Comprehensive Studies in Mathematics, Springer, 1984.

\bibitem[Car94]{Car94} H.\ Carayol {\it Formes Modulaires et Repr\'esentations Galoisiennes \`a Valeurs dans un Anneau Local Complet}, $p$-adic Monodromy and the Birch and Swinnerton-Dyer Conjecture, Contemporary Mathematics, Volume 165, p.\ 213-237, the American Mathematical Society, 1994.

\bibitem[CM98]{CM98} R.\ F.\ Coleman and B.\ Mazur, {\it The Eigencurve}, Galois Representations in Arithmetic Algebraic Geometry , London Mathematical Society Lecture Note Series, Number 254, p.\ 1-113, Cambridge University Press, 1998.

\bibitem[Col97]{Col97} R.\ F.\ Coleman, {\it $p$-adic Banach Spaces and Families of Modular Forms}, Inventiones Mathematicae, Volume 127, Issue 3, p.\ 417-479, 1997.

\bibitem[Del69]{Del69} P.\ Deligne, {\it Formes Modulaires et Repr\'esentations $\ell$-adiques}, S\'eminaire Bourbaki, Volume 11, Number 355, p.\ 139-172, 1968-69.

\bibitem[DI95]{DI95} F.\ Diamond and J.\ Im, {\it Modular Forms and Modular Curves}, Conference Proceedings, Volume 17, p.\ 39-133, Canadian Mathematical Society, 1995.

\bibitem[DR73]{DR73} P.\ Deligne and M.\ Rapoport, {\it Les Sch\'emas de Modules de Courbes Elliptiques}, Modular Forms of One Variables II, Lecture Notes in Mathematics, Volume 349, p.\ 143-316, 1973.

\bibitem[Eic57]{Eic57} M.\ Eichler, {\it Eine Verallgemeinerung der Abelschen Integrale}, Mathematische Zeitschrift, Volume 67, Issue 1, p.\ 267-298, 1957.

\bibitem[Eme10]{Eme10} M.\ Emerton, {\it p-adic Families of Modular Forms [after Hida, Coleman, and Mazur]}, S\'eminaire Bourbaki, 62\`eme ann\'ee, Number 1013, p.\ 31-61, 2009-2010.

\bibitem[Eme]{Eme} M.\ Emerton, {\it Completed cohomology and the p-adic Langlands Program}, preprint.

\bibitem[Eng77]{Eng77} R.\ Engelking, {\it General topology}, Pa\'nstwowe Wydawnictwo Naukowe, 1977.

\bibitem[Gro90]{Gro90} B.\ H.\ Gross, {\it A Tameness Criterion for Galois Representations Associated to Modular Forms (mod $p$)}, Duke Mathematical Journal, Volume 61, Number 2, p.\ 445-517, 1990.

\bibitem[GS93]{GS93} R.\ Greenberg and G.\ Stevens, {\it $p$-adic L-functions and $p$-adic Periods of Modular Forms}, Inventiones Mathematicae, Volume 111, Issue 2, p.\ 407-447, 1993.

\bibitem[Hid86]{Hid86} H.\ Hida, {\it Galois Representations into $\t{GL}_2(Z_p[[X]])$ Attached to Ordinary Cusp Forms}, Inventiones Mathematicae, Volume 85, p.\ 545-613, 1986.

\bibitem[Hid89]{Hid89} H.\ Hida, {\it Nearly Ordinary Hecke Algebras and Galois Representations of Several Variables}, JAMI Innaugural Conference Proceedings, Supplement to American Journal of Mathematics, p.\ 115-134, 1989.

\bibitem[Hid93]{Hid93} H.\ Hida, {\it Elementary Theory of $L$-functions and Eisenstein Series}, Student Texts 26, London Mathematical Society, 1993.

\bibitem[Jan88]{Jan88} U.\ Jannsen, {\it Continuous \'Etale Cohomology}, Mathematische Annalen, Volume 280, p.\ 207-245, 1988.

\bibitem[KM85]{KM85} N.\ Katz and B.\ Mazur, {\it Arithmetic Moduli of Elliptic Curves}, Annals of Mathematics Studies, Princeton University Press, 1985.

\bibitem[Mil80]{Mil80} J.\ S.\ Milne, {\it \'Etale Cohomology}, Princeton University Press, 1980.

\bibitem[Mil88]{Mil88} J.\ S.\ Milne, {\it Canonical Models of (Mixed) Shimura Varieties and Automorphic Vector Bundles}, Automorphic forms, Shimura varieties, and $L$-functions, Volume I, Proceedings of the conference held at the University of Michigan, p.\ 283-414, 1988.

\bibitem[Mil13]{Mil13} J.\ S.\ Milne, {\it Lectures on Etale Cohomology (version 2.21)}, Notes for a Course at the University of Michigan, 2013.

\bibitem[MW84]{MW84} B.\ Mazur and A.\ Wiles, {\it Class Fields of Abelian Extensions of $\Q$}, Inventiones Mathematicae, Volume 76, p.\ 179-330, 1984.

\bibitem[PS11]{PS11} R.\ Pollack and G.\ Stevens, {\it Overconvergent Modular Symbols and p-adic L-functions}, Annales Scientifiques de l'\'Ecole Normale Sup\'erieure, Volume 44, fascicule 1, p.\ 1-42, 2011.

\bibitem[Sch02]{Sch02} P.\ Schneider, {\it Nonarchimedean Functional Analysis}, Springer Monographs in Mathematics, Springer, 2002.

\bibitem[SGA1]{SGA1} A.\ Grothendieck, {\it Rev\'etements \'Etales et Groupe Fondamental - (SGA1)}, S\'eminaire de G\'eometrie Alg\'ebrique du Bois Marie - 1960-61, Lecture Notes in Mathematics, Volume 224, Springer, 1971.

\bibitem[SGA4]{SGA4} M.\ Artin, A.\ Grothendieck, and J.\ L.\ Verdier, {\it Th\'eorie des Topos et Cohomologie \'Etale des Sch\'emas - (SGA4), Tome 3, Expos\'es IX \`a XIX}, S\'eminaire de G\'eometrie Alg\'ebrique du Bois Marie - 1963-64, Lecture Notes in Mathematics, Volume 269, 270, and 305, Springer, 1972.

\bibitem[SGA4 1/2]{SGA4 1/2} P.\ Deligne, {\it Cohomologie Etale - (SGA4 1/2)}, S\'eminaire de G\'eometrie Alg\'ebrique du Bois Marie, Lecture Notes in Mathematics, Volume 569, Springer, 1977.

\bibitem[SGA5]{SGA5} A.\ Grothendieck, {\it Cohomologie $l$-adique et Fonctions $L$ - (SGA5)}, S\'eminaire de G\'eometrie Alg\'ebrique du Bois Marie - 1965-66, Lecture Notes in Mathematics, Volume 589, Springer, 1977.

\bibitem[Shi59]{Shi59} G.\ Shimura, {\it Sur les Int\'egrales Attach\'ees aux Formes Automorphes}, Journal of the Mathematical Society of Japan, Volume 11, Number 4, p.\ 291-311, 1959.

\bibitem[Shi71]{Shi71} G.\ Shimura, {\it Introduction to the Arithmetic Theory of Automorphic Functions}, Princeton University Press, 1971.

\bibitem[Sil86]{Sil86} J.\ H.\ Silverman, {\it The Arithmetic of Elliptic Curves}, Graduate Texts in Mathematics 109, Springer, 1986.

\bibitem[ST02]{ST02} P.\ Schneider and J.\ Teitelbaum, {\it Banach Space Representations and Iwasawa Theory}, Israel Journal of Mathematics, Volume 127, Issue 1, p.\ 359-380, 2002.

\bibitem[Tsc26]{Tsc26} N.\ Tschebotareff, {\it Die Bestimmung der Dichtigkeit einer Menge von Primzahlen, Welche zu einer Gegebenen Substitutionsklasse Geh\"oren}, Mathematische Annalen, Number 1, p.\ 191-228, 1926.

\bibitem[Tyc30]{Tyc30} A.\ Tychonoff, {\it \"Uber die Topologische Erweiterung von R\"aumen}, Mathematische Annalen, Volume 102, p.\ 544-561, 1930.

\bibitem[Ver61]{Ver61} J.\ L.\ Verdier, {\it Sur les Int\'egrales Attach\'ees aux Formes Automorphes (d'apres G.\ Shimura)}, Seminaire Bourbaki, Volume 6, Number 216, p.\ 149-175, 1960-1961.

\bibitem[Wil88]{Wil88} A.\ Wiles, {\it On Ordinary $\lambda$-adic Representations Associated to Modular Forms}, Inventiones Mathematicae, Volume 94, p.\ 529-573, 1988. 

\end{thebibliography}
\end{document}